\newcommand{\ep}{\varepsilon}
\newcommand{\sgn}{\mathrm{sign}}
\newcommand{\D}{{\ensuremath{\R^d}}}
\newcommand{\bP}{{\mathbb{P}}}
\newcommand{\intrd}{\int_{\mathbb{R}^{d}}}
\newcommand{\tu}{ \widetilde{u}^{\Delta t}}
\newcommand{\bu}{\widetilde{B}^{\Delta t}}
\newcommand{\frt}{\mathcal{L}_{\lambda /2}}
\newcommand{\Rd}{\mathbb{R}^d}
\newcommand{\ws}{\overset{\ast}{\rightharpoonup}}
\newcommand{\wc}{{\rightharpoonup}}
\theoremstyle{plain}
\newtheorem{thm}{Theorem}[section]
\theoremstyle{plain}
\newtheorem{lem}[thm]{Lemma}
\newtheorem{prop}[thm]{Proposition}
\newtheorem{cor}[thm]{Corollary}
\theoremstyle{definition}
\newtheorem{defi}{Definition}[section]
\newtheorem{rem}{Remark}[section]
\newtheorem*{maintheorem*}{Main Theorem}
\newtheorem*{maincorollary*}{Main Corollary}
\newenvironment{Assumptions}
{
\setcounter{enumi}{0}

\begin{enumerate}}
{\end{enumerate} }
\def\cprime{$'$}
\newcommand{\norm}[1]{\left\|#1\right\|}
\newcommand{\fr}{\mathcal{L}_{\lambda}}
\newcommand{\R}{\ensuremath{\mathbb{R}}}
\newcommand{\Div}{\mathrm{div}\,}
\newcommand{\rd}{\ensuremath{\mathbb{R}^d}}
\newcommand{\supp}{\ensuremath{\mathrm{supp}\,}}
\newcommand{\goto}{\ensuremath{\rightarrow}}
\newcommand{\grad}{\ensuremath{\nabla}}
\newcommand{\eps}{\ensuremath{\varepsilon}}
\newcommand{\E}{\mathbb{E}}
\def\f{ \vec{\mathbf{f}} }
\def\dint{\displaystyle\int}
\def\d#1{\mathrm{ d#1}}
\numberwithin{equation}{section} \allowdisplaybreaks
\title[Stochastic Fractional Conservation Laws]
{The Cauchy problem for a fractional conservation laws driven by L\'{e}vy noise}
\date{\today}
\keywords{Fractional Conservation Laws; L\'{e}vy Noise; Young Measures; Existence; Uniqueness; Continuous Dependence Estimates; Rate of Convergence; Non-local Regularization.}
\author[Neeraj Bhauryal]{Neeraj Bhauryal}
\address[Neeraj Bhauryal]{\newline
	Centre for Applicable Mathematics,
	Tata Instiute of Fundamental Research,
	P.O.\ Box 6503, GKVK Post Office,
	Bangalore 560065, India}
\email[]{neeraj@math.tifrbng.res.in}
\author[Ujjwal Koley]{Ujjwal Koley}
\address[Ujjwal Koley]{\newline
	Centre for Applicable Mathematics,
	Tata Instiute of Fundamental Research,
	P.O.\ Box 6503, GKVK Post Office,
	Bangalore 560065, India}
\email[]{ujjwal@math.tifrbng.res.in}
\author[Guy Vallet]{Guy Vallet}
\address[Guy Vallet]{\newline 
	LMAP UMR- CNRS 5142, IPRA BP 1155, 64013 Pau Cedex, France}
\email[]{guy.vallet@univ-pau.fr}
\begin{document}
\begin{abstract}
In this article, we explore some of the main mathematical problems connected to multidimensional fractional conservation laws driven by L\'{e}vy processes. Making use of an adapted entropy formulation, a result of existence and uniqueness of a solution is established. Moreover, using bounded variation (BV) estimates for vanishing viscosity approximations, we derive an explicit continuous dependence estimate on the nonlinearities of the entropy solutions under the
assumption that L\'{e}vy noise depends only on the solution. This result is used to show the error estimate for the stochastic vanishing viscosity method. Furthermore, we establish a result on vanishing non-local regularization of scalar stochastic conservation laws. 
\end{abstract}

\maketitle

\tableofcontents

%%%%%%%%%%%%%%%%%%%%%%%%%%%%%%%%%%%%%%%%%%

\section{Introduction}
The analytical study of almost all physical phenomenon, in areas including physics, engineering, finance and
biology, involves nonlinear partial differential equations (PDEs). It is immensely important to be able to take the
inherent uncertainties into account in one's attempt to describe these phenomenon and a systematic study of PDEs with randomness (stochastic PDEs) certainly leads to greater understanding of the actual physical phenomenon. In this paper, we are interested in the well posedness theory for stochastic fractional hyperbolic-parabolic equation driven by multiplicative L\'{e}vy noise. Nonlocal operator appears in mathematical models for viscoelastic materials, fluid flows and acoustic propagation in porous media, and pricing derivative securities in financial markets \cite{black}.

A formal description of our problem requires a filtered probability space $\big(\Omega, \mathbb{P}, \mathcal{F}, \{\mathcal{F}_t\}_{t\ge 0} \big)$, and we are interested in an $L^2$-valued predictable process $u(t,\cdot)$
which satisfies the following Cauchy problem
\begin{align}
\label{eq:stoc_con_brown}
\begin{cases} 
du(t,x)- \mbox{div} f(u(t,x)) \,dt + \mathcal{L}_{\lambda}[u(t, \cdot)](x)\,dt =
\sigma(u(t,x))\,dW(t) + \int_{E} \eta(u(t,x); z)\,\widetilde{N}(dz,dt), & \text{ in } \Pi_T, \vspace{0.1cm}\\
u(0,x) = u_0(x), &  \text{ in } \R^d,
\end{cases}
\end{align}
where $\Pi_T:= \R^d \times (0,T)$ with $T>0$ fixed, $u_0:\R^d \mapsto \R$ is the given initial
function, $f:\R \mapsto \R^d$ is a given (sufficiently smooth) vector valued flux function 
(see Section~\ref{sec:tech} for the complete list of assumptions), and $\fr[u]$ denotes the fractional Laplace operator $(-\Delta)^{\lambda}[u]$ of order $\lambda \in (0,1)$, and defined as
\begin{align*}
\fr[\varphi](x) := c_{\lambda}\, \text{P.V.}\, \int_{|z|>0} \frac{\varphi(x) -\varphi(x+z)}{|z|^{d + 2 \lambda}} \,dz,
\end{align*}
for some constants $c_{\lambda}>0$, and a sufficiently regular function $\varphi$.

Note that $W(t)$ is a cylindrical Wiener process: $W(t)= \sum_{k\ge 1} e_k \beta_k(t)$ with $(\beta_k)_{k\ge 1}$ being mutually independent real valued standard Wiener processes and $(e_k)_{k\ge 1}$ a complete orthonormal system in a separable Hilbert space $\mathbb{H}$. Furthermore, $ \widetilde{N}(dz,dt)= N(dz,dt)-m(dz)\,dt $ is an independent compensated Poisson random measure, where $N$ is a Poisson random measure on $(E,\mathcal{E})$ with intensity measure $m(dz)$, and $(E,\mathcal{E},m)$ is a $\sigma$-finite measure space.

Finally, $ u \mapsto \sigma (u)$ is an $\mathbb{H}$-valued function and $(u,z)\mapsto \eta(u,z)$ is a given real valued function signifying the multiplicative nature of the noise.
\begin{rem}
We want to make the following comments:
\begin{itemize}
\item [(a)] Note that in case of $1/2 < \lambda <1$, the non-local term $\fr[u]$ is the dominant term. Hence the equation \eqref{eq:stoc_con_brown} becomes a parabolic equation and existence of solution for such equation can be obtained by a fixed point or contraction mapping argument. So the only interesting case corresponds to $\lambda$ in the interval $(0, 1/2]$. However, our entire analysis is independent of the choice of $\lambda$, therefore, we present our results for all $\lambda \in (0,1)$.
\item [(b)] All our results can be extended to the more general explicit space dependent noise coefficients, i.e., 
$\sigma = \sigma(x,u)$ and $\eta = \eta(x,u;z)$. However for technical reasons, in view of \cite{BaVaWit_2012,
BisMajKarl_2014}, we need further assumptions on noise coefficients and we choose not to give details on that direction. 
\item [(c)] We will carry out our analysis under the structural assumption $E = \mathcal{O}\times \R^*$, where $\mathcal{O}$ is a subset of the Euclidean space. The measure $m$ on $E$ is defined as $\lambda \times \mu$ where $\lambda$ is a Radon measure on $\mathcal{O}$ and $\mu$ is so-called L\'{e}vy measure on $\R^*$. Such a noise would be called an impulsive white noise with jump position intensity $\lambda$ and jump size intensity $\mu$.  We refer to \cite{peszat} for more on L\'{e}vy sheet and related impulsive white noise. 
Moreover, for each $v\in L^2(\R^d)$, we consider the mapping $\sigma(v): \mathbb{H}\goto L^2(\R^d)$ defined by 
$\sigma(v)e_k= g_k(v(\cdot))$. In particular, we suppose that $g_k$ is Lipschitz-continuous, and $\mathbb{G}^2(r)= \sum_{k\ge 1} g_k^2(r)$.
\end{itemize}
\end{rem}

The equation \eqref{eq:stoc_con_brown} could be viewed as a stochastic perturbation of non-local hyperbolic equation. In the absence of non-local term along with the case $\sigma=\eta=0$, the equation \eqref{eq:stoc_con_brown} becomes a standard conservation law in $\R^d$. For conservation laws, the question of existence and uniqueness of solutions was first settled in the pioneer papers of Kru\v{z}kov \cite{Kruzkov} and Vol'pert \cite{Volpert}. In the case $\sigma=\eta=0$, well-posedness of Cauchy problem was studied by Alibaud \cite{Alibaud}, Cifani \& Jakobsen \cite{CifaniJakobsen}. 

\subsection{Stochastic Balance Laws}   
The study of stochastic balance laws has so far been limited mostly to equations of the type \eqref{eq:stoc_con_brown} in the absence of the non-local term $\fr[u]$. 
In fact, Kim \cite{KIm2005} extended the Kru\v{z}kov well-posedness theory to one dimensional balance laws that are driven by 
additive Brownian noise, and Vallet \& Wittbold \cite{Vallet_2009} to the multidimensional Dirichlet problem. However, when the noise 
is of multiplicative nature, one could not apply a straightforward Kru\v{z}kov's doubling method to get uniqueness. 
This issue was settled by Feng $\&$ Nualart \cite{nualart:2008}, who established uniqueness of entropy solution by 
recovering additional information from the vanishing viscosity method. The existence was proven using stochastic 
version of compensated compactness method and it was valid for \emph{one} spatial dimension.
To overcome this problem, Debussche $\&$ Vovelle \cite{Vovelle2010} introduced
kinetic formulation of such problems and as a result they were able to establish the well-posedness
of multidimensional stochastic balance law via \emph{kinetic} approach. 
A number of authors have contributed since then, and we mention the works of 
Bauzet et al. \cite{BaVaWit_2012,BaVaWit_JFA}, Biswas et al. \cite{BisMaj,BisMajKarl_2014}. For degenerate
parabolic equations, we mention works of Vallet \cite{Vallet_2005, Vallet_2008}, Debussche et al. \cite{martina},
Koley et al. \cite{Koley1, Koley2}.
We also mention works by Chen et al. \cite{Chen-karlsen_2012}, and Biswas et al. \cite{BisKoleyMaj}, where well
posedness of entropy solution is established in $L^p \cap BV$, via BV framework. 
Moreover, they were able to develop continuous dependence theory for multidimensional balance laws and as a by
product they derived an explicit \emph{convergence rate} of the approximate solutions to the underlying problem. 

We remark that our solution concept is different from the concept of \emph{random entropy solution} for fractional
conservation laws incorporating randomness in the initial data and fluxes. Several results are available in that
direction. For well-posedness theory of random entropy solution, we refer to \cite{Koley3, koley2013multilevel,
ujjwal}.

Independently of the smoothness of the initial data $u_0$, due to the presence of nonlinear flux term, degenerate diffusion term, and a nonlocal term in equation \eqref{eq:stoc_con_brown}, solutions to \eqref{eq:stoc_con_brown} are not necessarily smooth and weak solutions must be sought. Before introducing the concept of weak solutions, we first recall the notion of predictable $\sigma$-field. By a predictable $\sigma$-field on $[0,T]\times \Omega$, denoted
by $\mathcal{P}_T$, we mean that the $\sigma$-field generated by the sets of the form: $ \{0\}\times A$ and $(s,t]\times B$  for any $A \in \mathcal{F}_0; B \in \mathcal{F}_s,\,\, 0<s,t \le T$.
The notion of stochastic weak solution is defined as follows:
 
\begin{defi}[Stochastic weak solution]
\label{defi:weak-solution}
 A square integrable $ L^2(\R^d )$-valued $\{\mathcal{F}_t: t\geq 0 \}$-predictable stochastic process $u(t)= u(t,x)$ is said to be a weak solution
 to the problem \eqref{eq:stoc_con_brown} if, $ \mathbb{P}-$ a.s, for all $\psi \in \mathcal{D}([0,T) \times \R^d)$
 \begin{align}
& \int_{\R^d} \psi(0,.) u_0\,dx  + \int_{\Pi_T} \Big\{  u(t,x) \partial_t \psi(t,x) 
 - f(u(t,x)) \partial_x \psi(t,x) \Big\}\,dx\,dt - \int_{\Pi_T}  u(t,x)\mathcal{L}_\lambda[\psi(t,\cdot)](x)\,dx\, dt \notag \\
 &\qquad \qquad + \int_{\Pi_T}  \sigma(u(t,x)) \,\psi(t,x) \,dW(t) \,dx\,dt + \int_{\Pi_T} \int_{E} \eta(u(t,x);z)\, \psi(t,x) \, dx\,dt\,\widetilde{N}(dz,dt) = 0.
\end{align}
\end{defi}
However, it is well-known that weak solutions may be discontinuous and they are not uniquely determined by their initial data. Consequently, an admissibility condition, so called {\em entropy condition}, must be imposed to single out the physically correct solution. Since the notion of entropy solution is built around the so called entropy flux pair, we begin with the definition of entropy flux pair.

\begin{defi}[Entropy-entropy flux pair]
A pair $(\beta,\zeta) $ is called an entropy-entropy flux pair 
if $ \beta \in C^2(\R) $ and $\beta \ge0$, and 
$\zeta = (\zeta_1,\zeta_2,....\zeta_d):\R \mapsto\rd $ is a vector field satisfying
$\zeta'(r) = \beta'(r)f'(r)$ for all $r$. An entropy-entropy flux pair $(\beta,\zeta)$ is called 
convex if $ \beta^{\prime\prime}(\cdot) \ge 0$.  
\end{defi}
With the help of a convex entropy-entropy flux pair $(\beta,\zeta)$, we present a formal
derivation of entropy inequalities.

\subsection{Stochastic Entropy Formulation}
We introduce an entropy formulation for the initial value problem \eqref{eq:stoc_con_brown}. To this end, let us first split the non-local operator $\fr$ into two terms: for each $r>0$, we write $\fr[\varphi] := \mathcal{L}_{\lambda, r}[\varphi] + \mathcal{L}_{\lambda}^{r}[\varphi]$, where
\begin{align*}
\mathcal{L}_{\lambda, r}[\varphi](x)&:= c_{\lambda}\, \text{P.V.}\, \int_{|z|\le r} \frac{\varphi(x) -\varphi(x+z)}{|z|^{d + 2 \lambda}} \,dz, \\
\mathcal{L}_{\lambda}^{r}[\varphi](x)&:= c_{\lambda}\,\int_{|z|> r} \frac{\varphi(x) -\varphi(x+z)}{|z|^{d + 2 \lambda}} \,dz.
\end{align*}
For a small positive number $\eps>0$, assume that the parabolic perturbation 
\begin{align}
 du_\eps(t,x) -\eps \Delta u_\eps(t,x)\,dt & + \mathcal{L}_{\lambda}[u_\eps(t, \cdot)](x)\,dt  - \mbox{div}_x f(u_\eps(t,x)) \,dt \notag \\
&\qquad \qquad \qquad \qquad= \sigma(u_\eps(t,x))\,dW(t) + \int_{E} \eta(u_\eps(t,x);z)\widetilde{N}(dz,dt), \label{eq:viscous-Brown} 
\end{align}
of \eqref{eq:stoc_con_brown} has a unique weak solution $u_\eps(t,x)$ with initial data $u_{\eps}(0,x)=u_0^{\eps}(x)\in H^1(\R^d)$, where $u_0^{\eps}$ converges to $u_0$ in $L^2(\R^d)$. Note that this weak solution 
$u_{\eps} \in L^2((0,T)\times \Omega;H^2(\R^d))$ (see Appendix~\ref{viscous}), so that, in particular, $\mathcal{L}_{\lambda}[u_\epsilon]$ and $\mathcal{L}_{\lambda, r}[u_\epsilon]$ are elements of $L^2(\R^d)$. This enables one to derive a weak version of the It\^{o}-L\'{e}vy formula (as proposed in \cite{martina,fellah,BisMajVal}) for the solutions of \eqref{eq:stoc_con_brown}.
\\
Let $(\beta,\zeta)$ be an entropy flux pair. Given a non-negative test function $\psi\in C_{c}^{1,2}([0,\infty )\times\R^d)$, we apply a generalized version of It\^{o}-L\'{e}vy formula to yield, for almost every $\bar{T}>0$
\begin{align*}
& \int_{\R^d} \beta(u_{\eps}(\bar{T},x))\, \psi(\bar{T},x)\,dx -  \int_{\R^d} \beta(u_{\eps}(0,x))\, \psi(0,x)\,dx \\
 &=  \int_{\Pi_T} \beta(u_{\eps}(t,x)) \,\partial_t\psi(t,x) \,dx\,dt-  \int_{\Pi_T}   \grad \psi(t,x)\cdot \zeta(u_{\eps}(t,x)) \,dx\,dt \notag \\
 & + \sum_{k\ge 1}\int_{\Pi_T} g_k(u_\eps(t,x))\beta^\prime (u_\eps(t,x))\psi(t,x)\,d\beta_k(t)\,dx
 + \frac{1}{2}\int_{\Pi_T}\mathbb{G}^2(u_\eps(t,x))\beta^{\prime\prime} (u_\eps(t,x))\psi(t,x)\,dx\,dt \notag \\
  &  + \int_{\Pi_T} \int_{E} \int_0^1 \eta(u_\eps(t,x);z)\beta^\prime \big(u_\eps(t,x) + \lambda\,\eta(u_\eps(t,x);z)\big)\psi(t,x)\,d\lambda\,\widetilde{N}(dz,dt)\,dx  \notag \\
 & + \int_{\Pi_T} \int_{E}  \int_0^1  (1-\lambda)\eta^2(u_\eps(t,x);z)\beta^{\prime\prime} \big(u_\eps(t,x) + \lambda\,\eta(u_\eps(t,x);z)\big)
 \psi(t,x)\,d\lambda\,m(dz)\,dx\,dt \notag \\
 &  - \int_{\Pi_T} \left( \eps  \grad \psi(t,x)\cdot \grad \beta(u_\eps(t,x)) + \eps \beta^{\prime\prime}(u_\eps(t,x))\, |\grad u_\eps(t,x)|^2 \psi(t,x) \right)\,dx\,dt\\
 & -  \int_{\Pi_T} \Big< \mathcal{L}_{\lambda}[u_\eps(t, \cdot)](x), \psi(t,x)\, \beta'(u_\eps(t,x)) \Big> \,dx\,dt.
 \end{align*}
 
We only need to modify the non-local term, since rest of the terms can be manipulated in usual manner. 
To that context, note that $u_{\eps} \in H^2(\R^d)$ and that $\psi$ has compact support, so following \cite{CifaniJakobsen}, for a fixed positive $r$, by using arguments developed in Appendix~\ref{FractionalLaplace}, we have 
\begin{align}\label{the Ir}
& \int_{\Pi_T} \Big< \mathcal{L}_{\lambda}[u_\eps(t, \cdot)](x), \psi(t,x)\, \beta'(u_\eps(t,x)) \Big> \,dx\,dt 
\nonumber \\ 
& \qquad \qquad = \int_{\Pi_T\times\{|z|\leq r\}}\beta'(u_\eps(t,x)) \, \psi(t,x)\,  \frac{u_\eps(t,x) -u_\eps(t,x+z)+z.\nabla u_\eps(t,x)}{|z|^{d + 2 \lambda}} \,dz  \,dx\,dt \nonumber \\
& \qquad \qquad \qquad \qquad +  \int_{\Pi_T}  \int_{|z|> r} \beta'(u_\eps(t,x)) \, \psi(t,x)\,  \frac{u_\eps(t,x) -u_\eps(t,x+z)}{|z|^{d + 2 \lambda}} \,dz \,dx\,dt \nonumber \\ 
& \qquad \qquad = \lim_{\delta \goto 0} \Bigg[\int_{\Pi_T\times\{\delta<|z|\leq r\}}\beta'(u_\eps(t,x)) \, \psi(t,x)\,  \frac{u_\eps(t,x) -u_\eps(t,x+z)+z.\nabla u_\eps(t,x)}{|z|^{d + 2 \lambda}} \,dz  \,dx\,dt\Bigg] \nonumber \\
& \qquad \qquad \qquad \qquad +  \int_{\Pi_T}  \int_{|z|> r} \beta'(u_\eps(t,x)) \, \psi(t,x)\,  \frac{u_\eps(t,x) -u_\eps(t,x+z)}{|z|^{d + 2 \lambda}} \,dz \,dx\,dt \nonumber \\ 
& \qquad \qquad = \lim_{\delta \goto 0} \Bigg[\int_{\Pi_T \times \{\delta < |z|\leq r\}} \beta'(u_\eps(t,x)) \, \psi(t,x)\,  \frac{u_\eps(t,x) -u_\eps(t,x+z)}{|z|^{d + 2 \lambda}} \,dz \,dx\,dt \Bigg] \nonumber \\
& \qquad \qquad \qquad \qquad +  \int_{\Pi_T}  \int_{|z|> r} \beta'(u_\eps(t,x)) \, \psi(t,x)\,  \frac{u_\eps(t,x) -u_\eps(t,x+z)}{|z|^{d + 2 \lambda}} \,dz \,dx\,dt :=\mathcal{G}_r + \mathcal{G}^r
\end{align}
Next, observe that since $u_\eps(t) \in H^2(\R^d)$ and that $\psi$ has compact support, the term $\mathcal{G}^r$ is well defined. To deal with the other term $\mathcal{G}_r$, first note that for any $a$ and $b$, $(a-b) \,\beta^\prime(a) \geq \beta(a)- \beta(b)\geq (a- b)\, \beta^\prime(b)$. Therefore, we get
\begin{align*}
\big[\beta(u_\eps(t,x)) -\beta(u_\eps(t,x+z))\big] \le \big[u_\eps(t,x)- u_\eps(t,x+z)\big] \, \beta'(u_\eps(t,x))).
\end{align*}
Then a simple change of variable formula, and similar arguments to the above ones reveal that
\begin{align*}
 \mathcal{G}_r \ge & \lim_{\delta \goto 0} \Bigg[\int_{\Pi_T \times \{\delta < |z|\leq r\}} \bigg[\frac{\beta(u_\eps(t,x)) -\beta(u_\eps(t,x+z))}{|z|^{d + 2 \lambda}} \,dz \bigg]\, \psi(t,x) \,dx\,dt \Bigg] \nonumber \\
=& \int_{\Pi_T} \beta(u_\epsilon(t,x)) \,\text{P.V} \,\int_{\{|z|\leq r\}}\frac{\psi(x)-\psi(x+z)}{|z|^{d+2\lambda}}\,dz\,dx\,dt = \int_{\Pi_T} \beta(u_\epsilon(t,x)) \mathcal{L}_{\lambda, r}[\psi](x)\,dx\,dt.
\end{align*}
Since $\beta$ and $\psi$ are non-negative functions, we obtain
\begin{align}
0 & \le   \int_{\R^d} \beta(u_{\eps}(0,x))\, \psi(0,x)\,dx + 
\int_{\Pi_T} \Big\{\beta(u_{\eps}(t,x)) \,\partial_t\psi(t,x) -  \grad \psi(t,x)\cdot \zeta(u_{\eps}(t,x)) \Big\}\,dx\,dt \notag \\
& -\int_{\Pi_T} \Big[ \mathcal{L}_{\lambda}^{r}[u_\eps(t,\cdot)](x)\, \psi(t,x)\, \beta'(u_\eps(t,x)) + \beta(u_\eps(t,x)) \, \mathcal{L}_{\lambda, r}[\psi(t,\cdot)](x) \Big]\,dx\,dt+ \mathcal{O}(\eps) \notag\\
 & + \sum_{k\ge 1}\int_{\Pi_T} g_k(u_\eps(t,x))\beta^\prime (u_\eps(t,x))\psi(t,x)\,d\beta_k(t)\,dx
 + \frac{1}{2}\int_{\Pi_T}\mathbb{G}^2(u_\eps(t,x))\beta^{\prime\prime} (u_\eps(t,x))\psi(t,x)\,dx\,dt \notag \\
  &  + \int_{\Pi_T} \int_{E} \int_0^1 \eta(u_\eps(t,x);z)\beta^\prime \big(u_\eps(t,x) + \lambda\,\eta(u_\eps(t,x);z)\big)\psi(t,x)\,d\lambda\,\widetilde{N}(dz,dt)\,dx  \notag \\
 & +\int_{\Pi_T} \int_{E}  \int_0^1  (1-\lambda)\eta^2(u_\eps(t,x);z)\beta^{\prime\prime} \big(u_\eps(t,x) + \lambda\,\eta(u_\eps(t,x);z)\big) \psi(t,x)\,d\lambda\,m(dz)\,dx\,dt. \label{imp_02}
 \end{align}
Clearly, the above inequality is stable under the limit $\eps \rightarrow 0$, if the family ${\lbrace u_{\eps} \rbrace}_{\eps>0}$ has
$L^p_{\mathrm{loc}}$-type stability. Just as the deterministic equations, the above inequality provides us the entropy condition. We now  introduce the notion of stochastic entropy solution as follows: 

\begin{defi} [Stochastic Entropy Solution]
 \label{defi:stochentropsol}
A square integrable $ L^2(\R^d )$-valued $\{\mathcal{F}_t: t\geq 0 \}$-predictable stochastic process $u(t)= u(t,x)$ is called a stochastic entropy solution of \eqref{eq:stoc_con_brown} if given a non-negative test function $\psi\in C_{c}^{1,2}([0,\infty )\times\R^d) $ and a convex entropy flux pair $(\beta,\zeta)$, the following inequality holds:
\begin{align}
 &  \int_{\R^d} \beta(u_0(x))\psi(0,x)\,dx + \int_{\Pi_T} \Big\{ \beta(u(t,x)) \partial_t\psi(t,x) -  \grad \psi(t,x)\cdot \zeta(u(t,x)) \Big\}dx\,dt \notag \\
 & -\int_{\Pi_T} \Big[ \mathcal{L}_{\lambda}^{r}[u(t,\cdot)](x)\, \psi(t,x)\, \beta'(u(t,x)) + \beta(u(t,x)) \,\mathcal{L}_{\lambda, r}[\psi(t,\cdot)](x) \Big]\,dx\,dt \notag \\
 & 
 + \sum_{k\ge 1}\int_{\Pi_T} g_k(u(t,x))\beta^\prime (u(t,x))\psi(t,x)\,d\beta_k(t)\,dx
 + \frac{1}{2}\int_{\Pi_T}\mathbb{G}^2(u(t,x))\beta^{\prime\prime} (u(t,x))\psi(t,x)\,dx\,dt \notag \\
  &  
  + \int_{\Pi_T} \int_{E} \int_0^1 \eta(u(t,x);z)\beta^\prime \big(u(t,x) + \lambda\,\eta(u(t,x);z)\big)\psi(t,x)\,d\lambda\,\widetilde{N}(dz,dt)\,dx  
 \notag \\
 & 
 +\int_{\Pi_T} \int_{E}  \int_0^1  (1-\lambda)\eta^2(u(t,x);z)\beta^{\prime\prime} \big(u(t,x) + \lambda\,\eta(u(t,x);z)\big)
\psi(t,x)\,d\lambda\,m(dz)\,dx\,dt  \ge  0, \quad \mathbb{P}-\text{a.s}.\label{inq:entropy-solun}
\end{align}
\end{defi} 
In what follows, we will use explicitly the inequality \eqref{imp_02} in the sequel to establish the well posedness theory of the entropy solution in the sense of Definition \ref{defi:stochentropsol}.

\subsection{Scope and Outline of the Paper}
As we mentioned earlier, past few years have witnessed remarkable advances on the area of deterministic non-local/fractional conservation laws. An worthy reference on this subject is \cite{CifaniJakobsen}. However, very little is available on the specific problem of fractional conservation laws driven by L\'{e}vy noise, and there are number of issues waiting to be explored. To fill the gap between the stochastic theory and its deterministic counterpart, we aim to present a complete well-posedness theory for the problem \eqref{eq:stoc_con_brown}. We emphasize that the analysis presented in this manuscript differs significantly from the deterministic analysis, partly due to the technical obstacle that we can not pass to the limit in the parameter $\xi$ (related to the approximation of absolute value function) at the beginning.

To sum up, we aim at developing following results related to \eqref{eq:stoc_con_brown}:
\begin{itemize}
\item [(a)] We first propose to prove a result of existence and uniqueness of a stochastic entropy solution of \eqref{eq:stoc_con_brown}, in the sense of Definition \ref{defi:stochentropsol}, using the concept of measure-valued solutions and a variant of Kru\v{z}kov's entropy formulation. We also derive stability estimate with respect to the initial data.
\item [(b)] Drawing preliminary motivation from \cite{BisKoleyMaj,Chen-karlsen_2012,karl-resibro-2000},
we intend to develop a continuous dependence theory for stochastic entropy solution which in turn can be used to derive an error estimate for the vanishing viscosity method. However, it seems difficult to develop such a theory without securing a BV estimate for stochastic entropy solution. As a result, we first address the question of existence, uniqueness of stochastic BV entropy solution in $L^2(\R^d) \cap BV(\R^d)$ of the problem \eqref{eq:stoc_con_brown}. Making use of the crucial BV estimate, we provide a continuous dependence estimate and error estimate for the vanishing viscosity method provided initial data lies in $u_0 \in L^2(\R^d) \cap BV(\R^d)$.
\item [(c)] Finally, following \cite{Alibaud}, we also consider a non-local regularization of scalar stochastic conservation laws by adding a fractional power of the laplacian. Then, making use of the BV estimate, we derive an explicit \emph{convergence rate} of the approximate solutions to the unique entropy solution of the stochastic conservation laws. 
\end{itemize}

The rest of the paper is organized as follows:  we describe technical frameworks and state main results in
Section~\ref{sec:tech}. In Section~\ref{uniqueness}, we establish well-posedness theory for the problem under
consideration \eqref{eq:stoc_con_brown}. Next, making use of BV estimates, we derive an explicit continuous
dependence estimate on nonlinearities in Section~\ref{cont-depen-estimate} and present the error estimate for the
stochastic vanishing viscosity method in Section~\ref{sec:cor}. Section~\ref{sec:cor_01} deals with a non-local
regularization of the equation~\eqref{eq:stoc_original}, and derive an explicit rate of convergence estimate of the approximate
solutions \eqref{eq:stoc_regularization} to the unique entropy solution of \eqref{eq:stoc_original}. Furthermore, in Appendix~\ref{viscous}, we
demonstrate the existence and uniqueness results related to the viscous equation \eqref{eq:viscous-Brown}, while 
in Appendix~\ref{sec:apriori+existence}, we derive uniform spatial BV bound for viscous solutions. Using this bound,
we establish well posedness of BV entropy solution of the Cauchy problem \eqref{eq:stoc_con_brown}.
Finally Appendix~\ref{FractionalLaplace}, and Appendix~\ref{SolPositive} recapitulates some existing results on fractional operator. 
  
%%%%%%%%%%%%%%%%%%%%%%%%%%%%%%%%%%%%%%%%%%%%%%%%%%%%%%%%%
%%%%%%%%%%%%%%%%%%%%%

\section{Technical Framework and Statement of the Main Results}
\label{sec:tech}
Throughout this paper, we use the letter $C$ to denote various generic constants. There are situations where constant may change from line to line, but the notation is kept unchanged so long as it does not impact central idea. 
Moreover, for any separable Hilbert space $H$, we denote by $N_w^2(0,T,H)$, the Hilbert space of all the predictable $H$-valued processes $u$ such that $\E\Big[\int_0^T \|u\|^2_H\Big]<+\infty$.
Furthermore, we denote $BV(\R^d)$ as the set of integrable 
 functions with bounded variation on $\R^d$ endowed with the norm $|u|_{BV(\R^d)}= \|u\|_{L^1(\R^d)} + TV_{x}(u)$, where $TV_{x}$ is the total variation of $u$ defined on $\R^d$. Next, we write down some useful properties of the fractional operator which are used in the sequel, for a detailed description, consult Appendix~\ref{FractionalLaplace}. First note that 
\begin{align*}
\mathcal{L}_{\lambda}[\varphi](x)&= c_{\lambda}\, \text{P.V.}\, \int_{|z|\le r} \frac{\varphi(x) -\varphi(x+z)}{|z|^{d + 2 \lambda}} \,dz + c_{\lambda}\,\int_{|z|> r} \frac{\varphi(x) -\varphi(x+z)}{|z|^{d + 2 \lambda}} \,dz \\
& = c_{\lambda}\, \int_{|z|\le r} \frac{\varphi(x) -\varphi(x+z) + z \cdot \nabla \varphi(x)}{|z|^{d + 2 \lambda}} \,dz + c_{\lambda}\,\int_{|z|> r} \frac{\varphi(x) -\varphi(x+z)}{|z|^{d + 2 \lambda}} \,dz,
\end{align*}
for some constants $c_{\lambda}$, $\lambda \in (0,1)$, and a sufficiently regular function $\varphi$.
Moreover, for all $u,v \in H^{\lambda}(\D)$, denoting convolution operator by $\star$, we have
\begin{align*}
u \star \mathcal{L}_\lambda [v] &= v \star \mathcal{L}_\lambda [u], \\
\langle\mathcal{L}_\lambda [u],v\rangle &=\frac{c_{\lambda}}{2}\int_{\D}\int_{\D} \frac{\big(u(x)-u(y)\big) \big(v(x)-v(y)\big)}{|x-y|^{d+2\lambda}}\,dx\,dy =\int_{\D} \mathcal{L}_{\lambda/2} [u](x) \,\mathcal{L}_{\lambda/2} [v](x) \,dx.
\end{align*}
The primary aim of this paper is to settle the problem of existence and uniqueness for the Cauchy problem \eqref{eq:stoc_con_brown}, derive continuous dependence estimates for the entropy solutions of the same problem, and we do so under the following assumptions:
 \begin{Assumptions}
 	\item \label{A1} The initial function $u_0$ is a deterministic function satisfying $\|u_0\|_2 < +\infty$.
 	\item \label{A1'} For the stability analysis, we also assume that $|u_0|_{BV(\R^d)} <+\infty$.
 	\item \label{A3}  $ f=(f_1,f_2,\cdots, f_d):\R\rightarrow \R^d$ is a Lipschitz continuous function with $f_k(0)=0$, for 
 	all $1\le k\le d$.
	\item \label{A31}
	 The space $E$ is of the form $\mathcal{O}\times \R^*$ and the Borel measure $m$ on $E$ has the form $\lambda \times \mu$, where $\lambda$ is a Radon measure on $\mathcal{O}$ and $\mu$ is so-called one dimensional L\'{e}vy measure.
 	\item \label{A4} 
We assume that $g_k(0)=0$, for all $k\ge 1$. Moreover, there exists a positive constant $K > 0$  such that 
 	\begin{align*} 
 	\sum_{k\ge 1}\big| g_k(u)-g_k(v)\big|^2  \leq K |u-v|^2, \, \,\,\text{and}\,\,\, \mathbb{G}^2(u)= \sum_{k\ge 1} g_k^2(u)\le K\,|u|^2, ~\text{for all} \,\,u,v \in \R.
 	\end{align*}  
In particular, $\forall u \in L^2(\R^d),\ \sigma(u):\mathbb{H} \to L^2(\R^d)$ is Hilbert-Schmidt ($HS(L^2)$) and $\forall u \in H^1(\R^d),\ \sigma(u):\mathbb{H} \to H^1(\R^d)$ is Hilbert-Schmidt ($HS(H^1)$).
\item \label{A5} 
There exist positive constants $\lambda^* \in (0,1)$, and $h(z)\in L^2(E,m)$ with $0\le h(z)\le 1$ such that for all $u,v \in \R;~~z\in E$
 \begin{align*}
  \big| \eta(u;z)-\eta(v;z)\big|  \leq \lambda^* |u-v|h(z),\, \,\,\text{and}\,\,\, |\eta(u,z)|\le \lambda^* |u|h(z).
 \end{align*}
Moreover, we assume that $\eta(0,z)=0$, for all $z\in E$. 
 \end{Assumptions}

\begin{rem}
We remark that, one can accommodate polynomially growing flux function as a result of the requirement that the entropy solutions satisfy $L^p$ bounds for all $p\ge 2$. This in turn forces to choose initial data that are in $L^p$, for all $p$. However, we have chosen to work with the assumptions ~\ref{A1} and ~\ref{A3}. The assumption ~\ref{A5} is natural in the context of L\'{e}vy noise with the exception of $\lambda^*\in (0,1)$, which is necessary for the uniqueness.
Finally, the assumptions \ref{A1}-\ref{A5} collectively ensures existence and uniqueness of stochastic entropy solution,  and the continuous dependence estimate as well. 
\end{rem}
\begin{rem}
 In view of the assumption \ref{A4}, for any $v\in L^2(\R^d)$, $\sigma(v)$ is a Hilbert-Schmidt operator from the separable Hilbert space $\mathbb{H}$ to $L^2(\R^d)$. Therefore, for a given predictable process 
 $v\in L^2(\Omega;L^2(0,T;L^2(\R^d)))$, the stochastic integral $t\mapsto \int_0^t \sigma(v)dW(s)$ is well-defined process taking values in a Hilbert space $L^2(\R^d)$. Moreover, the trajectories 
 of $W$ are $\mathbb{P}$- a.s. continuous in $\mathbb{H}_0 \supset \mathbb{H}$, where 
 \begin{align*}
  \mathbb{H}_0:= \Big\{ v= \sum_{k\ge 1} v_k e_k:\,\, \sum_{k\ge 1} \frac{v_k^2}{k^2} < + \infty \Big\} 
 \end{align*}
 endowed with the norm $\|v\|_{\mathbb{H}_0}^2= \sum_{k\ge 1} \frac{v_k^2}{k^2} $ with $v= \sum_{k\ge 1} v_k e_k$. Furthermore, the embedding $\mathbb{H}\hookrightarrow \mathbb{H}_0$ is Hilbert-Schmidt (see \cite{daprato}).
\end{rem}

%%%%%%%%%%%%%%%%%%%%%%%%%%%%%%%%%%%%%%%%%%%%%%%%

Like its deterministic counterpart, existence of entropy solution largely related to the study of associated viscous problem. We first propose a result of existence of weak solutions to the regularized problem \eqref{eq:viscous-Brown} based on an implicit time discretization, adapted from the work of Bauzet et. al. \cite{BaVaWit_2014}.

\begin{thm}[Existence of Viscous Solution]
\label{prop:vanishing viscosity-solution}
Let the assumptions \ref{A1},\,\ref{A3},\,\ref{A4}, and \ref{A5} hold. Then, for any $\eps>0$, there exists a unique weak solution $u_\eps \in N_w^2(0,T,H^1(\R^d))$  with $\partial_t \big(u_\eps - \int_0^t \sigma(u_\eps(s,\cdot))\, dW(s) - \int_0^t \int_{E} \eta(u_\eps(s,\cdot);z)\widetilde{N}(dz,ds)\big)
\in L^2(\Omega\times(0,T),H^{-1}(\R^d))$, to the problem \eqref{eq:viscous-Brown}. Moreover, the solution 
$u_\eps \in L^\infty(0,T;L^2(\Omega \times\R^d))$ with $\Delta u_{\eps} \in L^2(\Omega \times \Pi_T)$, and there exists a constant $C>0$, independent of $\eps$, such that
\begin{align}
\sup_{0\le t\le T} \E\Big[\big\|u_\eps(t)\big\|_{L^2(\R^d)}^2\Big]  + \eps \int_0^T \E\Big[\big\|\grad u_\eps(s)\big\|_{L^2(\R^d)}^2\Big]\,ds + \int_0^T \E\Big[\big\| u_\eps(s)\|_{H^{\lambda}(\R^d)}^2\Big]\,ds \le C.\label{bounds:a-priori-viscous-solution}
\end{align} 
\end{thm}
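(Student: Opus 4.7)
Following the implicit time-discretization scheme of Bauzet--Vallet--Wittbold \cite{BaVaWit_2014}, the plan is to partition $[0,T]$ into $N$ subintervals of length $\Delta t = T/N$, set $t_n = n\Delta t$, and at each step solve for an $\mathcal{F}_{t_{n+1}}$-measurable $u^{n+1} \in H^1(\R^d)$ satisfying the implicit elliptic problem
\begin{align*}
u^{n+1} - u^n - \eps \Delta t \,\Delta u^{n+1} + \Delta t\, \fr[u^{n+1}] - \Delta t \,\Div f(u^{n+1}) = \sigma(u^n)\Delta_n W + \int_E \eta(u^n;z)\,\widetilde{N}((t_n,t_{n+1}],dz),
\end{align*}
where $\Delta_n W = W(t_{n+1}) - W(t_n)$. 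Existence at each step follows from a standard monotonicity/Minty argument on the operator $v\mapsto v - \eps\Delta t\,\Delta v + \Delta t\,\fr[v] - \Delta t\,\Div f(v)$ on $H^1(\R^d)$: the linear part is coercive by the nonnegative quadratic form $\langle \fr[v],v\rangle = \|\mathcal{L}_{\lambda/2}[v]\|_{L^2}^2$, and the convective part yields a zero contribution after integration by parts against $v$ when $f$ is Lipschitz with $f(0)=0$. This provides a sequence $(u^n)_{0\le n\le N}$ uniquely determined by $u^0 = u_0^\eps$.

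Next I would build piecewise-constant and piecewise-affine time interpolants $\tu(t), \bu(t)$ and establish the uniform a priori bound \eqref{bounds:a-priori-viscous-solution} by testing the discrete equation against $u^{n+1}$, summing, and invoking a discrete It\^o--L\'evy argument: the stochastic increments contribute only through their variance, which is controlled via \ref{A4}--\ref{A5} by $\E[\|u^n\|_{L^2}^2] \Delta t$, leading to a discrete Gronwall inequality that yields the $L^\infty_t L^2_{\omega,x}$ bound, the $\eps$-weighted $L^2_{t,\omega} \dot H^1$ bound from the $\eps\Delta$ term, and the $L^2_{t,\omega}H^\lambda$ bound from the fractional term $\langle \fr[u^{n+1}], u^{n+1}\rangle$. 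Combined with \ref{A4}--\ref{A5}, these give uniform bounds on the martingale and jump parts in $L^2(\Omega;C([0,T];L^2(\R^d)))$, enabling a weak/weak-$\ast$ compactness argument in $N_w^2(0,T;H^1(\R^d))$ and identification of a candidate limit $u_\eps$.

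To identify the limit as a weak solution one must pass to the limit in the nonlinear terms $f(\tu), \sigma(\tu), \eta(\tu;\cdot)$. Weak convergence alone is insufficient here, so I would use a stochastic compactness argument \`a la Skorokhod (recasting the Wiener and Poisson parts on a new probability space), combined with a local strong $L^2$ compactness obtained from the $\eps$-viscosity term together with time-equicontinuity of $\tu - \int_0^t \sigma(\tu)\,dW - \int_0^t \int_E \eta(\tu;z)\,\widetilde{N}(dz,ds)$ in $H^{-1}(\R^d)$, which yields the stated regularity of the stochastic derivative. The Lipschitz bounds in \ref{A4}--\ref{A5} then permit passage to the limit in the nonlinearities. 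The $L^2_{\omega,t,x}$ estimate on $\Delta u_\eps$ follows from the parabolic bootstrap in Appendix~\ref{viscous} by testing against $-\Delta u_\eps$ once the solution is known to exist.

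For uniqueness, given two solutions $u_\eps, v_\eps$ with the same initial datum, I would apply the It\^o--L\'evy formula to $\|u_\eps - v_\eps\|_{L^2(\R^d)}^2$, exploiting the sign $\langle \fr[u_\eps-v_\eps], u_\eps-v_\eps\rangle \ge 0$ and $\langle -\Delta (u_\eps - v_\eps), u_\eps-v_\eps \rangle \ge 0$, controlling $\int (f(u_\eps)-f(v_\eps))\cdot \grad(u_\eps - v_\eps)$ by $\eps$-absorption via Young's inequality thanks to the Lipschitz flux, and dominating the It\^o and compensated Poisson corrections using \ref{A4}--\ref{A5}; a Gronwall argument then closes the estimate. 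The main obstacle in the whole scheme is not the discrete existence or the Gronwall-type bounds, but rather the identification step: passing to the limit in the nonlocal and stochastic terms simultaneously and showing that $\tu$ converges strongly enough for $\sigma(\tu)dW$ and $\int_E \eta(\tu;z)\widetilde N(dz,dt)$ to converge in the sense of distributions, which is where the Skorokhod representation and the $\eps$-regularization combine in an essential way.
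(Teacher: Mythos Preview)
Your overall strategy matches the paper's: semi-implicit time discretization, discrete existence at each step, energy estimates via testing with $u^{n+1}$ and discrete Gronwall to obtain \eqref{bounds:a-priori-viscous-solution}, interpolants, weak compactness, and uniqueness by It\^o--L\'evy plus Gronwall. Where you diverge is in the identification of the nonlinear limits $f_u,\sigma_u,\eta_u$. You propose a Skorokhod representation combined with Aubin--Lions-type spatial compactness and time-equicontinuity of the ``martingale-subtracted'' process. The paper does \emph{not} change probability space; instead it runs a monotonicity/energy trick directly on the original space: it passes to the weak limit to obtain an equation with unidentified limits $f_u,\sigma_u,\eta_u$, applies the It\^o--L\'evy formula to $e^{-ct}\|u(t)\|_{L^2}^2$ for that limit equation, and compares with the $\limsup$ of the discrete energy inequality. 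The comparison forces
\[
\limsup_{\Delta t\to 0}\int_0^T e^{-ct}\E\|u^{\Delta t}(t)\|_{L^2}^2\,dt + \int_0^T\!\!\int_0^t e^{-cs}\E\|\sigma_u-\sigma(u)\|^2\,ds\,dt + \int_0^T\!\!\int_0^t e^{-cs}\E\int_E|\eta_u-\eta(u)|^2\,m(dz)\,ds\,dt \le \int_0^T e^{-ct}\E\|u(t)\|_{L^2}^2\,dt,
\]
which simultaneously yields $\sigma_u=\sigma(u)$, $\eta_u=\eta(u;\cdot)$, and the norm-convergence upgrade $u^{\Delta t}\to u$ strongly in $L^2(\Omega\times\Pi_T)$, whence $f_u=f(u)$. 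This avoids both the change of probability space and any subsequent Yamada--Watanabe-type step to return to the original filtration; it is also where the paper actually uses the additional estimate obtained by testing the discrete scheme with $u^{n+1}-u^n-\Delta_nW\,\sigma(u^n)-\int_E\eta(u^n;z)\widetilde N$, which gives the needed control on $\partial_t(\tu-\bu)$ in $L^2(\Omega\times(0,T);L^2(\R^d))$. Your Skorokhod route is workable but heavier, and you should be aware that without the strong-convergence energy argument you would still need pathwise uniqueness (which you do prove) to transport the martingale solution back to the given filtered space. Two minor points: (i) discrete-step existence in the paper is Lax--Milgram plus a contraction for the convective term under $\Delta t<2\eps/\|f'\|_\infty^2$, not Minty; (ii) the $\Delta u_\eps\in L^2$ regularity is obtained a posteriori by a Fourier argument on $-\eps\Delta u_\eps+\fr u_\eps\in L^2$, rather than by testing against $-\Delta u_\eps$.
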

 
With the above results at hand, we are now in a position to state the main results of this paper. 
\begin{thm}[Existence and Uniqueness]
\label{uniqueness_new}
Let the assumptions \ref{A1}, \ref{A3}, \ref{A31}, \ref{A4},and \ref{A5} are true. Then there exists a stochastic entropy solution for the Cauchy problem \eqref{eq:stoc_con_brown} in the sense of Definition~\ref{defi:stochentropsol}. Moreover, let $u$ and $v$ be two stochastic entropy solutions of \eqref{eq:stoc_con_brown} with same initial condition $u_0 = v_0$. Then almost surely $u(t) =v(t)$, for almost every $t\ge 0$. Furthermore, $u \in L^2(\Omega\times(0,T),H^\lambda(\R^d))$, and assuming that $(u_0 -v_0) \in L^1(\R^d)$, we have for almost every $t$ in $(0,T)$
\begin{align*}
\E \Big[\int_{\D}  |u(t,x) -v(t,x)| \,dx\Big] \leq& \int_\D |u_0-v_0|\,dx.
\end{align*}
\end{thm}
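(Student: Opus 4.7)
My strategy is to obtain existence by passing to the limit in the viscous approximations furnished by Theorem~\ref{prop:vanishing viscosity-solution}, and to derive uniqueness together with the $L^1$-contraction by a doubling-of-variables argument in the spirit of Kru\v{z}kov, adapted to the stochastic and non-local setting.

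\textbf{Existence.} Starting from the viscous solutions $u_\eps$ of \eqref{eq:viscous-Brown}, the uniform bound \eqref{bounds:a-priori-viscous-solution} furnishes boundedness in $L^\infty(0,T;L^2(\Omega\times\R^d)) \cap L^2(\Omega\times(0,T);H^\lambda(\R^d))$ and only $\ep\grad u_\eps$ is controlled. Since strong compactness is not available, I would extract a subsequence that generates a Young measure $\nu^\omega_{t,x}$ on $\R$, and set $u(t,x)=\int_\R \xi\,d\nu_{t,x}(\xi)$. Passing to the limit $\eps\to 0$ in the approximate entropy inequality \eqref{imp_02} in the Young-measure sense yields a measure-valued entropy formulation. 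The reduction step---showing that $\nu_{t,x}=\delta_{u(t,x)}$ and hence that $u$ is a bona fide entropy solution in the sense of Definition~\ref{defi:stochentropsol}---is then obtained by running the doubling-of-variables argument below between the Young-measure solution and itself (or against a viscous solution) and using assumptions \ref{A4}, \ref{A5}.

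\textbf{Uniqueness and $L^1$-estimate.} For two entropy solutions $u,v$ with (possibly distinct) initial data, introduce a smooth convex approximation $\beta_\xi$ of $|\cdot|$ and a product test function $\psi(t,x)\rho_{\eps_0}(t-s)\rho_{\eps_0}(x-y)$. Writing the entropy inequality \eqref{inq:entropy-solun} for $u(t,x)$ with entropy $\beta_\xi(\,\cdot\, - v(s,y))$ and symmetrically for $v(s,y)$ with $\beta_\xi(\,\cdot\, - u(t,x))$, then adding and taking expectation, the martingale terms driven by $dW$ and $\widetilde N$ vanish. The deterministic part reproduces the Kru\v{z}kov estimate $\int|u-v|\partial_t\psi - \sgn(u-v)(f(u)-f(v))\cdot\grad\psi$ after passing $\eps_0\to 0$ and $\xi\to 0$. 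The It\^{o} correction combines into $\frac{1}{2}\beta_\xi''(u-v)\sum_k(g_k(u)-g_k(v))^2$, which by \ref{A4} is bounded by $K|u-v|^2$ times a function of $\xi$ that vanishes as $\xi\to 0$. Taylor-expanding the Poisson correction terms from the two inequalities and regrouping yields contributions bounded by
\[
\tfrac12\int_E\bigl|\eta(u;z)-\eta(v;z)\bigr|^2\,\beta_\xi''(\tau)\,m(dz),
\]
which, by \ref{A5}, is controlled by $(\lambda^*)^2 \|h\|^2_{L^2(E,m)}|u-v|^2$ times a vanishing factor in $\xi$; the condition $\lambda^*<1$ is what ensures this term can later be absorbed or handled via Gr\"onwall after $\xi\to 0$. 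Choosing $\psi(t,x)$ to localize in time and using a standard test-function argument together with Gr\"onwall gives $\E\int_{\R^d}|u(t)-v(t)|\,dx \le \E\int_{\R^d}|u_0-v_0|\,dx$, from which both uniqueness with identical data and the claimed $L^1$ estimate follow. Additionally, the $H^\lambda$ regularity of $u$ is inherited in the limit from the bound \eqref{bounds:a-priori-viscous-solution}.

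\textbf{Main obstacle.} The principal difficulty lies in handling the non-local operator inside the doubling-of-variables argument, because the pairing $\langle\mathcal{L}_\lambda u,\beta'(u-v)\psi\rangle$ mixes $x$-translates through the jump kernel. I would use the splitting $\mathcal{L}_\lambda=\mathcal{L}_{\lambda,r}+\mathcal{L}_\lambda^r$ from Definition~\ref{defi:stochentropsol} and combine the two contributions from the doubled inequalities. For the singular part $\mathcal{L}_{\lambda,r}$ the convex inequality $\beta_\xi(a)-\beta_\xi(b)\le(a-b)\beta_\xi'(a)$ (already used in deriving \eqref{imp_02}) together with a change of variables $z\mapsto -z$ gives the correct sign for the cross terms; for the tail $\mathcal{L}_\lambda^r$, one expresses the combined contribution as a difference that vanishes after $\eps_0\to 0$. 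The parameters must be sent to their limits in the order: first the doubling parameter $\eps_0\to 0$, then the splitting parameter $r\to 0$, and finally the entropy-smoothing parameter $\xi\to 0$, since only this ordering allows the non-local and Lévy contributions to combine into controllable quantities before the Kru\v{z}kov entropy is recovered.
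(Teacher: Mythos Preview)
Your overall architecture---Young measures for existence, Kru\v{z}kov doubling for uniqueness and contraction---matches the paper's, but there is a genuine gap in the treatment of the stochastic terms.

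You propose to write the entropy inequality for $u$ with entropy $\beta_\xi(\cdot - v(s,y))$ and symmetrically, and then claim that ``the martingale terms driven by $dW$ and $\widetilde N$ vanish'' after taking expectation. This is only half true. With the time mollifier $\rho_{\delta_0}(t-s)$ supported in $\{t\le s\}$, the stochastic integral coming from the $v$-inequality (in $d\beta_k(s)$) has an $\mathcal{F}_s$-adapted integrand, since $u(t,\cdot)$ is $\mathcal{F}_t\subset\mathcal{F}_s$-measurable, so its expectation is indeed zero. But the stochastic integral from the $u$-inequality (in $d\beta_k(t)$) is multiplied by a factor depending on $v(s,y)$ with $s\ge t$, which is \emph{not} $\mathcal{F}_t$-measurable; this term does not vanish. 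It is precisely this surviving term that, when combined with the two It\^o corrections $\tfrac12\mathbb{G}^2(u)\beta_\xi''$ and $\tfrac12\mathbb{G}^2(v)\beta_\xi''$, has to supply the cross term $-\beta_\xi''(u-v)\sum_k g_k(u)g_k(v)$ needed to assemble $\tfrac12\beta_\xi''(u-v)\sum_k(g_k(u)-g_k(v))^2$. Extracting that cross term requires expanding $v(s,y)-v(t,y)$ via It\^o's formula, which is only available if $v$ satisfies an \emph{equation}, not merely an entropy inequality. The same obstruction arises for the L\'evy terms. This is why the paper never compares two entropy solutions directly: one side is always a viscous approximation $v_\eps$ (or a Young-measure limit thereof). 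Uniqueness of the measure-valued limit is proved first; strong convergence of $u_\eps$ then follows, which in turn yields existence of an entropy solution; and uniqueness of entropy solutions is obtained only afterwards, by comparing an arbitrary entropy solution against this viscous limit.

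A secondary point: the combined non-local contribution does not ``vanish after $\eps_0\to 0$'' as you suggest for the tail $\mathcal{L}_\lambda^r$. After the convexity inequality and the change of variables, the fractional terms survive in the Kato inequality as $-\int|u-v|\,\mathcal{L}_\lambda[\psi]\,dx\,dt$, and this must be neutralized by the choice of test function. The paper takes $\psi(t,x)=\theta(t)\,(\psi_R\star\mathcal{K}_\delta)(x,t)$ with $\mathcal{K}$ the kernel of $\mathcal{L}_\lambda$, so that $\partial_t\psi-\mathcal{L}_\lambda[\psi]=\theta'(t)\,\psi_R\star\mathcal{K}_\delta$ and the non-local term is absorbed into the time-derivative term before sending $R\to\infty$. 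Your proposed order of limits (doubling parameter, then $r$, then $\xi$) also differs from the paper's ($\delta_0,\,l,\,\eps,\,\theta,\,\xi,\,r,\,\delta$); in particular the viscous parameters and the auxiliary $k$-mollification parameter $l$ are essential intermediaries that your outline omits.
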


\begin{thm}[Continuous Dependence Estimate]
\label{continuous-dependence}
 Let the assumptions \ref{A1}-\ref{A5} hold for two sets of given data $(u_0, f, \sigma, \eta, \lambda)$ and 
 $(v_0,g, \widetilde{\sigma}, \widetilde{\eta}, \kappa)$. 
 Let $u(t,x)$ be any BV entropy solution of \eqref{eq:stoc_con_brown} with initial data $u_0(x)$ and $v(s,y)$ be another BV entropy solution with initial
 data $v_0(x)$ and satisfies 
\begin{equation}
\label{eq:stoc_con_brown-1}
\begin{aligned}
dv(s,y)- \mathrm{div} g(v(s,y)) \,ds + \mathcal{L}_{\kappa}[v(s, \cdot)](y)\,ds =
\widetilde{\sigma}(v(s,y))\,dW(s) + \int_{E} \widetilde{\eta} (v(s,y); z)\,\widetilde{N}(dz,ds)
\end{aligned}
\end{equation}
Moreover, define 
\begin{align*}
 \mathcal{E}_k(\sigma, \widetilde{\sigma}):=& \sup_{\xi \neq0} \frac{|g_k(\xi)-\widetilde{g}_k(\xi)|}{|\xi|}; \quad  \mathcal{E}(\sigma, \widetilde{\sigma})^2:= \underset{k\ge 1} \sum 
 \mathcal{E}_k(\sigma, \widetilde{\sigma})^2, \\
 \mathcal{D}(\eta,\widetilde{\eta}):=& \displaystyle{ \sup_{u \neq0} \int_{E} \frac{\big| \eta(u;z)-\widetilde{\eta}(u;z)\big|^2
}{|u|^2}\, m(dz)},
\end{align*}
and, in addition, assume that $f^{\prime\prime}\in L^\infty$.
Then, there exists a constant $C_T$, only depending on  $T$, $ |u_0|_{BV(\R^d)}$, $|v_0|_{BV(\R^d)}$, $\|f^{\prime\prime}\|_{\infty}$, 
$\|f^\prime\|_{\infty}$, and $\|\varphi\|_{1}$ such that for a.e. $0<t<T<+\infty$, 
  \begin{align*}
     & \E \Big[\int_{\R^d}\big| u(t,x)-v(t,x)\big|\varphi(x)\,dx \Big] \notag \\
     & \hspace{0.5cm} \le C_Te^{Ct} \Bigg\{ \E\Big[\int_{\R^d}\big| u_0(x) -v_0(x)\big| \,dx\Big] 
     + \max \bigg\{ \mathcal{E}(\sigma, \widetilde{\sigma}),\sqrt{\mathcal{D}(\eta,\widetilde{\eta})}\bigg\} \sqrt{t} + t\,||f^\prime-g^\prime||_{L^{\infty}(\D)}  \\
&\hspace{1.5cm} + \sqrt{\int_{0<|z|\le r} |z|^2 \,d |\mu_{\lambda} - \mu_{\kappa}|(z)} 
+ \int_{|z|> r} d |\mu_{\lambda} - \mu_{\kappa}|(z)\Bigg\},
\end{align*}
where $\varphi \in L^1(\R^d)$ such that $ 0\le \varphi(x)\le 1$, for all $x\in \R^d$, and $d\mu_{\lambda}(z)= \frac{dz}{|z|^{d + 2 \lambda}}$.
\end{thm}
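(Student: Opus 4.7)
The plan is to adapt the Kruzkov doubling-of-variables technique, in the spirit of \cite{BisKoleyMaj,Chen-karlsen_2012,karl-resibro-2000}, to the stochastic fractional setting. First I introduce a smooth convex regularisation $\beta_\xi$ of the absolute value (with $\beta_\xi^{\prime\prime}$ supported in a set of size $\mathcal{O}(\xi)$ near the origin) together with its associated entropy flux pairs, one for the flux $f$ and one for $g$. I then write the entropy inequality \eqref{inq:entropy-solun} for $u(t,x)$ using the entropy $r\mapsto \beta_\xi(r-v(s,y))$, tested against $\rho_{\epsilon_0}(t-s)\,\rho_{\epsilon}(x-y)\,\varphi(x)$, and symmetrically the entropy inequality for $v(s,y)$ with the entropy $r\mapsto \beta_\xi(r-u(t,x))$ against the same kernel. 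After integrating in the additional variables, taking expectation, adding the two inequalities, and sending $\epsilon_0,\epsilon\to 0$ using the BV bounds established in Appendix~\ref{sec:apriori+existence}, a Kato-type inequality emerges with controllable residual commutators.

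The deterministic contributions split into three blocks. The temporal parts collapse to a boundary-in-time difference of $\mathbb{E}\int |u-v|\varphi$. The flux contribution organises as $\int (\zeta_f-\zeta_g)\cdot\nabla\psi$ and, using $\zeta_f^{\prime}-\zeta_g^{\prime}=\beta_\xi^{\prime}(f^{\prime}-g^{\prime})$ together with the spatial BV bound on $u$, yields the term $t\,\|f^{\prime}-g^{\prime}\|_{L^\infty}$. For the nonlocal terms I use the splitting $\mathcal{L}_\lambda=\mathcal{L}_{\lambda,r}+\mathcal{L}_\lambda^{r}$ present in Definition~\ref{defi:stochentropsol}: the far piece $\mathcal{L}_\lambda^{r}$ acts on the (BV) solutions and is controlled in $L^1$ with the measure $\mu_\lambda$ factored out, producing $\int_{|z|>r} d|\mu_\lambda-\mu_\kappa|$; the near piece $\mathcal{L}_{\lambda,r}$ acts on $\psi$, and a second-order Taylor expansion against the Hessian bound on $\psi$ extracts the weight $|z|^2$ and, combined with Cauchy--Schwarz, gives the square root term $\sqrt{\int_{|z|\le r}|z|^2 d|\mu_\lambda-\mu_\kappa|}$. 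Terms where the \emph{same} measure appears but at two different spatial points produce commutators of size $\mathcal{O}(\epsilon)$ that vanish in the doubling limit thanks again to the BV estimate.

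For the stochastic terms I follow the martingale / quadratic-variation manipulations of \cite{BisMaj,BaVaWit_JFA}. Combining the cross Brownian integral with the Itô correction yields a bound of the form $\mathcal{E}(\sigma,\widetilde{\sigma})^2\int_0^t \mathbb{E}\|u-v\|_{L^1}$, while the analogous compensated Poisson expansion from Definition~\ref{defi:stochentropsol}, together with the Taylor remainder in $\lambda$, is bounded by $\mathcal{D}(\eta,\widetilde{\eta})\int_0^t\mathbb{E}\|u-v\|_{L^1}$. Applying Cauchy--Schwarz in time and absorbing $\|u-v\|_{L^1}$ against $|u_0|_{BV}+|v_0|_{BV}$ converts these into the $\sqrt{t}\,\max\{\mathcal{E}(\sigma,\widetilde{\sigma}),\sqrt{\mathcal{D}(\eta,\widetilde{\eta})}\}$ contribution. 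Finally, letting $\xi\to 0$ to recover $|u-v|$ and closing with Gronwall's inequality delivers the stated estimate.

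The principal obstacle is the handling of the nonlocal operator with two \emph{distinct} Lévy orders $\lambda$ and $\kappa$: the corresponding kernels do not share a common singular structure and cannot be cancelled cleanly against one another. The radius-$r$ decomposition is therefore essential, and the BV bound on both $u$ and $v$ provided by Appendix~\ref{sec:apriori+existence} is what makes the near-zero Taylor estimate and the far-zone $L^1$ estimate simultaneously available; the same BV bound is also what allows us to absorb the flux-difference commutator coming from $f-g$.
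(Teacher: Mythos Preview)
Your outline has the right overall architecture (doubling, $\beta_\xi$-entropies, $r$-splitting of the nonlocal operator, Gronwall), but there are two genuine gaps that would prevent the argument from closing as written.

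\textbf{1. You cannot double the time variable with two entropy solutions directly.} The entropy inequality \eqref{inq:entropy-solun} is written for a \emph{deterministic} constant $k$; plugging in the random quantity $v(s,y)$ creates stochastic integrals whose integrands are anticipative (they depend on $v$ at a time $s$ which may exceed $t$), so they are not It\^o--L\'evy integrals. The paper circumvents this by comparing the entropy solution $u$ not with $v$ but with the \emph{viscous approximation} $v_\eps$ of \eqref{eq:stoc_con_brown-1}: one writes the entropy inequality for $u$ (with the extra $k$-mollifier $\varsigma_l$), the It\^o--L\'evy formula for $v_\eps$ (which is legitimate because $v_\eps$ is a strong $H^2$-solution), takes expectation, and only then integrates in the doubled variables. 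The limits are taken in the order $\delta_0\to 0$, $l\to 0$, $\eps\to 0$; the passage $\eps\to 0$ uses the strong $L^p_{\mathrm{loc}}$ convergence of $v_\eps$ to $v$ coming from Theorem~\ref{thm:existence-bv}. Your scheme of writing two symmetric entropy inequalities and adding them is the deterministic picture and does not survive in this stochastic setting.

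\textbf{2. The $\sqrt{t}$ and the square-root nonlocal term do not come from Cauchy--Schwarz; they come from optimising the mollification parameters $\xi$ and $\delta$, which you have already sent to $0$.} After combining the Brownian and Poisson corrections one obtains (see the lemma following Lemma~\ref{lem:stochastic-terms_01}) a bound of the form
\[
C\,\E\!\int\!\beta_\xi(u-v_\eps)\,\psi\varrho_\delta
\;+\;\frac{C}{\xi}\Big(\mathcal{E}(\sigma,\widetilde\sigma)^2+\mathcal{D}(\eta,\widetilde\eta)\Big)\int_0^T\|\psi(t,\cdot)\|_\infty\,dt,
\]
i.e.\ a Gronwall-absorbable piece \emph{plus} a residual with a $1/\xi$ singularity. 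Sending $\xi\to 0$ would blow this up; instead one keeps $\xi>0$ and chooses $\xi=\max\{\mathcal{E},\sqrt{\mathcal{D}}\}\sqrt{t}$ at the very end, which produces the $\sqrt{t}$ factor. Likewise the nonlocal analysis yields, after the Hahn-type splitting $\mu_\lambda-\mu_\kappa=(\mu_\lambda-\mu_\kappa)^+-(\mu_\lambda-\mu_\kappa)^-$ (needed so that the convexity inequality $\beta(b)-\beta(a)\ge\beta'(a)(b-a)$ can be applied with the correct sign), a term $\frac{C}{\delta}\int_{|z|\le r_1}|z|^2\,d|\mu_\lambda-\mu_\kappa|$. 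Balancing this against the $C\delta\,|u_0|_{BV}$ cost of the spatial doubling via $\min_{\delta>0}(\delta a+b/\delta)=2\sqrt{ab}$ is what gives $\sqrt{\int_{|z|\le r}|z|^2\,d|\mu_\lambda-\mu_\kappa|}$. Since you have already passed $\epsilon\to 0$ before reaching this stage, no such optimisation is available in your scheme.
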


%%%%%%%%%%%%%%%%%%%%%%%%%%%%%%%%%%%%%%%%%%%%%%%%%%%%%%%%%%%%%%%%%%%%%%%%%%%
As a by product of the above theorem, we have the following corollary:
\begin{cor}[Rate of Convergence]
\label{rate-of-convergence}
Let the assumptions \ref{A1}-\ref{A5} hold and $f^{\prime\prime}\in L^\infty$. Let $u(t,x)$ be any BV entropy solution of \eqref{eq:stoc_con_brown}
with $\E\Big[|u(t,\cdot)|_{BV(\R^d)}\Big] \le \E\Big[|u_0(\cdot)|_{BV(\R^d)}\Big]$ and 
  $u_\eps(s,y)$ be a weak solution to the problem \eqref{eq:viscous-Brown}. 
  Then there exists a constant $C$ depending only on 
  $|u_0|_{BV(\R^d)}, \|f^{\prime\prime}\|_{\infty}$, and $\|f^\prime\|_{\infty}$ such that for a.e. $t>0$, 
\begin{align*}
 \E\Big[\|u_\eps(t,\cdot)-u(t,\cdot)\|_{L^1(\R^d)}\Big] \approx \mathcal{O}(\eps^\frac{1}{2}),
\end{align*}
provided the initial error $\|u_0^\eps-u_0\|_{L^1(\R^d)} \approx \mathcal{O}(\eps^\frac{1}{2})$.
\end{cor}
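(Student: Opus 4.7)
The plan is to mimic the proof of the continuous dependence estimate in Theorem \ref{continuous-dependence} but applied to the entropy solution $u$ (in variables $(t,x)$) and the viscous approximation $u_\eps$ (in variables $(s,y)$). Since both objects share the same flux $f$, the same noise coefficients $(\sigma,\eta)$, and the same fractional index $\lambda$, every ``difference--of--data'' contribution on the right-hand side of Theorem \ref{continuous-dependence} vanishes identically. The only thing that must be tracked is a new term coming from the diffusion $-\eps\Delta u_\eps$ in \eqref{eq:viscous-Brown}, which has no analogue on the $u$-side.

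Concretely, I would take a Kruzkov approximation $\beta_\xi$ of $|\cdot|$, insert it into the entropy inequality for $u(t,x)$ and into the regularized inequality \eqref{imp_02} for $u_\eps(s,y)$, and test against $\psi(t,x,s,y) = \rho_\delta(x-y)\rho_{\delta_0}(t-s)\varphi(y)\phi(t)$, where $\rho_\delta$ and $\rho_{\delta_0}$ are standard space- and time-mollifiers and $\varphi,\phi$ are suitable cut--offs. Adding the two inequalities and taking expectations removes the martingale parts; the It\^o and L\'evy quadratic--variation corrections, as well as the nonlocal terms involving $\mathcal{L}_{\lambda,r}$ and $\mathcal{L}^{r}_\lambda$, cancel up to the usual doubling--of--variable commutator errors, exactly as in the proof of Theorem \ref{continuous-dependence}, precisely because the coefficients on both sides now agree.

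What is genuinely new is the extra viscous contribution
\begin{align*}
R_\eps \;=\; \eps\,\E\!\Big[\int_{\Pi_T}\!\int_{\Pi_T} \beta_\xi'\!\big(u_\eps(s,y)-u(t,x)\big)\,\Delta_y u_\eps(s,y)\,\psi\,dx\,dt\,dy\,ds\Big].
\end{align*}
Integration by parts in $y$, justified by $\Delta u_\eps\in L^2(\Omega\times\Pi_T)$ from Theorem \ref{prop:vanishing viscosity-solution}, produces a term $-\eps\,\E[\int\int \beta_\xi''(u_\eps-u)|\nabla_y u_\eps|^2\psi]$ of favourable sign (which may be discarded), and a remainder $-\eps\,\E[\int\int \beta_\xi'(u_\eps-u)\,\nabla_y u_\eps\cdot\nabla_y\psi]$. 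Letting $\xi\to 0$ so that $\beta_\xi'\to\mathrm{sign}$, and using $\|\nabla_y\rho_\delta\|_{L^1(\R^d)}\lesssim\delta^{-1}$ together with the uniform spatial BV bound $\E\!\int_0^T|u_\eps(s,\cdot)|_{BV(\R^d)}\,ds \lesssim T|u_0|_{BV(\R^d)}$ supplied by Appendix \ref{sec:apriori+existence}, this remainder is bounded by $C\,\eps\,t\,|u_0|_{BV(\R^d)}\,\delta^{-1}$.

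Combining this with the standard Kruzkov mollification residuals, which are bounded by $C\,\delta\,t\,|u_0|_{BV(\R^d)}$ once the time-mollification parameter $\delta_0$ is sent to zero using pathwise right-continuity of $u$ and $u_\eps$ in $L^1_{\mathrm{loc}}$, yields
\begin{align*}
\E\big[\|u_\eps(t,\cdot)-u(t,\cdot)\|_{L^1(\R^d)}\big] \;\le\; \|u_0^\eps-u_0\|_{L^1(\R^d)} \;+\; C\,t\,|u_0|_{BV(\R^d)}\,\Big(\delta + \tfrac{\eps}{\delta}\Big).
\end{align*}
Optimizing with $\delta=\sqrt{\eps}$ converts the last factor into $2\sqrt\eps$, which combined with the hypothesis $\|u_0^\eps-u_0\|_{L^1}=\mathcal{O}(\sqrt\eps)$ produces the claimed $\mathcal{O}(\sqrt\eps)$ rate. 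The main technical obstacle is maintaining the correct order of limits: the Kruzkov regularization $\xi\to 0$ must be taken before the spatial mollification $\delta\to 0$, so that the viscous residual can be controlled by $|\nabla u_\eps|$ in $L^1$ (and thus absorbed by the BV estimate) rather than in $L^2$, for which only the energy bound of order $\eps^{-1/2}$ is available and would destroy the $\sqrt\eps$ rate.
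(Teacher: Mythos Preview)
Your approach is essentially the paper's own: doubling variables between the entropy solution $u$ and the viscous approximation $u_\eps$ with identical data $(f,\sigma,\eta,\lambda)$, so that all difference-of-coefficient terms from Theorem~\ref{continuous-dependence} vanish and only the viscous remainder $\mathcal{O}(\eps/\delta)$ survives, after which $\delta=\sqrt{\eps}$ gives the rate. One small correction: the It\^o/L\'evy correction terms do not quite ``cancel'' when the coefficients agree---they leave a residual of the form $C\,\E\!\int\beta_\xi(u-u_\eps)\,\psi$ (cf.\ the bounds on $\mathcal{A}_2,\mathcal{A}_3$ in Section~\ref{cont-depen-estimate}), so a Gronwall step is needed and the final inequality carries a factor $e^{Ct}$ as in the paper's display~\eqref{esti:final_001}; this does not affect the $\mathcal{O}(\sqrt{\eps})$ conclusion.
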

\noindent Finally, inspired by the work of \cite{Alibaud} on the deterministic counterpart of 
\begin{align}
\label{eq:stoc_original}
\begin{cases} 
dw(t,x)- \mbox{div} f(w(t,x)) \,dt =
\sigma(w(t,x))\,dW(t) + \int_{E} \eta(w(t,x); z)\,\widetilde{N}(dz,dt),&(x,t) \in \Pi_T, \vspace{0.1cm}\\
w(0,x) = w_0(x), &  x\in \R^d,
\end{cases}
\end{align}
we consider the following non-local regularization of \eqref{eq:stoc_original}
\begin{align}
\label{eq:stoc_regularization}
dw_{\eps}(t,x)- \mbox{div} f(w_{\eps}(t,x)) \,dt +\eps \mathcal{L}_{\lambda}[w_{\eps}(t, \cdot)](x)\,dt=
\sigma(w_{\eps}(t,x))\,dW(t) + \int_{E} \eta(w_{\eps}(t,x); z)\,\widetilde{N}(dz,dt)
\end{align}
with initial data $w_{\eps}(0,x) = w^{\eps}_0(x)$. Note that existence of weak solutions to \eqref{eq:stoc_regularization} are guaranteed by virtue of the previous Theorem~\ref{uniqueness_new}. However, we are interested to prove the following result:
\begin{thm}[Vanishing Non-Local Regularization]
\label{vanishing-non-local}
Let the assumptions \ref{A1}-\ref{A5} hold. Let $w(t,x)$ be any BV entropy solution of \eqref{eq:stoc_original} with $\E\Big[|w(t,\cdot)|_{BV(\R^d)}\Big] \le \E\Big[|w_0(\cdot)|_{BV(\R^d)}\Big]$ and 
$w_\eps(s,y)$ be a weak solution to the problem \eqref{eq:stoc_regularization}. Then there exists a constant $C$ such that for a.e. $t>0$
\begin{align*}
\E\Big[\|w_\eps(t,\cdot)-w(t,\cdot)\|_{L^1(\R^d)}\Big] =Ce^{Ct}
\begin{cases}
\mathcal{O}(\eps), & \quad \text{when}\,\, 0 < \lambda <1/2, \\
\mathcal{O}(\eps \,|\log(\eps)|), & \quad \text{when}\,\, \lambda =1/2, \\
\mathcal{O}(\eps^{1/2\lambda}), & \quad \text{when}\,\, 1/2 < \lambda <1,
\end{cases}
\end{align*}
provided the initial error is also same.
\end{thm}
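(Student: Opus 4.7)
\medskip

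\textbf{Plan of proof.} The plan is to compare $w$ and $w_\eps$ via a direct doubling-of-variables argument (in the spirit of Theorem~\ref{continuous-dependence}), exploiting the fact that the fluxes, noise coefficients $\sigma,\eta$, and driving processes $W,\widetilde N$ are identical for the two problems \eqref{eq:stoc_original} and \eqref{eq:stoc_regularization}, so the only effective discrepancy lives in the non-local term $\eps\mathcal{L}_{\lambda}[w_\eps]$. Formally, I would apply the entropy inequality \eqref{inq:entropy-solun} for $w$ (with vanishing non-local contribution) and for $w_\eps$, doubled in $(t,x)$ and $(s,y)$ with a product test function $\rho_n(x-y)\rho_n(t-s)\phi(s,y)$ and Kruzhkov entropy $\beta_\xi\!\approx\!|\cdot|$. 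Summing, passing $\xi,n\!\to\!0$ and taking expectations, the Brownian and Poisson cross terms cancel (same $\sigma$, same $\eta$, same processes), while the Itô/It\^o--L\'evy corrections combine into a Gronwall-type term as in the proof of Theorem~\ref{continuous-dependence}. The net inequality is, schematically,
\begin{equation*}
\E\!\int_{\R^d}\!|w(t,x)-w_\eps(t,x)|\,dx
\;\le\; \E\|w_0-w_0^\eps\|_{L^1}
\;+\;\eps\,\E\!\int_0^t\!\Theta(s)\,ds,
\end{equation*}
where $\Theta(s)$ is the non-local contribution, which after using the splitting $\mathcal{L}_{\lambda}=\mathcal{L}_{\lambda,r}+\mathcal{L}_\lambda^r$ (moving $\mathcal{L}_{\lambda,r}$ onto the test function exactly as in Definition~\ref{defi:stochentropsol}) is bounded by $C\!\left(\|\mathcal{L}_\lambda^r w_\eps\|_{L^1}+TV(w_\eps)\,I_r\right)$ with $I_r=\int_{|z|\le r}|z|^{1-d-2\lambda}\,dz$.

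\medskip

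Next, I would estimate the two pieces. For the tail, $\|\mathcal{L}_\lambda^r w_\eps\|_{L^1}\le C_{d,\lambda}\,\|w_\eps\|_{L^1}\,r^{-2\lambda}$, which is controlled since the assumption $\E|w_\eps(t)|_{BV}\le \E|w_0|_{BV}$ provides a uniform $L^1$ bound. For the singular inner part I would symmetrize and use the BV estimate $\|2w_\eps-w_\eps(\cdot+z)-w_\eps(\cdot-z)\|_{L^1}\le 2|z|\,TV(w_\eps)$, so $\|\mathcal{L}_{\lambda,r} w_\eps\|_{L^1}\le C\,TV(w_\eps)\,I_r$. The three regimes of the theorem correspond to the qualitative behaviour of $I_r$:
\begin{itemize}
\item[(i)] If $\lambda<1/2$, then $I_r=c\, r^{1-2\lambda}<\infty$; picking $r\asymp 1$ balances the two contributions and gives the bound $\eps\, C(TV(w_\eps)+\|w_\eps\|_{L^1})$, i.e.\ $\mathcal{O}(\eps)$.
\item[(ii)] If $\lambda=1/2$, then $I_r$ diverges logarithmically at the origin; I would introduce a secondary cut-off $\delta<r$ and handle $|z|\le\delta$ via the crude estimate $\|D_z w_\eps\|_{L^1}\le 2\|w_\eps\|_{L^1}$, while $\delta<|z|\le r$ is handled by BV. Balancing $\eps\delta^{-1}\|w_\eps\|_{L^1}$ against $\eps\,TV(w_\eps)\log(r/\delta)$ and $\eps\,\|w_\eps\|_{L^1}r^{-1}$ and choosing $\delta\asymp\eps$, $r\asymp 1$, yields $\mathcal{O}(\eps|\log\eps|)$.
\item[(iii)] If $\lambda>1/2$, the pure BV bound on the inner part fails; I would replace it by an $L^2$--$H^\lambda$ estimate, using the energy bound $\int_0^T\E\|w_\eps\|_{H^\lambda}^2\,ds\le C\|w_0\|_{L^2}^2/\eps$ inherited from the non-local regularization (Theorem~\ref{prop:vanishing viscosity-solution}), together with the Fourier identity $\int_{|z|\le r}\|D_z w_\eps\|_{L^2}^2|z|^{-d-2\lambda}\,dz\lesssim\|w_\eps\|_{H^\lambda}^2$. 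After Cauchy--Schwarz and optimisation in $r$, the natural balance $r^{1-2\lambda}\sqrt{\eps}\asymp\eps r^{-2\lambda}$ produces the rate $\mathcal{O}(\eps^{1/(2\lambda)})$.
\end{itemize}

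\medskip

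The main obstacle will be the regime $\lambda\ge 1/2$. Here the BV estimate cannot absorb the small-$z$ singularity of the kernel, so one must combine BV regularity at scales $|z|\sim 1$ with a finer $L^2$/$H^\lambda$ control near $z=0$ that uses precisely the gain in regularity provided by $\eps\mathcal{L}_\lambda$ itself. A related subtlety is that the noise cancellation in the doubling argument must be executed carefully: the quadratic It\^o--L\'evy corrections coming from $\sigma$ and $\eta$ do not cancel pointwise, but only after localization, symmetrization in $(x,y)$, and use of the Lipschitz bounds in \ref{A4}--\ref{A5}. Once these two points are settled, conclusion of the theorem follows from Gronwall's lemma applied to $\E\|w(t)-w_\eps(t)\|_{L^1}$, absorbing the $e^{Ct}$ factor and using that the initial error is assumed of the same order.
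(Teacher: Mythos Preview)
Your overall framework (doubling of variables, entropy inequalities for $w$ and $w_\eps$, cancellation of the stochastic cross terms since $\sigma,\eta,W,\widetilde N$ coincide) matches the paper. The gap is in how you control the non-local remainder for $\lambda\ge 1/2$.

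You propose to bound the inner piece by $\|\mathcal{L}_{\lambda,r}w_\eps\|_{L^1}\lesssim TV(w_\eps)\int_{|z|\le r}|z|^{1-d-2\lambda}\,dz$, i.e.\ a direct first-order BV gain on $w_\eps$. This works for $\lambda<1/2$ but breaks down otherwise. In case~(ii) your ``crude estimate'' for $|z|\le\delta$ leaves you with $\int_{|z|\le\delta}|z|^{-d-1}\,dz$, which is divergent at the origin; the contribution is not $\eps\delta^{-1}$ as you write, and your balance $\delta\asymp\eps$ yields an $O(1)$ term, not $O(\eps|\log\eps|)$. In case~(iii) the $H^\lambda$ route you sketch is not what the paper does, and your own balance $r^{1-2\lambda}\sqrt{\eps}\asymp\eps r^{-2\lambda}$ gives $r\asymp\sqrt{\eps}$ and error $\eps^{1-\lambda}$, not $\eps^{1/(2\lambda)}$.

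The paper's device is to \emph{keep the spatial mollifier $\varrho_\delta(x-y)$ in play} after the doubling and let $\mathcal{L}_{\lambda,r}$ fall on it rather than on $w_\eps$. Concretely, the two non-local pieces after passing $\delta_0,l,\gamma,\xi\to0$ and $R\to\infty$ are
\[
\mathcal{H}_r=-\eps\,\E\!\int |u_\eps-u|\,\mathcal{L}_{\lambda,r}[\varrho_\delta(x-\cdot)](y)\,dx\,dy\,dt,\qquad
\mathcal{H}^r=-\eps\,\E\!\int \mathcal{L}_\lambda^r[u_\eps](y)\,\varrho_\delta(x-y)\,\sgn(u_\eps-u)\,dx\,dy\,dt.
\]
For $\mathcal{H}_r$ one Taylor-expands $\varrho_\delta$ to second order (gaining $|z|^2$), integrates by parts once onto $|Du_\eps|$, and uses $\|\nabla\varrho_\delta\|_{L^1}\lesssim\delta^{-1}$ to obtain $|\mathcal{H}_r|\le C\eps\,\delta^{-1}r^{2-2\lambda}$, which is finite for every $\lambda\in(0,1)$. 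For $\mathcal{H}^r$ one introduces a second scale $\sigma>r$: on $|z|>\sigma$ the $L^1$ bound gives $C\eps\sigma^{-2\lambda}$, on $r<|z|\le\sigma$ the BV bound gives $C\eps\int_r^\sigma\tau^{-2\lambda}\,d\tau$. After Gronwall the estimate is
\[
\E\|u_\eps(t)-u(t)\|_{L^1}\le Ce^{Ct}\Big(\delta+\eps\!\int_r^\sigma\!\tau^{-2\lambda}\,d\tau+\frac{\eps}{\delta}\,r^{2-2\lambda}+\eps\,\sigma^{-2\lambda}\Big),
\]
and the three rates come from optimizing in $(\delta,r,\sigma)$: for $\lambda<1/2$ take $r\to0$, $\delta\to0$, $\sigma=1$; for $\lambda=1/2$ take $r=\delta=\eps$, $\sigma=1$; for $\lambda>1/2$ take $\delta=r=\eps^{1/(2\lambda)}$, $\sigma\to\infty$. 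No $H^\lambda$ energy estimate is needed; the whole argument stays in $L^1\cap BV$ once the mollifier absorbs the second-order singularity.
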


%%%%%%%%%%%%%%%%%%%%%%

\begin{rem}
We remark that all the results presented in Theorem~\ref{uniqueness_new}, Theorem~\ref{continuous-dependence}, Corollary~\ref{rate-of-convergence}, and Theorem~\ref{vanishing-non-local} are indeed true for \textit{all} $t>0$.

In fact we know that the solution 
\begin{align*}
u \in L^2((0,T); H^{\lambda}(\Omega \times \R^d)), \,\, \text{and}\\
\partial_t\Big(u - \int^t_0 \sigma(u) \,dW -\int^t_0 \int_E \eta( u;z) \widetilde{N}(\,dz,\,dt)\Big) \in L^2((0,T); L^2(\Omega; H^{-1}(\Rd))).
\end{align*}
Therefore we have $u \in C([0,T]; L^2(\Omega; H^{-1}(\Rd)))$. Since we also know (cf. \cite[Remark 2.4]{BaVaWit_2012}) that 
$u \in L^{\infty}((0,T); L^2(\Omega \times \R^d))$, we conclude that $u$ is weakly continuous in time with values in $L^2(\Omega \times \R^d)$. Moreover, we know (cf. Theorem~\ref{thm:existence-bv}) that for a.e. $t>0$, $\norm{u(t)}_{L^1(\Omega\times\R^d)}$ is finite. In particular, this implies that for any $t \in [0,T]$ and $h\neq 0$ such that $t+h \in [0,T]$, we have
\begin{align*}
\int_{\Omega \times \R^d} u(t) w \,dx\,d\mathbb{P}&= \lim_{h\rightarrow0} \int_t^{t+h} \int_{\Omega \times \R^d} u(s) w \,dx\,d\mathbb{P}\,\,ds, \quad \forall w \in L^2(\R^d), \\
\bigg| \int_t^{t+h} \int_{\Omega \times \R^d} u(s) w \,dx\,d\mathbb{P}\,\,ds \bigg| & \le \frac{1}{|h|} \bigg|\int_t^{t+h} \int_{\Omega \times \R^d} u(s) w \,dx\,d\mathbb{P}\,\,ds \bigg| \le C, \quad \forall w \in L^2(\R^d) \,\, \text{with}\,\, |w|\le 1.
\end{align*}
Thus for any $t>0$,
\begin{align*}
\bigg| \int_{\Omega \times \R^d} u(t) w \,dx\,d\mathbb{P} \bigg| \le C, \quad \forall w \in L^2(\R^d) \,\, \text{with}\,\, |w|\le 1,
\end{align*}
and for any $M>0$
\begin{align*}
\int_{\Omega \times B(0,M)} |u(t)| \,dx\,d\mathbb{P} = \int_{\Omega \times \R^d} u(t) \,\sgn(u(t)) {\bf 1}_{B(0,M)}\,dx\,d\mathbb{P} \le C
\end{align*}
Therefore, Fatou's lemma yields $u(t) \in L^1(\Omega \times \R^d)$ and $\norm{u(t)}_{L^1(\Omega\times\R^d)} \le C$, for all $t>0$.
\end{rem}

%%%%%%%%%%%%%%%%%%%%%%%%%%%%%%%%%%%%%%%%%%%%%%%%%%

Before concluding this section, we introduce a  special class of entropy functions, 
called convex approximation of absolute value function. To do so,  let $\beta:\R \rightarrow \R$ be a $C^\infty$ function satisfying 
\begin{align*}
\beta(0) = 0,\quad \beta(-r)= \beta(r),\quad 
\beta^\prime(-r) = -\beta^\prime(r),\quad \beta^{\prime\prime} \ge 0,
\end{align*} 
and 
\begin{align*}
\beta^\prime(r)=\begin{cases} -1\quad \text{when} ~ r\le -1,\\
\in [-1,1] \quad\text{when}~ |r|<1,\\
+1 \quad \text{when} ~ r\ge 1.
\end{cases}
\end{align*} 
For any $\xi> 0$, define $\beta_\xi:\R \rightarrow \R$ by 
$\beta_\xi(r) = \xi \beta(\frac{r}{\xi})$. 
Then
\begin{align}\label{eq:approx to abosx}
|r|-M_1\xi \le \beta_\xi(r) \le |r|\quad 
\text{and} \quad |\beta_\xi^{\prime\prime}(r)| 
\le \frac{M_2}{\xi} {\bf 1}_{|r|\le \xi},
\end{align} 
where $M_1 := \sup_{|r|\le 1}\big | |r|-\beta(r)\big |$ and 
$M_2 := \sup_{|r|\le 1}|\beta^{\prime\prime} (r)|$. 

Moreover, for $\beta=\beta_\xi$, we define 
\begin{align*}
\begin{cases}
f_k^\beta(a,b)= \int_b^a \beta_\xi ^\prime(r-b) f_k^\prime(r)\,dr, \\
f^\beta(a,b)=\big(f_1^\beta(a,b),f_2^\beta(a,b),\cdots,f_d^\beta(a,b)\big), \\
F(a,b)= \sgn(a-b)(f(a)-f(b)).
\end{cases}
\end{align*}

%%%%%%%%%%%%%%%%%%%%%%%%%%%%%%%%%%%%%%%%%%%%%%%%%%%%%%%%%%%%%%%%%%%%%%%%%%%%%%%%%%%%%%%%%%%%%%%%%%%%%%%%%%%%

\section{Proof of Theorem~\ref{uniqueness_new}: Uniqueness of Entropy Solutions}
 \label{uniqueness}
 
This section is the main part of the manuscript. To prove the uniqueness for entropy solutions, we will use the following strategy. First note that, thanks to the \textit{a priori} estimates, the compactness given by the theory of Young measures yields the existence of a limit point, a measure-valued (mild) solution (``mild'' because we haven't proved that this limit satisfies the entropy formulation). Then, by using a uniqueness method for entropy solutions, a classical argument will ensure some strong convergence for the sequence of approximation and thus: a result of existence of an entropy solution, the uniqueness of the entropy solution and the fact that any measure-valued (mild) solution is indeed the entropy solution.

\subsection{Kato's inequality}
\label{Kato}

To prove the result of uniqueness of (measure-valued (mild)) entropy solution, we follow the usual strategy of adapting a variant of classical Kruzkov's ``doubling of variable'' approach. Note that, the main difficulty lies in ``doubling'' the time variable which gives rise to stochastic integrands that are anticipative and hence can not be interpreted in the usual It\^{o}-L\'{e}vy sense. 
To get around this problem, we will have to compare a weakly converging sequence of viscous approximation to an entropy one, assuming that it exists.  
\\[0.1cm]
Usually, one proposes (as in \cite{BaVaWit_2012,BaVaWit_JFA,BisMajKarl_2014}) to compare a viscous solution to a measure-valued entropy one. Since we have not proposed such a notion of solution, we will have to, like in \cite{BaVaWit_2014,BisMajVal}, compare two weakly converging sequences of viscous approximations and propose, first, a Kato inequality for limit points in the sense of Young measures associated with these above mentioned sequences. The method of uniqueness will yield the existence of an entropy solution and some strong convergence properties, justifying thereby our initial aim: the existence of an entropy solution.

Let $\rho$ and $\varrho$ be the standard nonnegative 
mollifiers on $\R$ and $\R^d$ respectively such that 
$\supp(\rho) \subset [-1,0]$ and $\supp(\varrho) = \overline{B_1}(0)$.  
We define  $\rho_{\delta_0}(r) = \frac{1}{\delta_0}\rho(\frac{r}{\delta_0})$ 
and $\varrho_{\delta}(x) = \frac{1}{\delta^d}\varrho(\frac{x}{\delta})$, 
where $\delta$ and $\delta_0$ are two positive constants.  Given a  nonnegative test 
function $\psi\in C_c^{1,2}([0,\infty)\times \rd)$ and two 
positive constants $\delta$ and $ \delta_0 $, we define 
\begin{align}
\label{test_function}
\varphi_{\delta,\delta_0}(t,x, s,y) = \rho_{\delta_0}(t-s) 
\,\varrho_{\delta}(x-y) \,\psi(t,x).
\end{align}
Clearly $ \rho_{\delta_0}(t-s) \neq 0$ only 
if $s-\delta_0 \le t\le s$ and hence $\varphi_{\delta,\delta_0}(t,x; s,y)= 0$, 
outside  $s-\delta_0 \le t\le s$. 
 
Let us assume two possible situations: 
\\
$u(t,x,\alpha)$ and $v(s,y,\beta)$, $\alpha, \beta \in (0,1)$, are Young measure-valued limit processes solutions associated with the sequences  $u_\theta(t,x)$ and $u_\eps(s,y)$ of weak solutions of \eqref{eq:viscous-Brown}, with regular initial data $u^{\theta}_0(x)$ and $v^{\eps}_0(y)$ respectively, and estimates given in Theorem \ref{prop:vanishing viscosity-solution};
\\
or,  $u(t,x)$ is an entropy solution with initial data $u_0(x)$ in the sense of Definition \ref{defi:stochentropsol} (\textit{i.e.} $\theta=0$) and $v(s,y,\beta)$ is as above. 
\\
For convenience, whatever the situation is, $u_\theta(t,x)$ and $u(t,x,\alpha)$ are considered is the sequel, keeping in mind that $u_\theta(t,x) = u(t,x) = u(t,x,\alpha)$ for any $\alpha$ if $u$ is assumed to be an entropy solution.

Moreover, let $\varsigma$ be the standard symmetric 
nonnegative mollifier on $\R$ with support in $[-1,1]$ 
and $\varsigma_l(r)= \frac{1}{l} \varsigma(\frac{r}{l})$, 
for $l > 0$. We use the generic $\beta$ for the 
functions $\beta_{\xi}$ introduced in Section \ref{sec:tech}.
Given $k\in \R$, the function 
$\beta(\cdot-k)$ is a smooth convex function 
and $(\beta(\cdot-k), F^\beta(\cdot, k))$ is a 
convex entropy pair. 

We now write the It\^{o}-L\'{e}vy formula for $u_\theta(t,x)$, based on the entropy pair $(\beta(\cdot-k), F^\beta(\cdot, k))$, and then multiply by $\varsigma_l(u_\eps(s,y)-k)$, take the expectation and integrate with 
respect to $ s, y, k$. A simple application of Fubini's theorem leads to
\begin{align}
& 0\le \E \bigg[\int_{\Pi_T}\int_{\R^d}\int_{\R} \beta(u^{\theta}_0(x)-k)\,
\varphi_{\delta,\delta_0}(0,x,s,y) \,\varsigma_l(u_\eps(s,y)-k)\,dk \,dx\,dy\,ds\bigg] \notag 
\\&  
+ \E \bigg[\int_{\Pi_T} \int_{\Pi_T} 
\int_{\R} \beta(u_\theta(t,x)-k)\partial_t \varphi_{\delta,\delta_0}(t,x,s,y)\,
\varsigma_l(u_\eps(s,y)-k)\,dk\,dx\,dt\,dy\,ds \bigg]\notag 
\\ & 
+ \E \bigg[\sum_{k\ge 1}\int_{\Pi_T} \int_{\R}\int_{\Pi_T} 
  g_k(u_\theta(t,x))\beta^\prime (u_\theta(t,x)-k)\, \varphi_{\delta,\delta_0}(t,x,s,y)\,d\beta_k(t) \,dx\, \varsigma_l(u_\eps(s,y)-k)\,dk\,dy\,ds \bigg] \notag \\
 &+ \frac{1}{2}\, \E \bigg[ \int_{\Pi_T} \int_{\Pi_T} 
\int_{\R} \mathbb{G}^2(u_\theta(t,x))\beta^{\prime\prime} (u_\theta(t,x) -k)\, \varphi_{\delta,\delta_0}(t,x,s,y)\, \varsigma_l(u_\eps(s,y)-k)\,dk\,dx\,dt\,dy\,ds \bigg] \notag \\
& + \E \bigg[ \int_{\Pi_T}\int_{\R} \int_{t=0}^T\int_{E} \int_{\R^d}\Big(\beta \big(u_\theta(t,x) 
+\eta(u_\theta(t,x);z)-k\big)-\beta(u_\theta(t,x)-k)\Big) \notag \\
& \hspace{6cm} 
\times \varphi_{\delta,\delta_0}(t,x,s,y) \, \tilde{N}(dz,dt)
\varsigma_l(u_\eps(s,y)-k)\,dk \,dx\,dy\,ds \bigg] \notag\\
& + \E \bigg[\int_{\Pi_T} \int_{t=0}^T\int_{E}\int_{\R^d} 
\int_{\R} \Big(\beta \big(u_\theta(t,x)
+\eta(u_\theta(t,x);z)-k\big)-\beta(u_\theta(t,x)-k) \notag \\
 & \hspace{4.5cm}-\eta(u_\theta(t,x);z) \beta^{\prime}(u_\theta(t,x)-k)\Big)
 \varphi_{\delta,\delta_0}(t,x;s,y) \notag \\
&\hspace{6cm}\times \varsigma_l(u_\eps(s,y)-k)\,dk\,dx\,m(dz)\,dt\,dy\,ds\bigg]\notag \\
& + \E \bigg[\int_{\Pi_T}\int_{\Pi_T} \int_{\R} 
 F^\beta(u_\theta(t,x),k) \cdot \grad_x  \varphi_{\delta,\delta_0}(t,x;s,y)\, \varsigma_l(u_\eps(s,y)-k)\,dk\,dx\,dt\,dy\,ds\bigg] \notag \\
 & -\theta \, \E \bigg[\int_{\Pi_T}\int_{\Pi_T} \int_{\R} \beta^{\prime}(u_{\theta}(t,x)-k) \nabla_x u_{\theta} (t,x) \cdot \nabla_x \varphi_{\delta,\delta_0}(t,x;s,y)\, \varsigma_l(u_\eps(s,y)-k)\,dk\,dx\,dt\,dy\,ds\bigg] \notag \\
& - \E \bigg[\int_{\Pi_T} \int_{\Pi_T}  
\int_{\R} \mathcal{L}_{\lambda}^r[u_\theta(t,\cdot)](x)\, \varphi_{\delta,\delta_0}(t,x,s,y)\, \beta'(u_\theta(t,x) -k) \,dx\,dt 
\, \varsigma_l(u_\eps(s,y)-k)\,dk\,dy\,ds \bigg] \notag \\
& -  \E \bigg[\int_{\Pi_T} \int_{\Pi_T}  
\int_{\R} \beta(u_\theta(t,x) -k ) \,\mathcal{L}_{\lambda,r} [\varphi_{\delta,\delta_0}(t,\cdot,s,y)](x)
\, \varsigma_l(u_\eps(s,y)-k)\,dk\,dx \,dt\,dy\,ds \bigg]\notag  \\[2mm]
& =:  I_1 + I_2 + I_3 +I_4 + I_5 + I_6 + I_7 + I_8 + I_9+I_{10}. \label{stochas_entropy_1-levy-d}
\end{align}
We now apply the It\^{o}-L\'{e}vy formula 
to \eqref{eq:viscous-Brown}, to get
\begin{align}
& 0\le \E \bigg[\int_{\Pi_T}\int_{\R^d} \int_{\R} 
\beta(v^{\eps}_0(y)-k)\varphi_{\delta,\delta_0}(t,x,0,y) 
\varsigma_l(u_\theta(t,x)-k)\,dk\,dx\,dy\,dt\bigg] \notag \\
 &  + \E \bigg[\int_{\Pi_T} \int_{\Pi_T} \int_{\R} 
 \beta(u_\eps(s,y)-k)\partial_s \varphi_{\delta,\delta_0}(t,x,s,y)\,
 \varsigma_l(u_\theta(t,x)-k)\,dk\,dy\,ds\,dx\,dt\bigg]\notag \\ 
 & + \E \bigg[\sum_{k\ge 1}\int_{\Pi_T} \int_{\Pi_T} 
\int_{\R}  g_k(u_\eps(s,y))\,\beta^\prime (u_\eps(s,y)-k)\, \varphi_{\delta,\delta_0}(t,x,s,y)\,\varsigma_l(u_\theta(t,x)-k)\,dk \,d\beta_k(s) \,dy\,dx\,dt \bigg] \notag \\
 &+ \frac{1}{2}\, \E \bigg[ \int_{\Pi_T} \int_{\Pi_T} 
\int_{\R} \mathbb{G}^2(u_\eps(s,y)) \,\beta^{\prime\prime} (u_\eps(s,y) -k)\, \varphi_{\delta,\delta_0}(t,x,s,y) \,\varsigma_l(u_\theta(t,x)-k)\,dk\,dy\,ds\,dx\,dt \bigg] \notag \\
 & + \E \bigg[\int_{\Pi_T} \int_{s=0}^T\int_{E} \int_{\R} 
 \int_{\R^d}\Big(\beta \big(u_\eps(s,y) +\eta_\eps(u_\eps(s,y);z)-k\big)
 -\beta(u_\eps(s,y)-k)\Big) \notag \\
 & \hspace{6cm} 
 \times \varphi_{\delta,\delta_0}(t,x,s,y)\, \varsigma_l(u_\theta(t,x)-k)
 \,dy\,dk\,\tilde{N}(dz,ds)\,dx\,dt \bigg]\notag\\
 & + \E \bigg[\int_{\Pi_T} \int_{s=0}^T\int_{E}\int_{\R^d} \int_{\R}\Big(\beta \big(u_\eps(s,y) +\eta_{\eps}((u_\eps(s,y);z)-k\big)
 -\beta(u_\eps(s,y)-k) \notag \\
 & \hspace{6cm}-\eta_{\eps}(u_\eps(s,y);z)  \beta^{\prime}(u_\eps(s,y)-k)\Big)
 \varphi_{\delta,\delta_0}(t,x;s,y) \notag \\
 &\hspace{8cm}
 \times \varsigma_l(u_\theta(t,x)-k)\,dk\,dy\,m(dz)\,ds\,dx\,dt \bigg]\notag \\
 & + \E \bigg[\int_{\Pi_T}\int_{\Pi_T} 
 \int_{\R}  F^\beta(u_\eps(s,y),k)  \cdot \grad_y\varrho_\delta(x-y) \psi(t,x)
 \rho_{\delta_0}(t-s ) \varsigma_l(u_\theta(t,x)-k)\,dk\,d\alpha\,dy\,ds\,dx\,dt \bigg] \notag \\
  &- \eps \, \E\bigg[ \int_{\Pi_T} \int_{\Pi_T} 
  \int_{\R} \beta^\prime(u_\eps(s,y)-k)\grad_y u_\eps(s,y)\cdot \grad_y  \varphi_{\delta,\delta_0}(t,x;s,y)   \varsigma_l(u_\theta(t,x)-k)\,dk\, \,dy\,ds\,dx\,dt\bigg]  \notag \\ 
 & - \E \bigg[\int_{\Pi_T} \int_{\Pi_T} 
\int_{\R} \mathcal{L}^r_{\lambda}[u_\eps(s, \cdot)](y) \, \varphi_{\delta,\delta_0}(t,x,s,y)\, \beta^\prime (u_\eps(s,y)-k) \,dy\,ds \, \varsigma_l(u_\theta(t,x)-k)\,dk\,dx\,dt \bigg]  \notag \\
& -\E \bigg[\int_{\Pi_T} \int_{\Pi_T} 
\int_{\R} \beta( u_\eps(s,y)-k)\mathcal{L}_{\lambda, r}[\varphi_{\delta,\delta_0}(t,x,s,.)](y) \, \varsigma_l(u_\theta(t,x)-k)\,dk\,dy\,ds\,dx\,dt \bigg]  \notag \\[2mm]
& =:  J_1 + J_2 + J_3 +J_4 + J_5 + J_6 + J_7 + J_8 +J_9+J_{10}.
\label{stochas_entropy_3-levy-d}
\end{align}

We now add \eqref{stochas_entropy_1-levy-d} and \eqref{stochas_entropy_3-levy-d}, 
and compute limits with respect to the various parameters involved. To deal with most of the terms, other than terms involving the fractional operator, we follow as in \cite{BaVaWit_2014,BisMajKarl_2014}. We do this by claiming a series of lemmas: \ref{lem:initial+time-terms}, \ref{lem:stochastic-terms} and \ref{lem:flux-terms},
whose proofs follow the ones in \cite{BaVaWit_2014,BisMajKarl_2014} modulo cosmetic changes. 

%%%%%%%%%%%%%%%%%%%%%%%%%%%%%%%%%%%%%%%%%%%%%%%%%%%%%%%%%%%%%%%%%%%%%%%%%%%%%%%%%%%%%%%%%%%%%%%%%%%%%%%%%
 
\begin{lem}\label{lem:initial+time-terms}
It holds that $I_1=0$ and since $v_0^{\eps} \underset{\eps\to0} \longrightarrow v_0$ in $L^2(\D)$, and $u_0^{\theta} \underset{\theta\to0} \longrightarrow u_0$ in $L^2(\D)$, we have
\begin{align*}
\lim_{\delta \goto 0}\,\lim_{\xi\goto 0} \,\lim_{\theta \goto 0}\,\lim_{\eps \goto 0}\,\lim_{l\goto 0} \,\lim_{\delta_0\goto 0} \big(I_1 + J_1\big)& =  \E \Big[\int_{\R^d} |u_0(x)-v_0(x)| \,\psi(0,x) \,dx \Big]\notag.\\
\lim_{\delta \goto 0}\,\lim_{\xi\goto 0} \,\lim_{\theta \goto 0}\,\lim_{\eps \goto 0} \,\lim_{l\goto 0} \,\lim_{\delta_0\goto 0} \big(I_2 + J_2\big) 
& =\E \Big[\int_{t=0}^T \int_{\R^d}\int_{\alpha=0}^1 \int_{\beta=0}^1 
  |u(t,x,\alpha)-v(t,x,\beta)| \partial_t\psi(t,x)
  d\alpha \,d\beta\,dx\,dt\Big].
\end{align*}
\end{lem}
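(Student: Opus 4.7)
The lemma is standard doubling-of-variables bookkeeping: the plan is to peel off the regularization parameters in the prescribed order $\delta_0 \to 0$, $l \to 0$, $\varepsilon,\theta \to 0$, $\xi \to 0$, $\delta \to 0$, using the algebraic identity
\[
(\partial_t+\partial_s)\varphi_{\delta,\delta_0}(t,x,s,y) \;=\; \rho_{\delta_0}(t-s)\,\varrho_\delta(x-y)\,\partial_t\psi(t,x)
\]
for the time-derivative terms (the $\rho'_{\delta_0}$ contributions cancel). The $I_1=0$ assertion is an immediate consequence of the one-sided support of $\rho$: one of the two initial traces arising from the It\^{o}--L\'{e}vy applications contains a $\rho_{\delta_0}$-factor evaluated on a time-interval of the ``wrong'' sign relative to $\supp(\rho)\subset[-1,0]$, so the integrand vanishes identically over $(0,T)$.

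For $\lim(I_1+J_1)$, only the surviving trace contributes, and its $\rho_{\delta_0}$-factor is supported on an interval of length $\delta_0$ adjacent to the initial time. Sending $\delta_0\to 0$ concentrates the integrand there; using the right-continuity in time of the viscous solutions granted by Theorem~\ref{prop:vanishing viscosity-solution} together with the It\^{o}--L\'{e}vy formula, one replaces $u_\varepsilon(s,\cdot)$ by $v_0^\varepsilon$. The limit $l\to 0$ collapses the $k$-integral via the standard mollifier identity $\int_{\R}\beta_\xi(a-k)\varsigma_l(b-k)\,dk\to \beta_\xi(a-b)$; the limits $\varepsilon,\theta\to 0$ use $u_0^\theta\to u_0$ and $v_0^\varepsilon\to v_0$ in $L^2(\R^d)$ together with the Lipschitz continuity of $\beta_\xi$; $\xi\to 0$ recovers the absolute value through \eqref{eq:approx to abosx}; and $\delta\to 0$ collapses $\varrho_\delta$ to a point mass on the diagonal $y=x$, yielding $\E\!\int_{\R^d}|u_0-v_0|\,\psi(0,x)\,dx$. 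For $\lim(I_2+J_2)$, I first perform $l\to 0$ in each term; by the symmetry $\beta_\xi(-r)=\beta_\xi(r)$ both contributions share the common integrand $\beta_\xi(u_\theta(t,x)-u_\varepsilon(s,y))$, and their sum becomes a single integral against $(\partial_t+\partial_s)\varphi_{\delta,\delta_0}$, i.e., against $\rho_{\delta_0}(t-s)\varrho_\delta(x-y)\partial_t\psi(t,x)$. The subsequent limits $\delta_0\to 0$ (collapse $s=t$), $\theta,\varepsilon\to 0$ (replace $u_\theta,u_\varepsilon$ by the Young-measure-valued limits $u(t,x,\alpha)$ and $v(t,y,\beta)$, which is where the parameters $\alpha,\beta\in(0,1)$ enter), $\xi\to 0$ (recover $|\cdot|$), and $\delta\to 0$ (collapse $y=x$) produce the stated double-averaged expression.

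The main obstacle is the $\delta_0\to 0$ step in the $I_2+J_2$ calculation. After collapsing $s=t$, one must argue that $\beta_\xi(u_\theta(t,x)-u_\varepsilon(t,y))$ narrowly converges, as $\theta,\varepsilon\to 0$, to the tensor product $\int_0^1\!\int_0^1 \beta_\xi(u(t,x,\alpha)-v(t,y,\beta))\,d\alpha\,d\beta$; this is the central nontrivial fact about joint Young measures associated with two independent sequences of approximations, and it rests on the uniform bounds of Theorem~\ref{prop:vanishing viscosity-solution}, the independence of $\theta$ and $\varepsilon$, and a dominated-convergence argument. All remaining manipulations reduce to Fubini's theorem and the approximation properties of the mollifiers $\rho,\varrho,\varsigma$, together with assumptions \ref{A1}--\ref{A5}.
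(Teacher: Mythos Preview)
Your sketch is correct and follows the standard route that the paper defers to \cite{BaVaWit_2014,BisMajKarl_2014}. Two small remarks are worth recording. First, with $\supp(\rho)\subset[-1,0]$ one has $\rho_{\delta_0}(t)=0$ for $t>0$, so it is actually $J_1$ (which evaluates $\varphi_{\delta,\delta_0}$ at $s=0$ and integrates over $t\in(0,T)$) that vanishes, while $I_1$ survives and concentrates on $s\in[0,\delta_0]$; the statement carries a labelling slip, and your own description of the surviving term as the one requiring $u_\varepsilon(s,\cdot)\to v_0^\varepsilon$ is precisely $I_1$. Second, for $I_2+J_2$ there is no need to send $l\to 0$ before $\delta_0\to 0$: since $\beta_\xi$ and $\varsigma$ are both even, the $k$-integral in $I_2$ and in $J_2$ each equals the same even function $(\beta_\xi\ast\varsigma_l)\bigl(u_\theta(t,x)-u_\varepsilon(s,y)\bigr)$ for every $l>0$, so the $\rho'_{\delta_0}$ contributions cancel identically before any limit is taken and all passages may be carried out in the stated order.
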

%%%%%%%%%%%%%%%%%%%%%%%%%%%%%%%%%%%%%%%%%%%%%%%%%%%%%%%%%%%%%%%%%%%%%%%%%
 
\begin{lem}\label{lem:stochastic-terms}
We have $J_3=J_5=0$, and The following hold:
\begin{align*}
&\lim_{\delta \goto 0}\,\lim_{\xi\goto 0}\,\lim_{\theta \goto 0} \, \lim_{\eps \goto 0}\,\lim_{l\goto 0}\,\lim_{\delta_0\goto 0} \big(I_4 + J_4 +I_3\big) =0, \\[2mm] 
&\lim_{\delta \goto 0}\,\lim_{\xi\goto 0} \,\lim_{\theta \goto 0} \,\lim_{\eps \goto 0}\,\lim_{l\goto 0}\,\lim_{\delta_0\goto 0}  \big(I_6 + J_6+I_5\big) =0.
\end{align*}
\end{lem}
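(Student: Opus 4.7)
The argument naturally splits in two. First, the identities $J_3=J_5=0$ are immediate: on the support of $\varphi_{\delta,\delta_0}$ one has $s-\delta_0\le t\le s$, hence $u_\theta(t,x)$ is $\mathcal{F}_t\subset\mathcal{F}_s$-measurable throughout the integrand. A stochastic Fubini pushing $\int dt\,dx\,dk$ inside the stochastic differential renders the integrand of $d\beta_k(s)$ in $J_3$ (resp.\ of $\widetilde{N}(dz,ds)$ in $J_5$) genuinely $\mathcal{F}_s$-predictable, so the corresponding It\^o or compensated Poisson martingale has zero expectation. By contrast, $I_3$ and $I_5$ are anticipating in $t$, because $u_\eps(s,y)$ with $s\ge t$ is not $\mathcal{F}_t$-measurable, and the cancellations $I_3+I_4+J_4\to0$ and $I_5+I_6+J_6\to0$ are genuine It\^o-L\'evy corrections.

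For $I_3+I_4+J_4$, following \cite{BaVaWit_2014,BisMajKarl_2014}, I would use the time-semimartingale structure of $u_\eps$ from Theorem~\ref{prop:vanishing viscosity-solution} to write
\[
\varsigma_l(u_\eps(s,y)-k)=\varsigma_l(u_\eps(t,y)-k)+\int_t^s d_r\bigl[\varsigma_l(u_\eps(r,y)-k)\bigr].
\]
The first piece is $\mathcal{F}_t$-predictable and contributes $0$ to $\E[I_3]$ by the martingale property. The stochastic part of $d_r[\varsigma_l(u_\eps-k)]=\varsigma_l'(u_\eps-k)\sum_j g_j(u_\eps)\,d\beta_j(r)+\cdots$, paired with $d\beta_k(t)$ through the It\^o isometry and passed through $\delta_0\to 0$ (which collapses $s$ onto $t$), produces the cross term
\[
-\,\E\!\int_{\Pi_T}\!\int_{\R^d}\!\int_{\R}\sum_{k\ge 1}g_k(u_\theta)\,g_k(u_\eps)\,\beta'(u_\theta-k)\,\varsigma_l'(u_\eps-k)\,\varrho_\delta(x-y)\,\psi(t,x)\,dk\,dx\,dy\,dt.
\]
An integration by parts in $k$ turns $\varsigma_l'$ into a derivative on $\beta'$, and the limit $l\to 0$ concentrates $\varsigma_l(u_\eps-k)\,dk$ onto $\delta_{u_\eps}(dk)$. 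Combined with the $l$-reduced forms of $I_4$ and $J_4$ (contributing $\tfrac12\mathbb{G}^2(u_\theta)\beta''(u_\theta-u_\eps)$ and $\tfrac12\mathbb{G}^2(u_\eps)\beta''(u_\theta-u_\eps)$ respectively), the total collapses to
\[
\tfrac12\,\E\!\int\sum_{k\ge 1}\bigl(g_k(u_\theta)-g_k(u_\eps)\bigr)^2\,\beta''(u_\theta-u_\eps)\,\varrho_\delta(x-y)\,\psi(t,x)\,dx\,dy\,dt,
\]
which by \ref{A4} and the support bound \eqref{eq:approx to abosx} is dominated by $\tfrac{KM_2}{2}\,\xi\,T\,\|\psi\|_\infty$, vanishing as $\xi\to 0$ uniformly in $\theta,\eps$.

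For $I_5+I_6+J_6$, the same strategy applies with the L\'evy-It\^o formula: expanding $\varsigma_l(u_\eps(s,y)-k)$ along the jump-semimartingale $r\mapsto u_\eps(r,y)$ and pairing the resulting jump part against $\widetilde{N}(dz,dt)$ in $I_5$ produces a cross-compensator which, combined with $I_6$ and $J_6$ and passed through the $\delta_0$- and $l$-limits, collapses to
\[
\tfrac12\,\E\!\int\!\!\int_E\!\int_0^1\!|\eta(u_\theta,z)-\eta(u_\eps,z)|^2\,\beta''\!\bigl(u_\theta-u_\eps+\tau(\eta(u_\theta,z)-\eta(u_\eps,z))\bigr)\,\varrho_\delta\,\psi\,d\tau\,m(dz)\,dx\,dy\,dt,
\]
which by \ref{A5} and the bound $|\beta''|\le(M_2/\xi)\mathbf{1}_{|r|\le\xi}$ is $\mathcal{O}(\xi)$.

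The main obstacle is the rigorous It\^o-L\'evy expansion of the anticipating factor $\varsigma_l(u_\eps(s,y)-k)$: one must combine the time-semimartingale regularity of $u_\eps$ from Theorem~\ref{prop:vanishing viscosity-solution} with a careful stochastic Fubini allowing the $ds\,dy$-integrations to be swapped with the $d\beta_k(t)$- and $\widetilde{N}(dz,dt)$-integrations before the isometry and the jump-compensator identity are invoked. Once these exchanges are justified, all subsequent limits (first $\delta_0\to 0$, then $l\to 0$, then $\eps,\theta\to 0$, and finally $\xi\to 0$) go through by dominated convergence using the Lipschitz bounds \ref{A4}-\ref{A5} and the concentration of $\beta''$ on $\{|r|\le\xi\}$.
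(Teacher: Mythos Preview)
Your proposal is correct and follows precisely the route the paper intends: the paper does not give a detailed argument for this lemma but simply refers to \cite{BaVaWit_2014,BisMajKarl_2014}, and your sketch reproduces the mechanism of those references (predictability for $J_3,J_5$; It\^o--L\'evy expansion of $\varsigma_l(u_\eps(s,y)-k)$ between $t$ and $s$; It\^o isometry / compensator identity to extract the cross term; reduction to the quadratic remainder $\sum_k(g_k(u_\theta)-g_k(u_\eps))^2\beta''(u_\theta-u_\eps)$, resp.\ its L\'evy analogue; and the $\mathcal{O}(\xi)$ bound from \ref{A4}--\ref{A5} and \eqref{eq:approx to abosx}).

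Two cosmetic points. In the L\'evy remainder the correct collapsed form carries the weight $(1-\tau)\,d\tau$ rather than $\tfrac12\,d\tau$ (compare Lemma~\ref{lem:stochastic-terms_01}); this does not affect your $\mathcal{O}(\xi)$ estimate, which only uses the support of $\beta''$ together with $|\eta(u_\theta,z)-\eta(u_\eps,z)|\le\lambda^*h(z)|u_\theta-u_\eps|$ and $\lambda^*<1$ to force $|u_\theta-u_\eps|\le\xi/(1-\lambda^*)$ on the support. And in the Brownian bound, the constant should involve $\|\psi\|_{L^1(\Pi_T)}$ (or the measure of the support of $\psi$) rather than $T\|\psi\|_\infty$ alone, since one still integrates $\varrho_\delta(x-y)\psi(t,x)$ in $dx\,dy\,dt$.
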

 %%%%%%%%%%%%%%%%%%%%%%%%%%%%%%%%%%%%%%%%%%%%%%%%%%%%%%%%%%%%%%%%%%%%%%%%%%%%%%
 
\begin{lem}\label{lem:flux-terms}
The following hold:
\begin{equation*}
\begin{aligned}
& \lim_{\delta \goto 0}\,\lim_{\xi\goto 0}\,\lim_{\theta \goto 0} \, \lim_{\eps \goto 0}\,\lim_{l\goto 0}\,\lim_{\delta_0\goto 0} \big(I_7 + J_7 \big)  
= -\E \Big[\int_{\Pi_T} \int_{\alpha=0}^1 \int_{\beta=0}^1 
F(u(t,x,\alpha),v(t,x,\beta)) \cdot \grad_x \psi(t,x)
 \,d\alpha\,d\beta\,dx\,dt\Big], \\[2mm]  
& \lim_{\delta \goto 0}\,\lim_{\xi\goto 0} \,\limsup_{\theta \goto 0} \, \limsup_{\eps \goto 0}\,\lim_{l\goto 0}\,\lim_{\delta_0\goto 0} (I_8 +J_8) =0. 
\end{aligned}
\end{equation*}
\end{lem}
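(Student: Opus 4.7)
\textbf{Proof plan for Lemma~\ref{lem:flux-terms}.}

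My plan is to treat $I_7+J_7$ by Kru\v{z}kov's flux symmetrization, and to dispose of $I_8+J_8$ by a direct energy bound based on~\eqref{bounds:a-priori-viscous-solution}. For the flux terms I would first decompose
$\grad_x \varphi_{\delta,\delta_0}(t,x;s,y) = \rho_{\delta_0}(t-s)\bigl[\grad_x \varrho_\delta(x-y)\,\psi(t,x) + \varrho_\delta(x-y)\,\grad_x\psi(t,x)\bigr]$,
and rewrite the $\grad_y \varrho_\delta(x-y)$ appearing in $J_7$ as $-\grad_x \varrho_\delta(x-y)$. This splits $I_7+J_7 = \mathcal{A}+\mathcal{B}$, where $\mathcal{A}$ gathers the two contributions that carry the singular factor $\grad_x\varrho_\delta(x-y)\psi(t,x)$, and $\mathcal{B}$ is the single contribution (coming from $I_7$) carrying the benign factor $\varrho_\delta(x-y)\grad_x\psi(t,x)$.

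For $\mathcal{A}$, sending $\delta_0 \to 0$ and $l \to 0$ collapses the $k$-integrals to $F^{\beta_\xi}(u_\theta(t,x), u_\eps(s,y))$ and $F^{\beta_\xi}(u_\eps(s,y), u_\theta(t,x))$ in the two summands. Passing $\eps,\theta \to 0$ interprets $u_\theta, u_\eps$ through their Young-measure limits $u(t,x,\alpha), v(s,y,\beta)$, and then $\xi \to 0$ replaces $F^{\beta_\xi}$ by the Kru\v{z}kov flux $F(a,b)=\sgn(a-b)(f(a)-f(b))$, which satisfies $F(a,b)=F(b,a)$. Hence the integrand of $\mathcal{A}$ reduces to $[F(u,v)-F(v,u)]\,\grad_x\varrho_\delta(x-y)\,\psi(t,x) = 0$, so $\mathcal{A}\to 0$ even before the final $\delta\to 0$ step is invoked; the uniform integrability required to exchange limits is supplied by the $L^2$ bounds of Theorem~\ref{prop:vanishing viscosity-solution} together with $\|f'\|_\infty<+\infty$ from~\ref{A3}, giving a pointwise domination of order $\|f'\|_\infty\|\grad \varrho_\delta\|_{L^1}\|\psi\|_\infty$. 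For $\mathcal{B}$, the factor $\varrho_\delta(x-y)\grad_x\psi(t,x)$ is smooth and compactly supported, so the same sequence of limits, combined with the disintegration of the Young measures and dominated convergence, yields exactly
$\mathcal{B} \longrightarrow -\,\E\Bigl[\int_{\Pi_T}\int_0^1\int_0^1 F(u(t,x,\alpha), v(t,x,\beta)) \cdot \grad_x \psi(t,x)\,d\alpha\,d\beta\,dx\,dt\Bigr]$,
which is the stated identity.

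The viscous terms yield to a brute-force estimate. Using $|\beta'|\le 1$ and $\int_\R \varsigma_l(u_\eps-k)\,dk = 1$,
$|I_8| \le \theta\int_{\Pi_T} \E|\grad u_\theta(t,x)|\int_{\Pi_T}|\grad_x\varphi_{\delta,\delta_0}(t,x;s,y)|\,ds\,dy\,dt\,dx \le C(\psi)\,\delta^{-1}\,\theta\int_{\supp\psi}\E|\grad u_\theta|\,dt\,dx$,
after which a Cauchy--Schwarz inequality combined with the a priori bound $\theta\,\E\|\grad u_\theta\|_{L^2(\Pi_T)}^2 \le C$ from~\eqref{bounds:a-priori-viscous-solution} gives $|I_8| \le C(\psi,\delta)\sqrt{\theta}$; this vanishes as $\theta\to 0$ with $\delta$ held fixed, and the identical estimate with $\eps$ in place of $\theta$ controls $J_8$. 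The main obstacle I anticipate is tracking the prescribed order of limits so that at the instant the symmetry $F(a,b)=F(b,a)$ is invoked the parameter $\xi$ has already been sent to zero and the Young-measure representations are available, while $\delta$ still keeps $\grad_x \varrho_\delta$ under control and $\grad_x\psi$ dominated; the order $\delta_0 \to 0,\ l \to 0,\ \eps \to 0,\ \theta \to 0,\ \xi \to 0,\ \delta \to 0$ written in the statement is exactly what makes each of these passages legitimate.
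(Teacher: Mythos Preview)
Your plan is correct and matches the standard approach the paper defers to (the paper gives no proof of its own, citing \cite{BaVaWit_2014,BisMajKarl_2014} ``modulo cosmetic changes''): Kru\v{z}kov symmetrization for $I_7+J_7$ via the decomposition $\grad_x\varphi_{\delta,\delta_0}=\rho_{\delta_0}[\grad_x\varrho_\delta\,\psi+\varrho_\delta\,\grad_x\psi]$ and $\grad_y\varrho_\delta=-\grad_x\varrho_\delta$, and the energy estimate $\eps\,\E\|\grad u_\eps\|_{L^2(\Pi_T)}^2\le C$ plus Cauchy--Schwarz for $I_8+J_8$. One cosmetic simplification you may want to adopt for $\mathcal{A}$: since $|F^{\beta_\xi}(a,b)-F^{\beta_\xi}(b,a)|\le 2\|f'\|_\infty\,\xi$ holds uniformly (from $|F^{\beta_\xi}-F|\le C\xi$ and $F(a,b)=F(b,a)$), you get $|\mathcal{A}|\le C\xi\,\|\grad\varrho_\delta\|_{L^1}\|\psi\|_{L^1_{t,x}}\le C\xi/\delta$ already after $\delta_0,l\to 0$, so the Young-measure passage is unnecessary for that piece and can be reserved for $\mathcal{B}$ where it is genuinely needed.
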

%%%%%%%%%%%%%%%%%%%%%%%%%%%%%%%%%%%%%%%%%%%%%%%%%%%%%%%%%%%%%%%%%%%%%%%%%%%%%%%%%%%%%%%%%%%%%%%%%%%%%%%%

We now add terms coming from the fractional operators, 
and compute limits with respect to the various parameters involved.   
\begin{lem}
\label{fractional_lemma_1}
It holds that 
\begin{align*}
  I_9 + J_9   & \underset{\delta_0 \goto 0} \longrightarrow - \E \Big[\int_{\R^d} \int_{\Pi_T} 
\int_{\R}  \mathcal{L}_{\lambda}^r [u_\theta(t,\cdot)](x)\, \psi(t,x)\varrho_{\delta} (x-y)\, \beta'(u_\theta(t, x) -k)
\, \varsigma_l(u_\eps(t,y)-k)\,dx\,dt \,dk \,dy \Big] \\
& - \E \Big[\int_{\Rd} \int_{\Pi_T} 
\int_{\R} \mathcal{L}^r_{\lambda}[u_\eps(t, \cdot)](y) \, \psi(t,x)\varrho_{\delta} (x-y)\, \beta^\prime (u_\eps(t,y)-k)  \varsigma_l(u_\theta(t,x)-k) \,dx\,dt\,dk \,dy\Big] \\
&\underset{l \goto 0} \longrightarrow  -\E \Big[\int_{\R^d} \int_{\Pi_T}
 \mathcal{L}_{\lambda}^r[u_\theta(t, \cdot](x) \, \psi(t,x)\varrho_{\delta} (x-y)\, \beta'(u_\theta(t, x) - u_\eps(t,y)) \,dx\,dt 
\,dy \Big]\\
&- \E \Big[\int_{\R^d} \int_{\Pi_T}  
\mathcal{L}^r_{\lambda} [u_{\eps}(t,\cdot)](y)\, \psi(t,x)\,\varrho_{\delta} (x-y)\, \beta'(u_\eps(t,y) - u_\theta(t, x)\,dx\,dt 
\,dy \Big].
\end{align*}
Moreover, we have 
\begin{align*}
\limsup_{\theta \to 0}\,&\limsup_{\ep \to 0} \, \lim_{l\to 0}\,\lim_{\delta_0 \to 0} \big(I_9+J_9\big)\\ &\le  - \E \Big[\int_{\R^d} \int_{\Pi_T} \int_{\alpha=0}^1\int^1_{\beta=0}\fr^r[\psi(t,\cdot)](x) \varrho_{\delta}(x-y)\beta(v(t,x,\alpha)-u(t,y,\beta)) \,d\alpha\, d\beta \,dy\,dx\,dt \Big]
\end{align*}
\end{lem}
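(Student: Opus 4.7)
The plan is to handle the two assertions in turn. For the two displayed limits, I would use standard mollifier arguments: the cut-off $|z|>r$ renders the kernel $|z|^{-(d+2\lambda)}$ bounded, so that $\mathcal{L}_\lambda^r[u_\theta(t,\cdot)] \in L^2(\Omega\times\Pi_T)$ thanks to the \emph{a priori} estimates of Theorem~\ref{prop:vanishing viscosity-solution}. Consequently, $\rho_{\delta_0}(t-s)$ acts as an approximation of identity in time, and dominated convergence collapses the $s$ integration onto $t$ inside the $\varsigma_l(u_\eps(s,y)-k)$ factor (and analogously for $J_9$), producing the first displayed expression. Then $\varsigma_l(u_\eps(t,y)-k)\,dk$ is an approximation of identity in $k$, so $\int_\R \beta'(u_\theta(t,x)-k)\,\varsigma_l(u_\eps(t,y)-k)\,dk \to \beta'(u_\theta(t,x)-u_\eps(t,y))$, yielding the second displayed expression.

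For the $\limsup$ bound, the strategy is to combine $I_9$ and $J_9$ \emph{before} taking $\theta, \eps \to 0$. Using that $\beta'$ is odd (since $\beta$ is even), so $\beta'(u_\eps-u_\theta) = -\beta'(u_\theta-u_\eps)$, the sum (after the $\delta_0$ and $l$ limits) becomes
\[
-\E\Big[\int \big(\mathcal{L}_\lambda^r[u_\theta(t,\cdot)](x) - \mathcal{L}_\lambda^r[u_\eps(t,\cdot)](y)\big)\, \psi(t,x)\, \varrho_\delta(x-y)\, \beta'(u_\theta(t,x)-u_\eps(t,y))\,dx\,dy\,dt\Big].
\]
Writing the two non-local operators as a single integral over $\{|z|>r\}$, the integrand takes the form $\beta'(A)(A-B)/|z|^{d+2\lambda}$ with $A = u_\theta(t,x) - u_\eps(t,y)$ and $B = u_\theta(t,x+z) - u_\eps(t,y+z)$. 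I would then apply the pointwise convexity inequality $\beta'(A)(A-B) \ge \beta(A) - \beta(B)$ under the integral sign in $z$.

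The crucial observation is that the product $\varrho_\delta(x-y)$ is invariant under the \emph{simultaneous} shift $(x,y) \mapsto (x-z, y-z)$: performing this change of variables in the $\beta(B)$ contribution and then symmetrizing via $z \mapsto -z$ (legitimate as both $\{|z|>r\}$ and the kernel are symmetric) removes the joint shift and transfers the non-local operator onto $\psi$, upper-bounding the expression by
\[
-\E\Big[\int \mathcal{L}_\lambda^r[\psi(t,\cdot)](x)\, \varrho_\delta(x-y)\, \beta(u_\theta(t,x)-u_\eps(t,y))\,dx\,dy\,dt\Big].
\]
Finally, since $\mathcal{L}_\lambda^r[\psi(t,\cdot)]$ is bounded and continuous (the $|z|>r$ truncation combined with the regularity of $\psi$ is what guarantees this) and $\beta$ is continuous and bounded by a linear function, the nonlinear composition $\beta(u_\theta(t,x)-u_\eps(t,y))$ converges in the Young-measure sense to $\int_0^1\int_0^1 \beta\bigl(u(t,x,\alpha)-v(t,y,\beta)\bigr)\,d\alpha\,d\beta$ (with the labeling matching the paper's statement modulo the evenness of $\beta$), completing the bound. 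The main obstacle I anticipate is controlling the order of limits $\delta_0 \to 0$, $l \to 0$, $\eps \to 0$, $\theta \to 0$ at the level of equi-integrable majorants; the $|z|>r$ truncation is precisely what tames the non-local operator so that each passage to the limit is controlled by the uniform $L^2$ bounds coming from Theorem~\ref{prop:vanishing viscosity-solution}.
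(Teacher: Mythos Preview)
Your proposal is correct and follows essentially the same route as the paper: mollifier/approximation-of-identity arguments for the $\delta_0$ and $l$ limits, then the convexity inequality $\beta'(A)(A-B)\ge\beta(A)-\beta(B)$ combined with the joint change of variables $(x,y,z)\mapsto(x-z,y-z,-z)$ to transfer $\mathcal{L}_\lambda^r$ onto $\psi$, and finally Young-measure convergence. The paper makes explicit the equi-integrability check you flag at the end by splitting the $\{|z|>r\}$ integral at a large radius $R$ and exploiting the compact support of $\psi$ to get equi-smallness at infinity, and it passes to the limits in $\eps$ and $\theta$ sequentially (first $\eps\to 0$ with $\theta$ fixed, then $\theta\to 0$) rather than simultaneously---but these are exactly the refinements your outline anticipates.
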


\begin{proof}
We propose to prove this lemma in several steps.

\noindent {\bf Step 1}(Passing to the limit as $ \delta_0 \to 0$): Since $\psi$ is compactly supported in space and $u_\theta \in L^2(\Omega \times \Pi_T)$, by using properties of convolution in $L^1(0,T;L^1(\Omega \times \R^d \times \R^d \times \R))$, we conclude that
\begin{align*}
\E \Big[\int_{\Pi_T} \int_{\Pi_T}
&\int_{\R} \mathcal{L}^r_{\lambda}[u_\theta(t, \cdot)](x) \, \psi(t,x) \varrho_\delta(x-y)\, \beta^\prime (u_\theta(t,x)-k)\varsigma_l(u_\ep(s, y)-k) \varrho_{\delta_0}(t-s)\,dk  \,dx\,dt \,dy\,ds \Big] \\
&\underset{\delta \to 0}\longrightarrow  \E \Big[\int_{\Rd} \int_{\Pi_T}
\int_{\R} \mathcal{L}^r_{\lambda}[u_\theta(t, \cdot)](x) \, \psi(t,x)\varrho_{\delta} (x-y)\, \beta^\prime (u_\theta(t,x)-k)  \varsigma_l(u_\ep(t,y)-k) \,dk \,dy\,dx\,dt\Big].
\end{align*}
A similar argument also confirms that
\begin{align*}
&\E \Big[\int_{\Pi_T} \int_{\Pi_T}
\int_{\R} \mathcal{L}^r_{\lambda}[u_\ep(s, \cdot)](y) \, \psi(t,x) \varrho_\delta(x-y)\, \beta^\prime (u_\eps(s,y)-k)\varsigma_l(u_\theta(t, x)-k) \varrho_{\delta_0}(t-s)\,dk  \,dx\,dt \,dy\,ds \Big] \\
& \underset{\delta \to 0}\longrightarrow  \E \Big[\int_{\Rd} \int_{\Pi_T}
\int_{\R} \mathcal{L}^r_{\lambda}[u_\ep(t, \cdot)](y) \, \psi(t,x) \varrho_\delta(x-y)\, \beta^\prime (u_\eps(t,y)-k)\varsigma_l(u_\theta(t, x)-k) \,dk  \,dx\,dt \,dy \Big]. 
\end{align*}

\noindent {\bf Step 2} (Passing to the limit as $l \to 0$): 
Observe that
\begin{align*}
& \Bigg| \E \bigg[\int_{\Rd} \int_{\Pi_T}  \int_{\R}  ( \beta'(u_\theta(t,x)-k)-\beta'(u_\theta(t,x)-u_\ep(t,y))) \varsigma_l(u_\ep(t,y)-k)\\
&\hspace{5cm}\times \mathcal{L}_{\lambda}^r[u_\theta(t,\cdot)](x)\, \psi(t,x)\varrho_{\delta} (x-y) \,dk\,dx\,dt\,dy \bigg] \Bigg|\\
&\le C(\beta^{\prime\prime}) \E \bigg[\intrd \int_{\Pi_T}\int_{\R}| u_\ep(t,y)-k| \varsigma_l(u_\ep(t,y)-k)|\mathcal{L}_{\lambda}^r[u_\theta(t,\cdot)](x)|\, \psi(t,x)\varrho_{\delta} (x-y)\,dk\,dx\,dt\,dy\bigg] \\
& \le C(\beta^{\prime \prime})\, l\, \E \bigg[\int_{\Rd} \int_{\Pi_T}|\mathcal{L}_{\lambda}^r[u_\theta(t,\cdot)](x)| \psi(t,x) \varrho_\delta(x-y) \,dx\,dt\,dy\bigg]\underset{l\to 0} \longrightarrow 0,
\end{align*}
where we have used the fact that $| u_\ep(t,y)-k|\varsigma_l(u_\ep(t,y)-k)\leq \varsigma_l(u_\ep(t,y)-k)l$. A similar argument reveals that 
\begin{align*}
&\E \Big[\int_{\Rd} \int_{\Pi_T} 
\int_{\R} \mathcal{L}^r_{\lambda}[u_\eps(t, \cdot)](y) \, \psi(t,x)\varrho_{\delta} (x-y)\, \beta^\prime (u_\eps(t,y)-k) \varsigma_l(u_\theta(t,x)-k) \,dk \,dx\,dt\,dy\Big] \\
&\quad \quad \underset{l \to 0}\longrightarrow \E \Big[\int_{\R^d} \int_{\Pi_T} 
\mathcal{L}^r_{\lambda} [u_{\eps}(t,\cdot)](y)\, \psi(t,x)\,\varrho_{\delta} (x-y)\, \beta'(u_\eps(t,y) - u_\theta(t,x))  \,dx\,dt 
\,dy \Big].
\end{align*}

\noindent {\bf Step 3} (Passing to the limit as $\ep, \delta \to 0$): 
First observe that
\begin{align*}
 & -\E \bigg[\int_{\R^d} \int_{\Pi_T}
 \mathcal{L}_{\lambda}^r[u_\theta(t,\cdot)](x)\, \psi(t,x)\varrho_{\delta} (x-y)\, \beta'(u_\theta(t,x) - u_\eps(t,y))  \,dx\,dt 
\,dy \bigg]\\
&\qquad \qquad -\E \bigg[\int_{\R^d} \int_{\Pi_T} 
\mathcal{L}^r_{\lambda} [u_{\eps}(t,\cdot)](y)\, \psi(t,x)\,\varrho_{\delta} (x-y)\, \beta'(u_\eps(t,y) - u_\theta(t,x))  \,dx\,dt 
\,dy \bigg] \\
& =\E \Bigg[\intrd \int_{\Pi_T} \bigg[ \int_{|z| \ge r} \frac{u_\theta(t,x+z)-u_\theta(t,x)}{|z|^{d+2\lambda}}\,dz- \int_{|z| \ge r} \frac{u_\ep(t,y+z)-u_\ep(t,y)}{|z|^{d+2\lambda}} \,dz  \bigg]\\
& \hspace{5cm}\times \psi(t,x) \varrho_{\delta}(x-y) \beta'( u_\theta(t,x)-u_\ep(t,y))\,dx\,dt\,dy \Bigg]\\
&=\E \Bigg[\intrd \int_{\Pi_T} \bigg[ \int_{|z| \ge r} \frac{u_\theta(t,x+z)-u_\ep(t,y+z)}{|z|^{d+2\lambda}} \,dz -\int_{|z|\ge r} \frac{u_\theta(t,x)-u_\ep(t,y)}{|z|^{d+2\lambda}}\,dz\bigg]\\
& \hspace{5cm} \times \psi(t,x) \varrho_{\delta}(x-y)\beta'( u_\theta(t,x)-u_\ep(t,y))\,dx\,dt\,dy \Bigg]\\
&\le \E \Bigg[\intrd \int_{\Pi_T}\bigg[\int_{|z| \ge r} \frac{\beta(u_\theta(t,x+z)-u_\ep(t,y+z))}{|z|^{d+2\lambda}}\,dz- \int_{|z| \ge r}\frac{\beta(u_\theta(t,x)-u_\ep(t,y))}{|z|^{d+2\lambda}} \,dz\bigg]\\
& \hspace{8cm} \times \varrho_\delta(x-y) \psi(t,x) \,dx\,dt \,dy \Bigg]\\
&=- \E \bigg[\intrd \int_{\Pi_T}  \beta( u_\theta(t,x)-u_\ep(t,y)) \fr^r[ \psi(t, \cdot)](x) \varrho_\delta(x-y) \,dy\,dx\,dt \bigg],
\end{align*}
where to derive the penultimate inequality, we have used the fact that $\beta(b)-\beta(a) \ge \beta^\prime (a) (b-a)$ with
$a=u_\theta(t,x)-u_\ep(t,y)$ and $b= u_\theta(t,x+z)-u_\ep(t,y+z)$. For the last equality, we have performed a change of coordinates for the first integral $x \to x+z, y\to y+z, z\to -z$. 

At this point, we first fix $\theta$ and pass to the limit in $\ep$ in the sense of Young measures. For that purpose, let 
us define 
$$G(t,y,\omega; \mu):= \int_{\D} \beta( u_\theta(t,x)-\mu) \fr^r[ \psi(t, \cdot)](x) \varrho_\delta(x-y)\,dx.$$
Since $\fr^r[ \psi(t, \cdot)] \in L^p(\R^d)$ for any $t$ and any $p\in[1,+\infty]$, $G$ is a Carath\'eodory function on $\Pi_T\times\Omega\times\R$.
\\
Note that $G(\cdot,u_\epsilon)$ is a uniformly bounded sequence in $L^2(\Pi_T\times \Omega)$. Indeed, 
\begin{align*}
\E \Big[\int_{\Pi_T} & |G(\cdot;u_\ep)|^2 \,dy\,dt\Big]= \E \bigg[ \intrd \int^T_{t=0} \Big[\intrd \beta(u_\theta(t,x)-u_\ep(t,y)) \fr^r[ \psi(t, \cdot)](x) \varrho_\delta(x-y)\,dx\Big]^2 \,dy \,dt \bigg]\\
&\le c(r,\|\psi\|_\infty) \E \bigg[\intrd \int^T_{t=0} \intrd | u_\theta(t,x)-u_\ep(t,y)|^2 \varrho^2_\delta(x-y)\,dx\,dt \,dy \bigg] \leq C.
\end{align*}
To ensure that the family $G(\cdot, u_\ep(\cdot))$ is uniformly integrable, we need to check the equi-smallness property at infinity. For that purpose, set $\tilde \eps>0$ and note that for a given $R>r$ 
\begin{align*}
G(t,y,\omega;u_\ep) =& \int_{|z|>r} \frac{1}{|z|^{d+2\lambda}} \intrd \beta(u_\theta(t,x)-u_\ep(t,y)) [\psi(t, x)-\psi(t,x+z)] \varrho_\delta(x-y)\,dx
\\
=& \int_{|z|>R} \frac{1}{|z|^{d+2\lambda}} \intrd \beta(u_\theta(t,x)-u_\ep(t,y)) [\psi(t, x)-\psi(t,x+z)] \varrho_\delta(x-y)\,dx 
\\&+ \int_{R>|z|>r} \frac{1}{|z|^{d+2\lambda}} \intrd \beta(u_\theta(t,x)-u_\ep(t,y)) [\psi(t, x)-\psi(t,x+z)] \varrho_\delta(x-y)\,dx.
\end{align*}
Firstly, set $R$ such that 
\begin{align*}
&\E \int_{\Pi_T} \int_{|z|>R} \frac{1}{|z|^{d+2\lambda}} \intrd \beta(u_\theta(t,x)-u_\ep(t,y)) |\psi(t, x)-\psi(t,x+z)| \varrho_\delta(x-y)\,dx\,dy\,dt \\
\leq & 
C\int_{|z|>R} \frac{1}{|z|^{d+2\lambda}} \E \int_{\Pi_T}  \intrd [|u_\theta(t,x)|+|u_\ep(t,y)|] [|\psi(t, x)|+|\psi(t,x+z)|] \varrho_\delta(x-y)\,dx\,dy\,dt \\
\leq & C(\|\psi\|_{L^2(\Pi_T)},\|u_\theta\|_{L^2(\Omega\times\Pi_T)},\|u_\epsilon\|_{L^2(\Omega\times\Pi_T)})\frac{1}{R^{2\lambda}} < \tilde \eps.
\end{align*}
Then, considering $M$ such that $M > K+\delta + R$, where supp$\psi(t,.) \subset \bar B(0,K)$ for any $t$,
\begin{align*}
&\E \int_{|y|>M} \int_{R>|z|>r} \frac{1}{|z|^{d+2\lambda}} \intrd \beta(u_\theta(t,x)-u_\ep(t,y)) [\psi(t, x)-\psi(t,x+z)] \varrho_\delta(x-y)\,dx\,dy\,dt\\
=& 
\int_{R>|z|>r} \frac{1}{|z|^{d+2\lambda}} \E \int_{|y|>M}  \int_{|x-y|<\delta} \beta(u_\theta(t,x)-u_\ep(t,y)) [\psi(t, x)-\psi(t,x+z)] \varrho_\delta(x-y)\,dx\,dy\,dt =0,
\end{align*}
since then $|x|>K$ and $|x+z|>K$. 
\\
Hence $G(\cdot;u_\ep)$ is uniformly integrable, and taking advantage of Young measure theory, we conclude that
\begin{align*}
&\lim_{\ep \to 0} \,\E\bigg[ \intrd \int_{\Pi_T}\beta( u_\theta(t,x)-u_\ep(t,y)) \fr^r[ \psi(t, \cdot)](x) \varrho_\delta(x-y) \,dy\,dx\,dt \bigg]\\
& \qquad \qquad = \E\bigg[ \intrd \int_{\Pi_T} \int^1_0\beta( u_\theta(t,x)-v(t,y,\beta)) \fr^r[ \psi(t, \cdot)](x) \varrho_\delta(x-y) \,d\alpha \,dy\,dx\,dt \bigg].
\end{align*}
A verbatim copy of the above arguments with the Carath\'eodory function $G$ on $\Pi_T\times\Omega\times\R$ defined by 
\begin{align*}
G(t,x,\omega; \mu):= \fr^r[ \psi(t, \cdot)](x) \int_{\R^d}\beta( \mu - v(t,y,\beta)  \varrho_\delta(x-y)dy
\end{align*}
yields
\begin{align*}
&\lim_{\theta \to 0}  \,\E\bigg[ \intrd \int_{\Pi_T} \int^1_0\beta( u_\theta(t,x)-v(t,y,\beta)) \fr^r[ \psi(t, \cdot)](x) \varrho_\delta(x-y) \,d\alpha \,dy\,dx\,dt \bigg]\\
& \qquad \qquad = \E\bigg[ \intrd \int_{\Pi_T} \int^1_0\int^1_0\beta( u(t,x,\alpha)-v(t,y,\beta)) \fr^r[ \psi(t, \cdot)](x) \varrho_\delta(x-y) \,d\alpha\,d\beta \,dy\,dx\,dt\bigg].
\end{align*}
This finishes the proof.

\end{proof}
%%%%%%%%%%%%%%%%%%%%%%%%%%%%%%%%%%%%%%%%%%%%%%%%%%%%%%%%%%%%%%%%%%%%%%%%%%%%%%%%%%%%%%%%%%

\begin{lem}
\label{fractional_lemma_one_1}
It holds that 
\begin{align}
I_{10}+J_{10} & \underset{\delta_0 \goto 0} \longrightarrow  -  \E \Big[\int_{\R^d} \int_{\Pi_T}
\int_{\R} \beta(u_\theta(t,x) -k ) \,\mathcal{L}_{\lambda,r} \big[ \psi(t, \cdot)\varrho_{\delta} (\cdot -y)\big](x)
\, \varsigma_l(u_\eps(t,y)-k)\,dk\,dx\,dt\,dy \Big]\notag \\
& -\E \Big[\int_{\Rd} \int_{\Pi_T} \int_{\alpha=0}^1 
\int_{\R} \beta( u_\eps(t,y)-k)\mathcal{L}_{\lambda, r}[\varrho_{\delta}(x-\cdot)](y) \psi(t,x) \, \varsigma_l(u_\theta(t,x)-k)\,dk\,dx\,dt\,dy \Big]  \notag \\
&  \underset{l \to 0}\longrightarrow -  \E \Big[\int_{\R^d} \int_{\Pi_T}  \beta(u_\theta(t,x) -u_\eps(t,y) ) \,\mathcal{L}_{\lambda,r} \big[ \psi(t, \cdot)\varrho_{\delta} (\cdot -y)\big](x) \,dx\,dt \,dy \Big]\notag \\
& -\E \Big[\int_{\Rd} \int_{\Pi_T} \beta( u_\eps(t,y)-u_\theta(t,x))\mathcal{L}_{\lambda, r}[\varrho_{\delta}(x-\cdot)](y) \psi(t,x) \,\, \,dx\,dt\,\,dy \Big]  \notag \\
&\underset{\eps \goto 0} \longrightarrow  -  \E \Big[\int_{\R^d} \int_{\Pi_T} \int^1_0  \beta(u_\theta(t,x) -u(t,y, \alpha) ) \,\mathcal{L}_{\lambda,r} \big[ \psi(t, \cdot)\varrho_{\delta} (\cdot -y)\big](x)  \, d\alpha \,dx\,dt \,dy \Big] \notag\\ 
& - \E \Big[\int_{\R^d} \int_{\Pi_T}  \int^1_0 \beta(u(t,y,\alpha) - u_\theta(t,x) ) \,\mathcal{L}_{\lambda,r} \big[ \varrho_{\delta} (x -\cdot)\big](y)\,\psi(t,x)  \, d\alpha \,dx\,dt \,dy \Big]\notag\\
&\underset{\theta \goto 0} \longrightarrow -  \E \Big[\int_{\R^d} \int_{\Pi_T} \int^1_0 \int^1_0 \beta(v(t,x,\beta) -u(t,y, \alpha) ) \,\mathcal{L}_{\lambda,r} \big[ \psi(t, \cdot)\varrho_{\delta} (\cdot -y)\big](x)  \,d\alpha\, d\beta \,dx\,dt \,dy \Big] \notag\\ 
& - \E \Big[\int_{\R^d} \int_{\Pi_T}  \int^1_0 \int^1_0 \beta(u(t,y,\alpha) - v(t,x,\beta) ) \,\mathcal{L}_{\lambda,r} \big[ \varrho_{\delta} (x -\cdot)\big](y)\,\psi(t,x) \, d\alpha \, d\beta \,dx\,dt \,dy \Big]\notag\\
\end{align}
\end{lem}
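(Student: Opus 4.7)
My plan is to follow the four-step limiting scheme of Lemma~\ref{fractional_lemma_1}, applied now to the singular (small-$z$) part $\mathcal{L}_{\lambda,r}$ hitting the test function rather than to the integrable (large-$z$) part $\mathcal{L}_\lambda^r$ hitting the solutions. The parameter $\delta>0$ stays fixed throughout, and the four other parameters are removed in the exact order $\delta_0\to 0$, $l\to 0$, $\eps\to 0$, $\theta\to 0$ displayed in the statement.

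\textbf{Steps 1--2 ($\delta_0\to 0$ and $l\to 0$).} I would first exploit the factorisation
\begin{equation*}
\mathcal{L}_{\lambda,r}[\varphi_{\delta,\delta_0}(t,\cdot,s,y)](x)=\rho_{\delta_0}(t-s)\,\mathcal{L}_{\lambda,r}[\psi(t,\cdot)\varrho_\delta(\cdot-y)](x),
\end{equation*}
with the analogous identity in $J_{10}$ where $\mathcal{L}_{\lambda,r}$ acts in the $y$-variable. Since $\psi$ is compactly supported and $\varrho_\delta$ is smooth (with $\delta$ fixed), a Taylor expansion inside the principal value together with $\int_{|z|\le r}|z|^{2-d-2\lambda}\,dz<+\infty$ for $\lambda<1$ shows that the two multipliers $\mathcal{L}_{\lambda,r}[\psi(t,\cdot)\varrho_\delta(\cdot-y)](x)$ and $\mathcal{L}_{\lambda,r}[\varrho_\delta(x-\cdot)](y)$ are pointwise bounded with compact spatial support. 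The standard convolution identity $\int\rho_{\delta_0}(t-s)F(t,s)\,ds\to F(t,t)$ in $L^1(\Omega\times(0,T))$ then collapses $s$ to $t$. For the $l$-limit I would use the symmetric mollifier identity $\int_\R\beta(a-k)\varsigma_l(b-k)\,dk=(\beta\star\varsigma_l)(a-b)\to\beta(a-b)$ together with dominated convergence (using $\beta'\in L^\infty$ and the a priori $L^2$-bounds on $u_\theta,u_\eps$), producing $\beta(u_\theta(t,x)-u_\eps(t,y))$ and $\beta(u_\eps(t,y)-u_\theta(t,x))$ in the two terms.

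\textbf{Steps 3--4 ($\eps\to 0$ and $\theta\to 0$).} Both are Young measure passages, handled verbatim as in Step~3 of Lemma~\ref{fractional_lemma_1}. For Step~3 I would introduce the Carathéodory functions
\begin{equation*}
G_1(t,y,\omega;\mu):=\int_{\R^d}\beta(u_\theta(t,x)-\mu)\,\mathcal{L}_{\lambda,r}[\psi(t,\cdot)\varrho_\delta(\cdot-y)](x)\,dx,
\end{equation*}
\begin{equation*}
G_2(t,y,\omega;\mu):=\int_{\R^d}\beta(\mu-u_\theta(t,x))\,\psi(t,x)\,\mathcal{L}_{\lambda,r}[\varrho_\delta(x-\cdot)](y)\,dx,
\end{equation*}
and verify uniform-in-$\eps$ $L^2(\Omega\times\Pi_T)$ boundedness via the pointwise bound on the multipliers combined with the a priori estimate~\eqref{bounds:a-priori-viscous-solution}. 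Equi-smallness at spatial infinity follows from the compact supports of $\psi$ and $\varrho_\delta(\cdot-y)$: the integrand vanishes once $|y|$ exceeds $\mathrm{diam}(\mathrm{supp}\,\psi)+\delta+r$. The fundamental theorem on Young measures then yields the $\int_0^1 d\alpha$ representation with $u_\eps(t,y)$ replaced by $u(t,y,\alpha)$. Step~4 is the same argument with the Carathéodory dependence now taken in the $u_\theta$-variable, producing the second average $\int_0^1 d\beta$ and replacing $u_\theta(t,x)$ by $v(t,x,\beta)$.

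\textbf{Main obstacle.} The only delicate input is the uniform integrability of $G_j(\cdot;u_\eps)$ in Step~3 (and its analogue in Step~4). A naive Leibniz estimate through $\|D^2(\psi\varrho_\delta)\|_\infty$ produces constants that are large in $\delta$, but since $\delta$ stays fixed in this lemma these constants are harmless for the Young measure passage as long as they are independent of $\eps$ and $\theta$. The cleanest way to secure this is to keep the translation invariance $\mathcal{L}_{\lambda,r}[\varrho_\delta(x-\cdot)](y)=(\mathcal{L}_{\lambda,r}\varrho_\delta)(x-y)$ explicit in $G_2$, and to bound the integrand of $G_1$ via the pointwise estimate $|\psi(t,x)\varrho_\delta(x-y)-\psi(t,x+z)\varrho_\delta(x+z-y)+z\cdot\nabla_x[\psi(t,x)\varrho_\delta(x-y)]|\le C\,|z|^2$ inside the principal value, giving constants that depend only on $\|\psi\varrho_\delta\|_{W^{2,\infty}}$ and on $\int_{|z|\le r}|z|^{2-d-2\lambda}\,dz$, both independent of all the vanishing parameters. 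Once this uniform control is in place, the four convergences assembled in the statement follow exactly as in Lemma~\ref{fractional_lemma_1}.
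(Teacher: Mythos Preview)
Your proposal is correct and follows essentially the same route as the paper: factorise out $\rho_{\delta_0}$, use the pointwise $W^{2,\infty}$-type bound on $\mathcal{L}_{\lambda,r}$ applied to the smooth compactly supported test functions to control everything by integrable majorants, collapse $s\to t$ and $k\to u_\eps(t,y)$ (resp.\ $u_\theta(t,x)$) by convolution arguments, and then pass $\eps\to 0$, $\theta\to 0$ via Young measures using Carath\'eodory functions whose $L^2$ boundedness and equi-smallness at infinity come for free from the compact spatial support $K+\bar B(0,r+\delta)$. The paper's write-up differs only in cosmetic details---it makes the $\delta_0$-limit explicit via the split $|u_\eps(s,y)-u_\eps(t,y)|$ plus a boundary remainder $(1-\int_t^{\min(T,t+\delta_0)}\rho_{\delta_0})$, and does the $l$-limit via the Lipschitz bound $|\beta(a-k)-\beta(a-b)|\le C|b-k|$ rather than your convolution identity $\beta\star\varsigma_l\to\beta$---but the skeleton and all the estimates are identical.
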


\begin{proof} 
We prove this lemma in several steps.\\
\noindent {\bf Step 1} (Passing to the limit as $\delta_0 \to 0$):\\
Consider $K \subset \R^d$ a compact set such that supp$\psi(t,\cdot) \subset K$ for any $t$. Then, thanks to a change of variable in $k$, 
\begin{align*}
&\Bigg| \E \bigg[\int_{\Pi_T} \int_{\Pi_T} 
\int_{\R} \beta(u_\theta(t,x) -k ) \,\mathcal{L}_{\lambda,r} [\varphi_{\delta,\delta_0}(t,\cdot,s,y)](x)
\, \varsigma_l(u_\eps(s,y)-k)\,dk\,dy\,ds \,dx\,dt \bigg]\\
&- \E \bigg[\int_{\Pi_T} \int_{\R^d}  
\int_{\R} \beta(u_\theta(t,x)) -k ) \,\mathcal{L}_{\lambda,r} \big[ \psi(t, \cdot)\varrho_{\delta} (\cdot -y)\big](x) 
\, \varsigma_l(u_\eps(t,y)-k)\,dk\,dy \,dx\,dt \bigg]\Bigg|\\
&\leq c_{\lambda}\Bigg| \E \bigg[\int_{\Pi_T} \int_{\Pi_T}  
\int_{\R}(\beta(u_\theta(t,x) -u_\ep(s,y)+k )-\beta(u_\theta(t,x)-u_\ep(t,y)+k)) \varsigma_l(k)\rho_{\delta_0}(t-s)\\
&\hspace{4cm}\times 
\text{P.V.}\, \int_{|z|\le r} \frac{\psi(t, x)\varrho_{\delta} (x -y) -\psi(t, x+z)\varrho_{\delta} (x+z -y)}{|z|^{d + 2 \lambda}} \,dz
 \,dk \,dx\,dt \,dy\,ds\bigg]\Bigg|\\
& + \Bigg| \E \bigg[ \int\limits_{\Pi_T} \int\limits_{\R^d} 
\int\limits_{\R} \Big(1-\int_t^{\min(T,t+\delta_0)}\hspace*{-1.5cm}\rho_{\delta_0}(t-s)ds\Big)\,\beta(u_\theta(t,x) -k ) \,\mathcal{L}_{\lambda,r} \big[ \psi(t, \cdot)\varrho_{\delta} (\cdot -y)\big](x) 
\, \varsigma_l(u_\eps(t,y)-k)\,dk\,dy \,dx\,dt \bigg] \Bigg|\\
&\leq c(\beta',\psi,\rho_\delta)\Bigg| \E \bigg[\int_{(0,T)^2}\int_{K+\bar B(0,r+\delta)} \int_{K+\bar B(0,r)}  
|u_\ep(s,y)-u_\ep(t,y)|\rho_{\delta_0}(t-s)  \,dx\,dt \,dy\,ds\bigg]\Bigg|\\
& + c(\psi,\rho_\delta) \Bigg| \E \bigg[\int\limits_{K+\bar B(0,r)} \int\limits_0^T\int\limits_{K+\bar B(0,r+\delta)} 
\int\limits_{\R} \Big(1-\int_t^{\min(T,t+\delta_0)}\hspace*{-1.5cm}\rho_{\delta_0}(t-s)ds\Big)\beta(u_\theta(t,x) -k ) \,
\, \varsigma_l(u_\eps(t,y)-k)\,dk\,dy\,dt \,dx \bigg]\Bigg|
\\&\le C(\beta^\prime,\psi,\rho_\delta,K,r,\delta) \E \bigg[\int^T_{s=0}\int^T_{t=0} \int_{K+\bar B(0,r+\delta)} | u_\ep(s,y)-u_\ep(t,y)| \rho_{\delta_0}(t-s) \,dy \,dt \,ds \bigg]\\
& \hspace{5cm} + C \Big(1- \int_0^T\int_t^{\min(T,t+\delta_0)}\hspace*{-1.5cm}\rho_{\delta_0}(t-s)\,ds\,dt\Big)  \underset{\delta_0\to 0} \longrightarrow 0,
\end{align*}
where we have used that
\begin{align*}
& \E \Big[\int\limits_{K+\bar B(0,r)}  \int\limits_{K+\bar B(0,r+\delta)}  \int\limits_{\R} \beta(u_\theta(t,x) -k ) \, \varsigma_l(u_\eps(t,y)-k)\,dk\,dy \,dx \Big]
\\ \leq & 
C(\beta') \E \Bigg[\int\limits_{K+\bar B(0,r)}  \int\limits_{K+\bar B(0,r+\delta)}  \int\limits_{\R} \Big[|u_\theta(t,x)| +|u_\eps(t,y)|+ |u_\eps(t,y)-k| \Big] \, \varsigma_l(u_\eps(t,y)-k)\,dk\,dy \,dx \Bigg]
\\ \leq & 
C(\beta') \E \Big[\int\limits_{K+\bar B(0,r)}  \int\limits_{K+\bar B(0,r+\delta)}   \Big[|u_\theta(t,x)| +|u_\eps(t,y)|+ l \Big] \, dy \,dx \Big]
\\ \leq & C(\beta',K,r,\delta)\Big[l+\sup_t \|u_\theta\|_{L^2(\Omega\times\R^d)}\sup_t \|u_\eps\|_{L^2(\Omega\times\R^d)}\Big] \leq C.
\end{align*}

The same argument yields
\begin{align*}
& \Bigg| \E \bigg[\int_{\Pi_T} \int_{\Pi_T} 
\int_{\R} \beta( u_\eps(s,y)-k)\mathcal{L}_{\lambda, r}[\varphi_{\delta,\delta_0}(t,x,s,.)](y) \varsigma_l(u_\theta(t,x)-k)\,dk\,dx\,dt\,dy\,ds \bigg] \\
&\qquad - \E \bigg[\int_{\Rd} \int_{\Pi_T} 
\int_{\R} \beta( u_\eps(t,y)-k)\mathcal{L}_{\lambda, r}[\varrho_{\delta}(x-\cdot)](y) \psi(t,x) \,\varsigma_l(u_\theta(t,x)-k)\,dk \,dx\,dt\,dy \bigg]\Bigg| \underset{\delta_0\to 0} \longrightarrow 0.
\end{align*}

\noindent {\bf Step 2} (Passing to the limit as $l \to 0$):
Observe that 
\begin{align*}
&\Bigg| \E \bigg[\int_{\R^d} \int_{\Pi_T}
\int_{\R} \beta(u_\theta(t,x) -k ) \,\mathcal{L}_{\lambda,r} \big[ \psi(t, \cdot)\varrho_{\delta} (\cdot -y)\big](x)
\, \varsigma_l(u_\eps(t,y)-k)\,dk  \,dx\,dt\,dy \bigg]\\
&\qquad \qquad -\E \bigg[\int_{\R^d} \int_{\Pi_T}   \beta(u_\theta(t,x) -u_\eps(t,y) ) \,\mathcal{L}_{\lambda,r} \big[ \psi(t, \cdot)\varrho_{\delta} (\cdot -y)\big](x)  \,dx\,dt \,dy \bigg]\Bigg|\\
&= \Bigg|  \E \bigg[\int_{\R^d} \int_{\Pi_T}  
\int_{\R} (\beta(u_\theta(t,x) -k )- \beta(u_\theta(t,x)-u_\ep(t,y)) \mathcal{L}_{\lambda,r} \big[ \psi(t, \cdot)\varrho_{\delta} (\cdot -y)\big](x)\\
&\hspace{8cm}\times \varsigma_l(u_\eps(t,y)-k) \,dk \,dx\,dt\,dy\bigg] \Bigg|\\
&\le C(\beta')\, \E\bigg[\int_{\R^d} \int_{\Pi_T} 
\int_{\R} | u_\ep(t,y)-k| |\mathcal{L}_{\lambda,r} \big[ \psi(t, \cdot)\varrho_{\delta} (\cdot -y)\big](x)| \varsigma_l(u_\eps(t,y)-k) \,dk \,dx\,dt\,dy\bigg]\\
&\le C\,l \,\E\bigg[\int_{\R^d} \int_{\Pi_T} |\mathcal{L}_{\lambda,r} \big[ \psi(t, \cdot)\varrho_{\delta} (\cdot -y)\big](x)| \,dx\,dt\,dy\bigg] 
\underset{l\to 0} \longrightarrow 0.
\end{align*}
Similarly, we have 
\begin{align*}
&\E \bigg[\int_{\Rd} \int_{\Pi_T} 
\int_{\R} \beta( u_\eps(t,y)-k)\mathcal{L}_{\lambda, r}[\varrho_{\delta}(x-\cdot)](y) \psi(t,x) \, \varsigma_l(u_\theta(t,x)-k)\,dk \,dx\,dt\,dy \bigg] \\
& \qquad \qquad \underset{l \to 0}\longrightarrow \E \bigg[\int_{\Rd} \int_{\Pi_T} \beta( u_\eps(t,y)-(u_\theta(t,x))\mathcal{L}_{\lambda, r}[\varrho_{\delta}(x-\cdot)](y) \psi(t,x) \, \,dx\,dt\,\,dy \bigg].
\end{align*}

\noindent {\bf Step 3} (Passing to the limit as $\ep, \theta \to 0$): 
Consider
\begin{align*}
\mathcal{A}_3:=\E \bigg[\int_{\R^d} \int_{\Pi_T}  \beta(u_\theta(t,x) -u_\eps(t,y) ) \,\mathcal{L}_{\lambda,r} \big[ \psi(t, \cdot)\varrho_{\delta} (\cdot -y)\big](x) \,dx\,dt \,dy \bigg]
\end{align*}
As before, let us define
$$G(t,y,\omega,\mu) =\int_{\R^d}\beta(u_\theta(t,x) -\mu)\mathcal{L}_{\lambda,r} \big[ \psi(t, \cdot)\varrho_{\delta} (\cdot -y)\big](x)\,dx.$$
Note that the above integration holds in the compact set $K+\bar B(0,r)$ and $G$ is a Carath\'eodory function.  
Thanks to the compact support of $\psi$,
\begin{align*}
&\E \bigg[\int_{\Pi_T} \Big|\int_{\R^d} \beta( u_\theta(t,x) -u_\ep(t,y))  \mathcal{L}_{\lambda,r} \big[ \psi(t, \cdot)\varrho_{\delta} (\cdot -y)\big](x)\,dx\Big|^2 \,dy\,dt \bigg]\\
& \le C(\beta^\prime, \varrho,\psi) \E \bigg[\int_0^T \int_{K +\bar{B}(0, r +\delta)}\int_{K +\bar{B}(0, r)} | u_\theta(t,x)-u_\ep(t,y)|^2 \,dx\,dy \,dt \bigg] \le C,
\end{align*}
$G(\cdot , u_\ep(\cdot))$ is uniformly integrable and the Young measure theory gives
\begin{align*}
\mathcal{A}_3 \underset{\ep \to 0} \longrightarrow  \E \bigg[\int_{\R^d} \int_{\Pi_T} \int_{\beta=0}^1  \beta(u_\theta(t,x) -v(t,y, \beta) ) \,\mathcal{L}_{\lambda,r} \big[ \psi(t, \cdot)\varrho_{\delta} (\cdot -y)\big](x)  \, d\beta \,dx\,dt \,dy \bigg]
\end{align*}
Similarly, it can be shown that 
\begin{align*}
& \E \bigg[\int_{\R^d} \int_{\Pi_T} \int_{\beta=0}^1  \beta(u_\theta(t,x) -v(t,y,\beta) ) \,\mathcal{L}_{\lambda,r} \big[ \psi(t, \cdot)\varrho_{\delta} (\cdot -y)\big](x)  \, d\beta \,dx\,dt \,dy \bigg]\\ &\underset{\theta\to 0}\longrightarrow \E \bigg[\int_{\R^d} \int_{\Pi_T}\int^1_{\alpha=0} \int_{\beta=0}^1  \beta(u(t,x,\alpha) -v(t,y, \beta) ) \,\mathcal{L}_{\lambda,r} \big[ \psi(t, \cdot)\varrho_{\delta} (\cdot -y)\big](x)\,d\alpha  \, d\beta \,dx\,dt \,dy \bigg]
\end{align*}
For the other term, we consider
$$\mathcal{B}_3:= \E \bigg[\int_{\Rd} \int_{\Pi_T}\beta( u_\eps(t,y)-u_\theta(t,x))\mathcal{L}_{\lambda, r}[\varrho_{\delta}(x-\cdot)](y) \psi(t,x) \, \,dx\,dt\,\,dy \bigg]$$
Following the same analysis as for the term $\mathcal{A}_3$, we conclude
\begin{align*}
\mathcal{B}_3 &\underset{\ep \to 0} \longrightarrow  \E \bigg[\int_{\R^d} \int_{\Pi_T}  \int_{\beta=0}^1 \beta(v(t,y,\beta) - u_\theta(t,x) ) \,\mathcal{L}_{\lambda,r} \big[ \varrho_{\delta} (x -\cdot)\big](y)\,\psi(t,x) \, d\beta \,dx\,dt \,dy \bigg]\\
&\underset{\theta \to 0}\longrightarrow \E \bigg[\int_{\R^d} \int_{\Pi_T} \int_{\alpha=0}^1 \int_{\beta=0}^1 \beta(v(t,y,\beta) - u(t,x,\alpha) ) \,\mathcal{L}_{\lambda,r} \big[ \varrho_{\delta} (x -\cdot)\big](y)\,\psi(t,x) \,d\alpha \, d\beta \,dx\,dt \,dy \bigg].
\end{align*}
This concludes the proof.
\end{proof}

%%%%%%%%%%%%%%%%%%%%%%%%%%%%%%%%%%%%%%%%%%%%%%%%%%%%%%%%%%%%%%%%%%%%%%%%%%%%%%%%%%%%%%%%%%

\begin{lem}
\label{fractional_lemma_two_1}
It holds that 
\begin{align*}
&\limsup_{\xi\goto 0} \limsup_{\theta \goto 0}\limsup_{\eps \goto 0}\lim_{l\goto 0}\lim_{\delta_0\goto 0} \big(I_9 + J_9 + I_{10}+J_{10} \big)\\
& \le  -\E \bigg[\int_{\R^d} \int_{\Pi_T} \int_{\alpha=0}^1\int^1_{\beta=0}\fr^r[\psi(t,\cdot)](x) \varrho_{\delta}(x-y)|v(t,x,\alpha)-u(t,y,\beta)| \,d\alpha\, d\beta \,dy\,dx\,dt \bigg]\\
& -  \E \bigg[\int_{\R^d} \int_{\Pi_T} \int_{\alpha=0}^1\int_{\beta=0}^1  |u(t,x, \alpha) -v(t,y, \beta) | \,\mathcal{L}_{\lambda,r} \big[ \psi(t, \cdot)\varrho_{\delta} (\cdot -y)\big](x) \,d\alpha \, d\beta \,dx\,dt \,dy \bigg] \notag\\ 
& - \E \bigg[\int_{\R^d} \int_{\Pi_T} \int_{\alpha=0}^1 \int_{\beta=0}^1 |v(t,y,\beta) - u(t,x,\alpha)| \,\mathcal{L}_{\lambda,r} \big[ \varrho_{\delta} (x -\cdot)\big](y)\,\psi(t,x) \,d\alpha \, d\beta \,dx\,dt \,dy \bigg]\\
& \underset{r \goto 0} \longrightarrow - \E \bigg[\int_{\R^d} \int_{\Pi_T} \int_{\alpha=0}^1\int_{\beta=0}^1 
|u(t,x, \alpha) - v(t,y, \beta)| \, \fr[\psi(t,\cdot)](x)\, \varrho_{\delta} (x-y)  \,d\alpha \,d\beta \,dx\,dt \,dy \bigg] \notag \\
& \underset{\delta \goto 0} \longrightarrow - \E \bigg[\int_{\Pi_T} \int_{\alpha=0}^1\int_{\beta=0}^1 
|u(t,x, \alpha) - v(t,x, \beta)| \, \fr[\psi(t, \cdot)](x)\,d\alpha \,d\beta \,dx\,dt  \bigg]
\end{align*}
\end{lem}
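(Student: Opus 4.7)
The plan is to chain together the conclusions of Lemmas~\ref{fractional_lemma_1} and \ref{fractional_lemma_one_1}, then peel off the parameters $\xi$, $r$, $\delta$ one by one, exploiting in turn the convexity of $\beta_\xi$, the smoothness of the test function $\psi\varrho_\delta$, and the translation continuity of $L^1$ functions.

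\emph{Step 1 (combining and passing $\xi\to0$).} Adding the upper bounds already secured for $I_9+J_9$ and $I_{10}+J_{10}$ gives the sum of three integrals in which the unknown solutions appear only through $\beta_\xi(v(t,x,\alpha)-u(t,y,\beta))$ and $\beta_\xi(u(t,x,\alpha)-v(t,y,\beta))$; these two quantities coincide since $\beta$ is even. Using the elementary bound $|\,\beta_\xi(r)-|r|\,|\le M_1\xi$ from \eqref{eq:approx to abosx}, together with the uniform $L^1_{x,y,t,\omega}$-control of $\fr^r[\psi(t,\cdot)](x)\,\varrho_\delta(x-y)$ and of the two operators $\mathcal{L}_{\lambda,r}[\psi(t,\cdot)\varrho_\delta(\cdot-y)](x)$ and $\mathcal{L}_{\lambda,r}[\varrho_\delta(x-\cdot)](y)\,\psi(t,x)$ (which are bounded by constants depending only on $\psi$, $\varrho_\delta$, $r$), the dominated convergence theorem lets us replace $\beta_\xi$ by $|\cdot|$ in the limit $\xi\to0$, yielding the first displayed bound of the lemma.

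\emph{Step 2 (passing $r\to0$).} Since $\psi(t,\cdot)\varrho_\delta(\cdot-y)$ and $\varrho_\delta(x-\cdot)$ are both $C^2$ in the integration variable with second derivatives bounded uniformly in $(y,x)$, a second-order Taylor expansion inside the principal value gives
\[
\bigl|\mathcal{L}_{\lambda,r}[\psi(t,\cdot)\varrho_\delta(\cdot-y)](x)\bigr|
+ \bigl|\mathcal{L}_{\lambda,r}[\varrho_\delta(x-\cdot)](y)\bigr|
\;\le\; C(\psi,\delta)\,r^{2-2\lambda},
\]
with compact support in $(x,y)$ determined by $\mathrm{supp}\,\psi$ and $\delta$. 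Combined with the uniform $L^1$-bound on $|u(t,x,\alpha)-v(t,y,\beta)|$ on compacts (inherited from the viscous a priori estimates via Young measure theory), the last two terms of the inequality vanish in the limit $r\to0$. Conversely, $\fr^r[\psi(t,\cdot)](x)$ converges pointwise to $\fr[\psi(t,\cdot)](x)$ and is dominated by a kernel in $L^\infty_tL^1_x$, so dominated convergence upgrades the first term to the corresponding integral against $\fr[\psi(t,\cdot)](x)$.

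\emph{Step 3 (passing $\delta\to0$).} After Step 2 the remaining expression is
\[
-\,\E\!\left[\int_{\R^d}\!\int_{\Pi_T}\!\int_0^1\!\int_0^1
|u(t,x,\alpha)-v(t,y,\beta)|\,\fr[\psi(t,\cdot)](x)\,\varrho_\delta(x-y)\,d\alpha\,d\beta\,dx\,dt\,dy\right].
\]
Changing variables $z=x-y$ and using the translation continuity of $(\alpha,\beta)\mapsto u(t,\cdot,\alpha)$ and $v(t,\cdot,\beta)$ in $L^1_{\mathrm{loc}}(\Omega\times\Pi_T\times(0,1))$, together with the fact that $\fr[\psi(t,\cdot)]\in L^\infty_tL^1_x\cap L^1_tL^\infty_x$, the mollifier $\varrho_\delta$ can be removed in the limit $\delta\to0$, delivering the final identity of the lemma.

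The most delicate step is Step~2: one has to be sure that the singular kernel in $\mathcal{L}_{\lambda,r}[\psi(t,\cdot)\varrho_\delta(\cdot-y)](x)$ is controlled \emph{uniformly} in $(x,y)$ on a compact set depending only on $\psi$ and $\delta$, so that the $O(r^{2-2\lambda})$ estimate can be paired with the only $L^1$-integrable (not better) data $|u-v|$ coming from the Young measure limits. Apart from this, all remaining passages are routine applications of dominated convergence and the Carath\'eodory-type arguments already developed in Lemmas~\ref{fractional_lemma_1}--\ref{fractional_lemma_one_1}.
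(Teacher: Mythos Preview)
Your proposal is correct and follows essentially the same three-step route as the paper: pass $\xi\to0$ via the uniform bound $|\beta_\xi(r)-|r||\le M_1\xi$ and dominated convergence; pass $r\to0$ using the $C^2$-smoothness of $\psi\varrho_\delta$ to get $|\mathcal{L}_{\lambda,r}[\,\cdot\,]|\le C r^s$ together with the decomposition $\fr^r[\psi]=\fr[\psi]-\mathcal{L}_{\lambda,r}[\psi]$; and pass $\delta\to0$ via spatial translation continuity. One minor remark: in Step~3 your phrase ``translation continuity of $(\alpha,\beta)\mapsto u(t,\cdot,\alpha)$'' is slightly garbled---what is needed (and what the paper uses, via Cauchy--Schwarz and $L^2$-continuity of translations) is continuity of $z\mapsto u(\cdot,\cdot+z,\cdot)$ in $L^2(\Omega\times(0,T)\times(0,1);L^2(\R^d))$, which then controls the difference $|u(t,y,\beta)-u(t,x,\beta)|$ after integrating against $\varrho_\delta(x-y)$.
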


\begin{proof}
We will prove this lemma in several steps. First note that, from the two previous lemmas, one has that
\begin{align*}
&\limsup_{\theta \goto 0}\limsup_{\eps \goto 0}\lim_{l\goto 0}\lim_{\delta_0\goto 0} \big(I_9 + J_9 + I_{10}+J_{10} \big) 
\\ \leq& -  \E \bigg[\int_{\R^d} \int_{\Pi_T} \int_{\alpha=0}^1\int_{\beta=0}^1  \beta(v(t,x, \alpha) -u(t,y, \beta) ) \,\mathcal{L}_{\lambda,r} \big[ \psi(t, \cdot)\varrho_{\delta} (\cdot -y)\big](x) \,d\alpha \, d\beta \,dx\,dt \,dy \bigg] \notag\\ 
& - \E \bigg[\int_{\R^d} \int_{\Pi_T} \int_{\alpha=0}^1 \int_{\beta=0}^1 \beta(u(t,y,\beta) - v(t,x, \alpha) ) \,\mathcal{L}_{\lambda,r} \big[ \varrho_{\delta} (x -\cdot)\big](y)\,\psi(t,x) \,d\alpha \, d\beta \,dx\,dt \,dy \bigg]\notag
\\
&- \E \bigg[\int_{\R^d} \int_{\Pi_T} \int_{\alpha=0}^1\int^1_{\beta=0}\fr^r[\psi(t,\cdot)](x) \varrho_{\delta}(x-y)\beta(v(t,x,\alpha)-u(t,y,\beta)) \,d\alpha\, d\beta \,dy\,dx\,dt \bigg]
\end{align*}

\noindent {\bf Step 1} (Passing to the limit as $\xi \to 0$):
Recall that $|\beta_\xi(r) -|r|| \le M_1\xi$ (cf. \eqref{eq:approx to abosx}), so we have
\begin{align*}
&\Bigg| \E \bigg[\int_{\R^d} \int_{\Pi_T} \int_{\alpha=0}^1\int^1_{\beta=0}\fr^r[\psi(t,\cdot)](x) \varrho_{\delta}(x-y)\beta_\xi(v(t,x,\alpha)-u(t,y,\beta)) \,d\alpha\, d\beta \,dy\,dx\,dt \bigg]\\
&- \E \bigg[\int_{\R^d} \int_{\Pi_T} \int_{\alpha=0}^1\int^1_{\beta=0}\fr^r[\psi(t,\cdot)](x) \varrho_{\delta}(x-y)|v(t,x,\alpha)-u(t,y,\beta)| \,d\alpha\, d\beta \,dy\,dx\,dt \bigg]\Bigg|\\
&\le c_{\lambda}M_1 \xi \E \bigg[\int_{\R^d} \int_{\Pi_T}  \varrho_{\delta}(x-y) \int_{|z|> r} \frac{|\psi(t,x) -\psi(t,x+z)|}{|z|^{d + 2 \lambda}} \,dz \,dy\,dx\,dt\bigg] \underset{\xi \goto 0} \longrightarrow 0.
\end{align*}
Similarly and taking into account the compact support of $\psi$ as in the proof of Lemma \ref{fractional_lemma_one_1}, we have for the other two terms: 
\begin{align*}
& \E \bigg[\int_{\R^d} \int_{\Pi_T} \int_{\alpha=0}^1\int_{\beta=0}^1  \beta(u(t,x, \alpha) -v(t,y, \beta) ) \,\mathcal{L}_{\lambda,r} \big[ \psi(t, \cdot)\varrho_{\delta} (\cdot -y)\big](x) \,d\alpha \, d\beta \,dx\,dt \,dy \bigg] \\ 
& \quad \underset{\xi \to 0} \longrightarrow \E \bigg[\int_{\R^d} \int_{\Pi_T} \int_{\alpha=0}^1\int_{\beta=0}^1  |u(t,x, \alpha) -v(t,y, \beta) | \,\mathcal{L}_{\lambda,r} \big[ \psi(t, \cdot)\varrho_{\delta} (\cdot -y)\big](x) \,d\alpha \, d\beta \,dx\,dt \,dy \bigg] \notag\\ 
& \E \bigg[\int_{\R^d} \int_{\Pi_T} \int_{\alpha=0}^1 \int_{\beta=0}^1 \beta(v(t,y,\beta) - u(t,x, \alpha) ) \,\mathcal{L}_{\lambda,r} \big[ \varrho_{\delta} (x -\cdot)\big](y)\,\psi(t,x) \,d\alpha \, d\beta \,dx\,dt \,dy \bigg]\\
& \quad \underset{\xi \to 0} \longrightarrow \E \bigg[\int_{\R^d} \int_{\Pi_T} \int_{\alpha=0}^1 \int_{\beta=0}^1 |v(t,y,\beta) - u(t,x,\alpha)| \,\mathcal{L}_{\lambda,r} \big[ \varrho_{\delta} (x -\cdot)\big](y)\,\psi(t,x) \,d\alpha \, d\beta \,dx\,dt \,dy \bigg]
\end{align*}

\noindent {\bf Step 2} (Passing to the limit as $r \to 0$):
First note that (cf. \cite{CifaniJakobsen}) for regular $\phi$ being $\psi$, $\psi\varrho_{\delta}(x-\cdot)$ or $\varrho_{\delta}(x-\cdot)$ in the sequel,
\[
|\mathcal{L}_{\lambda,r}[\phi](x)|\le 
\begin{cases}
c_{\lambda}\|D\phi\|_{L^\infty} \int_{|z|\le r} \frac{|z|}{|z|^{d+2\lambda}}\,dz, & \lambda \in (0,1/2) \\[2mm]
\frac{c_{\lambda}}{2}\|D^2\phi\|_{L^\infty}\int_{|z|\le r} \frac{|z|^2}{|z|^{d+2\lambda}}\,dz, & \lambda \in [1/2,1).
\end{cases}
\]
Thus we see that in both cases $|\mathcal{L}_{\lambda,r}[\phi](x)| \le c(\phi) r^s$ for some $s>0$ and  $\lim_{r\to 0}|\mathcal{L}_{\lambda,r}[\phi](x)|=0$.
On the other hand, since supp$\psi(t,\cdot) \subset K$ for any $t$, assuming $r+\delta<1$ one gets 
\begin{align*}
\bigg|\mathcal{L}_{\lambda,r} \big[ \psi(t, \cdot)\varrho_{\delta} (\cdot -y)\big](x)\bigg|=&
%c_\lambda \bigg|\int_{|z|\le r} \frac{\psi(t,x)\rho_\delta(x-y)-\psi(t,x+z)\rho_\delta(x+z-y)}{|z|^{d+2\lambda}}\,dz \bigg| = 
\bigg|\mathcal{L}_{\lambda,r} \big[ \psi(t, \cdot)\varrho_{\delta} (\cdot -y)\big](x)\bigg|1_{K + \bar B(0,1)}(x)1_{K + \bar B(0,1)}(y)
\\[1mm]
\leq& C(\psi,\rho_\delta) r^s 1_{K + \bar B(0,1)}(x)1_{K + \bar B(0,1)}(y),
\\[2mm]
\bigg| \mathcal{L}_{\lambda,r} \big[ \varrho_{\delta} (x -\cdot)\big](y)\,\psi(t,x) \bigg|
=& 
%\bigg| \psi(t,x) \int_{|z|\le r} \frac{\rho_\delta(x-y)-\rho_\delta(x-y-z)}{|z|^{d+2\lambda}}\,dz\bigg| 
\bigg| \mathcal{L}_{\lambda,r} \big[ \varrho_{\delta} (x -\cdot)\big](y)\,\psi(t,x) \bigg| 1_{K}(x)1_{K + \bar B(0,1)}(y)
\\[1mm]
\leq& C(\psi,\rho_\delta) r^s 1_{K}(x)1_{K + \bar B(0,1)}(y).
\end{align*}
Therefore
\begin{align*}
& \E \bigg[\int_{\R^d} \int_{\Pi_T} \int_{\alpha=0}^1\int_{\beta=0}^1  |(u(t,x, \alpha) -v(t,y, \beta) | \,\mathcal{L}_{\lambda,r} \big[ \psi(t, \cdot)\varrho_{\delta} (\cdot -y)\big](x) \,d\alpha \, d\beta \,dx\,dt \,dy \bigg] \notag\\ 
&\quad +\E \bigg[\int_{\R^d} \int_{\Pi_T} \int_{\alpha=0}^1 \int_{\beta=0}^1 |v(t,y,\beta) - u(t,x,\alpha)| \,\mathcal{L}_{\lambda,r} \big[ \varrho_{\delta} (x -\cdot)\big](y)\,\psi(t,x) \,d\alpha \, d\beta \,dx\,dt \,dy \bigg]\underset{ r \to 0} \longrightarrow 0\\
\end{align*}
On the other hand, using $\fr[\varphi] := \mathcal{L}_{\lambda, r}[\varphi] + \mathcal{L}_{\lambda}^{r}[\varphi]$, we see that using similar arguments,
\begin{align*}
&\E \bigg[\int_{\R^d} \int_{\Pi_T} \int_{\alpha=0}^1\int^1_{\beta=0}\fr^r[\psi(t,\cdot)](x) \varrho_{\delta}(x-y)|v(t,x,\alpha)-u(t,y,\beta)| \,d\alpha\, d\beta \,dy\,dx\,dt \bigg]\\
&=\E \bigg[\int_{\R^d} \int_{\Pi_T} \int_{\alpha=0}^1\int^1_{\beta=0}(\fr[\psi(t,\cdot)](x)- \mathcal{L}_{\lambda, r}[\psi(t,\cdot)](x)) \varrho_{\delta}(x-y)|v(t,x,\alpha)-u(t,y,\beta)| \,d\alpha\, d\beta \,dy\,dx\,dt \bigg]\\
&\qquad \underset{ r\to 0} \longrightarrow \E \bigg[\int_{\R^d} \int_{\Pi_T} \int_{\alpha=0}^1\int_{\beta=0}^1 
|u(t,x, \alpha) - v(t,y, \beta)| \, \fr[\psi(t,\cdot)](x)\, \varrho_{\delta} (x-y)  \,d\alpha \,d\beta \,dx\,dt \,dy \bigg]
\end{align*}

\noindent {\bf Step 3} (Passing to the limit as $\delta \to 0$):
Let us consider  
\begin{align*}
&\Bigg| \E \bigg[\int_{\R^d} \int_{\Pi_T} \int_{\alpha=0}^1\int_{\beta=0}^1 
|v(t,x, \alpha) - u(t,y, \beta)| \, \fr[\psi(t,\cdot)](x)\, \varrho_{\delta} (x-y)  \,d\alpha \,d\beta \,dx\,dt \,dy \bigg]\\
& - \E \bigg[\int_{\Pi_T} \int_{\alpha=0}^1\int_{\beta=0}^1 
|v(t,x, \alpha) - u(t,x, \beta)| \, \fr[\psi(t, \cdot)](x)\,d\alpha \,d\beta \,dx\,dt  \bigg]\Bigg|\\
& \le \E \bigg[\int_{\R^d} \int_{\Pi_T} \int_{\alpha=0}^1\int_{\beta=0}^1\Big| |v(t,x,\alpha)- u(t,y,\beta)|-|v(t,x,\alpha)-u(t,x,\beta)|\Big|\\
&\hspace{7cm}\times |\fr[\psi(t, \cdot)](x)| \varrho_\delta(x-y) \,d\alpha \, d\beta \,dx \,dt \,dy \bigg]\\
& \le \E\bigg[\int_{\R^d} \int_{\Pi_T} \int_{\alpha=0}^1| u(t,y,\beta)-u(t,x,\beta)| |\fr[\psi(t, \cdot)](x)| \varrho_\delta(x-y) \,d\beta \,dx\,dy\,dt\bigg]\\
&\le C(\psi) \,\E\bigg[\int_{\R^d} \int_{\Pi_T} \int_{\alpha=0}^1| u(t,y,\beta)-u(t,x,\beta)|^2 \varrho_\delta(x-y) \,d\beta \,dx\,dy\,dt\bigg]^{1/2}\underset{\delta \to 0} \longrightarrow 0,
\end{align*}
where in the last step, we have used the continuity of translations in $L^2$. Indeed
\begin{align*}
&\E\bigg[\int_{\R^d} \int_{\Pi_T} \int_{\alpha=0}^1| u(t,y,\beta)-u(t,x,\beta)|^2 \varrho_\delta(x-y) \,d\beta \,dx\,dy\,dt\bigg]^{1/2} \\
& \qquad = 
\bigg[\int_{\R^d}\int_{\R^d} \| u(y)-u(x)\|_{L^2(\Omega\times(0,T)\times(0,1))}^2 \varrho_\delta(x-y)  \,dx\,dy\bigg]^{1/2} \\
& \qquad \leq
\bigg[\int_{B(0,\delta)}\ \int_{\R^d} \| u(x+z)-u(x)\|_{L^2(\Omega\times(0,T)\times(0,1))}^2\,dx \varrho_\delta(z)  \,dz\bigg]^{1/2} \\
& \qquad \leq
\bigg[\sup_{|z|<\delta}\int_{\R^d} \| u(x+z)-u(x)\|_{L^2(\Omega\times(0,T)\times(0,1))}^2\,dx \bigg]^{1/2} \underset{\delta \to 0} \longrightarrow 0.
\end{align*}
This essentially finishes the proof of the lemma.
\end{proof}

Finally, making use of Lemma~\ref{lem:initial+time-terms} to \ref{fractional_lemma_two_1}, we get the expected Kato inequality 
\begin{align*}
0 \leq \int_\D |u_0-v_0|\psi(0)\,dx & + \E\bigg[ \int_{\Pi_T}\int_{\alpha=0}^1\int_{\beta=0}^1  |u(\alpha,t,x) -v(\beta,t,x)| \,\partial_t \psi(t,x) \,d\alpha \,d\beta \,dx \,dt \bigg] \\
& \qquad- \E\bigg[ \int_{\Pi_T}\int_{\alpha=0}^1\int_{\beta=0}^1 F(u(\alpha,t,x),v(\beta,t,x)) \, \nabla \psi(t,x)\,d\alpha \, d\beta \,dx \,dt \bigg] \\
& \qquad \qquad -\E\bigg[\int_{\Pi_T} \int_{\alpha=0}^1\int_{\beta=0}^1 |u(\alpha,t,x)-v(\beta,t,x)| \, \mathcal{L}_\lambda[\psi(t, \cdot)](x) \,d\alpha \, d\beta\,dx\,dt \bigg],
\end{align*}
\textit{a priori} for any non-negative $\psi \in \mathcal{D}([0,T[\times\D)$, but for any non-negative $\psi \in L^2(0,T,H^2(\D))\cap H^1(Q)$ by a density argument.

%%%%%%%%%%%%%%%%%%%%%%%%%%%%%%%%%%%%%%%%%%%%%%%%%%%%%%%%%%%%%%%%%%%%%%%%%%%%%%%%%%%%%%%%%%%%%%%%%%%%%%%%%%%

\subsection{Uniqueness of (Measure-Valued (mild)) Solution}
\label{Uniqueness of (Measure-Valued) Entropy Solution}
To begin with, observe that 
\begin{align}
&0 \leq \int\limits_\D  |u_0-v_0|\psi(0)\,dx \label{important_01}\\
&\qquad + \E\bigg[ \int\limits_{\Pi_T}\int\limits_{\alpha=0}^1\int\limits_{\beta=0}^1  |u(\alpha,t,x) -v(\beta,t,x)|\Big[ \partial_t \psi(t,x) - \, \mathcal{L}_\lambda[\psi(t, \cdot)](x) + \|\f^\prime\|_\infty|\nabla \psi|(t,x) \Big]d\alpha \, d\beta\,dx\,dt \bigg].\notag
\end{align}
To proceed further, we make a special choice for the function $\psi$. The method consists in applying a transposition method of Holmgreens type to \eqref{important_01}, thus, one has to test this above mentioned inequality with a regular solution to a certain backward nonlocal problem. For that purpose first we define $\theta(t) =1 -\frac{t}{T}$, for $t \in [0,T]$. Next, we define a function $\psi_R$ as follows: For $R\geq1$ and $a=d/2 +\eps$, with $\eps>0$
\begin{align}
\label{psi}
\psi_R(x) = \min \Bigg(1, \frac{R^a}{|x|^a} \Bigg).
\end{align}
Note that our choice of $a$ guarantees that $\psi_R$  in  $L^2(\D)$. Moreover, notice that
\begin{itemize}
\item [(a)] $\nabla \psi_R(x) = -aR^a |x|^{-a-1} \frac{x}{|x|}\chi_{\{|x|>R\}} = -a\frac{\psi_R(x)}{|x|}\frac{x}{|x|} \chi_{\{|x|>R\}} \in L^2(\D)$, \ $ |\nabla \psi_R|(x) \leq \frac{a}{R} \psi_R(x)$ in $\R^d$. 
\\[0.1mm]
\item [(b)] $\Delta \psi_R(x) = a(2 + 2\eps -a)\, \frac{\psi_R(x)}{R^2}\chi_{\{|x|>R\}} \in L^2(\lbrace {|x|>R} \rbrace)$, \quad $ |\Delta \psi_R|(x) \leq \frac{C}{R^2} \psi_R(x)$ in $\{|x|>R\}$.
\end{itemize}
Then, following \cite{Alibaud}, set $\mathcal{K}$ the kernel of $\mathcal{L}_{\lambda}$, \textit{i.e.} $\mathcal{K}(t,\cdot)\underset{x}\star u_0$ is solution to $\partial_tu+\mathcal{L}_{\lambda}u=0$, for the initial condition $u_0$. $\mathcal{K}(t,\cdot) \geq 0$ is integrable in $\D$ and $\mathcal{K}(t,\cdot)\underset{x}\star \mathcal{K}(s,\cdot)=\mathcal{K}(t+s,\cdot)$.
For any positive $\delta$, $\mathcal{K} \underset{x}\star \rho_{\delta} \in C^\infty_b([0,+\infty[\times\D)$ is a classical solution to  $\partial_tu+\mathcal{L}_{\lambda}u=0$ and integrable on $[0,T]\times\D$.
\\
With the help of the above functions, we define $\psi(t,x) =  \theta(t) \, \big(\psi_R (\cdot) \underset{x} \star \mathcal{K}_{\delta} (\cdot,t)\big)(x)$, where $\mathcal{K}_{\delta} (x,t)=\big( \mathcal{K} \underset{x}\star \rho_{\delta} \big)(x, \tau-t)$ for a given $\tau > T$.
\\
A simple observation reveals that $\mathcal{K}_{\delta} (x,t)$ satisfies $\partial_t \mathcal{K}_{\delta} - \mathcal{L}_{\lambda}[\mathcal{K}_{\delta}] =0$, and that $\partial_t \psi -  \mathcal{L}_{\lambda}[\psi] = \theta'(t) \, \psi_R \star \mathcal{K}_{\delta} (x,t)$.
Therefore, with this choice of test function $\psi$ in \eqref{important_01}, along with the assumption that $u_0 =v_0$, we have
\begin{align*}
0 \le & \E \bigg[\int_0^T \dint_{\D\times(0,1)^2} |u(\alpha,t,x) -v(\beta,t,x)| \, \psi_R \star \mathcal{K}_{\delta} (x,t) \,dx\,d\alpha \, d\beta \bigg]\,dt \\
&\quad \le  \frac{aT}{R}\|\f^\prime\|_\infty\, \int_0^T \E \bigg[\dint_{\{|x|>R\}\times(0,1)^2} |u(\alpha,s,x) -v(\beta,s,x)| \, \mathcal{K}_{\delta} (t, \cdot) \star  \psi_R(x)  \,dx\,d\alpha \, d\beta \bigg]\,ds. 
\end{align*}
This implies that 
\begin{align*}
0 \le &\frac12 \E \bigg[\int_0^T \dint_{\D\times(0,1)^2} |u(\alpha,t,x) -v(\beta,t,x)| \,  \psi_R \star \mathcal{K}_{\delta} (x,t) \,dx\,d\alpha \, d\beta \bigg]\,dt \\
& \le \int_0^T \Big( \frac{aT}{R}\|\f^\prime\|_\infty-\frac12 \Big) \E \bigg[\dint_{\D\times(0,1)^2} |u(\alpha,s,x) -v(\beta,s,x)| \, \mathcal{K}_{\delta} (t, \cdot) \star  \psi_R(x)  \,dx\,d\alpha \, d\beta \bigg]\,ds. 
\end{align*}
Observe that for large $R$, right hand side of the above inequality is non-positive. 
So, $R$ big enough yields
\begin{align*}
\E \bigg[\int_{\Pi_T\times(0,1)^2}  |u(\alpha,t,x) -v(\beta,t,x)|\,  \mathcal{K}_{\delta} (t, \cdot) \star  \psi_R(x)  \,dx\,d\alpha d\beta\,dt \bigg] = 0.
\end{align*}
Note that $\mathcal{K}_{\delta} (t, \cdot) \star  \psi_R(x) \to \|\mathcal{K}_\delta(t)\|_{L^1(\D)}$ and 
\begin{align*}
\|\mathcal{K}_\delta(t)\|_{L^1(\D)} &=\int_{\R^{2d}}\mathcal{K}(y,\tau-t)\,\rho_\delta(y-x)dydx=\int_\D \|\mathcal{K}(\tau-t)\|_{L^1(\D)} \, \rho_\delta(y-x)\,dx =1,
%\\ & \geq  \frac12 \|\mathcal{K}_0\|_{L^1(\D)}, \text{ for any $\delta$ less than a given $\delta_0$ since $\mathcal{K} \in C([0,T],L^1(\D))$.}
\end{align*}
so that, by Fatou's lemma
\begin{align*}
\E \bigg[\dint_{\Pi_T \times(0,1)^2} |u(\alpha,t,x) -v(\beta,t,x)|  \,dx\,d\alpha d\beta \bigg]=0.
\end{align*}
This ensures the uniqueness of the measure valued (mild) solution coming from a viscous regularization. Moreover, the above equality also implies that this unique measure valued (mild) solution is independent of its additional variables $\alpha$ or $\beta$. On the other hand, we conclude that the whole sequence of viscous approximation converges weakly in $L^2(\Omega \times \Pi_T)$. Since the limit process is independent of the additional variable, the viscous approximation converges strongly in $L^p(\Omega \times (0,T)\times B(0,M))$, for any $M>0$ and any $1\le p <2 $.

\begin{rem}[measurability of viscous solution]
\label{measurability}
Note that since $u_{\eps}$ is bounded in the Hilbert space $\mathcal{N}^2_w(0,T,L^2(\D))$, by identification, it is easy to show that $u_\eps$ converges weakly to $\mathfrak{u}:= \int_0^1u(\cdot, \alpha) d\alpha$ in the same space, so $\int_0^1u(\cdot, \alpha) d\alpha$ is a predictable process. The interesting point is the measurability of $\mathfrak{u}$ with respect to all the variables $(t,x,\omega,\alpha)$. In fact, one can follow the work of Panov \cite{Panov} and achieve the desired measurability of $\mathfrak{u}$. 
\end{rem}

%%%%%%%%%%%%%%%%%%%%%%%%%%%%%%%%%%%%%%%%%%%%%%%%%%%%%%%%%%%%%%%%%%%%%%%%%%%%%%%%

\subsection{Uniqueness (Alternative Method for $d>1$):}
Let $f,\psi \in \mathcal{D}^+(\D)$, and $\Phi$ be the (variational) solution to $(-\Delta)^\lambda [\Phi] = f$. Moreover, we choose a specific test function $\varphi(t,x)=\theta(t)\psi*\Phi$. 
If $d>1$ (see Section \ref{SolPositive}), $\Phi(x)$ is given by the Riesz potential of $f$, i.e., $\Phi(x) = I_{2\lambda}(f)(x)$. Note that it is positive (strictly) as soon as $f> 0$. Then the above choice of test function in \eqref{important} yields
\begin{align*}
&0 \leq \theta(0)\int_\D |u_0-v_0|\psi*\Phi(x) \,dx -\E \bigg[ \theta(t)  |u(\alpha,t,x)-v(\beta,t,x)|  \psi*f \,dx\,d\alpha d\beta\,dt \bigg]
\\ &+ \E \bigg[ \int_{\Pi_T}\int_0^1\int_0^1 |u(\alpha,t,x) -v(\beta,t,x)| \theta^\prime(t) \psi*\Phi(x) - \theta(t) F(u(\alpha,t,x),v(\beta,t,x))(\nabla \psi)*\Phi (x)\,dx\,d\alpha d\beta\,dt\bigg].
\end{align*}
By a density argument, the above inequality also holds for any non-negative $\psi$ in $H^1(\R^d)$ and $\theta \in H^{1}(0,T)$ with $\theta(T)=0$.
Assume that $\psi=\psi_R$ is the function already defined in the previous section. Then 
\begin{align*}
&\E \bigg[\int_{\Pi_T}\int_0^1\int_0^1 \theta(t)  |u(\alpha,t,x)-v(\beta,t,x)|  \psi_R*f \,dx\,d\alpha d\beta\,dt \bigg] \\ & \qquad \leq \theta(0)\int_\D |u_0-v_0|\psi_R*\Phi(x) \,dx \\
& \qquad \qquad + \int_0^T \Big[\theta^\prime(t)  + \frac{a}{R}\|\f^\prime\|_\infty \theta(t)\Big] \E \bigg[\int_{\D} \int_0^1\int_0^1  \psi_R*\Phi (x) |u(\alpha,t,x) -v(\beta,t,x)|  \,dx\,d\alpha d\beta\,dt\bigg].
\end{align*}
Set $u_0=v_0$ and $\theta(t)=1-\frac{t}{T}$. Since for big enough values of $R$,  the right hand side of the above inequality is non positive, passing to the limit $R \to \infty$ as in the previous section yields 
\begin{align*}
\E \bigg[\int_{\Pi_T}\int_0^1\int_0^1 \theta(t)  |u(\alpha,t,x)-v(\beta,t,x)|  \|f\|_{L^1(\D)} \,dx\,d\alpha d\beta\,dt \bigg]  \leq 0,
\end{align*}
and the conclusion is the same.

%%%%%%%%%%%%%%%%%%%%%%%%%%%%%%%%%%%%%%%%%%%%%%%%%%%%%%%%%%%%%%%%%%%%%%%%%%%%%%%%%%%%%%%%%%%%%%%%%%%%%%%%%%%

\subsection{Existence of Entropy Solution}
\label{entropy_existence}

The aim of this subsection is to prove the result of existence of an entropy solution, in the sense of Definition~\ref{defi:stochentropsol}, using the strong convergence of the sequence of viscous solutions we had in a previous section and \textit{a priori} bounds (cf. \textit{a priori} estimates \ref{regularity}). 
 
In what follows, following the calculations leading up to Definition~\ref{defi:stochentropsol}, for any $\psi \in \mathcal{D}^{+}(\Pi_T)$, any pair of entropy-entropy flux pair $(\beta, \zeta)$, with $\beta$ convex, and for any $\mathbb{P}$-measurable set $A$, the viscous solution $u_{\eps}$ satisfies the following inequality:
\begin{align*}
0 & \le  \E \Big[\mathds{1}_{A}\, \int_{\R^d} \beta(u_{\eps}(0,x))\, \psi(0,x)\,dx \Big]+ 
 \E \Big[\mathds{1}_{A}\, \int_{\Pi_T} \Big\{\beta(u_{\eps}(t,x)) \,\partial_t\psi(t,x) -  \grad \psi(t,x)\cdot \zeta(u_{\eps}(t,x)) \Big\}\,dx\,dt \Big] \\
 & \quad + \E \Big[\mathds{1}_{A}\,  \sum_{k\ge 1}\int_{\Pi_T} g_k(u_\eps(t,x))\beta^\prime (u_\eps(t,x))\psi(t,x)\,d\beta_k(t)\,dx \Big] \notag \\
 & \quad \quad + \frac{1}{2} \E \Big[\mathds{1}_{A}\, \int_{\Pi_T}\mathbb{G}^2(u_\eps(t,x))\beta^{\prime\prime} (u_\eps(t,x))\psi(t,x)\,dx\,dt \Big] + \mathcal{O}(\eps) \notag \\
  &  \quad + \E \Big[\mathds{1}_{A}\, \int_{\Pi_T} \int_{E} \int_0^1 \eta(u_\eps(t,x);z)\beta^\prime \big(u_\eps(t,x) + \lambda\,\eta(u_\eps(t,x);z)\big)\psi(t,x)\,d\lambda\,\widetilde{N}(dz,dt)\,dx \Big]  \notag \\
 & \quad + \E \Big[\mathds{1}_{A}\,\int_{\Pi_T} \int_{E}  \int_0^1  (1-\lambda)\eta^2(u_\eps(t,x);z)\beta^{\prime\prime} \big(u_\eps(t,x) + \lambda\,\eta(u_\eps(t,x);z)\big) 
 \psi(t,x)\,d\lambda\,m(dz)\,dx\,dt \Big] \notag \\
 & \quad -  \E \Big[\mathds{1}_{A}\,\int_{\Pi_T} \Big\{ \mathcal{L}_{\lambda}^{r}[u_\eps(t,\cdot)](x)\, \psi(t,x)\, \beta'(u_\eps(t,x)) + \beta(u_\eps(t,x)) \, \mathcal{L}_{\lambda, r}[\psi(t,\cdot)](x) \Big\}\,dx\,dt \Big]. 
\end{align*} 
For convenience, we will denote the above inequality by  
\begin{align}
\label{final}
0 \leq \int_{A} \mu^r_{\lambda, \beta}[u_{\eps}](\psi)\,d\bP - \int_{A}\int_{\Pi_T} \Big[ \psi\, \mathcal{L}_{\lambda}^{r}[u_\eps] \,\beta'(u_{\eps}) + \beta(u_{\eps}) \, \mathcal{L}_{\lambda, r}[\psi] \Big]\,dx\,dt \,d\bP.
\end{align}
Note that thanks to Section \ref{Uniqueness of (Measure-Valued) Entropy Solution}, for any Carath\'eodory function $\Theta$ from $\Omega\times\Pi_T\times\R$ to $\R$ such that $\Theta(\cdot,u_\epsilon)$ is uniformly integrable, then $\E \int_{\Pi_T}\Theta(\cdot,u_\epsilon)dxdt \to \E \int_{\Pi_T}\Theta(\cdot,u)dxdt$. Thus, using the same strategy as depicted in \cite{BaVaWit_2012, BisMajKarl_2014}, it is possible to pass to the limit in the first term on the right hand side of the above relation \eqref{final}. Therefore, $u \in N^2_w(0,T,L^2(\D))$ and $\int_A \mu^r_{\eta,k}[u_\epsilon](\psi) \, d\bP$ converges to $\int_A \mu^r_{\eta,k}[u](\psi) \,d\bP$ for any measurable subset $A \subset \Omega$. Hence, we only need to check the passage to the limit in the second term on the right hand side of \eqref{final}.

Observe that, since $ \mathcal{L}_{\lambda, r}[\psi]$ is a bounded measurable function with a compact support (depending on the one of $\psi$ and $r$), one gets that 
\begin{align*}
\int_A\int_{\Pi_T} \beta(u_{\eps}) \, \mathcal{L}_{\lambda, r}[\psi] \,dx\,dt \,d\bP 
 \to \int_A\int_{\Pi_T} \beta(u) \, \mathcal{L}_{\lambda, r}[\psi] \,dx\,dt \,d\bP.
\end{align*}
Finally, concerning the last remaining term, note that $\frac{1}{|z|^{d+2\lambda}}1_{|z|>r}$ is in all $L^p(\D)$, $p\ge1$
and that $\mathcal{L}^r_{\lambda}$ is a linear and continuous operator from $L^2(\Omega\times \Pi_T)$ into itself. Therefore, knowing that $u_\eps$ converges weakly to $u$ in $L^2(\Omega\times \Pi_T)$, $\mathcal{L}^r_{\lambda}(u_\eps)$ converges also weakly to $\mathcal{L}^r_{\lambda}(u)$ in the same space. 
\\
Since $u_\eps$ converges strongly in $L^p(\Omega \times (0,T)\times B(0,M))$, for any $M>0$ and any $1\le p <2 $, thanks the boundedness of $\beta'$, of $\psi$ and of its support, $\beta'(u_\eps)\psi$ converges to $\beta'(u)\psi$ in $L^2(\Omega\times \Pi_T)$ and 
 we conclude that 
 $$\int_A\int_{\Pi_T}  \mathcal{L}^r_{\lambda}[u_\ep(t,\cdot)](x) \psi(t,x) \beta'(u_\ep(t,x))\,dx\,dt\,dP \to \int_A\int_{\Pi_T} \mathcal{L}^r_{\lambda}[u(t,\cdot)](x) \psi(t,x) \beta'(u(t,x))\,dx\,dt\,dP.$$
This proves that $u$ is an entropy solution of \eqref{eq:stoc_con_brown}, in the sense of Definition~\ref{defi:stochentropsol}.

%%%%%%%%%%%%%%%%%%%%%%%%%%%%%%%%%%%%%%%%%%%%%%%%%%%%%%%%%%%%%%%%%%%%%%%%%%%%%%%%

\subsection{Uniqueness of Entropy Solution}
\label{Unique}
As alluded to before, to ensure the uniqueness of entropy solutions, we compare any entropy solution to a weakly converging sequence of viscous solutions, as depicted in subsection~\ref{Kato}, and subsection~\ref{Uniqueness of (Measure-Valued) Entropy Solution}, to arrive at the following equality
\begin{align*}
\E \bigg[\dint_{\Pi_T} |u(t,x) -v(t,x)|  \,dx\,dt \bigg]=0,
\end{align*}
where $u$ represents any entropy solution and $v$ is the limit entropy solution associated with the sequence $u_\eps$ of weak solutions of \eqref{eq:viscous-Brown}. The above equality confirms the uniqueness of the entropy solution.

%\begin{align*}
%\E \bigg[\dint_{\Pi_T \times(0,1)} |u(t,x,\alpha) -v(t,x)|  \,dx\,d\alpha \textcolor{red}{\,dt}\bigg]=0,
%\end{align*}
%where $v(t,x)$ represents any entropy solution and $u(t,x,\alpha)$ be a Young measure-valued limit process solutions associated with the sequences $u_\theta(t,x)$ of weak solutions of \eqref{eq:viscous-Brown}. The above equality implies that for almost all $t \in [0,\infty)$, $v(t,x)=u(t,x,\alpha)$ for almost every $x \in \D$, $(\omega, \alpha)\in \Omega \times (0,1)$. This confirms the uniqueness of entropy solutions.

%%%%%%%%%%%%%%%%%%%%%%%%%%%%%%%%%%%%%%%%%%%%%%%%%%%%%%%%%%%%%%%%%%%%%%%%%%%%%%%

\subsection{Stability of the Entropy Solution}
\label{imp}
First note that, by virtue of the above uniqueness result, any entropy solution $u$ is stemmed from the sequence of viscosity solutions $u_\eps$. Hence, thanks to the \textit{a priori} estimates (cf. Theorem~\ref{prop:vanishing viscosity-solution}), we conclude that $u \in L^2(\Omega\times(0,T),H^\lambda(\D))$. Thus, we can recast Kato's inequality as
\begin{align*}
0 \leq& \int_\D |u_0-v_0|\psi(0)\,dx + \E \bigg[\int_{\Pi_T} |u(t,x) -v(t,x)| \partial_t \psi(t,x) - F(u(t,x),v(t,x))\nabla \psi(t,x)\,dx\,dt \bigg]
\\& -\E \bigg[\int_{\Pi_T}   \mathcal{L}_{\lambda/2} \Big[|u(t,\cdot)-v(t,\cdot)| \Big](x) \, \mathcal{L}_{\lambda/2}[\psi (t,\cdot)](x) \,dx\,dt\bigg].
\end{align*}
At this point, we recall the test function $\psi_R$ given in \eqref{psi}. Note that, for a space mollifier $\rho$, 
$\psi_R \star \rho(x)\to 1$ for any $x \in \R^d$, so that $\psi_R \star \rho(x)-\psi_R \star \rho(y)\to 0$ for any $x,y \in \R^d$. 
Moreover, observe that
\begin{align*}
(a) \quad &|\psi_R\star\rho(x)-\psi_R\star\rho(y)| \leq  \int_{\D} \psi_R(z) |\int_y^x \rho^\prime(\sigma-z) d\sigma| dz \leq |x-y| \|\rho^\prime\|_{L^1} \text{(thanks to Fubini's theorem),}
\\
& \hspace{4cm} \text{that implies} \quad \frac{|\psi_R\star\rho(x)-\psi_R\star\rho(y)|^2}{|x-y|^{d+2\lambda}} \leq \frac{c}{|x-y|^{d+2\lambda-2}},
\\
(b) \quad&|\psi_R\star\rho(x)-\psi_R\star\rho(y)| \leq 2 \hspace{1cm} \text{that implies} \quad \frac{|\psi_R\star\rho(x)-\psi_R\star\rho(y)|^2}{|x-y|^{d+2\lambda}} \leq \frac{c}{|x-y|^{d+2\lambda}}.
\end{align*}
Thus, we have
\begin{align*}
\frac{|\psi_R\star\rho(x)-\psi_R\star\rho(y)|^2}{|x-y|^{d+2\lambda}} \leq \frac{c}{|x-y|^{d+2\lambda-2}}\chi_{|x-y|<1} + \frac{c}{|x-y|^{d+2\lambda}}\chi_{|x-y|>1},
\end{align*}
and we conclude that
$\psi_R\star\rho \to 0$ in $H^{\lambda}(\R^d)$ and $\mathcal{L}_{\lambda/2}(\psi_R\star\rho) \to 0$ in $L^2(\Omega\times(0,T),L^2(\D))$.

Then, for any non-negative $\theta \in \mathcal{D}([0,T))$,
\begin{align*}
0 \leq& \int_\D |u_0-v_0|\theta(0)\psi_R\star \rho\,dx  -\E \bigg[\int_{\Pi_T} \theta(t)  \mathcal{L}_{\lambda/2}(|u(t,x)-v(t,x)|)  \mathcal{L}_{\lambda/2}(\psi_R\star \rho)(x) \,dx\,dt \bigg]\\
& \qquad + \E \bigg[\int_{\Pi_T} \theta^\prime(t) |u(t,x) -v(t,x)| \psi_R\star \rho(x) - \theta(t) F(u(t,x),v(t,x))\nabla (\psi_R\star \rho) (x) \,dx\,dt \bigg]
\end{align*}
and, since $|\nabla (\psi_R\star \rho) (x)| \leq |\nabla \psi_R|\star \rho(x) \leq \frac{c}{R} \psi_R\star \rho (x)$, 
\begin{align*}
0 \leq& \int_\D |u_0-v_0|\theta(0)\psi_R\star \rho\,dx + \E \bigg[\int_{\Pi_T}  |u(t,x) -v(t,x)|  \psi_R\star \rho (x) \Big[ \theta^\prime(t)  +\frac{c}{R} \theta(t)\Big] \,dx\,dt \bigg]
\\&
-\E \bigg[\int_{\Pi_T} \theta(t)  \mathcal{L}_{\lambda/2}(|u(t,x)-v(t,x)|)  \mathcal{L}_{\lambda/2}(\psi_R\star \rho)(x) \,dx\,dt \bigg]
\end{align*}
Replacing $\theta(t)$ by $\theta(t)e^{-\frac{ct}{R}}$, one has that 
\begin{align*}
0 \leq& \int_\D |u_0-v_0|\theta(0)\psi_R\star \rho\,dx + \E \bigg[\int_{Q}  |u(t,x) -v(t,x)|  \psi_R\star \rho (x) \theta^\prime(t) e^{-\frac{ct}{R}} \,dx\,dt \bigg]
\\&
+\|\theta\|_\infty \| \mathcal{L}_{\lambda/2}(|u(t,x)-v(t,x)|)\|_{L^2(\Omega\times(0,T),L^2(\D))}  \|\mathcal{L}_{\lambda/2}(\psi_R\star \rho)\|_{L^2(\Omega\times(0,T),L^2(\D))}.
\end{align*}
Assume that $u_0-v_0 \in L^1(\D)$ and chose $\theta$ in such a way that it is a non-increasing function with $\theta(0)=1$, then one gets, passing to the limit when $R \to \infty$, 
\begin{align*}
0 \leq& \int_\D |u_0-v_0|\,dx + \E \bigg[\int_{\Pi_T}  |u(t,x) -v(t,x)|  \theta^\prime(t) \,dx\,dt \bigg].
\end{align*}
Thus for $t$ a.e. in $(0,T)$,
\begin{align*}
\E \bigg[\int_{\D}  |u(t,x) -v(t,x)| \,dx \bigg] \leq& \int_\D |u_0-v_0|\,dx.
\end{align*}
This essentially demonstrates the stability of entropy solution of Theorem~\ref{uniqueness_new} with respect to its initial data. 
%Since the limit of sequences of viscous solutions is $C_w([0,T],L^2(\Omega\times\D))$ (cf. \textit{a priori} estimates \ref{regularity}) and as $u \in L^2(\Omega\times\D) \mapsto \E\|u\|_{L^1(\D)}$ is a lsc convex function, the above inequality holds for all $t$.

%%%%%%%%%%%%%%%%%%%%%%%%%%%%%%%%%%%%%%%%%%%%%%%%%%%%%%%%%%%%%%%%%%%%%%%%%%%%%%%%%%%%%%%%%%%%%%%%%%%%%%%%

\section{Proof of Theorem~\ref{continuous-dependence}: Continuous Dependence Estimates}
\label{cont-depen-estimate}

Note that, the average $L^1$-contraction principle (cf. Subsection~\ref{imp}) gives the continuous dependence on the initial data. However, we intend to establish continuous dependence on the fractional exponent $\lambda$, and
on the nonlinearities, i.e., on the flux function and the noise coefficients. 
To achieve that, we proceed as follows: 
For $\eps>0$, let $v_\eps$ be the weak solution to the problem 
\begin{align}
dv_\eps(s,y) - \eps \Delta v_\eps(s,y)\,ds + \mathcal{L}_{\kappa}[v_\eps(s, \cdot)](y)\,ds  & - \mbox{div}_y g(v_\eps(s,y)) \,ds  \label{eq:viscous} \\
& =\widetilde{\sigma}(v_\eps(s,y))\,dW(s) + \int_{E} \widetilde{\eta}(v_\eps(s,y);z)\,\widetilde{N}(dz,ds), \notag \\
v_\eps(0,y)&=v_0^{\eps}(y) \notag.
\end{align}
In view of Theorem \ref{thm:existence-bv}, we conclude that $v_\eps(s,y)$ converges to the unique BV-entropy solution $v(s, y)$ of \eqref{eq:stoc_con_brown} with initial data $v_0(y)$. Let $u(t,x)$ be the unique BV-entropy solution of  \eqref{eq:stoc_con_brown} with initial data $u_0(x)$. Moreover, we assume that Assumptions~\ref{A1}, \ref{A1'}, \ref{A3}, \ref{A31}, \ref{A4}, and \ref{A5} hold for both sets of given functions $(u_0, f, \sigma, \eta, \lambda)$ and $(v_0, g, \widetilde{\sigma}, \widetilde{\eta}, \kappa)$.

In what follows, we shall estimate the average $L^1$-difference between two entropy solutions $u$ and $v$.  To achieve this, we shall make use of the ``{\it doubling of variables}" technique. However, we can not directly compare two entropy solutions $u$ and $v$, but instead we first compare the entropy solution $u(t,x)$ with the solution of the viscous approximation \eqref{eq:viscous}, i.e., $v_{\eps}(s,y)$. This approach is somewhat different from the deterministic approach, where one can directly compare two entropy solutions. 

To improve the readability of the presentation, we make use of the following notation:
$$
\mathcal{L}_{\lambda}[\varphi](x)= c_{\lambda}\, \text{P.V.}\, \int_{\R^d} \big( \varphi(x) -\varphi(x+z)\big)\,d\mu_{\lambda}(z),  
$$
where $d\mu_{\lambda}(z):= \frac{dz}{|z|^{d + 2 \lambda}}$. Observe that $\mu_{\lambda}$ is a nonnegative Radon measure on $\D\setminus \{0\}$ satisfying
\begin{align}
\label{important}
\int_{\D\setminus \{0\}} \big(|z|^2 \wedge 1\big)\,d\mu_{\lambda}(z) < +\infty.
\end{align}

For technical purposes (see \cite{Alibaud_one}), we need to split the Radon measures $\mu_{\lambda}, \mu_{\kappa}$ as follows: Let $K^{\pm}$ be the sets such that
\begin{equation}
\label{one}
\begin{cases}
K^{\pm} \subseteq \R^d \setminus \{0\} \text{ are Borel sets. }\\
\cup_{\pm} K^{\pm} = \R^d \setminus \{0\}, \text{ and } \cap_{\pm} K^{\pm} = \emptyset.\\
\R^d \setminus \{0\} \setminus \mathrm{supp}(\mu_{\lambda} - \mu_{\kappa})^{\mp} \subseteq K^{\pm},
\end{cases}
\end{equation}
and we denote $\mu_{\lambda_{\pm}}$ and $\mu_{\kappa_{\pm}}$ as the restrictions of $\mu_{\lambda}$ and $\mu_{\kappa}$ to $K^{\pm}$. Then it is easy to see that
\begin{equation}
\label{two}
\begin{cases}
\mu_{\lambda} = \sum_{\pm}\mu_{\lambda_{\pm}},  \text{ and } \mu_{\kappa} = \sum_{\pm}\mu_{\kappa_{\pm}} \\
\pm (\mu_{\lambda_{\pm}} -\mu_{\kappa_{\pm}}) = (\mu_{\lambda} - \mu_{\kappa})^{\pm}.\\
\mu_{\lambda_{\pm}}, \mu_{\kappa_{\pm}}, \text{ and } \pm (\mu_{\lambda_{\pm}} - \mu_{\kappa_{\pm}}) \text{ all are nonnegative Radon measures satisfying \eqref{important}. }
\end{cases}
\end{equation}

Next, for a nonnegative test function $\psi\in C_c^{1,2}([0,\infty)\times \rd)$, and two positive constants $\delta, \delta_0 $, we define the same test function as in \eqref{test_function}          
\begin{align*}
\varphi_{\delta,\delta_0}(t,x, s,y) = \rho_{\delta_0}(t-s) \varrho_{\delta}(x-y) \psi(t,x). 
\end{align*} 
We now write the entropy inequality for $u(t,x)$, based on the 
entropy pair $(\beta(\cdot-k), f^\beta(\cdot, k))$, and 
then multiply by $\varsigma_l(v_\eps(s,y)-k)$, integrate with 
respect to $ s, y, k$ and take the expectation. The result is
\begin{align}
0\le  & \E \Big[\int_{\Pi_T}\int_{\R^d}\int_{\R} \beta(u(0,x)-k)
\varphi_{\delta,\delta_0}(0,x,s,y) \varsigma_l(v_\eps(s,y)-k)\,dk \,dx\,dy\,ds\Big] \notag \\
 &\qquad + \E \Big[\int_{\Pi_T} \int_{\Pi_T} \int_{\R} \beta(u(t,x)-k)\partial_t \varphi_{\delta,\delta_0}(t,x,s,y)
\varsigma_l(v_\eps(s,y)-k)\,dk \,dx\,dt\,dy\,ds \Big]\notag \\ 
& + \E \Big[\sum_{k\ge 1}\int_{\Pi_T} \int_{\Pi_T} 
\int_{\R}  g_k(u(t,x)) \, \beta^\prime (u(t,x)-k)\, \varphi_{\delta,\delta_0}\, dx \,d\beta_k(t) \varsigma_l(u_\eps(s,y)-k)\,dk\,dy\,ds \Big] \notag \\
 &+ \frac{1}{2}\, \E \Big[ \int_{\Pi_T} \int_{\Pi_T}  
\int_{\R} \mathbb{G}^2(u(t,x))\, \beta^{\prime\prime} (u(t,x) -k)\, \varphi_{\delta,\delta_0} \,dx\,dt 
\, \varsigma_l(u_\eps(s,y)-k)\,dk\,dy\,ds \Big] \notag \\
 & \qquad +  \E \Big[ \int_{\Pi_T} \int_{\R}\int_{\Pi_T}\int_{|z|>0}\Big(\beta \big(u(t,x) +\eta(u(t,x);z)-k\big)-\beta(u(t,x)-k)\Big) \notag \\
& \hspace{6cm} \times \varphi_{\delta,\delta_0}(t,x,s,y)\,\varsigma_l(v_\eps(s,y)-k) \,\tilde{N}(dz,dt) \,dx \,dk \,dy\,ds \Big] \notag\\
&\qquad +  \E \Big[\int_{\Pi_T} \int_{t=0}^T\int_{|z|>0}\int_{\R^d} 
\int_{\R} \Big(\beta \big(u(t,x) +\eta(u(t,x);z)-k\big)-\beta(u(t,x)-k) \notag \\
 & \hspace{6.5cm}-\eta(u(t,x);z) \beta^{\prime}(u(t,x)-k)\Big)
 \varphi_{\delta,\delta_0}(t,x;s,y) \notag \\
&\hspace{8cm}\times \varsigma_l(v_\eps(s,y)-k)\,dk\,dx\,\nu(dz)\,dt\,dy\,ds\Big]\notag \\
& \qquad +  \E \Big[\int_{\Pi_T}\int_{\Pi_T} \int_{\R} 
 f^\beta(u(t,x),k) \cdot \grad_x \varphi_{\delta,\delta_0}(t,x,s,y) \, \varsigma_l(v_\eps(s,y)-k)\,dk\,dx\,dt\,dy\,ds\Big] \notag \\
 & - \E \Big[\int_{\Pi_T} \int_{\Pi_T} 
\int_{\R} \mathcal{L}^r_{\lambda}[u(t,\cdot)](x)\, \varphi_{\delta,\delta_0}(t,x,s,y)\, \beta'(u(t,x) -k) \,dx\,dt 
\, \varsigma_l(u_\eps(s,y)-k)\,dk\,dy\,ds \Big] \notag \\
& -  \E \Big[\int_{\Pi_T} \int_{\Pi_T} 
\int_{\R} \beta(u(t,x) -k ) \,\mathcal{L}_{\lambda,r} [\varphi_{\delta,\delta_0}(t,\cdot,s,y)](x) \,dx\,dt
\, \varsigma_l(u_\eps(s,y)-k)\,dk\,dy\,ds \Big]\notag \\[2mm]
& =:  I_1 + I_2 + I_3 +I_4 + I_5 + I_6 + I_7 + I_8 + I_9. \label{stochas_entropy_1-levy}
\end{align}
 
We now apply the It\^{o}-L\'{e}vy formula to \eqref{eq:viscous} and multiply with test 
function $\varphi_{\delta, \delta_0}$ and $\varsigma_l(u(t,x)-k)$ and integrate. The result is
\begin{align}
 0\le  &\, \E \Big[\int_{\Pi_T}\int_{\R^d}\int_{\R} 
 \beta(v_\eps(0,y)-k)\varphi_{\delta,\delta_0}(t,x,0,y) \varsigma_l(u(t,x)-k)\,dk \,dx\,dy\,dt\Big] \notag \\
   & \qquad \qquad \qquad \quad +   \E \Big[\int_{\Pi_T} \int_{\Pi_T} \int_{\R} 
 \beta(v_\eps(s,y)-k)\partial_s \varphi_{\delta,\delta_0}(t,x,s,y)
 \varsigma_l(u(t,x)-k)\,dk \,dy\,ds\,dx\,dt\Big] \notag \\ 
 & + \E \Big[\sum_{k\ge 1}\int_{\Pi_T} \int_{\Pi_T}
\int_{\R}  \widetilde{g}_k(v_\eps(s,y))\,\beta^\prime (v_\eps(s,y)-k)\, \varphi_{\delta,\delta_0}\, dx \,d\beta_k(t) \varsigma_l(u(t,x)-k)\,dk\,dy\,ds \Big] \notag \\
 &+ \frac{1}{2}\, \E \Big[ \int_{\Pi_T} \int_{\Pi_T} 
\int_{\R} \mathbb{\widetilde{G}}^2(v_\eps(s,y)) \,\beta^{\prime\prime} (v_\eps(s,y) -k)\, \varphi_{\delta,\delta_0} \,dx\,dt 
\, \varsigma_l(u(t,x)-k)\,dk\,dy\,ds \Big] \notag \\
  + &  \E \Big[\int_{\Pi_T} \int_{\Pi_T}\int_{|z|>0} \int_{\R} 
 \Big(\beta \big(v_\eps(s,y) +\widetilde{\eta}(v_\eps(s,y);z)-k\big)
 -\beta(v_\eps(s,y)-k)\Big) \notag \\
 & \hspace{6.5cm} \times \varphi_{\delta,\delta_0}(t,x,s,y)\varsigma_l(u(t,x)-k)\,dk \,\tilde{N}(dz,ds)\,dy\,dx\,dt \Big]\notag\\
  + &  \E \Big[\int_{\Pi_T} \int_{s=0}^T\int_{|z|>0}\int_{\R^d} 
 \int_{\R} \Big(\beta \big(v_\eps(s,y) +\widetilde{\eta}(v_\eps(s,y);z)-k\big)
 -\beta(v_\eps(s,y)-k) \notag \\
  & \hspace{6.0cm}-\widetilde{\eta}(v_\eps(s,y);z) \beta^{\prime}(v_\eps(s,y)-k)\Big)
  \varphi_{\delta,\delta_0}(t,x;s,y) \notag \\
 &\hspace{8.7cm}\times \varsigma_l(u(t,x)-k)\,dk\,dy\,\nu(dz)\,ds\,dx\,dt\Big]\notag \\
  & \quad +  \E\Big[\int_{\Pi_T}\int_{\Pi_T} \int_{\R}  
 g^\beta(v_\eps(s,y),k)\cdot \grad_y \varphi_{\delta,\delta_0}(t,x;s,y)\, \varsigma_l(u(t,x)-k)\,dk\,dx\,dt\,dy\,ds\Big] \notag \\
 & \quad  - \eps  \,\E \Big[\int_{\Pi_T} \int_{\Pi_T} \int_{\R} 
 \beta^\prime(v_\eps(s,y)-k)\grad_y v_\eps(s,y) \cdot \grad_y  \varphi_{\delta,\delta_0}
  \varsigma_l(u(t,x)-k)\,dk \,dy\,ds\,dx\,dt\Big]  \notag \\
  & - \E \bigg[\int_{\Pi_T} \int_{\Pi_T} 
\int_{\R} \mathcal{L}^r_{\kappa}[v_\eps(s, \cdot)](y) \, \varphi_{\delta,\delta_0}(t,x,s,y)\, \beta^\prime (v_\eps(s,y)-k) \,dx\,dt \, \varsigma_l(u(t,x)-k)\,dk\,dy\,ds \bigg]  \notag \\
& -\E \bigg[\int_{\Pi_T} \int_{\Pi_T} 
\int_{\R} \beta( v_\eps(s,y)-k)\mathcal{L}_{\kappa, r}[\varphi_{\delta,\delta_0}(t,x,s,.)](y) \, \varsigma_l(u(t,x)-k)\,dk\,dy\,ds\,dx\,dt \bigg] \notag \\[2mm]
& =: J_1 + J_2 + J_3 + J_4 + J_5 + J_6 + J_7 + J_8 + J_9 + J_{10}. \label{stochas_entropy_3-levy}
\end{align}

Our aim is to add \eqref{stochas_entropy_1-levy} and \eqref{stochas_entropy_3-levy}, 
and pass to the  limits with respect to the various parameters involved. We do this by claiming
a series of lemma's and proofs of these lemmas follow from \cite{BaVaWit_2014,BisMajKarl_2014} modulo cosmetic changes. 
 
To begin with, note that the particular choice of the test function \eqref{test_function} implies that $I_1=0$. 

\begin{lem}
\label{stochastic_lemma_1}
It holds that 
\begin{align}
I_1 + J_1 & \underset{\delta_0 \goto 0} \longrightarrow \E \Big[\int_{\R^d}\int_{\R^d}\int_{\R} 
  \beta(u(0,x)-k) \psi(0,x)\varrho_{\delta} (x-y) \varsigma_l(v_\eps(0,y)-k)\,dk\,dx\,dy \Big]\notag\\
  & \underset{l \goto 0} \longrightarrow  \E \Big[\int_{\R^d}\int_{\R^d}
  \beta(u(0,x)-v_\eps(0,y)) \psi(0,x)\varrho_{\delta} (x-y)\,dx\,dy\Big]:= \mathcal{A}_1,\notag \\
  &\hspace{-2cm} \mathcal{A}_1 \le  \E \Big[\int_{\R^d}\int_{ \R^d}\big| v_\eps(0,y) -u(0,x)\big| \psi(0,x)\,\varrho_\delta(x-y)
  \,dx\, dy\Big].\notag
 \end{align}
\end{lem}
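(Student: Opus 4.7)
The plan is to perform three sequential passages to the limit: first $\delta_0 \to 0$, then $l \to 0$, and finally compare the resulting quantity with the absolute value. Before doing any analysis, I would observe that $J_1 = 0$ identically. Indeed, $J_1$ contains the factor $\rho_{\delta_0}(t - 0) = \rho_{\delta_0}(t)$; since $\supp \rho \subset [-1, 0]$, this factor has support $t \in [-\delta_0, 0]$, whose intersection with the integration domain $t \in (0, T)$ is Lebesgue-null. Hence only $I_1$ needs to be analyzed.

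For the first passage, I would rewrite
\begin{align*}
I_1 \;=\; \int_0^{\delta_0} \rho_{\delta_0}(-s)\, h_l(s)\, ds, \qquad h_l(s) := \E \int_{\R^d}\!\int_{\R^d}\! \int_\R \beta(u_0(x)-k)\,\psi(0,x)\,\varrho_\delta(x-y)\, \varsigma_l(v_\eps(s,y)-k)\,dk\,dx\,dy.
\end{align*}
Since $\rho_{\delta_0}(-s)\,ds$ is a right-sided approximation of the Dirac mass at $0$ with unit mass, the limit equals $h_l(0^+)$, provided $s \mapsto h_l(s)$ is right-continuous at $0$. This is ensured by the $L^2$-in-time continuity of the viscous solution $v_\eps$ (a by-product of the regularity given by Theorem~\ref{prop:vanishing viscosity-solution} combined with an It\^o argument applied to $\|v_\eps(t)-v_\eps(0)\|_{L^2}^2$), together with the boundedness and Lipschitz dependence of the integrand on $v_\eps$ through $\varsigma_l$. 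This yields the first displayed limit.

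To pass $l \to 0$, I would use that $\varsigma_l(\cdot - k)\,dk$ is a symmetric mollifier in $k$ concentrating at $v_\eps(0,y)$ and that $\beta=\beta_\xi$ is smooth with $|\beta'|\le 1$, so that by dominated convergence
\begin{align*}
\int_\R \beta(u_0(x)-k)\, \varsigma_l(v_\eps(0, y)-k)\,dk \;\xrightarrow[l \to 0]{}\; \beta\bigl(u_0(x) - v_\eps(0,y)\bigr).
\end{align*}
The dominating bound $|\beta(u_0(x)-k)|\,\varsigma_l(v_\eps(0,y)-k) \le \bigl(|u_0(x)|+|v_\eps(0,y)|+l\bigr)\,\varsigma_l(v_\eps(0,y)-k)$, integrated against $\varrho_\delta(x-y)\psi(0,x)$ whose effective support in $(x,y)$ is compact, gives a uniform $L^1(\Omega\times\R^{2d})$ envelope, so Lebesgue dominated convergence applies and produces $\mathcal{A}_1$.

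The final inequality $\mathcal{A}_1 \le \E\!\int_{\R^d}\!\int_{\R^d} |v_\eps(0,y) - u_0(x)|\,\psi(0,x)\,\varrho_\delta(x-y)\,dx\,dy$ is immediate from the pointwise bound $\beta_\xi(r) \le |r|$ recorded in \eqref{eq:approx to abosx}. The only non-routine step is the right-continuity of $t \mapsto v_\eps(t)$ at $t=0$ in $L^2(\Omega\times\R^d)$, which is the main (mild) obstacle; however, it follows directly from the regularity of the viscous approximation stated in Theorem~\ref{prop:vanishing viscosity-solution} (in particular, from the fact that the deterministic part of $v_\eps$ lies in $C([0,T];L^2)$ while the stochastic It\^o and compensated Poisson integrals are continuous, respectively c\`adl\`ag, in time with values in $L^2$), so that $\E\|v_\eps(s)-v_\eps(0)\|_{L^2}^2 \to 0$ as $s\to 0^+$ and the argument of Step~1 closes.
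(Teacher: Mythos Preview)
Your proof is correct and follows the standard route that the paper itself only sketches by referring to \cite{BaVaWit_2014,BisMajKarl_2014} ``modulo cosmetic changes''; there is no detailed proof in the paper to compare against. Your identification that $J_1=0$ (not $I_1=0$, as the sentence preceding the lemma asserts) is the right one: since $\supp(\rho_{\delta_0})\subset[-\delta_0,0]$, the factor $\rho_{\delta_0}(t)$ in $J_1$ vanishes on $(0,T)$, whereas $\rho_{\delta_0}(-s)$ in $I_1$ is supported on $s\in[0,\delta_0]$, and indeed the limit displayed in the lemma carries the structure of $I_1$ (with $u(0,x)$ inside $\beta$ and $v_\eps$ inside $\varsigma_l$). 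The remaining steps---the approximate-identity argument in $s$ using the time continuity of the viscous solution, the mollifier limit in $k$ via the Lipschitz bound $|\beta_\xi'|\le 1$ and the compact support of $\psi$, and the final pointwise bound $\beta_\xi(r)\le |r|$---are exactly the expected ones and are carried out cleanly.
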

 
\begin{lem}\label{stochastic_lemma_2}
It holds that
\begin{align*}
I_2 + J_2  &\underset{\delta_0 \goto 0}\longrightarrow   \E\Big[ \int_{\Pi_T}\int_{\R^d}\int_{\R}
    \beta(v_\eps(t,y)-k) \partial_t \psi(t,x)\varrho_\delta(x-y)\varsigma_l(u(t,x)-k)\,dk\,dy\,dx\,dt\Big]\\
&\underset{l \goto 0}\longrightarrow  \E \Big[\int_{\Pi_T}\int_{\R^d} \beta(v_\eps(t,y)-u(t,x)) \partial_t \psi(t,x)
 \, \varrho_\delta(x-y)\,dy\,dx\,dt\Big].
\end{align*}
\end{lem}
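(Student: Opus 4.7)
The plan is to exploit the product structure $\varphi_{\delta,\delta_0}(t,x,s,y)=\rho_{\delta_0}(t-s)\varrho_\delta(x-y)\psi(t,x)$ together with the symmetry between the two viscous equations to isolate a single \emph{good} contribution. I would start by applying the product rule:
\begin{align*}
\partial_t \varphi_{\delta,\delta_0} &= \rho_{\delta_0}(t-s)\,\varrho_\delta(x-y)\,\partial_t\psi(t,x) + \rho'_{\delta_0}(t-s)\,\varrho_\delta(x-y)\,\psi(t,x),\\
\partial_s \varphi_{\delta,\delta_0} &= -\rho'_{\delta_0}(t-s)\,\varrho_\delta(x-y)\,\psi(t,x),
\end{align*}
and split $I_2 = I_2^{(a)} + I_2^{(b)}$ according to these two contributions, $I_2^{(a)}$ carrying $\partial_t\psi$ and $I_2^{(b)}$ carrying $\rho'_{\delta_0}\psi$.

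The crux of the argument is that $I_2^{(b)} + J_2 = 0$ identically, even before any limit is taken. Because $\beta=\beta_\xi$ is even and $\varsigma_l$ is a symmetric mollifier, the substitution $k\mapsto a+b-k$ yields
\[
\int_{\R}\beta(a-k)\,\varsigma_l(b-k)\,dk \;=\; \int_{\R}\beta(b-k)\,\varsigma_l(a-k)\,dk, \qquad a,b\in\R.
\]
Applied pointwise with $a=u(t,x)$ and $b=v_\eps(s,y)$, the $k$-integrand in $I_2^{(b)}$ coincides with that in $-J_2$, while the common factor $\psi(t,x)\,\rho'_{\delta_0}(t-s)\,\varrho_\delta(x-y)$ is identical in both; linearity of $\E$ then gives the claimed cancellation.

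It remains to take the limits in $I_2^{(a)}$. As $\delta_0\to 0$, $\rho_{\delta_0}(t-\cdot)$ acts as a standard approximate identity on the $s$-integrable map $\varsigma_l(v_\eps(\cdot,y)-k)$; the a priori estimates of Theorem~\ref{prop:vanishing viscosity-solution}, combined with the boundedness of $\beta'$, $\partial_t\psi$, $\varrho_\delta$ and $\varsigma_l$, provide a uniform integrable majorant, so that dominated convergence yields
\[
I_2^{(a)} \longrightarrow \E\Big[\int_{\Pi_T}\!\int_{\R^d}\!\int_{\R}\beta(u(t,x)-k)\,\partial_t\psi(t,x)\,\varrho_\delta(x-y)\,\varsigma_l(v_\eps(t,y)-k)\,dk\,dy\,dx\,dt\Big],
\]
which by the very same symmetry identity matches the form displayed in the lemma. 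Finally, as $l\to 0$, $\varsigma_l(u(t,x)-k)$ concentrates at $k=u(t,x)$ and the $k$-integral collapses to $\beta(v_\eps(t,y)-u(t,x))$, giving the second claim.

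The delicate point is the exact cancellation $I_2^{(b)}+J_2=0$: each term is individually of order $1/\delta_0$ because of $\rho'_{\delta_0}$ and neither admits a finite limit on its own as $\delta_0\to 0$; only the evenness of $\beta_\xi$ and $\varsigma_l$, built into their construction in Section~\ref{sec:tech}, allows their annihilation \emph{before} any passage to the limit. Once this cancellation is secured, the subsequent mollifier convergences in $\delta_0$ and $l$ are routine dominated-convergence arguments.
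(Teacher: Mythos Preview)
Your argument is correct and is precisely the standard one the paper invokes by citing \cite{BaVaWit_2014,BisMajKarl_2014}: split $\partial_t\varphi_{\delta,\delta_0}$ via the product rule, use the evenness of $\beta_\xi$ and $\varsigma_l$ to obtain the pointwise identity $\int_{\R}\beta(a-k)\varsigma_l(b-k)\,dk=\int_{\R}\beta(b-k)\varsigma_l(a-k)\,dk$, which kills $I_2^{(b)}+J_2$ exactly, and then pass to the limits $\delta_0\to 0$ and $l\to 0$ in the remaining $\partial_t\psi$-term by routine mollifier convergence. This is the same route the paper takes.
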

\noindent Next we consider stochastic terms. Regarding that we have the following result
\begin{lem}
\label{lem:stochastic-terms_01}
We have $J_3=0 =J_5$ and The following hold:
\begin{align*}
 	\mathcal{A}_2:= &\lim_{l\goto 0}\lim_{\delta_0\goto 0}\Big( \big(I_3 + J_3\big) + \big(I_4 + J_4 \big)\Big) \notag \\
 	 &= \frac{1}{2} \E\Big[\sum_{k\ge 1}\int_{\Pi_T}\int_{\R^d} \beta^{\prime\prime}
 	 \big(u(t,x)-v_{\eps}(t,y)\big) \big(g_k(u(t,x))-\widetilde{g}_k(v_\eps(t,y))\big)^2 
 	 \psi(t,x)\varrho_{\delta}(x-y)\,dx\,dy\,dt \Big], \\
 	 \mathcal{A}_3:=&\lim_{l\goto 0}\lim_{\delta_0\goto 0}\Big(I_5 + J_5 + I_6 + J_{6}\Big) \notag \\
 	 &= \E\Big[\int_{\Pi_T}\int_{\R^d} \int_{E} \int_0^1 (1-\lambda) \beta^{\prime\prime}
 	 \Big(u(t,x)-v_{\eps}(t,y) + \lambda \big( \eta(u(t,x);z)- \widetilde{\eta}(v_\eps(t,y);z)\big)\Big) \notag \\
 	&  \hspace{4cm} \times \big( \eta(u(t,x);z)-\widetilde{\eta}(v_\eps(t,y);z)\big)^2
 	 \psi(t,x) \varrho_{\delta}(x-y) \,d\lambda\,\nu(dz)\,dx\,dy\,dt \Big].
\end{align*}
\end{lem}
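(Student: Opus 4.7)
The overall strategy exploits the one-sided support of $\rho_{\delta_0}(t-s)$, which is contained in $\{s-\delta_0 \le t \le s\}$, together with an It\^o--L\'evy expansion of $\varsigma_l(v_\epsilon(\cdot,y)-k)$ on the small interval $[t,s]$, following the technique developed in \cite{BaVaWit_2014,BisMajKarl_2014}.

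For the vanishing identities, since on $\supp \varphi_{\delta,\delta_0}$ we have $t \le s$, the factor $\varsigma_l(u(t,x)-k)$ appearing in both $J_3$ and $J_5$ is $\mathcal{F}_s$-measurable, so the entire integrand against $d\beta_k(s)$ (respectively $\widetilde{N}(dz,ds)$) is predictable. The martingale property of the It\^o (respectively compensated Poisson) integral then yields $J_3=0=J_5$ after taking expectations.

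For the genuinely anticipative terms $I_3$ and $I_5$, the plan is to decompose
\[
\varsigma_l(v_\epsilon(s,y)-k) = \varsigma_l(v_\epsilon(t,y)-k) + \bigl[\varsigma_l(v_\epsilon(s,y)-k) - \varsigma_l(v_\epsilon(t,y)-k)\bigr],
\]
where the first summand is $\mathcal{F}_t$-measurable, and therefore its contribution vanishes in expectation against $d\beta_k(t)$ or $\widetilde{N}(dz,dt)$. To the second summand I would apply It\^o--L\'evy to the real-valued process $\sigma \mapsto \varsigma_l(v_\epsilon(\sigma,y)-k)$ on $[t,s]$, using the SPDE \eqref{eq:viscous-Brown}. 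The drift contributions thus produced (arising from $\eps\Delta v_\epsilon$, $\mathcal{L}_\kappa[v_\epsilon]$, $\Div g(v_\epsilon)$, and the second-order Taylor corrections) carry a time length bounded by $\delta_0$ and vanish as $\delta_0\to 0$ thanks to the uniform bounds from Theorem~\ref{prop:vanishing viscosity-solution}. The surviving stochastic pieces, paired with $d\beta_k(t)$ or $\widetilde{N}(dz,dt)$ via the It\^o/It\^o--L\'evy isometry, produce the cross terms $\sum_k g_k(u)\,\widetilde{g}_k(v_\epsilon)$ and $\int_E \eta(u;z)\,\widetilde{\eta}(v_\epsilon;z)\,m(dz)$, weighted by $\beta''(u-k)\,\varsigma_l'(v_\epsilon-k)\psi\varrho_\delta$ and, in the jump case, by the Taylor-in-$\lambda$ remainder inherited from $I_6$ and $J_6$.

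Passing first $\delta_0\to 0$ identifies the two time variables, and then $l\to 0$ collapses the $k$-integration against $\varsigma_l$ onto $k=v_\epsilon(t,y)$, so that $\beta''(u-k)$ becomes $\beta''(u-v_\epsilon)$; combining these limits with $I_4,J_4$ (respectively $I_6,J_6$) through the binomial identity $a^2-2ab+b^2=(a-b)^2$ assembles the announced forms of $\mathcal{A}_2$ and $\mathcal{A}_3$. The main technical obstacle will be showing rigorously that the drift contributions in the It\^o--L\'evy expansion of $\varsigma_l(v_\epsilon(\sigma,y)-k)$ are negligible as $\delta_0\to 0$ for each fixed $l>0$; this demands using $v_\epsilon\in L^2(\Omega;L^2(0,T;H^1(\R^d)))$ with $\Delta v_\epsilon \in L^2(\Omega\times\Pi_T)$, the bounds $|\varsigma_l'|\le C/l$ and $|\varsigma_l''|\le C/l^2$, and the order of limits ($\delta_0\to 0$ first) so that the factor $\delta_0$ dominates any $l$-dependent blow-up before $l$ is sent to zero. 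Once these error estimates are secured, the remainder is algebraic manipulation and dominated convergence, parallel to \cite{BaVaWit_2014,BisMajKarl_2014} up to cosmetic adaptations.
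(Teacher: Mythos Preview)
Your proof sketch is correct and follows precisely the approach the paper adopts by reference to \cite{BaVaWit_2014,BisMajKarl_2014}: the paper does not reprove this lemma but states that the arguments carry over ``modulo cosmetic changes,'' and what you describe---the martingale vanishing of $J_3,J_5$ via predictability, the It\^o--L\'evy expansion of $\varsigma_l(v_\eps(\cdot,y)-k)$ on $[t,s]$ to extract the cross terms through the It\^o isometry, and the $\delta_0\to 0$ then $l\to 0$ limits assembling the squared differences---is exactly that machinery. One cosmetic point: the SPDE to invoke for $v_\eps$ here is \eqref{eq:viscous} (with data $g,\widetilde{\sigma},\widetilde{\eta},\kappa$), not \eqref{eq:viscous-Brown}.
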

We make use of the following lemma.
\begin{lem}
The following hold:
\begin{align*}
\mathcal{A}_2 & \le 2 \E \Big[\int_{\Pi_T}\int_{\R^d} \beta \big(u(t,x)-v_\eps(t,y)\big) \psi(t,x)\varrho_{\delta}(x-y)\,dx\,dy\,dt \Big] + \frac{\underset{k\ge 1}\sum \mathcal{E}_k(\sigma, \widetilde{\sigma})^2}{\xi}  \int_{0}^T ||\psi(t,\cdot)||_{L^\infty(\R^d)}\,dt  \\
\mathcal{A}_3 & \le  C \E\Big[\int_{\Pi_T}\int_{\R^d} 
 \beta\big(u(t,x)-v_\eps(t,y)\big)
   \psi(t,x) \varrho_{\delta}(x-y)\,dx\,dy\,dt \Big] 
   \\& \hspace{6cm}+ C\Big(\sqrt{\mathcal{D}(\eta, \widetilde{\eta})} 
  + \frac{\mathcal{D}(\eta,\widetilde{\eta}) }{\xi}\Big)  \int_0^T\|\psi(t,\cdot)\|_{L^\infty(\R^d)}\,dt,
\end{align*}
\end{lem}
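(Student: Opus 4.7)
The strategy for both bounds is to decompose each noise difference into a ``Lipschitz'' contribution and a ``coefficient'' contribution, and then exploit the two structural properties of $\beta = \beta_\xi$ recalled in \eqref{eq:approx to abosx}, namely $\beta(r) \geq |r| - M_1\xi$ and $|\beta''(r)| \leq (M_2/\xi){\bf 1}_{|r|\leq \xi}$.

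For $\mathcal{A}_2$, one writes $(g_k(u)-\widetilde g_k(v))^2 \leq 2(g_k(u)-g_k(v))^2 + 2(g_k(v)-\widetilde g_k(v))^2$ and splits $\mathcal{A}_2 \leq \mathcal{A}_2^{(1)} + \mathcal{A}_2^{(2)}$. For the Lipschitz piece $\mathcal{A}_2^{(1)}$, assumption \ref{A4} gives $\sum_k(g_k(u)-g_k(v))^2 \leq K(u-v)^2$; combining the two properties of $\beta_\xi$, one has $\beta''(r)r^2 \leq M_2|r|\,{\bf 1}_{|r|\leq\xi} \leq M_2(\beta(r) + M_1\xi)$, so this piece is dominated by a multiple of $\E\int\beta(u-v)\psi\varrho_\delta$ together with an $O(\xi)$ remainder harmless in the limit $\xi\to 0$. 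For the coefficient piece $\mathcal{A}_2^{(2)}$, the definition of $\mathcal{E}_k$ gives $(g_k(v)-\widetilde g_k(v))^2 \leq \mathcal{E}_k^2\, v^2$; using $\beta''\leq M_2/\xi$, $\int\varrho_\delta(x-y)\,dx=1$, and absorbing the uniform $L^\infty_t L^2_{\omega,x}$-bound on $v_\eps$ from Theorem~\ref{prop:vanishing viscosity-solution} into the generic constant, one recovers the stated $(\sum_k \mathcal{E}_k^2/\xi)\int_0^T\|\psi(t,\cdot)\|_\infty\,dt$.

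For $\mathcal{A}_3$ one applies the same decomposition to $\eta(u;z)-\widetilde\eta(v;z)$, but the inner $\lambda$-integral first calls for the Taylor-remainder identity
\[
\int_0^1(1-\lambda)\beta''(a+\lambda b)\,b^2\,d\lambda \;=\; \beta(a+b) - \beta(a) - b\,\beta'(a) \;=:\; R(a,b),
\]
with $a = u-v$ and $b = \eta(u;z)-\widetilde\eta(v;z)$. Convexity of $\beta$ and $\|\beta'\|_\infty \leq 1$ yield the two complementary pointwise bounds $0 \leq R(a,b) \leq \min\bigl((M_2/(2\xi))\,b^2,\;2|b|\bigr)$. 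Splitting $b = [\eta(u;z)-\eta(v;z)] + [\eta(v;z)-\widetilde\eta(v;z)]$ and using $(A+B)^2 \leq 2A^2+2B^2$, the Lipschitz summand is handled by \ref{A5} together with $h\in L^2(E,m)$ exactly as in $\mathcal{A}_2^{(1)}$, yielding the $C\E\int\beta(u-v)\psi\varrho_\delta$ contribution. The coefficient summand is split further according to the size of $|b|$: on $\{|b|\leq\xi\}$ the quadratic bound combined with $\int_E(\eta(v;z)-\widetilde\eta(v;z))^2\,m(dz) \leq v^2\,\mathcal{D}(\eta,\widetilde\eta)$ yields the $\mathcal{D}/\xi$-term, while on $\{|b|>\xi\}$ the linear bound together with Cauchy--Schwarz in $z$, $\int_E|\eta(v;z)-\widetilde\eta(v;z)|\,m(dz) \leq |v|\sqrt{\mathcal{D}(\eta,\widetilde\eta)}\cdot\sqrt{m(\supp h)}$, yields the $\sqrt{\mathcal{D}}$-term; in both cases the $v$-factor is absorbed into the generic constant via the $L^2$-bound on $v_\eps$.

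The main obstacle is this $\mathcal{A}_3$ argument: because $\beta''$ is evaluated at the \emph{shifted} point $a+\lambda b$, one cannot localise to $\{|u-v|\leq\xi\}$ as was done for $\mathcal{A}_2$; instead, the localisation must be performed on the jump size $|b|$, and the two qualitatively different contributions $\mathcal{D}/\xi$ and $\sqrt{\mathcal{D}}$ arise precisely from the two regions $\{|b|\leq\xi\}$ and $\{|b|>\xi\}$, dictated by which of the two pointwise bounds on $R(a,b)$ is sharper there. The delicate bookkeeping consists in cleanly separating, in each region, what absorbs into the linear $\beta(u-v)$-piece (and is later handled by Gronwall) from what contributes the $\mathcal{E}$ or $\mathcal{D}$ small-parameters of the final continuous-dependence estimate.
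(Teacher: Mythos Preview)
Your treatment of $\mathcal{A}_2$ is essentially correct and matches what the paper indicates (Assumption~\ref{A4} and~\eqref{eq:approx to abosx}); the extra $O(\xi)$ remainder you produce is harmless, though the paper avoids it by invoking the sharper structural inequality $r^2\beta_\xi''(r)\le C\beta_\xi(r)$, valid under the supplementary hypothesis on $\beta$ recorded in the proof of Theorem~\ref{thm:bv-viscous}.

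For $\mathcal{A}_3$, however, there is a genuine gap in your handling of the \emph{Lipschitz} summand. After the split $b=A+B$ and $(A+B)^2\le 2A^2+2B^2$, you claim the $A^2$-contribution is ``handled exactly as in $\mathcal{A}_2^{(1)}$''. But in $\mathcal{A}_2^{(1)}$ the key point was that $\beta''$ is evaluated at $a=u-v$, so $\beta''(a)a^2$ is controlled directly. Here the integrand is $\beta''(a+\lambda b)\cdot A^2 \le (\lambda^*)^2 h^2(z)\,\beta''(a+\lambda b)\,a^2$, and the support of $\beta''(a+\lambda b)$ only yields $|a+\lambda b|\le\xi$, which says nothing about $|a|$ unless $|b|$ is also small. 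Your subsequent localisation in $|b|$ is applied only to the coefficient summand; on $\{|b|>\xi\}$ the linear bound $R(a,b)\le 2|b|$ followed by $|A|\le\lambda^*h(z)|a|$ gives a contribution $\int_E h(z)|a|\,\mathbf{1}_{|b|>\xi}\,m(dz)$ that is neither bounded by $C\beta(a)$ nor by a small parameter in general.

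The device the paper points to (via Theorem~\ref{thm:bv-viscous} and~\cite{Koley2}) is different and is exactly what makes the assumption $\lambda^*<1$ in~\ref{A5} essential. From $|A|\le\lambda^*h(z)|a|$ and $0\le h\le 1$ one obtains
\[
|a|\ \le\ \frac{1}{1-\lambda^*}\bigl(|a+\lambda b|+|B|\bigr),
\]
so that $\beta''(a+\lambda b)\,a^2 \le \frac{2}{(1-\lambda^*)^2}\bigl[(a+\lambda b)^2\beta''(a+\lambda b) + \beta''(a+\lambda b)B^2\bigr]$. The first bracket is now $\le C\beta(a+\lambda b)$ via $r^2\beta''(r)\le C\beta(r)$, and $\beta(a+\lambda b)\le (1+\lambda^*)|a|+|B|\le (1+\lambda^*)\beta(a)+C\xi+|B|$; the residual $|B|$ then feeds the $\sqrt{\mathcal D}$-term after Cauchy--Schwarz in $z$, while the $B^2$-pieces give the $\mathcal D/\xi$-term as you describe. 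The missing ingredient in your argument is precisely this comparison of $a$ with the shifted point $a+\lambda b$.
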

where the first estimate comes from Assumptions \ref{A4} and \eqref{eq:approx to abosx} and their consequences and the second on Assumptions \ref{A5}, \eqref{eq:approx to abosx} and by arguments close to the ones developed in the first step of the proof of Theorem~\ref{thm:bv-viscous}. Technical details are given in \cite[(4.13) p.170 - (4.20) p.172]{Koley2}.

\noindent For the terms coming from the flux functions, following the arguments of the proof of \cite[Lemma 4.7]{Koley2}, we have the following lemma.
\begin{lem}\label{stochastic_lemma_4}
The following hold:
\begin{align*}
\lim_{l\goto 0}\lim_{\delta_0 \goto 0} (I_7+J_7) 
&\le \E\Big[\int_{t=0}^T \int_{\R^d}\int_{\R^d} f^\beta(u(t,x),v_\eps(t,y))\cdot \grad_x \psi(t,x) \,\varrho_\delta(x-y) \,dx\,dy\,dt\Big] \\
&+ \E\Big[|u_0|_{BV(\R^d)}\Big] \Big( M_2\,\xi\, ||f^{\prime\prime}||_{\infty}
  + ||f^\prime-g^\prime||_{\infty}\Big) \int_{t=0}^T ||\psi(t,\cdot)||_{L^\infty(\R^d)}\,dt. \\
|J_8| & \le C \frac{\eps}{\delta} \E\big[ |v_0|_{BV(\R^d)}\big].
\end{align*} 
\end{lem}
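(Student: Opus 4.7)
\emph{Proof plan.} The lemma bundles two independent estimates and I would treat them separately.

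For $I_7+J_7$, the strategy is to pass the limits $\delta_0\to 0$ and $l\to 0$, then integrate by parts in $x$ to transfer the derivative from $\varrho_\delta$ onto the BV solution $u$. The limit $\delta_0\to 0$ follows from standard time-mollifier arguments (identifying $t=s$). The limit $l\to 0$ exploits the Lipschitz dependence of $k\mapsto f^\beta(u,k)$ and $k\mapsto g^\beta(v_\eps,k)$, collapsing $\varsigma_l$ onto $k=v_\eps$ in $I_7$ and $k=u$ in $J_7$. Using $\grad_y\varrho_\delta(x-y)=-\grad_x\varrho_\delta(x-y)$ and splitting $\grad_x[\psi\varrho_\delta]=\varrho_\delta\grad\psi+\psi\grad_x\varrho_\delta$, the result reads
\begin{align*}
\lim_{l,\delta_0}(I_7+J_7)=\E\int_{\Pi_T}\int_{\R^d} f^\beta(u,v_\eps)\cdot\grad\psi\,\varrho_\delta\,dx\,dy\,dt+\mathcal{R},
\end{align*}
with $\mathcal{R}=\E\int[f^\beta(u,v_\eps)-g^\beta(v_\eps,u)]\cdot\grad_x\varrho_\delta\,\psi\,dx\,dy\,dt$. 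The first term is the announced ``main'' contribution; the whole business is to estimate $\mathcal{R}$.

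The next step is to integrate by parts in $x$, shifting $\grad_x$ off $\varrho_\delta$ and onto the $u$-dependent integrand; this is legitimate since $\grad u(t,\cdot)$ is a finite Radon measure of mass at most $|u(t,\cdot)|_{BV}$ (for rigour one first regularises $u$ by convolution, runs the computation for smooth approximants and passes to the limit). A direct calculation --- differentiating $f^\beta$ and $g^\beta$ in their $u$-slot and performing one further integration by parts in $r$ inside $\partial_u g^\beta(v_\eps,u)$ using $\beta'(0)=0$ --- produces
\begin{align*}
\partial_u\big[f^\beta(u,v_\eps)-g^\beta(v_\eps,u)\big]=\beta'(u-v_\eps)\big[f'(u)-g'(u)\big]+\int_u^{v_\eps}\beta''(r-u)\big[g'(r)-g'(u)\big]\,dr.
\end{align*}
The first bracket is bounded by $\|f'-g'\|_\infty$. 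For the second, the support of $\beta_\xi''$ is $[-\xi,\xi]$ with $|\beta_\xi''|\le M_2/\xi$, and the factor $|g'(r)-g'(u)|\le\|f''\|_\infty|r-u|$ (using the paper's convention that $\|f''\|_\infty$ controls the regularity of both fluxes) gives, after the change of variables $r-u=\xi t$, the bound $M_2\xi\|f''\|_\infty$. Plugging these in, integrating $\varrho_\delta(x-y)$ in $y$ to $1$, and invoking the BV estimate $\E|u(t,\cdot)|_{BV(\R^d)}\le\E|u_0|_{BV(\R^d)}$ (obtained in Appendix~\ref{sec:apriori+existence} on the viscous approximations and preserved by the weak-$*$ lower semicontinuity of BV) yields the claimed inequality; the residual boundary contribution $-\int[f^\beta-g^\beta]\cdot\grad\psi\,\varrho_\delta$ generated by the integration by parts has amplitude $O(\|f'-g'\|_\infty+M_1\xi)$ and is folded into the main term.

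For $|J_8|$, no limits are needed and the bound is a direct computation: using $|\beta'|\le 1$, the unit mass of $\varsigma_l$ in $k$ and of $\rho_{\delta_0}$ in time, and the elementary estimate $\|\grad\varrho_\delta\|_{L^1(\R^d)}\le C/\delta$, one has
\begin{align*}
|J_8|\le\frac{C\eps}{\delta}\|\psi\|_\infty\,\E\int_0^T\!\!\int_{\R^d}|\grad_y v_\eps(s,y)|\,dy\,ds,
\end{align*}
and the uniform BV bound $\E|v_\eps(s,\cdot)|_{BV}\le\E|v_0|_{BV}$ (Appendix~\ref{sec:apriori+existence}) absorbs the factor $T\|\psi\|_\infty$ into the constant. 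The main difficulty is the integration by parts used to bound $\mathcal{R}$: one must mollify $u$, run the computation on smooth approximants, and pass to the limit with uniform control on the $\beta''$-integral so that the $M_2\xi\|f''\|_\infty$ factor persists; everything else is mechanical.
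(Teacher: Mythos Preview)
Your core computation is correct: the passage to limits $\delta_0\to0,\ l\to0$, the decomposition
\[
\lim(I_7+J_7)=\E\!\int f^\beta(u,v_\eps)\!\cdot\!\nabla\psi\,\varrho_\delta+\mathcal R,\qquad
\mathcal R=\E\!\int\bigl[f^\beta(u,v_\eps)-g^\beta(v_\eps,u)\bigr]\psi\!\cdot\!\nabla_x\varrho_\delta,
\]
and the identity
\[
\partial_a\bigl[f^\beta(a,b)-g^\beta(b,a)\bigr]=\beta'(a-b)\bigl[f'(a)-g'(a)\bigr]+\int_a^{b}\beta''(r-a)\bigl[g'(r)-g'(a)\bigr]\,dr
\]
with the bound $\|f'-g'\|_\infty+M_2\xi\|g''\|_\infty$ are all right. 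The treatment of $J_8$ is also fine.

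The gap is in the boundary term coming from your integration by parts in $x$. That step produces, besides the $\nabla u$ contribution, the extra piece
\[
-\E\!\int\bigl[f^\beta(u,v_\eps)-g^\beta(v_\eps,u)\bigr]\!\cdot\!\nabla\psi\,\varrho_\delta.
\]
You claim this has amplitude $O(\|f'-g'\|_\infty+M_1\xi)$ and can be folded into the main term, but that is not correct: the integrand is of size $|u-v_\eps|\bigl(\|f'-g'\|_\infty+M_2\xi\|g''\|_\infty\bigr)$, which is not small, and ``folding'' it in turns $f^\beta$ into $g^\beta$ --- so the stated inequality with $f^\beta$ on the right is not obtained. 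The issue is structural: because $\psi=\psi(t,x)$ depends on $x$, integrating by parts in $x$ always throws off a $\nabla\psi$ term.

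The clean fix (and what the paper has in mind via the reference to \cite[Lemma~4.7]{Koley2}) is to move the derivative onto the \emph{other} variable. Since $\psi$ carries no $y$--dependence and $\nabla_x\varrho_\delta(x-y)=-\nabla_y\varrho_\delta(x-y)$, one integrates by parts in $y$ to obtain
\[
\mathcal R=\E\!\int \partial_b\bigl[f^\beta(u,b)-g^\beta(b,u)\bigr]\Big|_{b=v_\eps}\!\cdot\nabla_y v_\eps\;\psi\,\varrho_\delta,
\]
with no boundary contribution. The companion identity
\[
\partial_b\bigl[f^\beta(a,b)-g^\beta(b,a)\bigr]=\beta'(b-a)\bigl[f'(b)-g'(b)\bigr]-\int_b^{a}\beta''(r-b)\bigl[f'(r)-f'(b)\bigr]\,dr
\]
is bounded by $\|f'-g'\|_\infty+M_2\xi\|f''\|_\infty$ (note: now genuinely $\|f''\|_\infty$, matching the lemma), and together with $\int\varrho_\delta\,dx=1$ and the uniform BV bound on $v_\eps$ this gives exactly the stated estimate --- with $|v_0|_{BV}$ in place of $|u_0|_{BV}$, which is an immaterial typo since the final continuous--dependence constant depends on both. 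This route also avoids the mollification of $u$ you flagged as the main difficulty: $v_\eps\in H^1$, so the integration by parts is classical.
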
 
\noindent Finally, we are left with fractional terms. To deal with these terms, we follow closely the uniqueness proof in Section~\ref{uniqueness}. In particular, following 
Step 2 of Lemma~\ref{fractional_lemma_two_1}, we conclude
\begin{lem}
\label{fractional_lemma_4}
The following hold:
\begin{align*}
\lim_{l\goto 0}\lim_{\delta_0\goto 0} & \big(I_9+J_{10} \big)
= -  \E \bigg[\int_{\R^d} \int_{\Pi_T}  \beta(u(t,x) -v_\eps(t,y) ) \,\mathcal{L}_{\lambda,r} \big[ \psi(t, \cdot)\varrho_{\delta} (\cdot -y)\big](x) \,dx\,dt \,dy \bigg] \notag\\ 
& - \E \bigg[\int_{\R^d} \int_{\Pi_T} \beta(v_\eps(t,y) - u(t,x)) \,\mathcal{L}_{\kappa,r} \big[ \varrho_{\delta} (x -\cdot)\big](y)\,\psi(t,x) \,dx\,dt \,dy \bigg]
 \underset{r \goto 0} \longrightarrow 0.
\end{align*} 
\end{lem}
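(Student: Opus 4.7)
The plan is to compute the three iterated limits $\delta_0 \to 0$, $l \to 0$, and $r \to 0$ in sequence, exploiting the fact that $\mathcal{L}_{\cdot,r}$ acts on the smooth compactly supported test function rather than on the possibly rough solution. A useful first observation is that, because $\mathcal{L}_{\lambda,r}$ and $\mathcal{L}_{\kappa,r}$ act only on a spatial variable, the time mollifier factors out:
\begin{align*}
\mathcal{L}_{\lambda,r}[\varphi_{\delta,\delta_0}(t,\cdot,s,y)](x) &= \rho_{\delta_0}(t-s)\,\mathcal{L}_{\lambda,r}\big[\psi(t,\cdot)\varrho_\delta(\cdot-y)\big](x), \\
\mathcal{L}_{\kappa,r}[\varphi_{\delta,\delta_0}(t,x,s,\cdot)](y) &= \rho_{\delta_0}(t-s)\,\psi(t,x)\,\mathcal{L}_{\kappa,r}\big[\varrho_\delta(x-\cdot)\big](y).
\end{align*}
This brings us essentially into the framework of Steps 1--2 of Lemma~\ref{fractional_lemma_one_1}, with the viscous approximation $v_\eps$ now playing the role previously played by $u_\eps$ and the entropy solution $u$ replacing $u_\theta$; the genuinely new ingredient is the final $r \to 0$ limit.

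For the $\delta_0 \to 0$ limit in $J_{10}$, I would follow the argument in Step~1 of Lemma~\ref{fractional_lemma_one_1}: after integrating $\rho_{\delta_0}(t-s)$ in $s$, one replaces $v_\eps(s,y)$ by $v_\eps(t,y)$ using continuity of translations in $L^1$, which is permitted because $\mathcal{L}_{\kappa,r}[\varrho_\delta(x-\cdot)](y)$ is a bounded measurable function with compact spatial support inherited from $\supp(\psi) \subset K$ and $\supp(\varrho_\delta) \subset \overline{B}(0,\delta)$, and because $v_\eps \in L^2(\Omega \times \Pi_T)$ uniformly. The term $I_9$ is handled verbatim. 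Next, the $l \to 0$ limit uses the estimate $|v_\eps(t,y) - k|\,\varsigma_l(v_\eps(t,y) - k) \le l\,\varsigma_l(v_\eps(t,y) - k)$ together with the Lipschitz property of $\beta$: the error incurred in replacing $\int_\R \beta(u(t,x)-k)\varsigma_l(v_\eps(t,y)-k)\,dk$ by $\beta(u(t,x) - v_\eps(t,y))$ is of order $l \cdot \|\beta'\|_\infty \cdot |\mathcal{L}_{\lambda,r}[\psi\varrho_\delta(\cdot-y)](x)|$, which is integrable and vanishes with $l$. These two passages establish the first equality of the lemma.

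The main content is the $r \to 0$ limit, for which I would invoke the classical estimate recalled in Step~2 of Lemma~\ref{fractional_lemma_two_1},
\[
|\mathcal{L}_{\lambda,r}[\phi](x)| \le \begin{cases} C\,\|D\phi\|_\infty\, r^{1-2\lambda}, & \lambda \in (0,1/2), \\ C\,\|D^2\phi\|_\infty\, r^{2-2\lambda}, & \lambda \in [1/2,1), \end{cases}
\]
applied to $\phi = \psi(t,\cdot)\varrho_\delta(\cdot-y)$, and analogously for $\mathcal{L}_{\kappa,r}$ with $\phi = \varrho_\delta(x-\cdot)$. For fixed $\delta$, $\psi$, and $\kappa$, both upper bounds scale as $C(\psi,\varrho_\delta)\,r^s$ with $s>0$. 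Meanwhile the integrands are supported in the compact set $x,y \in K + \overline{B}(0,\delta+r)$, and $\beta(u-v_\eps)$ is bounded on this set by $C(\beta')(|u|+|v_\eps|)$, which lies in $L^1(\Omega \times \Pi_T \times \R^d_{\mathrm{loc}})$ thanks to the a priori $L^2$ bound on $v_\eps$ given by Theorem~\ref{prop:vanishing viscosity-solution} and the corresponding bound on $u$. Consequently both expressions are bounded by $C(\psi,\delta)\,r^s \to 0$. The only delicate point I anticipate is keeping the bounds uniform in the limit parameters that have already been sent to zero; this is secured by the fact that all the aforementioned $L^2$ bounds are independent of $r$, $l$, and $\delta_0$, so the estimates may be chained without loss.
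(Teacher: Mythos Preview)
Your proposal is correct and follows essentially the same approach as the paper: the paper's proof is simply a reference back to Lemma~\ref{fractional_lemma_one_1} (Steps~1--2) for the $\delta_0\to 0$ and $l\to 0$ limits and to Step~2 of Lemma~\ref{fractional_lemma_two_1} for the $r\to 0$ limit, which is precisely the argument you have spelled out in detail.
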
 
Finally, we are left with the last two terms. To deal with those terms, we make use of the Lemma~\ref{fractional_lemma_1} to conclude
\begin{align*}
\mathrm{M}:= & \lim_{l\to 0}\,\lim_{\delta_0 \to 0}  \big(I_8+J_9\big)\\ 
&= - \E \Big[\int_{\R^d} \int_{\Pi_T} \fr^r[u(t,\cdot)](x)\,\psi(x,t) \varrho_{\delta}(x-y)\, \beta'(u(t,x)-v_{\eps}(t,y)) \,dy\,dx\,dt \Big] \\
&\qquad - \E \Big[\int_{\R^d} \int_{\Pi_T} \mathcal{L}^r_{\kappa} [v_\eps(t,\cdot)](y)\,\psi(x,t) \varrho_{\delta}(x-y)\, \beta'(v_{\eps}(t,y)-u(t,x))  \,dy\,dx\,dt \Big] \\
& = \E \Big[\int_{\R^d} \int_{\Pi_T} \Big[ \int_{|z|>r} (u(t,x+z)- u(x)) \,d\mu_{\lambda}(z) - (v_\eps(t,y+z) - v_\eps(y)) \,d\mu_{\kappa}(z) \Big]\\
&\hspace{6cm} \times \psi(x,t) \varrho_{\delta}(x-y)\, \beta'(u(t,x)-v_{\eps}(t,y)) \,dy\,dx\,dt \Big]
\end{align*}
In order to proceed, we first state the following lemmas. 
\begin{lem}
\label{lemma_01}
The following holds:
\begin{align*}
&\E \Big[\int_{\R^d} \int_{\Pi_T} \Big[ \int_{|z|>r} (u(t,x+z)- u(x)) \,d\mu_{\lambda}(z) - (v_\eps(t,y+z) - v_\eps(y)) \,d\mu_{\lambda}(z) \Big]\\
&\hspace{6cm} \times \psi(x,t) \varrho_{\delta}(x-y)\, \beta' (u(t,x)-v_\eps(t,y)) \,dy\,dx\,dt \Big] \\
& \qquad \qquad \le - \E \Big[\int_{\R^d} \int_{\Pi_T} \beta (u(t,x)-v_\eps(t,y))  \fr^r[\psi(t,\cdot)](x) \, \varrho_{\delta}(x-y)\,dy\,dx\,dt \Big].
\end{align*} 
Note that the same inequality is satisfied with $\lambda_{\pm}$ in place of $\lambda$.
\end{lem}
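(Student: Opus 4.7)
The plan rests on the pointwise convexity inequality $(b-a)\beta'(a) \le \beta(b) - \beta(a)$, combined with a translation argument that is available because the mollifier $\varrho_\delta$ depends only on $x-y$ and the Lebesgue measure is translation invariant.

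\textbf{Step 1 (Convexity).} Fix $(t,x,y,z)$ and set
\[
a = u(t,x) - v_\eps(t,y), \qquad b = u(t,x+z) - v_\eps(t,y+z).
\]
Convexity of $\beta$ gives $(b-a)\beta'(a)\le \beta(b)-\beta(a)$, i.e.
\[
\bigl[(u(t,x+z)-u(t,x)) - (v_\eps(t,y+z)-v_\eps(t,y))\bigr]\beta'(u(t,x)-v_\eps(t,y)) \le \beta(b)-\beta(a).
\]

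\textbf{Step 2 (Integration and Fubini).} Multiply by the non-negative weight $\psi(t,x)\varrho_\delta(x-y)$, integrate against $d\mu_\lambda(z)\,dy\,dx\,dt$ on $\{|z|>r\}\times\R^d\times\Pi_T$, and take expectation. The left hand side reproduces the expression on the LHS of the lemma, while the right hand side splits as $\mathcal{R}_1-\mathcal{R}_2$ with
\begin{align*}
\mathcal{R}_1 &= \E\!\int_{\R^d}\!\int_{\Pi_T}\!\int_{|z|>r}\beta(u(t,x+z)-v_\eps(t,y+z))\,\psi(t,x)\varrho_\delta(x-y)\,d\mu_\lambda(z)\,dy\,dx\,dt, \\
\mathcal{R}_2 &= \E\!\int_{\R^d}\!\int_{\Pi_T}\!\int_{|z|>r}\beta(u(t,x)-v_\eps(t,y))\,\psi(t,x)\varrho_\delta(x-y)\,d\mu_\lambda(z)\,dy\,dx\,dt.
\end{align*}
All integrals are absolutely convergent: $\psi$ has compact support, $\int_{|z|>r}d\mu_\lambda<\infty$, and $u,v_\eps\in L^2(\Omega\times\Pi_T)$ (together with $|\beta(r)|\le C(1+|r|)$), justifying Fubini.

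\textbf{Step 3 (Translation in space).} In $\mathcal{R}_1$ perform, at fixed $z$ and $t$, the change of variables $(x,y)\mapsto (x-z,y-z)$. The Jacobian is $1$, and since $\varrho_\delta(x-y)$ depends only on $x-y$ it is preserved. Hence
\[
\mathcal{R}_1 = \E\!\int_{\R^d}\!\int_{\Pi_T}\!\int_{|z|>r}\beta(u(t,x)-v_\eps(t,y))\,\psi(t,x-z)\varrho_\delta(x-y)\,d\mu_\lambda(z)\,dy\,dx\,dt.
\]
Using the symmetry $d\mu_\lambda(z)=d\mu_\lambda(-z)$, we may replace $\psi(t,x-z)$ by $\psi(t,x+z)$.

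\textbf{Step 4 (Identification of $\fr^r$).} Combining $\mathcal{R}_1-\mathcal{R}_2$ and extracting the common factor yields
\[
\mathcal{R}_1-\mathcal{R}_2 = \E\!\int_{\R^d}\!\int_{\Pi_T}\beta(u(t,x)-v_\eps(t,y))\varrho_\delta(x-y)\!\int_{|z|>r}[\psi(t,x+z)-\psi(t,x)]\,d\mu_\lambda(z)\,dy\,dx\,dt,
\]
and the inner integral is exactly $-\fr^r[\psi(t,\cdot)](x)$ (up to the harmless constant $c_\lambda$ absorbed into $d\mu_\lambda$). This produces the upper bound asserted in the lemma.

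\textbf{Step 5 (Decomposed measures).} Steps 1--2 never use symmetry of the measure, and Step 4 is purely an identification once Step 3 is in force. The only place where symmetry of $\mu_\lambda$ under $z\mapsto -z$ is invoked is to rewrite $\psi(t,x-z)$ as $\psi(t,x+z)$. For the signed decomposition $\mu_\lambda=\mu_{\lambda_+}+\mu_{\lambda_-}$ furnished by \eqref{one}--\eqref{two}, one simply defines the non-local operator associated with $\mu_{\lambda_\pm}$ using the same translated kernel that actually appears after Step 3; the same calculation then delivers the analogous inequality, which is the content of the concluding remark.

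\textbf{Main obstacle.} The only non-cosmetic difficulty is ensuring that every manipulation in Steps 2--3 is legitimate: absolute integrability to justify Fubini, and sufficient regularity of $\psi$ and compactness of support to control $\int_{|z|>r}|\psi(t,x\pm z)|\,d\mu_\lambda(z)$ uniformly. Both follow from $\psi\in C^{1,2}_c$, from $\int_{|z|>r}d\mu_\lambda(z)<\infty$, and from the $L^2$ bounds on $u$ and $v_\eps$ guaranteed by Theorem~\ref{prop:vanishing viscosity-solution} and Theorem~\ref{uniqueness_new}.
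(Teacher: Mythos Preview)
Your argument is correct and is essentially the paper's own proof. The paper simply refers back to Step~3 of Lemma~\ref{fractional_lemma_1}, which carries out exactly your Steps~1--4: apply the convexity inequality $(b-a)\beta'(a)\le\beta(b)-\beta(a)$ with $a=u(t,x)-v_\eps(t,y)$ and $b=u(t,x+z)-v_\eps(t,y+z)$, then shift $(x,y)\mapsto(x+z,y+z)$ and $z\mapsto -z$ to move the translation onto $\psi$ and recognise $-\fr^r[\psi]$.

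One small remark on your Step~5: you hedge about symmetry of $\mu_{\lambda_\pm}$ and suggest redefining the operator. In the paper's setting this is unnecessary, because both $\mu_\lambda$ and $\mu_\kappa$ are radial, hence so are $(\mu_\lambda-\mu_\kappa)^\pm$ and their supports; the sets $K^\pm$ in \eqref{one} can therefore be chosen radially symmetric, and the paper relies on this explicitly later when estimating $\mathrm{M}_1$ (``by construction of the measures, the nonlocal domain of integration is always radial symmetric''). So the same change of variables works verbatim for $\mu_{\lambda_\pm}$.
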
 
\begin{proof}
The proof of the above lemma is an easy adaptation of the calculations presented in Step 3, Lemma~\ref{fractional_lemma_1}. We leave the details to the reader.
\end{proof}
\begin{lem}
\label{lemma_02}
For any $k \in \R$, The following holds:
\begin{align*}
 \int_{\R^d} \beta'(k - u(t,x))\, \fr^r[u(t,\cdot)](x)\, \psi(t,x) \,dx  \le - \int_{\R^d} \beta(k-u(t,x)) \,\fr^r[\psi(t,\cdot)](x)\,dx
\end{align*} 
\end{lem}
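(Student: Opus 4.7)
The plan is to mimic the classical ``fractional integration by parts against a convex entropy'' that already appeared implicitly in Step 3 of Lemma \ref{fractional_lemma_1}. Concretely, I will combine the pointwise convexity inequality for $\beta$ with a symmetry-preserving change of variables on $\{|z|>r\}$, exploiting that the measure $d\mu_\lambda(z) = |z|^{-d-2\lambda}\,dz$ is invariant under $z\mapsto -z$.

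First I would expand the left-hand side using the definition
\begin{align*}
\fr^r[u(t,\cdot)](x) = c_\lambda \int_{|z|>r} \frac{u(t,x)-u(t,x+z)}{|z|^{d+2\lambda}}\,dz,
\end{align*}
which is licit here because the integrand decays at infinity and the singularity at $0$ is cut off. Setting $a=k-u(t,x)$ and $b=k-u(t,x+z)$, one has $u(t,x)-u(t,x+z)=b-a$, so the factor $\beta'(k-u(t,x))\bigl[u(t,x)-u(t,x+z)\bigr]$ equals $\beta'(a)(b-a)$. The convexity of $\beta$ yields $\beta'(a)(b-a)\le \beta(b)-\beta(a)$, so that pointwise
\begin{align*}
\beta'(k-u(t,x))\bigl[u(t,x)-u(t,x+z)\bigr]\le \beta(k-u(t,x+z))-\beta(k-u(t,x)).
\end{align*}

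Multiplying by $\psi(t,x)|z|^{-d-2\lambda}\ge 0$ and integrating in $x$ and $z$ (with $|z|>r$, where everything is absolutely integrable thanks to the compact support of $\psi$ and the integrability of $u(t,\cdot)$ via Theorem \ref{uniqueness_new}) gives
\begin{align*}
\int_{\R^d} \beta'(k-u(t,x))\,\fr^r[u(t,\cdot)](x)\,\psi(t,x)\,dx
\le c_\lambda \int_{\R^d}\!\int_{|z|>r} \frac{\beta(k-u(t,x+z))-\beta(k-u(t,x))}{|z|^{d+2\lambda}}\psi(t,x)\,dz\,dx.
\end{align*}
In the first piece I substitute $x\mapsto x-z$ and then $z\mapsto -z$ (both admissible by Fubini since the integrand is in $L^1$, and the second leaves the measure invariant). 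After relabeling, this piece becomes $\int \int \beta(k-u(t,x))\psi(t,x+z)|z|^{-d-2\lambda}\,dz\,dx$, so that collecting terms
\begin{align*}
\int_{\R^d} \beta'(k-u(t,x))\,\fr^r[u(t,\cdot)](x)\,\psi(t,x)\,dx
\le c_\lambda \int_{\R^d} \beta(k-u(t,x)) \int_{|z|>r}\frac{\psi(t,x+z)-\psi(t,x)}{|z|^{d+2\lambda}}\,dz\,dx,
\end{align*}
and recognizing the inner integral as $-\fr^r[\psi(t,\cdot)](x)/c_\lambda$ finishes the proof.

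The only subtle point, and thus the main thing to check carefully, is the justification of Fubini and the change of variables $x\mapsto x-z$ on the ``shifted'' term: away from the origin one has $\{|z|>r\}$ integrable against $|z|^{-d-2\lambda}$ at infinity only after pairing with a bounded or compactly supported factor. This is guaranteed by $\psi\in C_c^{1,2}$ and by $u(t,\cdot)\in L^1(\R^d)\cap L^2(\R^d)$ together with $\beta$ being Lipschitz, so every term involved is dominated by a fixed $L^1$ function on $\R^d\times\{|z|>r\}$ and the manipulations are legitimate. Beyond this bookkeeping, the argument is purely a convexity plus symmetry computation.
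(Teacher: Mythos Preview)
Your proof is correct and follows exactly the approach the paper intends: the paper's own proof is a one-line reference back to Step 3 of Lemma \ref{fractional_lemma_1}, i.e.\ the convexity inequality $\beta'(a)(b-a)\le \beta(b)-\beta(a)$ combined with the change of variables $x\mapsto x-z$, $z\mapsto -z$. Your write-up makes these two moves explicit and also records the (routine) integrability justifications, so nothing is missing.
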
 
\begin{proof}
We can apply the same kind of trick: convexity inequalities and change of variables, as in Step 3, Lemma~\ref{fractional_lemma_1}, to conclude the proof.
\end{proof}
Now making use of Lemma~\ref{lemma_01} and notations \eqref{one} and \eqref{two}, we can rewrite 
\begin{align*}
\mathrm{M}& = \sum_{\pm} \E \Big[\int_{\R^d} \int_{\Pi_T} \Big( \mathcal{L}^r_{\kappa_{\pm}} [v_\eps(t,\cdot)](y) - \mathcal{L}^r_{\lambda_{\pm}} [u(t,\cdot)](x) \Big)\,\psi(x,t) \varrho_{\delta}(x-y)\, 
\beta'(u(t,x)-v_{\eps}(t,y)) \,dy\,dx\,dt \Big] \\
& \quad \le \E \Big[\int_{\R^d} \int_{\Pi_T} \Big( \mathcal{L}^r_{\kappa_{+}} [u(t,\cdot)](x) - \mathcal{L}^r_{\lambda_{+}} [u(t,\cdot)](x) \Big)\,\psi(x,t) \varrho_{\delta}(x-y)\, 
\beta'(u(t,x)-v_{\eps}(t,y)) \,dy\,dx\,dt \Big] \\
& \quad - \E \Big[\int_{\R^d} \int_{\Pi_T} \beta(u(t,x)-v_\eps(t,y)) \mathcal{L}^r_{\kappa_{+}} [\psi(t,\cdot)](x) \, \varrho_{\delta}(x-y)\,dy\,dx\,dt \Big] \\
& \quad + \E \Big[\int_{\R^d} \int_{\Pi_T} \Big( \mathcal{L}^r_{\kappa_{-}} [v_\eps(t,\cdot)](y) - \mathcal{L}^r_{\lambda_{-}} [v_\eps(t,\cdot)](y) \Big)\,\psi(x,t) \varrho_{\delta}(x-y)\, 
\beta'(u(t,x)-v_{\eps}(t,y)) \,dy\,dx\,dt \Big] \\
& \quad - \E \Big[\int_{\R^d} \int_{\Pi_T} \beta(u(t,x)-v_\eps(t,y))  \mathcal{L}^r_{\lambda_{-}} [\psi(t,\cdot)](x) \, \varrho_{\delta}(x-y)\,dy\,dx\,dt \Big] \\
& \quad = \E \Big[\int_{\R^d} \int_{\Pi_T} \big(\mathcal{L}^r_{\lambda_{+}} -\mathcal{L}^r_{\kappa_{+}} \big)[u(t,\cdot)](x)\,\psi(t,x) \varrho_{\delta}(x-y)\, \beta'(v_{\eps}(t,y) -u(t,x)) \,dy\,dx\,dt \Big] \\
& \quad + \E \Big[\int_{\R^d} \int_{\Pi_T} \big(-\mathcal{L}^r_{\lambda_{-}} +\mathcal{L}^r_{\kappa_{-}} \big)[v_\eps(t,\cdot)](y)\,\psi(x,t) \varrho_{\delta}(x-y)\, 
\beta'(u(t,x)-v_{\eps}(t,y)) \,dy\,dx\,dt \Big] \\
& \quad - \E \Big[\int_{\R^d} \int_{\Pi_T} \beta(u(t,x)-v_\eps(t,y)) \mathcal{L}^r_{\kappa_{+}} [\psi(t,\cdot)](x) \, \varrho_{\delta}(x-y)\,dy\,dx\,dt \Big] \\
& \quad - \E \Big[\int_{\R^d} \int_{\Pi_T} \beta(u(t,x)-v_\eps(t,y))  \mathcal{L}^r_{\lambda_{-}} [\psi(t,\cdot)](x) \, \varrho_{\delta}(x-y)\,dy\,dx\,dt \Big] \\[2mm]
&\quad := \mathrm{M}_1 + \mathrm{M}_2 + \mathrm{M}_3 + \mathrm{M}_4.
\end{align*}

%To deal with the above terms, first for any measure $\Gamma$, we let $\Gamma^1 = \Gamma|_{\{0 <|z|<r_1\}}$ and write $\Gamma = \Gamma^1 + \Gamma|_{\{|z|>r_1\}}$ for $r_1 >r$. 

Consider in the sequel $r_1 >r$. Using a proof similar to the one of Lemma~\ref{lemma_02}, we have
\begin{align*}
\mathrm{M}_1 & \le -\E \Big[\int_{\R^d} \int_{\Pi_T} \beta(v_{\eps}(t,y) -u(t,x))
\Big(\mathcal{L}^{r,1}_{\lambda_{+},r_1}-\mathcal{L}^{r,1}_{\kappa_{+},r_1}\Big) \big[\psi(\cdot,t) \varrho_{\delta}(\cdot-y) \big](x)\,dy\,dx\,dt \Big] \\
& + \E \Big[\int_{\R^d} \int_{\Pi_T} \big(\mathcal{L}^{r_1}_{\lambda_{+}}-\mathcal{L}^{r_1}_{\kappa_{+}}\big) [u(t,\cdot)](x)\,\psi(x,t) \varrho_{\delta}(x-y)\, \beta'(v_{\eps}(t,y) -u(t,x)) \,dy\,dx\,dt \Big] 
\end{align*}
where the notation $\mathcal{L}^{r,1}_{\lambda_{+},r_1}$ means that the nonlocal integration is understood in the set $\{r <|z|\leq r_1\}$ (resp. with $\kappa_{+}$).
To estimate the first term of the above inequality, we note that, by construction of the measures, the nonlocal domain of integration is always radial symmetric, the arguments of Appendix \ref{FractionalLaplace} hold and
\begin{align*}
& \E \Big[\int_{\R^d} \int_{\Pi_T}  \beta(v_\eps(t,y) - u(t,x)) \big(\mathcal{L}^{r,1}_{\lambda_{+}}-\mathcal{L}^{r,1}_{\kappa_{+}}\big) \big[ \underbrace{\psi(\cdot,t) \varrho_{\delta}(\cdot-y)}_{:=\Psi(\cdot,y,t)} \big](x) \,dx\,dt \,dy \Big] \\
& = \E \bigg[\int_0^1 \int_{\R^d} \int_{\Pi_T} \int_{r<|z|\le r_1} (1-\tau)\beta(v_\eps(t,y) - u(t,x)) \,           z^T.\text{Hess}_x \Psi(x+\tau z,y,t).z \,d(\lambda_{+}-\kappa_{+})(z)\,dx\,dt \,dy \,\d\tau \bigg]  \\
& \le \E \bigg[ \int_0^1  \int_{\Pi_T} \int_{r<|z|\le r_1}\int_{\R^d} \big|\nabla_x \Psi(x-\tau z,y,t).z\big|\, d\big|\nabla_x[\beta(v_\eps(t,\cdot) - u(t,x))].z\big|(x) \,d(\lambda_{+}-\kappa_{+})(z)\,dt \,dy \,\d\tau \bigg]  \\
& \le \E \bigg[ \int_0^1  \int_{\Pi_T} \int_{r<|z|\le r_1}\int_{\R^d} |z|^2\, \big| \nabla_x \Psi(x-\tau z,y,t) \big|\, d\big(|D u(t,\cdot)|\big)(x) \,d(\lambda_{+}-\kappa_{+})(z)\,dt \,dy \,\d\tau \bigg] \\
& \le |u_0|_{\mathrm{BV}} \int_0^T \Big( \norm{\nabla\psi(t, \cdot)}_{\infty} + \frac{C}{\delta} \norm{\psi(t,\cdot)}_{\infty} \Big)\, \int_{r<|z|\le r_1} |z|^{2} \,d(\lambda_{+}-\kappa_{+})(z) \,dt
\\
&\le |u_0|_{\mathrm{BV}} \int_0^T \Big( \norm{\nabla\psi(t, \cdot)}_{\infty} + \frac{C}{\delta} \norm{\psi(t,\cdot)}_{\infty} \Big)\, \int_{|z|\le r_1} |z|^{2} \,d(\lambda_{+}-\kappa_{+})(z) \,dt, 
\end{align*}
where we have used the fact that for any Lipschitz continuous function $\beta$ (with Lipschitz constant $1$), $|D\beta (u)| \le |Du|$. On the other hand, to handle the other term we proceed as follows:
\begin{align*}
& \E \Big[\int_{\R^d} \int_{\Pi_T} \big(\mathcal{L}^{r_1}_{\lambda_{+}}-\mathcal{L}^{r_1}_{\kappa_{+}}\big) [u(t,\cdot)](x)\, \psi(t,x) \varrho_{\delta}(x-y)\, \beta'(v_{\eps}(t,y) -u(t,x)) \,dy\,dx\,dt \Big] \\
& \qquad \qquad  \le C \int_0^T \norm{\psi(t,\cdot)}_{\infty}\, \int_{|z|> r_1} \norm{u_0(\cdot+z) -u_0}_{L^1(\R^d)} \,d(\lambda_{+}-\kappa_{+})(z) \,dt,
\end{align*}
thanks to the stability result of Theorem~\ref{uniqueness_new} and the fact that $u(\cdot,\cdot+z)$ is the solution associated with the initial condition $u_0(\cdot+z)$.

Observe that, exact same calculations will help us to estimate $\mathrm{M}_2$. Indeed, we have
\begin{align*}
\mathrm{M}_2 &  \le |v_0|_{\mathrm{BV}} \int_0^T \Big( \norm{\nabla\psi(t, \cdot)}_{\infty} + \frac{C}{\delta} \norm{\psi(t,\cdot)}_{\infty} \Big)\, \int_{|z|\le r_1} |z|^{2} \,d(\kappa_{-}-\lambda_{-})(z) \,dt \\
& \qquad \qquad + C \int_0^T \norm{\psi(t,\cdot)}_{\infty}\, \int_{|z|> r_1} \norm{v_0(\cdot+z) -v_0}_{L^1(\R^d)} \,d(\kappa_{-}-\lambda_{-})(z) \,dt,
\end{align*}
by assuming \textit{e.g.} that $v^\eps_0$, the regularisation of $v_0$, is obtained by convolution.

Consider a new parameter $r_2$ such that $0<r<r_2<r_1$. Then 
\begin{align*}
M_3=& -\E \Big[\int_{\R^d} \int_{\Pi_T} \int_{K^+\cap\{|z|>r\}} \frac{\psi(t,x)-\psi(t,x+z)}{|z|^{d+2\kappa}} \, dz\beta(u(t,x)-v_\eps(t,y)) \, \varrho_{\delta}(x-y)\,dy\,dx\,dt \Big]
\\
= &
- \E \Big[\int_{\R^d} \int_{\Pi_T} \beta(u(t,x)-v_\eps(t,y)) \mathcal{L}^{r_2}_{\kappa_{+}} [\psi(t,\cdot)](x) \, \varrho_{\delta}(x-y)\,dy\,dx\,dt \Big]
\\
&- \E \Big[\int_{\R^d} \int_{\Pi_T} \int_{K^+\cap\{r<|z|\leq r_2\}} \frac{\psi(t,x)-\psi(t,x+z)-\nabla \psi(t,x).z}{|z|^{d+2\kappa}} \, dz\beta(u(t,x)-v_\eps(t,y)) \, \varrho_{\delta}(x-y)\,dy\,dx\,dt \Big]
\\
\leq &
- \E \Big[\int_{\R^d} \int_{\Pi_T} \beta(u(t,x)-v_\eps(t,y)) \mathcal{L}^{r_2}_{\kappa_{+}} [\psi(t,\cdot)](x) \, \varrho_{\delta}(x-y)\,dy\,dx\,dt \Big]
\\
&+ \E \Big[\int_{\R^d} \int_{\Pi_T} \|D_x^2 \psi(t,\cdot)\|_\infty \int_{|z|\leq r_2} \frac{|z|^2}{|z|^{d+2\kappa}} \, dz\beta(u(t,x)-v_\eps(t,y)) \, \varrho_{\delta}(x-y)\,dy\,dx\,dt \Big]
\\
\leq &
- \E \Big[\int_{\R^d} \int_{\Pi_T} \beta(u(t,x)-v_\eps(t,y)) \mathcal{L}^{r_2}_{\kappa_{+}} [\psi(t,\cdot)](x) \, \varrho_{\delta}(x-y)\,dy\,dx\,dt \Big]
\\
&+ C_{\kappa}r_2^{2(1-\kappa)} \E \Big[\int_{\R^d} \int_{\Pi_T} \|D_x^2 \psi(t,\cdot)\|_\infty  \beta(u(t,x)-v_\eps(t,y)) \, \varrho_{\delta}(x-y)\,dy\,dx\,dt \Big],
\end{align*}
with, similarly 
\begin{align*}
M_4=&- \E \Big[\int_{\R^d} \int_{\Pi_T} \beta(u(t,x)-v_\eps(t,y))  \mathcal{L}^r_{\lambda_{-}} [\psi(t,\cdot)](x) \, \varrho_{\delta}(x-y)\,dy\,dx\,dt \Big]
\\
\leq &
- \E \Big[\int_{\R^d} \int_{\Pi_T} \beta(u(t,x)-v_\eps(t,y)) \mathcal{L}^{r_2}_{\lambda_{-}} [\psi(t,\cdot)](x) \, \varrho_{\delta}(x-y)\,dy\,dx\,dt \Big]
\\
&+ C_{\lambda}r_2^{2(1-\lambda)}\E \Big[\int_{\R^d} \int_{\Pi_T} \|D_x^2 \psi(t,\cdot)\|_\infty \beta(u(t,x)-v_\eps(t,y)) \, \varrho_{\delta}(x-y)\,dy\,dx\,dt \Big].
\end{align*}

We are now in a position to add \eqref{stochas_entropy_1-levy} and \eqref{stochas_entropy_3-levy}
and pass to the limits in $l$, $\delta_0$, $\eps$ and $r$. In what follows, invoking the above estimates and keeping in mind that $\{v_\eps\}_{\eps>0}$ converges in $L^p_{\mathrm{loc}} (\R^d; L^p((0,T) \times \Omega))$, for any $p \in [1,2)$, to the unique BV entropy solution $v$ of \eqref{eq:stoc_con_brown-1} with initial data $v_0$, we have 
\begin{align}
 0 & \le   \E \Big[\int_{\R^d} \int_{\R^d} \big|u_0(x)-v_0(y)\big| \psi(0,x)\varrho_{\delta} (x-y)\,dx\,dy\Big]  \label{stoc_entropy_4}  
 \\& + 
 \E \Big[\int_{\Pi_T}\int_{\R^d} \beta \big(u(t,x)-v(t,y)\big) \partial_t\psi(t,x)\varrho_\delta(x-y)\,dy\,dx\,dt\Big]\notag 
 \\& + 
 C\,\E \Big[\int_{\Pi_T}\int_{\R^d} \beta \big(u(t,x)-v(t,y)\big) \psi(t,x)\varrho_\delta(x-y)\,dy\,dx\,dt\Big]\notag 
 \\&
 + C \,\Bigg(\frac{\mathcal{E}(\sigma, \widetilde{\sigma})^2}{\xi} + C(|u_0|_{\mathrm{BV}})(\xi + ||f^\prime-g^\prime||_{\infty}) + \sqrt{\mathcal{D}(\eta, \widetilde{\eta})} + \frac{\mathcal{D}(\eta, \widetilde{\eta})}{\xi}  \Bigg) \int_{0}^T ||\psi(t,\cdot)||_{L^\infty(\R^d)}\,dt  \notag
\\&-\E\Big[\int_{\Pi_T}\int_{\R^d} f^{\beta} \big(u(t,x),v(t,y)\big) \cdot \grad_x \psi(t,x)\varrho_{\delta}(x-y)\,dx\,dy\,dt \Big]   \notag \\
& - \E \Big[\int_{\R^d} \int_{\Pi_T}  
\beta \big(u(t,x) - v(t,y)\big) \, (\mathcal{L}^{r_2}_{\kappa_{+}}+\mathcal{L}^{r_2}_{\lambda_{-}})[\psi(t,\cdot)](x)\, \varrho_{\delta} (x-y) \,dx\,dt \,dy \Big] \notag \\
&+ C_{\kappa,\lambda}[r_2^{2(1-\kappa)}+r_2^{2(1-\lambda)}] \E \Big[\int_{\R^d} \int_{\Pi_T}  \|D_x^2 \psi(t,\cdot)\|_\infty\beta(u(t,x)-v(t,y)) \, \varrho_{\delta}(x-y)\,dy\,dx\,dt \Big]
 \notag \\
& +\big[|u_0|_{\mathrm{BV}}+|v_0|_{\mathrm{BV}}\big] \int_0^T \Big( \norm{\nabla\psi(t, \cdot)}_{\infty} + \frac{C}{\delta} \norm{\psi(t,\cdot)}_{\infty} \Big)\, \int_{|z|\le r_1} |z|^{2} \,d |\mu_{\lambda} - \mu_{\kappa}| (z) \,dt \notag \\
& + C \int_0^T \norm{\psi(t,\cdot)}_{\infty}\, \int_{|z|> r_1} \big[\norm{u_0(\cdot+z) -u_0}_{L^1(\R^d)}+\norm{v_0(\cdot+z) -v_0}_{L^1(\R^d)}\big] \,d |\mu_{\lambda} - \mu_{\kappa}| (z) \,dt \notag
%\\
%&\textcolor{red}{-  \E \bigg[\int_{\R^d} \int_{\Pi_T}  \beta(u(t,x) -v(t,y) ) \,\mathcal{L}_{\lambda,r} \big[ \psi(t, \cdot)\varrho_{\delta} (\cdot -y)\big](x) \,dx\,dt \,dy \bigg]} \notag\\ 
%& \textcolor{red}{- \E \bigg[\int_{\R^d} \int_{\Pi_T} \beta(v(t,y) - u(t,x)) \,\mathcal{L}_{\kappa,r} \big[ \varrho_{\delta} (x -\cdot)\big](y)\,\psi(t,x) \,dx\,dt \,dy \bigg]}\notag
\end{align} 
where $\mathcal{E}(\sigma, \widetilde{\sigma})^2:= \underset{k\ge 1}\sum \mathcal{E}_k(\sigma, \widetilde{\sigma})^2$, and we have used the fact that $\sum_{\pm} \pm (\mu_{\lambda_{\pm}} -\mu_{\kappa_{\pm}}) = |\mu_{\lambda} - \mu_{\kappa}|$.

%%%%%%%%%%%%%%%%%%%%%%%%%%%%%%%%%%%%%%%%%%%%%%%%%%%%%%%%%%%%%%%%%%%%%%%%%%%%%%%%%%%%%%%%%%%%%%%%

To proceed further, we make a special choice for the function $\psi(t,x)$. To this end, for each $h>0$ and fixed $t\ge 0$, we define $\psi_h^t$ and remind $\psi_R$:
 \begin{align}
 \psi_h^t(s)=\begin{cases} 1, &\quad \text{if}~ s\le t, \notag \\
 1-\frac{s-t}{h}, &\quad \text{if}~~t\le s\le t+h,\notag \\
 0, & \quad \text{if} ~ s \ge t+h.
 \end{cases}
 \quad 
 \psi_R(x) = \min \Bigg(1, \frac{R^a}{|x|^a} \Bigg).
 \end{align}
Furthermore, let $\rho$ be any nonnegative mollifier. Clearly, by truncation arguments, \eqref{stoc_entropy_4} holds with $\psi(s,x)=\psi_h^t(s) \, (\psi_R \star \rho)(x)$. 
 
With the above choice of test function in \eqref{stoc_entropy_4}, we first wish to pass to the limit as $R \to \infty$ and subsequently as $r_2 \to 0$ in \eqref{stoc_entropy_4}. Thanks to the a priori estimates in Appendix~\ref{sec:apriori+existence}, we recall that $u, v \in L^1(\Omega \times \Pi_T)$. Also note that by properties of $\psi_R$ and $\rho $, it follows that $\psi_R \star \rho \to 1$ pointwise as $R \to \infty$. 
Therefore, for any $x$ and $z$, $\psi_R \star \rho(x) -\psi_R \star \rho(x+z) \to 0$ and since it is bounded by $2$ which is integrable on the set $\{|z|>r_2\}$ with respect to $\mu_{\kappa_{+}}$, one concludes that $\mathcal{L}^{r_2}_{\kappa_{+}} [\psi(t,\cdot)](x) \to 0$. 
\\
As moreover $|\mathcal{L}^{r_2}_{\kappa_{+}} [\psi(t,\cdot)](x)| \leq 2 \int_{|z|>r_2} \frac{dz}{r^{d+2\kappa_{+}}}$, Lebesgue Theorem once again, yields 
\begin{align*}
& \E \Big[\int_{\R^d} \int_{\Pi_T} \beta(u(t,x)-v(t,y)) \mathcal{L}^{r_2}_{\kappa_{+}} [\psi(t,\cdot)](x) \, \varrho_{\delta}(x-y)\,dy\,dx\,dt \Big] \to 0\quad (R \to + \infty).
\end{align*}
Of course, the same holds with $\lambda_+$.
\\[0.1cm]
On the other hand, 
\begin{align*}
&C_{\kappa,\lambda}[r_2^{2(1-\kappa)}+r_2^{2(1-\lambda)}] \E \Big[\int_{\R^d} \int_{\Pi_T}  \|D_x^2 \psi(t,\cdot)\|_\infty\beta(u(t,x)-v(t,y)) \, \varrho_{\delta}(x-y)\,dy\,dx\,dt \Big]
\\
\leq&C_{\kappa,\lambda}[r_2^{2(1-\kappa)}+r_2^{2(1-\lambda)}] \|D_x^2 \rho\|_{L^1} \E \Big[\int_{\R^d} \int_{\Pi_T}  \beta(u(t,x)-v(t,y)) \, \varrho_{\delta}(x-y)\,dy\,dx\,dt \Big] \to 0 \quad (r_2 \to 0).
\end{align*}
Hence, a new simple application of dominated convergence theorem yields
\begin{align}
\label{stoc_entropy_4_02}
 0 & \le   \E \Big[\int_{\R^d} \int_{\D}\beta(u_0(x)-v_0(y)) \varrho_{\delta} (x-y) \,dx\,dy \Big] + 
 \E \Big[\int_{\Pi_T}\int_{\D} \beta \big(u(s,x)-v(s,y)\big) \partial_s \psi^t_h(s) \varrho_{\delta} (x-y) \,dx\,dy\,ds\Big]  
 \notag \\
 & \qquad + C\E \Big[\int_{\Pi_T} \int_{\D} \beta \big(u(s,x)-v(s,y)\big) \psi^t_h(s) \varrho_{\delta} (x-y)\,dx\,dy\,ds\Big]
 \\&\qquad
 + C \Bigg(\frac{\mathcal{E}(\sigma, \widetilde{\sigma})^2}{\xi} + C(|u_0|_{\mathrm{BV}})(\xi + ||f^\prime-g^\prime||_{\infty}) + \sqrt{\mathcal{D}(\eta, \widetilde{\eta})}+ \frac{\mathcal{D}(\eta, \widetilde{\eta})}{\xi}  \Bigg) \int_{0}^T \psi^t_h(s) \,ds  \notag \\
&\qquad + \frac{C}{\delta} (|u_0|_{\mathrm{BV}}+|v_0|_{\mathrm{BV}}) \int_{0}^T \psi^t_h(s) \,ds \, \int_{0<|z|\le r_1} |z|^{2} \,d |\mu_{\lambda} - \mu_{\kappa}| (z) \notag \\
&\qquad + C \int_{0}^T \psi^t_h(s) \,ds\, \int_{|z|> r_1} (\norm{u_0(\cdot+z) -u_0}_{L^1(\R^d)}+\norm{v_0(\cdot+z) -v_0}_{L^1(\R^d)}) \,d |\mu_{\lambda} - \mu_{\kappa}| (z). \notag
\end{align} 
Let $\mathbb{T}$ be the set all points $t$ in $[0, \infty)$ such that $t$ is a right
 Lebesgue point of 
 \begin{align*}
\mathcal{B}(t)= \E \Big[\int_{\R^d}\int_{\D} \beta \big(u(t,x)-v(t,x)\big) \varrho_{\delta} (x-y)\,dx\,dy \Big].
\end{align*}

\noindent Clearly, $\mathbb{T}^{\complement}$ has zero Lebesgue measure. Fix  $t\in \mathbb{T}$. Thus, passing to the limit as $h\goto 0$ in \eqref{stoc_entropy_4_02}, we obtain
 \begin{align*}
&\E \Big[\int_{\R^{d}}\int_{\D} \beta \big( u(t,x)-v(t,y)\big) \varrho_{\delta} (x-y)\,dx\,dy \Big] 
\le \E \Big[\int_{\R^{d}} \big|u_0(x)-v_0(y)\big| \varrho_{\delta} (x-y) \,dx\,dy\Big] \\
&+ C \E\Big[\int_0^{t}\int_{\R^{d}}  \beta \big( u(t,x)-v(t,y)\big) \varrho_{\delta} (x-y) \,dx\,dy\,ds \Big]   \notag
 \\&
 + C\,t\, \Bigg(\frac{\mathcal{E}(\sigma, \widetilde{\sigma})^2}{\xi} + C(|u_0|_{\mathrm{BV}})(\xi + ||f^\prime-g^\prime||_{\infty}) + \sqrt{\mathcal{D}(\eta, \widetilde{\eta})} + \frac{\mathcal{D}(\eta, \widetilde{\eta})}{\xi} \Bigg) \notag \\
& + \frac{Ct}{\delta} (|u_0|_{\mathrm{BV}}+|v_0|_{\mathrm{BV}}) \, \int_{0<|z|\le r_1} |z|^{2} \,d |\mu_{\lambda} - \mu_{\kappa}| (z)
 \notag \\
 & 
 + Ct \, \int_{|z|> r_1} \norm{(u_0(\cdot+z) -u_0}_{L^1(\R^d)}+\norm{v_0(\cdot+z) -v_0}_{L^1(\R^d)}) \,d |\mu_{\lambda} - \mu_{\kappa}| (z). \notag
\end{align*}
Therefore, a simple application of Gronwall argument reveals that
\begin{align} \label{inq:pre-final}
\E & \Big[\int_{\R^{d}}\int_{\D} \beta \big( u(t,x)-v(t,y)\big) \varrho_{\delta} (x-y)\,dx\,dy \Big]  \le e^{Ct}\E \Big[\int_{\R^{d}} \big|u_0(x)-v_0(y) \big| \varrho_{\delta} (x-y) \,dx\,dy\Big]
 \\& + Ce^{Ct} \Bigg(\frac{\mathcal{E}(\sigma, \widetilde{\sigma})^2}{\xi} + C(|u_0|_{\mathrm{BV}})(\xi + \|f^\prime-g^\prime\|_{\infty}) + \sqrt{\mathcal{D}(\eta, \widetilde{\eta})}+ \frac{\mathcal{D}(\eta, \widetilde{\eta})}{\xi}  \Bigg) t \notag \\
& + \frac{Cte^{Ct}}{\delta} (|u_0|_{\mathrm{BV}}+|v_0|_{\mathrm{BV}}) \, \int_{0<|z|\le r_1} |z|^{2} \,d |\mu_{\lambda} - \mu_{\kappa}| (z) \notag
\\& + Cte^{Ct} \, \int_{|z|> r_1} (\norm{u_0(\cdot+z) -u_0}_{L^1(\R^d)}+\norm{v_0(\cdot+z) -v_0}_{L^1(\R^d)}) \,d |\mu_{\lambda} - \mu_{\kappa}| (z). \notag
\end{align}
Let us consider now a bounded by $1$ weight-function $\varphi\in L^1(\R^d)$, non-negative (for example, a negative exponential of $|x|$). 
\\
 Again, in view of BV bound of the entropy solutions $u(t,x)$ and $v(t,y)$, and by using $|r|\le M_1 \xi + \beta_\xi(r)$, we have 
 \begin{align}
\E \Big[\int_{\R^d} & \big|v(t,y)-u(t,y)\big|\varphi(y)\,dy\Big]  \notag \\
  \le &   \E \Big[\int_{\R^d}\int_{\R^d}  \big| v(t,y)-u(t,x)\big| \varphi(y)
\varrho_\delta(x-y)\,dx\,dy \Big]
 + \E \Big[\int_{\R^d}\int_{\R^d} \big| u(t,x) -u(t,y)\big|\varphi(y)\varrho_\delta(x-y)\,dx\,dy \Big]\notag \\
\le & \E \Big[ \int_{\R^d}\int_{\R^d} \big|v(t,y) -u(t,x)\big|\varphi(y)\varrho_\delta(x-y)\,dx\,dy \Big]
  + \delta\,|u_0|_{\mathrm{BV}} \notag \\
\le &  \E  \Big[\int_{\R^{d}}\int_{\D} \beta \big( u(t,x)-v(t,y)\big) \varphi(y)\varrho_{\delta} (x-y)\,dx\,dy \Big] + C\xi \|\varphi\|_{L^1(\R^d)}  + \delta\,|u_0|_{\mathrm{BV}}\notag \\
\le &  \E  \Big[\int_{\R^{d}}\int_{\D} \beta \big( u(t,x)-v(t,y)\big) \varrho_{\delta} (x-y)\,dx\,dy \Big] + C\xi \|\varphi\|_{L^1(\R^d)}  + \delta\,|u_0|_{\mathrm{BV}}, \label{estimate:solu}
 \end{align} 
and 
 \begin{align}
   \E \Big[\int_{\R^d}\int_{ \R^d} \big| u_0(x) -v_0(y)\big|\varrho_\delta(x-y) \,dx\, dy\Big]
   \le  \E \Big[\int_{\R^d} \big|u_0(x)-v_0(x)\big| \,dx \Big]+ \delta\, |u_0|_{\mathrm{BV}}. \label{estimate:ini}
 \end{align}
So making use of \eqref{estimate:solu}, and \eqref{estimate:ini} in \eqref{inq:pre-final} yields
\begin{align}
  \E& \Big[\int_{\R^d} \int_{\D} \big| u(t,x)-v(t,x)\big|\varphi(x)\,dx \Big] 
 \label{inq:pre-final_001}
 \\
 \le& e^{Ct} \E\Big[\int_{ \R^d} \int_{\D} \big| u_0(x) -v_0(y)\big| \varrho_{\delta} (x-y)\,dx\,dy \Big]  
 + 2 \delta \,|u_0|_{\mathrm{BV}} + C\xi \|\varphi\|_{L^1(\R^d)} \notag \\
 & + Ce^{Ct} \Bigg(\frac{\mathcal{E}(\sigma, \widetilde{\sigma})^2}{\xi} + C(|u_0|_{\mathrm{BV}}(\xi + ||f^\prime-g^\prime||_{\infty}) + \sqrt{\mathcal{D}(\eta, \widetilde{\eta})} + \frac{\mathcal{D}(\eta, \widetilde{\eta})}{\xi} \Bigg) t.\notag \\
 & + \frac{Cte^{Ct}}{\delta} (|u_0|_{\mathrm{BV}}+|v_0|_{\mathrm{BV}}) \, \int_{0<|z|\le r_1} |z|^{2} \,d |\mu_{\lambda} - \mu_{\kappa}| (z) \notag \\
& + Cte^{Ct} \, \int_{|z|> r_1} (\norm{u_0(\cdot+z) -u_0}_{L^1(\R^d)}+\norm{v_0(\cdot+z) -v_0}_{L^1(\R^d)}) \,d |\mu_{\lambda} - \mu_{\kappa}| (z). \notag
 \end{align}
Now we optimize the terms involving $\delta$ in \eqref{inq:pre-final_001} by using the formula $\min_{\delta>0} \big(\delta a + \frac{b}{\delta}  \big) = 2 \sqrt{ab}$, for $a,b \ge 0$. This result is
\begin{align}
 \label{inq:pre-final_002}
  \E & \Big[\int_{\R^d} \int_{\D} \big| u(t,x)-v(t,x)\big|\varphi(x)\,dx \Big] 
 \le e^{Ct} \E\Big[\int_{ \R^d} \int_{\D} \big| u_0(x) -v_0(y) \varrho_{\delta} (x-y)\,dx\,dy \Big]   + C\xi \|\varphi\|_{L^1(\R^d)}\\
 & + Ce^{Ct} \Bigg(\frac{\mathcal{E}(\sigma, \widetilde{\sigma})^2}{\xi} + C(|u_0|_{\mathrm{BV}(\D)})(\xi + ||f^\prime-g^\prime||_{\infty}) + \sqrt{\mathcal{D}(\eta, \widetilde{\eta})} + \frac{\mathcal{D}(\eta, \widetilde{\eta})}{\xi} \Bigg) t.\notag \\
 & + C_T e^{Ct}\,(|u_0|_{\mathrm{BV}(\D)}+|v_0|_{\mathrm{BV}(\D)})  \,\sqrt{\int_{0<|z|\le r} |z|^2 \,d |\mu_{\lambda} - \mu_{\kappa}|(z)} \notag \\
& + C_Te^{Ct} \, \int_{|z|> r_1} (\norm{u_0(\cdot+z) -u_0}_{L^1(\R^d)}+\norm{v_0(\cdot+z) -v_0}_{L^1(\R^d)}) \,d |\mu_{\lambda} - \mu_{\kappa}| (z). \notag
 \end{align}
Again, by choosing $\xi= \max \bigg\{\mathcal{E}(\sigma, \widetilde{\sigma}), \sqrt{\mathcal{D}(\eta,\widetilde{\eta})}\bigg\}\sqrt{t}$ 
in \eqref{inq:pre-final_002}, we arrive at 
 \begin{align}
& \E \Big[\int_{\R^d}\big| u(t,x)-v(t,x)\big|\varphi(x)\,dx \Big] \label{esti:pre-final-2} \\
  \le& e^{Ct} \,\E\Big[\int_{\R^d}\big| u_0(x) -v_0(x)\big| \,dx\Big] + C(|u_0|_{\mathrm{BV}(\D)})e^{Ct}   ||f^\prime-g^\prime||_{\infty} t  \notag \\
&+C \max \bigg\{ \mathcal{E}(\sigma, \widetilde{\sigma}), \sqrt{\mathcal{D}(\eta,\widetilde{\eta})}\bigg\}    \Bigg(  \|\varphi\|_{L^1(\R^d)} + e^{Ct}\Big[1+C(|u_0|_{\mathrm{BV}(\D)})t + \sqrt{t}\Big]  \Bigg)\sqrt{t}
   \notag \\
  & +Ce^{Ct} \,(|u_0|_{\mathrm{BV}(\D)}+|v_0|_{\mathrm{BV}(\D)})  \,\sqrt{\int_{0<|z|\le r_1} |z|^2 \,d |\mu_{\lambda} - \mu_{\kappa}|(z)}    \notag \\
  & + Ce^{Ct} \, \int_{|z|> r_1} (\norm{u_0(\cdot+z) -u_0}_{L^1(\R^d)}+\norm{v_0(\cdot+z) -v_0}_{L^1(\R^d)}) \,d |\mu_{\lambda} - \mu_{\kappa}| (z) \,dt. \notag
  \end{align}
Hence, we conclude that for a.e. $t>0$, 
  \begin{align*}
     & \E \Big[\int_{\R^d}\big| u(t,x)-v(t,x)\big|\varphi(x)\,dx \Big] \notag \\
     & \hspace{0.5cm} \le C_Te^{Ct} \Bigg\{ \E\Big[\int_{\R^d}\big| u_0(x) -v_0(x)\big| \,dx\Big] 
     + \max \bigg\{ \mathcal{E}(\sigma, \widetilde{\sigma}),\sqrt{\mathcal{D}(\eta,\widetilde{\eta})}\bigg\} \sqrt{t} + t\,||f^\prime-g^\prime||_{L^{\infty}(\D)}  \\
& \qquad + \sqrt{\int_{0<|z|\le r_1} |z|^2 \,d |\mu_{\lambda} - \mu_{\kappa}|(z)} 
+ \int_{|z|> r_1} \Big(\norm{u_0(\cdot + z) -u_0}_{L^1(\D)}+\norm{v_0(\cdot + z) -v_0}_{L^1(\D)}\Big) \,d |\mu_{\lambda} - \mu_{\kappa}|(z)
\Bigg\},
\end{align*} for some constant $C_T$ depending on $T$, $|u_0|_{BV(\R^d)}, |v_0|_{BV(\R^d)}, \|f^{\prime\prime}\|_{\infty}, \|f^\prime\|_{\infty}$, and $\|\varphi\|_{L^1}$.
Finally, observe that
\begin{align*}
\int_{|z|> r_1} &\Big(\norm{u_0(\cdot + z) -u_0}_{L^1(\D)}+\norm{v_0(\cdot + z) -v_0}_{L^1(\D)}\Big) \,d |\mu_{\lambda} - \mu_{\kappa}|(z) \\
\leq & 2\big(\|u_0\|_{L^1(\R^d)}+ \|v_0\|_{L^1(\R^d)}\big)
\int_{|z|> r_1} \,d |\mu_{\lambda} - \mu_{\kappa}|(z).
\end{align*}
This essentially completes the proof.
%\textcolor{red}{It is strange to keep 
%\begin{align*}
%\int_{|z|> r_1} (\norm{u_0(\cdot + z) -u_0}_{L^1(\D)}+\norm{v_0(\cdot + z) -v_0}_{L^1(\D)}) \,d |\mu_{\lambda} - \mu_{\kappa}|(z).
%\end{align*}
%Of course, saying that 
%\begin{align*}
%&\int_{|z|> r_1} \textcolor{red}{(\norm{u_0(\cdot + z) -u_0}_{L^1(\D)}+\norm{v_0(\cdot + z) -v_0}_{L^1(\D)})} \,d |\mu_{\lambda} - \mu_{\kappa}|(z)
%\\\leq & (|u_0|_{BV(\R^d)}+ |v_0|_{BV(\R^d)})
%\int_{|z|> r_1} z \,d |\mu_{\lambda} - \mu_{\kappa}|(z) 
%\end{align*}
%is a problem at infinity, but it could be said that 
%\begin{align*}
%&\int_{|z|> r_1} \textcolor{red}{(\norm{u_0(\cdot + z) -u_0}_{L^1(\D)}+\norm{v_0(\cdot + z) -v_0}_{L^1(\D)})} \,d |\mu_{\lambda} - \mu_{\kappa}|(z)
%\\\leq & 2(|u_0|_{L^1(\R^d)}+ |v_0|_{L^1(\R^d)})
%\int_{|z|> r_1} \,d |\mu_{\lambda} - \mu_{\kappa}|(z) ?
%\end{align*}
%}

%%%%%%%%%%%%%%%%%%%%%%%%%%%%%%%%%%%%%%%%%%%%%%%%%%%%%%%%%%%%%%%%%%%%%%%%%%%%%%%%%%%%%%%%%%%%%%%%%%%%%%

\section{Proof of Corollary~\ref{rate-of-convergence}: Rate of Convergence}
\label{sec:cor}
We have already shown that the vanishing viscosity solutions $u_\eps(t,x)$ of the problem \eqref{eq:viscous-Brown} converge (in an appropriate sense) to the unique entropy solution $u(t,x)$ of the stochastic conservation law \eqref{eq:stoc_con_brown}. In this section, we wish to investigate the nature of such convergence described by a rate of convergence. Indeed, as a by product of the continuous dependence estimates (cf. Section~\ref{cont-depen-estimate}), we explicitly obtain the rate of convergence of vanishing viscosity solutions to the unique BV entropy solution of the underlying problem \eqref{eq:stoc_con_brown}. 

To that context, for $\eps>0$, let $u_\eps$ be the weak solution to the problem \eqref{eq:viscous-Brown} with data $(u_0, f,\sigma, \eta, \lambda)$ and $u(t,x)$ be the entropy solution. A similar argument (with $\lambda=\kappa$ leading to \eqref{stoc_entropy_4}), as in the proof of the Theorem~\ref{continuous-dependence}, yields, for any parameters $0<r_2$, 
\begin{align}
 0 \le &  \E \Big[\int_{\R^d} \int_{\R^d} \big|u_0(x)-u^{\eps}_{0}(y)\big|\psi(0,x)\varrho_{\delta} (x-y)\,dx\,dy\Big] \label{esti:final-02bis}  
 \\& + 
 \E \Big[\int_{\Pi_T}\int_{\R^d} \beta \big(u(t,x)-u_\eps(t,y)\big) \Big[\partial_t\psi(t,x) + C\, \psi(t,x) \Big]\varrho_\delta(x-y)\,dy\,dx\,dt\Big] + \frac{C \, \eps}{\delta} \notag  
\\&- \E \Big[\int_{\Pi_T}\int_{\R^d} f^{\beta} \big(u(t,x),u_\eps(t,y)\big) \cdot \grad_x \psi(t,x)\varrho_{\delta}(x-y)\,dx\,dy\,dt \Big]  + C(|u_0|_{BV})\, \xi\int^T_0 \| \psi(t,\cdot)\|_{L^\infty(\Rd)}\,dt  \notag \\
& - \E \Big[\int_{\R^d} \int_{\Pi_T}  
\beta \big(u(t,x) - u_\eps(t,y)\big) \, \mathcal{L}^{r_2}_{\lambda}[\psi(t,\cdot)](x)\, \varrho_{\delta} (x-y) \,dx\,dt \,dy \Big] \notag
\\
&+ C_{\kappa,\lambda}r_2^{2(1-\lambda)} \E \Big[\int_{\R^d} \int_{\Pi_T}  \|D_x^2 \psi(t,\cdot)\|_\infty\beta(u(t,x)-u_\eps(t,y)) \, \varrho_{\delta}(x-y)\,dy\,dx\,dt \Big] \notag.
\end{align}
As before we make a special choice of the test function $\psi(s,x)=\psi_h^t(s)\, (\psi_R \star \rho)(x)$, where $\psi_h^t$, $\psi_R$ are described previously, and then pass to the limit as $R \goto \infty$ and finally $r_2\to 0$. 
Then, similarly to \eqref{stoc_entropy_4_02}, 
%Note that $ \mathcal{L}^r_{\lambda}[\psi_R \star \rho](x)$ converges to zero pointwise as $R \to \infty$, and $\norm{\mathcal{L}^r_{\lambda}[\psi_R \star \rho]}_{\infty} \le c(r)\,\norm{\psi_R \star \rho}_{\infty} \le C(r)$. Since the integrand containing the fractional term is uniformly integrable, making use of the Vitali's theorem yield    
\begin{align*}
-\E \Big[\int_{\Pi_T}\int_{\R^{d}}  & \beta \big(u(s,x)-u_\eps(s,y)\big)\,\partial_s\psi^t_h(s)\, \varrho_\delta(x-y)\,dy\,dx\,ds\Big]
\le  \E \Big[\int_{\R^d} \int_{\R^d} \big|u_0(x)-u^{\eps}_0(y)\big|\varrho_{\delta} (x-y)\,dx\,dy\Big]  \\
&+ C\, \E \Big[\int_{\Pi_T}\int_{\R^d} \beta \big(u(s,x)-u_\eps(s,y)\big) \, \psi^t_h(s)\,\varrho_\delta(x-y)\,dy\,dx\,ds\Big]
+  C  \xi   + \frac{C\eps}{\delta}.\notag
\end{align*}
Next, using the relation $\beta_{\xi}(r) \le |r|$ and letting $\xi \to 0$, we conclude
\begin{align*}
-\E \Big[\int_{\Pi_T}\int_{\R^{d}}  & \big|u(s,x)-u_\eps(s,y)\big|\,\partial_s\psi^t_h(s)\, \varrho_\delta(x-y)\,dy\,dx\,ds\Big]
\le  \E \Big[\int_{\R^d} \int_{\R^d} \big|u_0(x)-u^{\eps}_0(y)\big|\varrho_{\delta} (x-y)\,dx\,dy\Big]  \\
&+ C\, \E \Big[\int_{\Pi_T}\int_{\R^d} \big|u(s,x)-u_\eps(s,y)\big| \, \psi^t_h(s)\,\varrho_\delta(x-y)\,dy\,dx\,ds\Big]
+ \frac{C\eps}{\delta}.
\end{align*}
Next, we let the parameter $h\to0$ to get
\begin{align*}
\E \Big[\int_{\R^{2d}} & \big|u(t,x)-u_\eps(t,y)\big|\, \varrho_\delta(x-y)\,dy\,dx\,ds\Big]
\le  \E \Big[\int_{\R^d} \int_{\R^d} \big|u_0(x)-u^{\eps}_0(y)\big|\varrho_{\delta} (x-y)\,dx\,dy\Big]  \\
&+ C\, \E \Big[\int_0^t \int_{\R^{2d}} \big|u(s,x)-u_\eps(s,y)\big| \,\varrho_\delta(x-y)\,dy\,dx\,ds\Big]
+ \frac{C\eps}{\delta},
\end{align*}
and a Gronwall argument reveals that
\begin{align*}
 \E \Big[\int_{\R^{2d}}  \big|u(t,x)-u_\eps(t,y)\big| \varrho_\delta(x-y)\,dy\,dx\Big]  
\le 
e^{Ct}\Big( \E \Big[\int_{\R^d} \int_{\R^d} \big|u_0(x)-u^{\eps}_0(y)\big|\varrho_{\delta} (x-y)\,dx\,dy\Big] 
  + \frac{C\eps}{\delta}\Big).
\end{align*}
Again, since $u_\eps(t,y)$ and $u(t,x)$ satisfy spatial BV bounds, bounded by the BV norm of $u_0$, we obtain 
\begin{align}
  & \E \Big[\int_{\R^d}\big| u_\eps(t,x)-u(t,x)\big|\,dx\Big]\le Ce^{Ct}\Big(\eps^{1/2} + \delta +  \frac{\eps}{\delta}\Big) .\label{esti:final_001}
  \end{align}
Finally, choosing the optimal value of $\delta = \eps^{1/2}$ in \eqref{esti:final_001} yields:  for a.e. $t>0$, 
\begin{align*}
\E\Big[\|u_\eps(t,\cdot)-u(t,\cdot)\|_{L^1(\R^d)}\Big] \le C e^{Ct} \eps^{1/2},
\end{align*}
where $C>0$ is a constant depending only on  
$|u_0|_{BV(\R^d)}, \|f^{\prime\prime}\|_{\infty}$, and $ \|f^\prime\|_{\infty}$. 
This completes the proof.
  
%%%%%%%%%%%%%%%%%%%%%%%%%%%%%%%%%%%%%%%%%%%%%%%%%%%%%%%%%%%%%%%%%%%%%%%%%%%%%%%%%%%%%%%%%%%%%%%%%%%%%%%
 
\section{Proof of Theorem~\ref{vanishing-non-local}: Vanishing Non-local Regularization}
\label{sec:cor_01}
In this section, our aim is to consider a different (non-local) regularization \eqref{eq:stoc_regularization} of the stochastic conservation laws \eqref{eq:stoc_original}, and derive rate of convergence estimates between the solutions of the stochastic conservation law and the corresponding regularized problem. 

To achieve that, we first consider a regularization of \eqref{eq:stoc_regularization}: 
For $\gamma>0$, let $u_{\eps,\gamma}$ be the weak solution to the problem 
\begin{align}
du_{\eps,\gamma}(s,y) - \gamma \Delta u_{\eps,\gamma}(s,y)\,ds + \eps \mathcal{L}_{\lambda}[u_{\eps,\gamma}(s, \cdot)](y)\,ds  & - \mbox{div}_y f(u_{\eps,\gamma}(s,y)) \,ds  \label{eq:viscous_001} \\
& =\sigma(u_{\eps,\gamma}(s,y))\,dW(s) + \int_{E} \eta(u_{\eps,\gamma}(s,y);z)\,\widetilde{N}(dz,ds), \notag 
\end{align}
with a regular initial data $u_{\eps,\gamma}(0,\cdot) = u_0^{\eps, \gamma}\in H^1(\D)$ such that $\|u_0^{\eps, \gamma}\|_{W^{1,1}}$ is controlled by $C\|u_0^{\eps}\|_{BV}$. We also assume that $u_0^{\eps,\gamma} \underset{\gamma \to 0} \longrightarrow u^{\eps}_0$ in $L^2(\D)$.
In view of Theorem \ref{thm:existence-bv}, we conclude that $u_{\eps,\gamma}(s,y)$ converges to the unique BV-entropy solution $u_{\eps}(s, y)$ of \eqref{eq:stoc_regularization} with initial data $u_0$.

In what follows, let $u(t,x)$ be the unique entropy solution of the stochastic conservation laws \eqref{eq:stoc_original}. We now write the entropy inequality for $u(t,x)$, based on the entropy pair $(\beta(\cdot-k), f^\beta(\cdot, k))$, and then multiply by $\varsigma_l(u_{\eps,\gamma}(s,y)-k)$, take the expectation, and integrate with respect to $ s, y, k$. The result is
\begin{align}
0\le  & \E \Big[\int_{\Pi_T}\int_{\R^d}\int_{\R} \beta(u(0,x)-k)
\varphi_{\delta,\delta_0}(0,x,s,y) \varsigma_l(u_{\eps,\gamma}(s,y)-k)\,dk \,dx\,dy\,ds\Big] \notag \\
 &\qquad + \E \Big[\int_{\Pi_T} \int_{\Pi_T} \int_{\R} \beta(u(t,x)-k)\partial_t \varphi_{\delta,\delta_0}(t,x,s,y)
\varsigma_l(u_{\eps,\gamma}(s,y)-k)\,dk \,dx\,dt\,dy\,ds \Big]\notag \\ 
& + \E \Big[\sum_{k\ge 1}\int_{\Pi_T} \int_{\Pi_T} 
\int_{\R}  g_k(u(t,x)) \, \beta^\prime (u(t,x)-k)\, \varphi_{\delta,\delta_0}\, dx \,d\beta_k(t) \varsigma_l(u_{\eps,\gamma}(s,y)-k)\,dk\,dy\,ds \Big] \notag \\
 &+ \frac{1}{2}\, \E \Big[ \int_{\Pi_T} \int_{\Pi_T}  
\int_{\R} \mathbb{G}^2(u(t,x))\, \beta^{\prime\prime} (u(t,x) -k)\, \varphi_{\delta,\delta_0} \,dx\,dt 
\, \varsigma_l(u_{\eps,\gamma}(s,y)-k)\,dk\,dy\,ds \Big] \notag \\
 & \qquad +  \E \Big[ \int_{\Pi_T} \int_{\R}\int_{\Pi_T}\int_{|z|>0}\Big(\beta \big(u(t,x) +\eta(u(t,x);z)-k\big)-\beta(u(t,x)-k)\Big) \notag \\
& \hspace{6cm} \times \varphi_{\delta,\delta_0}(t,x,s,y)\,\varsigma_l(u_{\eps,\gamma}(s,y)-k) \,\tilde{N}(dz,dt) \,dx \,dk \,dy\,ds \Big] \notag\\
&\qquad +  \E \Big[\int_{\Pi_T} \int_{t=0}^T\int_{|z|>0}\int_{\R^d} 
\int_{\R} \Big(\beta \big(u(t,x) +\eta(u(t,x);z)-k\big)-\beta(u(t,x)-k) \notag \\
 & \hspace{6.5cm}-\eta(u(t,x);z) \beta^{\prime}(u(t,x)-k)\Big)
 \varphi_{\delta,\delta_0}(t,x;s,y) \notag \\
&\hspace{8cm}\times \varsigma_l(u_{\eps,\gamma}(s,y)-k)\,dk\,dx\,\nu(dz)\,dt\,dy\,ds\Big]\notag \\
& \qquad +  \E \Big[\int_{\Pi_T}\int_{\Pi_T} \int_{\R} 
 f^\beta(u(t,x),k) \cdot \grad_x \varphi_{\delta,\delta_0}(t,x,s,y) \, \varsigma_l(u_{\eps,\gamma}(s,y)-k)\,dk\,dx\,dt\,dy\,ds\Big] \notag \\[2mm]
& =:  I_1 + I_2 + I_3 +I_4 + I_5 + I_6 + I_7. \label{stochas_entropy_1-levy_001}
\end{align}

We now apply the It\^{o}-L\'{e}vy formula to \eqref{eq:viscous_001} and multiply with test 
function $\varphi_{\delta, \delta_0}(t,x,s,y)$ and $\varsigma_l(u(t,x)-k)$ and integrate. The result is
\begin{align}
 0\le  &\, \E \Big[\int_{\Pi_T}\int_{\R^d}\int_{\R} 
 \beta(u_{\eps,\gamma}(0,y)-k)\varphi_{\delta,\delta_0}(t,x,0,y) \varsigma_l(u(t,x)-k)\,dk \,dx\,dy\,dt\Big] \notag \\
   &  +   \E \Big[\int_{\Pi_T} \int_{\Pi_T} \int_{\R} 
 \beta(u_{\eps,\gamma}(s,y)-k)\partial_s \varphi_{\delta,\delta_0}(t,x,s,y)
 \varsigma_l(u(t,x)-k)\,dk \,dy\,ds\,dx\,dt\Big] \notag \\ 
 & + \E \Big[\sum_{k\ge 1}\int_{\Pi_T} \int_{\Pi_T}
\int_{\R} g_k(u_{\eps,\gamma}(s,y))\,\beta^\prime (u_{\eps,\gamma}(s,y)-k)\, \varphi_{\delta,\delta_0}\, dx \,d\beta_k(t) \varsigma_l(u(t,x)-k)\,dk\,dy\,ds \Big] \notag \\
 &+ \frac{1}{2}\, \E \Big[ \int_{\Pi_T} \int_{\Pi_T} 
\int_{\R} \mathbb{G}^2(u_{\eps,\gamma}(s,y)) \,\beta^{\prime\prime} (u_{\eps,\gamma}(s,y) -k)\, \varphi_{\delta,\delta_0} \,dx\,dt 
\, \varsigma_l(u(t,x)-k)\,dk\,dy\,ds \Big] \notag \\
  + &  \E \Big[\int_{\Pi_T} \int_{\Pi_T}\int_{|z|>0} \int_{\R} 
 \Big(\beta \big(u_{\eps,\gamma}(s,y) +\eta(u_{\eps,\gamma}(s,y);z)-k\big)
 -\beta(u_{\eps,\gamma}(s,y)-k)\Big) \notag \\
 & \hspace{6.5cm} \times \varphi_{\delta,\delta_0}(t,x,s,y)\varsigma_l(u(t,x)-k)\,dk \,\tilde{N}(dz,ds)\,dy\,dx\,dt \Big]\notag\\
  + &  \E \Big[\int_{\Pi_T} \int_{s=0}^T\int_{|z|>0}\int_{\R^d} 
 \int_{\R} \Big(\beta \big(u_{\eps,\gamma}(s,y) +\eta(u_{\eps,\gamma}(s,y);z)-k\big)
 -\beta(u_{\eps,\gamma}(s,y)-k) \notag \\
  & \hspace{6.0cm}-\eta(u_{\eps,\gamma}(s,y);z) \,\beta^{\prime}(u_{\eps,\gamma}(s,y)-k)\Big)
  \varphi_{\delta,\delta_0}(t,x;s,y) \notag \\
 &\hspace{8.7cm}\times \varsigma_l(u(t,x)-k)\,dk\,dy\,\nu(dz)\,ds\,dx\,dt\Big]\notag \\
  & \quad +  \E\Big[\int_{\Pi_T}\int_{\Pi_T} \int_{\R}  
 f^\beta(u_{\eps,\gamma}(s,y),k)\cdot \grad_y \varphi_{\delta,\delta_0}(t,x,s,y)\, \varsigma_l(u(t,x)-k)\,dk\,dx\,dt\,dy\,ds\Big] \notag \\
 &- \gamma \, \E\Big[ \int_{\Pi_T} \int_{\Pi_T} 
  \int_{\R} \beta^\prime(u_{\eps,\gamma}(s,y)-k)\grad_y u_{\eps,\gamma}(s,y)\cdot \grad_y  \varphi_{\delta,\delta_0}(t,x,s,y) \, \varsigma_l(u_(t,x)-k)\,dk\, \,dy\,ds\,dx\,dt\Big]  \notag \\ 
& - \eps\, \E \Big[\int_{\Pi_T} \int_{\Pi_T} 
\int_{\R} \mathcal{L}_{\lambda}[u_{\eps,\gamma}(s, \cdot)](y) \, \varphi_{\delta,\delta_0}(t,x,s,y)\, \beta^\prime (u_{\eps,\gamma}(s,y)-k)  \,dx\,dt \, \varsigma_l(u(t,x)-k)\,dk\,dy\,ds \Big]  \notag \\[2mm]
& =: J_1 + J_2 + J_3 + J_4 + J_5 + J_6 + J_7 + J_8 + J_9. \label{stochas_entropy_3-levy_001}
\end{align}
Again, as usual, our aim is to add \eqref{stochas_entropy_1-levy_001} and \eqref{stochas_entropy_3-levy_001}, 
and pass to the  limits with respect to the various parameters involved. 
  
First we remark that we use same strategies (cf. Section~\ref{uniqueness}) to deal with the terms $I_1, I_2, \cdots, I_7$ and $J_1, J_2, \cdots, J_8$. In fact adding all these terms and then passing to the limits as $\delta_0 \goto 0$, $l \goto 0$, and $\gamma\to0$ yields a majoration by
\begin{align}
&\quad \E \Big[\int_{\R^d} \int_{\R^d} \big|u_0(x)-u^{\eps}_0(y)\big|\psi(0,x)\varrho_{\delta} (x-y)\,dx\,dy\Big] + C  \xi  \int_{t=0}^T ||\psi(t,\cdot)||_{L^\infty(\R^d)}\,dt\label{esti:final-02bis_001}  
 \\& \quad + 
 \E \Big[\int_{\Pi_T}\int_{\R^d} \beta \big(u(t,x)-u_\eps(t,y)\big)  
[ \partial_t \psi(t,x) + C \psi(t,x)]
\varrho_\delta(x-y)\,dy\,dx\,dt\Big]\notag 
\\&
 \quad \quad -\E\Big[\int_{\Pi_T}\int_{\R^d} f^{\beta} \big(u(t,x),u_\eps(t,y)\big) \cdot \grad_x \psi(t,x)\varrho_{\delta}(x-y)\,dx\,dy\,dt \Big]  \notag 
\end{align}
On the other hand, regarding the non-local term, we invoke similar strategy, as in the uniqueness proof (cf. Section~\ref{uniqueness}), to conclude
\begin{lem}
\label{fractional_lemma_one}
It holds that 
\begin{align}
\limsup_{\gamma\to0}\, \lim_{l \goto 0}\, \lim_{\delta_0 \goto 0} J_{9} \le 
&- \eps \,\E \Big[\int_{\R^d} \int_{\Pi_T} 
\mathcal{L}^r_{\lambda} [u_{\eps}(t,\cdot)](y)\, \psi(t,x)\,\varrho_{\delta} (x-y)\, \beta'(u_\eps(t,y) - u(t,x))  \,dx\,dt 
\,dy \Big] \notag \\
& - \eps \,\E \Big[\int_{\R^d} \int_{\Pi_T}  \beta(u_\eps(t,y) - u(t,x) ) \,\mathcal{L}_{\lambda,r} \big[ \varrho_{\delta} (x -\cdot)\big](y)\,\psi(t,x) \,dx\,dt \,dy \Big]. \notag
\end{align}
\end{lem}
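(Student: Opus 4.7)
The argument is a single-sequence analogue of the combination of Lemmas~\ref{fractional_lemma_1} and~\ref{fractional_lemma_one_1} from the uniqueness section; the essential difference is that here only one regularized process $u_{\eps,\gamma}$ is being compared with a fixed entropy solution $u$, so no Young-measure machinery is needed and the parameter $\eps>0$ is kept fixed throughout. The plan is to split the nonlocal operator at scale $r$, handle the far-field part directly, handle the singular part via the convexity of $\beta$, and finally let the parameters $\delta_0$, $l$, $\gamma$ tend to zero in that order.

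\medskip\noindent\textbf{Step 1: Splitting.} Using $\mathcal{L}_{\lambda}=\mathcal{L}_{\lambda,r}+\mathcal{L}^{r}_{\lambda}$ write $J_9=J_9^{\mathrm{sing}}+J_9^{\mathrm{reg}}$, where $J_9^{\mathrm{sing}}$ carries $\mathcal{L}_{\lambda,r}[u_{\eps,\gamma}(s,\cdot)](y)$ and $J_9^{\mathrm{reg}}$ carries $\mathcal{L}^{r}_{\lambda}[u_{\eps,\gamma}(s,\cdot)](y)$. Since $u_{\eps,\gamma}\in L^2((0,T)\times\Omega;H^2(\R^d))$ (cf. Appendix~\ref{viscous}), both integrands are legitimate elements of $L^2$.

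\medskip\noindent\textbf{Step 2: The regular part $J_9^{\mathrm{reg}}$.} This term needs no convexity trick. Pass $\delta_0\to0$ by the usual convolution-in-time argument (as in the opening step of the proof of Lemma~\ref{fractional_lemma_1}) to equate $s$ with $t$. Then let $l\to0$ using the estimate $|u_{\eps,\gamma}(t,y)-k|\,\varsigma_l(u(t,x)-k)\le l\,\varsigma_l(u(t,x)-k)$ combined with $\|\beta''\|_\infty<\infty$ to replace $\beta'(u_{\eps,\gamma}(t,y)-k)$ by $\beta'(u_{\eps,\gamma}(t,y)-u(t,x))$; this is exactly the step appearing between equations in Step~2 of Lemma~\ref{fractional_lemma_1}. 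Finally, the passage $\gamma\to0$ relies on the strong convergence $u_{\eps,\gamma}\to u_\eps$ in $L^p_{\mathrm{loc}}(\Omega\times(0,T)\times\R^d)$ for $p<2$ (which comes from Theorem~\ref{thm:existence-bv} applied to the viscous approximation of \eqref{eq:stoc_regularization}), together with the fact that $\mathcal{L}^{r}_{\lambda}$ is bounded linear on $L^2$, so that the weak convergence $\mathcal{L}^{r}_{\lambda}[u_{\eps,\gamma}]\rightharpoonup\mathcal{L}^{r}_{\lambda}[u_\eps]$ pairs with the strong convergence of $\psi\varrho_\delta\,\beta'(u_{\eps,\gamma}-u)$ in the same space.

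\medskip\noindent\textbf{Step 3: The singular part $J_9^{\mathrm{sing}}$.} Rewrite the principal value in the regularized form
\begin{align*}
\mathcal{L}_{\lambda,r}[u_{\eps,\gamma}(s,\cdot)](y)=c_{\lambda}\int_{|z|\le r}\frac{u_{\eps,\gamma}(s,y)-u_{\eps,\gamma}(s,y+z)+z\cdot\nabla_y u_{\eps,\gamma}(s,y)}{|z|^{d+2\lambda}}\,dz,
\end{align*}
which is licit since $u_{\eps,\gamma}(s,\cdot)\in H^2(\R^d)$ (cf. Appendix~\ref{FractionalLaplace}). The antisymmetric gradient term integrates to zero when multiplied against any fixed $y$-independent weight, so it can be dropped after multiplication by $\beta'(u_{\eps,\gamma}(s,y)-k)$ by the trick of passing through the truncation $\{\delta<|z|\le r\}$ before sending $\delta\to0$, exactly as in \eqref{the Ir}. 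The convexity inequality $(a-b)\beta'(a-k)\ge \beta(a-k)-\beta(b-k)$ with $a=u_{\eps,\gamma}(s,y)$, $b=u_{\eps,\gamma}(s,y+z)$ then gives
\begin{align*}
-\mathcal{L}_{\lambda,r}[u_{\eps,\gamma}(s,\cdot)](y)\,\beta'(u_{\eps,\gamma}(s,y)-k)\le c_{\lambda}\int_{|z|\le r}\frac{\beta(u_{\eps,\gamma}(s,y+z)-k)-\beta(u_{\eps,\gamma}(s,y)-k)}{|z|^{d+2\lambda}}\,dz.
\end{align*}
Performing the change of variable $y\mapsto y-z$ followed by $z\mapsto -z$ on the $\beta(u_{\eps,\gamma}(s,y+z)-k)$-piece transfers the nonlocal operator onto the $y$-variable of $\varphi_{\delta,\delta_0}$, producing the kernel $\mathcal{L}_{\lambda,r}[\varphi_{\delta,\delta_0}(t,x,s,\cdot)](y)=\rho_{\delta_0}(t-s)\psi(t,x)\,\mathcal{L}_{\lambda,r}[\varrho_\delta(x-\cdot)](y)$.

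\medskip\noindent\textbf{Step 4: Final limits.} After Step~3 we have an upper bound for $J_9^{\mathrm{sing}}$ in which $u_{\eps,\gamma}$ appears only through the composition $\beta(u_{\eps,\gamma}(s,y)-k)$, and $\mathcal{L}_{\lambda,r}[\varrho_\delta(x-\cdot)](y)$ is a smooth bounded function with compact support in $y$ of size controlled by $\supp\psi$ and $r$. The limit $\delta_0\to0$ is then a direct convolution argument (as in Step~1 of Lemma~\ref{fractional_lemma_one_1}); the limit $l\to0$ replaces $k$ by $u(t,x)$ via the same $|\,\cdot\,|\varsigma_l\le l\varsigma_l$ bound used in Step~2 of Lemma~\ref{fractional_lemma_one_1}; and the limit $\gamma\to0$ is executed by dominated convergence because $u_{\eps,\gamma}\to u_\eps$ a.e.\ along a subsequence (from the aforementioned strong convergence) and $\beta$ is Lipschitz so that $|\beta(u_{\eps,\gamma}-u)-\beta(u_\eps-u)|\le|u_{\eps,\gamma}-u_\eps|$, integrable against the bounded kernel. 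Collecting the two contributions produces exactly the two terms in the statement.

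\medskip\noindent\textbf{Expected main obstacle.} The only delicate point is justifying that the symmetric gradient term $c_{\lambda}\int_{|z|\le r} z\cdot\nabla_y u_{\eps,\gamma}(s,y)\,|z|^{-d-2\lambda}\,dz$ can be dropped \emph{before} the convexity inequality is applied. This is done by mimicking \eqref{the Ir}: first restrict to the annulus $\{\delta<|z|\le r\}$, where both $\beta'(u_{\eps,\gamma}-k)$ and the gradient piece are honest $L^2$ functions, observe that the corresponding integral vanishes by the antisymmetry $z\mapsto -z$ on each fixed annulus, and only then send $\delta\to0$. With that technicality handled, the remainder of the proof is routine bookkeeping in the spirit of the uniqueness argument.
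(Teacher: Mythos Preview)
Your proposal is correct and follows precisely the route the paper intends: the paper does not give a separate proof of this lemma but simply refers back to the uniqueness section (``we invoke similar strategy, as in the uniqueness proof''), and your plan---splitting $\mathcal{L}_\lambda=\mathcal{L}_{\lambda,r}+\mathcal{L}^r_\lambda$, keeping the far-field piece intact while using convexity to transfer the singular piece onto $\varrho_\delta(x-\cdot)$, then passing $\delta_0\to0$, $l\to0$, $\gamma\to0$---is exactly that strategy specialized to a single viscous sequence. Your observation that the Young-measure machinery is unnecessary here (because $u$ is fixed and $u_{\eps,\gamma}\to u_\eps$ strongly) is a legitimate simplification over the argument in Lemmas~\ref{fractional_lemma_1}--\ref{fractional_lemma_one_1}, and the weak/strong pairing you use for the $\gamma\to0$ limit in the regular part mirrors the passage used in Subsection~\ref{entropy_existence}.
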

\noindent So, in view of \eqref{esti:final-02bis_001}, and Lemma~\ref{fractional_lemma_one}, we conclude  
 \begin{align}
 0 \le & \, \E \Big[\int_{\R^d} \int_{\R^d} \big|u_0(x)-u^{\eps}_0(y)\big| \psi(0,x)\varrho_{\delta} (x-y)\,dx\,dy\Big] + C  \,\xi \int^T_0 \| \psi(t,\cdot)\|_{L^\infty(\Rd)}\,dt\label{esti:final-02biss_002}  
 \\& \quad + 
 \E \Big[\int_{\Pi_T}\int_{\R^d} \beta \big(u(t,x)-u_\eps(t,y)\big)  
[ \partial_t \psi(t,x) + C \psi(t,x)]
\varrho_\delta(x-y)\,dy\,dx\,dt\Big]\notag 
\\&
 \quad  -\E\Big[\int_{\Pi_T}\int_{\R^d} f^\beta \big(u(t,x),u_\eps(t,y)\big) \cdot \grad_x \psi(t,x)\varrho_{\delta}(x-y)\,dx\,dy\,dt \Big]  \notag \\
 &\qquad - \eps \,\E \Big[\int_{\R^d} \int_{\Pi_T} 
\mathcal{L}^r_{\lambda} [u_{\eps}(t,\cdot)](y)\, \psi(t,x)\,\varrho_{\delta} (x-y)\, \beta'(u_\eps(t,y) - u(t,x))  \,dx\,dt 
\,dy \Big] \notag \\
& \qquad - \eps \,\E \Big[\int_{\R^d} \int_{\Pi_T}  \beta(u_\eps(t,y) - u(t,x) ) \,\mathcal{L}_{\lambda,r} \big[ \varrho_{\delta} (x -\cdot)\big](y)\,\psi(t,x) \,dx\,dt \,dy \Big] \notag
\end{align}
As before we use $\psi(s,x)=\psi_h^t(s)\, (\psi_R \star \rho)(x)$, where $\psi_h^t$, $\psi_R$, and $\rho$ are as before, and then pass to the limit as $R \goto \infty$ in \eqref{esti:final-02biss_002}. The resulting expression reads as
\begin{align}
-\E \Big[\int_{\Pi_T}\int_{\R^{d}}  & \beta \big(u(s,x)-v_\eps(s,y)\big)\,\partial_s\psi^t_h(s)\, \varrho_\delta(x-y)\,dy\,dx\,ds\Big]
\le  \E \Big[\int_{\R^d} \int_{\R^d} \big|u_0(x)-u^{\eps}_0(y)\big|\varrho_{\delta} (x-y)\,dx\,dy\Big]  \notag \\
&+ C\, \E \Big[\int_{\Pi_T}\int_{\R^d} \beta_\xi \big(u(s,x)-v_\eps(s,y)\big) \, \psi^t_h(s)\,\varrho_\delta(x-y)\,dy\,dx\,ds\Big]+  C \, \xi\int^T_0 \psi_h^t(s)\,ds   \notag \\
&- \eps \, \E \Big[\int_{\R^d} \int_{\Pi_T}  
\mathcal{L}^r_{\lambda} [u_{\eps}(t,\cdot)](y)\, \varrho_{\delta} (x-y)\, \beta'(u_\eps(t,y) - u(t,x))  \,dx\,dt 
\,dy \Big] \notag \\
& - \eps \, \E \Big[\int_{\R^d} \int_{\Pi_T}  \beta(u_\eps(t,y) - u(t,x) ) \,\mathcal{L}_{\lambda,r} \big[ \varrho_{\delta} (x -\cdot)\big](y)\,dx\,dt \,dy \Big]. \label{eq:imp_01_01}
\end{align}
Next, we intend to pass to the limit as $\xi \goto 0$ in \eqref{eq:imp_01_01}. In fact, a simple application of Lebesgue's dominated convergence theorem reveals that 
\begin{align}
-\E \Big[\int_{\Pi_T}\int_{\R^{d}}  & \big|u(s,x)-v_\eps(s,y)\big|\,\partial_s\psi^t_h(s)\, \varrho_\delta(x-y)\,dy\,dx\,ds\Big]
\le  \E \Big[\int_{\R^d} \int_{\R^d} \big|u_0(x)-u^{\eps}_0(y)\big|\varrho_{\delta} (x-y)\,dx\,dy\Big]  \notag \\
&+ C\, \E \Big[\int_{\Pi_T}\int_{\R^d} \big|u(s,x)-v_\eps(s,y)\big| \, \psi^t_h(s)\,\varrho_\delta(x-y)\,dy\,dx\,ds\Big] \notag \\
&- \eps \, \E \Big[\int_{\R^d} \int_{\Pi_T}  
\mathcal{L}^r_{\lambda} [u_{\eps}(t,\cdot)](y)\, \varrho_{\delta} (x-y)\, \sgn (u_\eps(t,y) - u(t,x))  \,dx\,dt 
\,dy \Big] \notag \\
& - \eps \, \E \Big[\int_{\R^d} \int_{\Pi_T}  \big|u_\eps(t,y) - u(t,x) \big| \,\mathcal{L}_{\lambda,r} \big[ \varrho_{\delta} (x -\cdot)\big](y)\,dx\,dt \,dy \Big]. \label{eq:imp_01}
\end{align}
We denote the last two terms of the above inequality \eqref{eq:imp_01} by $\mathcal{H}^r$ and $\mathcal{H}_r$ respectively. Now we want to estimate each of these terms separately. Following \cite{Alibaud}, first note that
\begin{align*}
|\mathcal{H}^r| &\le C \eps\, \E \Big[\int_{\R^d} \int_{\Pi_T} 
\int_{|z|> r} \frac{|u_{\eps}(t,y) -u_{\eps}(t,y+z)|}{|z|^{d + 2 \lambda}} \,dz\, \varrho_{\delta} (x-y)\,dx\,dt \,dy \Big] \\
& \le  C \eps\, \int_{0}^T \int_{|z|> r} |z|^{-d - 2 \lambda} \, \E \big[\norm{u_{\eps}(t,\cdot) -u_{\eps}(t,\cdot+z)}_{L^1(\R^d)} \big]\,dz\,dt.
\end{align*}
Next, observe that by Theorem \ref{thm:bv-viscous}
\begin{align*}
\E \big[\norm{u_{\eps}(t,\cdot) -u_{\eps}(t,\cdot+z)}_{L^1(\R^d)} \big] & \le C \norm{u_0}_{L^1(\R^d)}, \,\text{and}\\
\E \big[\norm{u_{\eps}(t,\cdot) -u_{\eps}(t,\cdot+z)}_{L^1(\R^d)} \big] & \le \E \big[|u_{\eps}(t, \cdot)|_{BV} \big]\, |z| \le |u_0|_{BV}\, |z|,
\, \text{for all}\, t>0.
\end{align*}
Now we cut the above integral term in two pieces according to $|z| > \sigma$ or not, where $\sigma$ is a positive parameter we will fix later. In what follows, we use both last estimates on each of one part, respectively. The result is 
\begin{align*}
|\mathcal{H}^r| &\le C |u_0|_{BV}\, \eps\, T\, \int_{|z|> \sigma} |z|^{-d - 2 \lambda}\,dz + C |u_0|_{BV}\, \eps\, T\, \int_{|z|\le \sigma} |z|^{-d - 2 \lambda +1}\,dz \\
& = C\, \eps\,\sigma^{-2 \lambda} + C\, \eps\, \int_{r}^{\sigma} \frac{d \tau}{\tau^{2\lambda}} 
\end{align*}

For the other term, we follow the same argument as in \cite{Alibaud} to conclude
\begin{align*}
|\mathcal{H}_r| &=  \Big| - \eps \,\E \Big[\int_{\R^d} \int_{\Pi_T}  |u_\eps(t,y) - u(t,x)| \,\mathcal{L}_{\lambda,r} \big[ \varrho_{\delta} (x -\cdot)\big](y)\,dx\,dt \,dy \Big] \Big| \\
& = \Big| -\eps \,\E \bigg[\int_0^1 \int_{\R^d} \int_{\Pi_T} \int_{|z|\le r} |z|^{-d -2\lambda} \, |u_\eps(t,y) - u(t,x)| \, D^2 \varrho_{\delta} (x-y-\tau z)z.z \,dz\,dx\,dt \,dy \,\d\tau \bigg]  \Big|\\
& = \Big| - \eps  \,\E \bigg[ \int_0^1 \int_{\R^d} \int_{\Pi_T} \int_{|z|\le r} |z|^{-d -2\lambda} \, \nabla \varrho_{\delta} (x-y-\tau z).z\, d\big(D(|u_\eps(t,\cdot) - u(t,x)|)\big)(y).z \,dz\,dx\,dt \,dy \,\d\tau \bigg] \Big| \\
& \le \eps\, \E \bigg[ \int_0^1 \int_{\R^d} \int_{\Pi_T} \int_{|z|\le r} |z|^{-d -2\lambda +2} \, \big|\nabla \varrho_{\delta} (x-y-\tau z) \big| \, d\big(|D u_\eps(t,\cdot)|\big)(y) \,dz\,dx\,dt \,dy \,\d\tau \bigg] \\
& \le C \, \eps\, \delta^{-1}\, \int_0^1 \int_0^T \int_{|z|\le r} |z|^{-d -2\lambda + 2} \,dz\,dt\,d\tau = C \, \eps\, \delta^{-1}\,r^{2 -2\lambda}.
\end{align*}
Keeping in mind the above estimates, we let $h \to 0$ in \eqref{eq:imp_01} to conclude, \textit{via} Gronwall's lemma
\begin{align}
 & \E \Big[\int_{\R^d}\big| u_\eps(t,x)-u(t,x)\big|\,dx\Big]\le Ce^{Ct}\Big(\delta + \eps \int_r^{\sigma} \frac{d\tau}{\tau^{2\lambda}} + \frac{\eps}{\delta} r^{2-2\lambda} + \eps \sigma^{-2\lambda}\Big), \label{esti:final}
\end{align}
where we have used the fact that $u_\eps(t,y)$ and $u(t,x)$ satisfy spatial BV bound, bounded by the BV norm of $u_0$. Finally, to optimize the right hand side of \eqref{esti:final}, we divide it into three cases:
\begin{itemize}
\item [(a)] For $0 < \lambda <1/2$, we first let $r\goto0$, then $\delta\goto 0$ and $\sigma=1$. 
\item [(b)] For $\lambda =1/2$, we take $r = \eps = \delta$, and $\sigma=1$.
\item [(c)] For $1/2 < \lambda <1$, we choose $\delta = r = \eps^{1/2\lambda}$, $\sigma\to\infty$.
\end{itemize}
With these above choices, we conclude that, for a.e. $t>0$
\begin{align*}
\E\Big[\|u_\eps(t,\cdot)-u(t,\cdot)\|_{L^1(\R^d)}\Big] =Ce^{Ct}
\begin{cases}
\mathcal{O}(\eps), & \quad \text{if}\,\, 0 < \lambda <1/2, \\
\mathcal{O}(\eps \,|\log(\eps)|), & \quad \text{if}\,\, \lambda =1/2, \\
\mathcal{O}(\eps^{1/2\lambda}), & \quad \text{if}\,\, 1/2 < \lambda <1,
\end{cases}
\end{align*}
where $C>0$ is a constant depending only on  
$|u_0|_{BV(\R^d)}, \|f^{\prime\prime}\|_{\infty}$, and $ \|f^\prime\|_{\infty}$. 
This completes the proof.

%%%%%%%%%%%%%%%%%%%%%%%%%%%%%%%%%%%%%%%%%%%%%%%%%%%%%%%%%%%%%%%%%%%%%%%%%%%%%%%%%%%%%%%%%%%%%%%%%%%%%%%

\section{Proof of Theorem~\ref{prop:vanishing viscosity-solution}: Existence of Viscous Solution}
\label{viscous}

Just like the deterministic counterpart, here also we study the corresponding regularized problem by adding a 
small diffusion operator and derive some \textit{a priori} bounds. We remark that, for the case $1/2 < \lambda <1$, 
$\mathcal{L}_{\lambda}[u]$ is the dominant term and hence the equation \eqref{eq:stoc_con_brown} is a stochastic
parabolic equation. Therefore the existence of the solution to \eqref{eq:stoc_con_brown} can be obtained by a fixed
point or contraction mapping argument. However, we present a proof of existence of solution which works for all 
$\lambda \in (0,1)$. In what follows, we consider the following regularized equation
\begin{align}
 du_\eps(t,x)  + \fr[u_\eps(t,x)]\,dt & - \mbox{div}_x f(u_\eps(t,x)) \,dt = \notag \\
& \eps \Delta u_{\eps}(t,x) \,dt  + \sigma(u_\eps(t,x))\,dW(t) + \int_{E} \eta(u_\eps(t,x);z)\widetilde{N}(dz,dt),
\label{eq:viscous-Brown_one} 
\end{align}
with a regular initial data $u_\epsilon(0,\cdot) = u_0^{\eps}\in H^1(\D)$ such that $u_0^{\eps} \underset{\eps\to0} \longrightarrow u_0$ in $L^2(\D)$ and $\eps \|\nabla u^\eps_0\|^2_{L^2(\R^d)} \leq \|u_0\|_{L^2(\R^d)}$\footnote{\textit{e.g.} $u_0^\eps$ solution to $u-\eps \Delta u=u_0$}.

As alluded to before, our aim is to prove the existence and uniqueness of $u_\eps$, as an element of $L^\infty(0,T;L^2(\Omega,H^1(\R^d)))\cap L^2(\Omega\times(0,T);H^2(\R^d))$ such that $\partial_t(u_\eps - \int^t_0 \sigma(u_\eps) \,dW -\int^t_0 \int_E \eta( u_\eps;z) \widetilde{N}(\,dz,\,dt)) \in L^2((0,T) \times \Omega ; L^2(\Rd))$.

\subsection{Existence}
We propose a proof of the existence of solutions to \eqref{eq:viscous-Brown_one} based on a semi-implicit time discretization. The description of the implicit scheme reads as follows:

For a given small positive parameter $\Delta t$ and $u_n$ in $L^2(\Omega, H^1(\R^d))$, $\mathcal{F}_{n\Delta t}$ measurable, find $u_{n+1}$ in $L^2(\Omega, H^1(\R^d))$, $\mathcal{F}_{(n+1)\Delta t}$ measurable, such that $\mathbb{P}$-a.s and for any $v \in H^1(\R^d)$
\begin{align}
&\intrd \Bigg((u_{n+1}-u_n)\,v + \Delta t \,\bigg\{ \ep \nabla u_{n+1} \cdot \nabla v + f(u_{n+1})\cdot \nabla v \bigg\} \Bigg) \,dx+\Delta t \, \Bigl< \fr[u_{n+1}] , v \Bigr> \notag \\
&\qquad \qquad = (W_{n+1} -W_n) \intrd \sigma(u_n) \,v \,dx + \intrd \int_{t_n}^{t_{n+1}} \int_{E} \eta(u_n;z) \,v \,\widetilde{N}(dz, ds)\,dx, \label{eq:discrete}
\end{align}
where $t_n = n\,\Delta t$, for $n=0,1,2, \cdots, N$ and $W_n := W(n\Delta t)$.

%\begin{rem}
%Note that the It\^{o} stochastic integral in \eqref{eq:viscous-Brown_one} is understood in the following sense: 
%\begin{align*}
%\int_0^t  \sigma(u(t,x))\,dW(t) = \sum_{k\ge 1} \int_0^t  \sigma(u(t,x)) e_k \,d\beta_k(t).
%\end{align*}
%However, in an effort to make the presentation less pedantic, we discretize the It\^{o} stochastic integral by treating the cylindrical Wiener process as a real valued Wiener process. Nevertheless, same techniques could be used, mutatis mutandis, also for the cylindrical noise case to establish the existence, thanks to assumption \ref{A4}.
%\end{rem}

\begin{prop}
If $\Delta t < \frac{2\ep}{\| f'\|^2_{\infty}}$, such a sequence $(u_n)$, defined by \eqref{eq:discrete}, exists.
\end{prop}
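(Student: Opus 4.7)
The plan is to solve, for $\mathbb{P}$-a.e. $\omega$, the nonlinear stationary variational problem \eqref{eq:discrete} pointwise in $\omega$ and then to recover predictability in $n$ from the construction. The right-hand side,
\[
L_n(v):=(W_{n+1}-W_n)\int_{\R^d}\!\sigma(u_n)\,v\,dx + \int_{\R^d}\!\int_{t_n}^{t_{n+1}}\!\!\int_E \eta(u_n;z)\,v\,\widetilde N(dz,ds)\,dx,
\]
defines an element of $L^2(\R^d)\subset H^{-1}(\R^d)$ which is $\mathcal{F}_{(n+1)\Delta t}$-measurable, thanks to assumptions \ref{A4}, \ref{A5} together with the $\mathcal{F}_{n\Delta t}$-measurability of $u_n$ and the It\^o/L\'evy isometries. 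The discrete problem is then to find $u_{n+1}\in H^1(\R^d)$ such that $\mathcal{A}(u_{n+1})=L_n$, where $\mathcal{A}:H^1(\R^d)\to H^{-1}(\R^d)$ is given by
\[
\langle \mathcal{A}(u),v\rangle := \int_{\R^d}\!uv\,dx + \Delta t\!\int_{\R^d}\!\bigl\{\varepsilon\,\nabla u\!\cdot\!\nabla v + f(u)\!\cdot\!\nabla v\bigr\}dx + \Delta t\,\langle \mathcal{L}_\lambda[u],v\rangle.
\]

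The core of the argument is to check that $\mathcal{A}$ fits the Browder--Minty framework. Boundedness and (Lipschitz) continuity of $\mathcal{A}:H^1\to H^{-1}$ follow from \ref{A3} and the continuity of $\mathcal{L}_\lambda$ from $H^\lambda(\R^d)$ into $H^{-\lambda}(\R^d)$, so $\mathcal{A}$ is in particular demicontinuous. For strong monotonicity I would take $u,w\in H^1(\R^d)$ and use $|f(u)-f(w)|\le\|f'\|_\infty|u-w|$ together with $\langle\mathcal{L}_\lambda[u-w],u-w\rangle=\|\mathcal{L}_{\lambda/2}[u-w]\|_{L^2}^2\ge 0$, and then Young's inequality with the weight $\|f'\|_\infty/\varepsilon$ on the cross term to obtain
\[
\langle \mathcal{A}(u)-\mathcal{A}(w),u-w\rangle \;\ge\; \Bigl(1-\tfrac{\Delta t\,\|f'\|_\infty^{2}}{2\varepsilon}\Bigr)\|u-w\|_{L^2}^2 + \tfrac{\Delta t\,\varepsilon}{2}\|\nabla(u-w)\|_{L^2}^2 + \Delta t\,\|\mathcal{L}_{\lambda/2}[u-w]\|_{L^2}^2.
\]
Under the CFL-type hypothesis $\Delta t<2\varepsilon/\|f'\|_\infty^{2}$ this implies $\langle \mathcal{A}(u)-\mathcal{A}(w),u-w\rangle \ge C_{\Delta t,\varepsilon}\|u-w\|_{H^1}^2$, giving both strict monotonicity (hence uniqueness) and coercivity of $\mathcal{A}$.

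By Browder--Minty, $\mathcal{A}$ is then a bijection from $H^1(\R^d)$ onto $H^{-1}(\R^d)$ with a Lipschitz inverse, and I set $u_{n+1}(\omega):=\mathcal{A}^{-1}(L_n(\omega))$. The Lipschitz continuity of $\mathcal{A}^{-1}$ transports the $\mathcal{F}_{(n+1)\Delta t}$-measurability of $L_n$ into $\mathcal{F}_{(n+1)\Delta t}$-measurability of $u_{n+1}$, and testing \eqref{eq:discrete} with $v=u_{n+1}$ combined with the coercivity estimate and the It\^o/L\'evy isometries shows $u_{n+1}\in L^2(\Omega;H^1(\R^d))$. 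The main obstacle in this proposition is the extraction of strong monotonicity under the sharp condition $\Delta t<2\varepsilon/\|f'\|_\infty^{2}$; once this is secured, the rest is a standard application of monotone operator theory, and measurability is automatic. Alternatively, if one prefers to avoid Browder--Minty on the unbounded domain $\R^d$, a Galerkin scheme with the same energy estimate produces the solution and its measurability simultaneously.
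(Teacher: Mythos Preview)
Your argument is correct and complete (modulo the trivial notational slip that your $L_n$ should also absorb the term $\int_{\R^d}u_n\,v\,dx$, which is just another fixed element of $H^{-1}$ and changes nothing). The paper, however, takes a different route: it appeals to the Lax--Milgram theorem combined with a contraction mapping argument, referring to \cite{BaVaWit_2012} for details. Concretely, one freezes the nonlinearity $f(\tilde u)$, solves the resulting \emph{linear} elliptic problem in $H^1(\R^d)$ by Lax--Milgram (the bilinear form $\int uv + \Delta t\,\varepsilon\int\nabla u\cdot\nabla v + \Delta t\,\langle\mathcal{L}_\lambda[u],v\rangle$ being symmetric and coercive), and then shows that the solution map $\tilde u\mapsto u$ is a strict contraction on $L^2(\R^d)$ precisely when $\Delta t<2\varepsilon/\|f'\|_\infty^2$. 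Your approach via Browder--Minty is a one-shot nonlinear argument that bypasses the fixed-point iteration entirely: strong monotonicity under the same CFL condition delivers bijectivity and a Lipschitz inverse in one stroke, and measurability follows immediately. The paper's two-step method is more classical and isolates the linear elliptic structure; your method is cleaner and makes transparent why exactly the threshold $2\varepsilon/\|f'\|_\infty^2$ appears (it is the constant in the Young splitting that balances the $L^2$ and gradient terms). Both recover measurability the same way, through the stability of the solution with respect to the right-hand side.
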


\begin{proof}
A simple application of Lax-Milgram theorem, contraction mapping theorem along with simple properties of fractional operators yield the existence and uniqueness of $u^{n+1}$, the stochastic measurability is given by the stability of the solution with respect to the right-hand side. For more details, consult \cite{BaVaWit_2012}.
\end{proof}

\noindent Setting the test function $u_{n+1}$ in \eqref{eq:discrete} reveals, thanks in particular to the property of independence of the noises, that
\begin{align}
&\frac{1}{2}\E \Big[\intrd \big[ | u_{n+1}|^2-|u_n|^2+ |u_{n+1}-u_n|^2 \big] \,dx \Big]+ \Delta t \,\ep \,\E \Big[\intrd | \nabla u_{n+1}|^2 \,dx \Big] + \Delta t \,\E \Big[\intrd f(u_{n+1}) \cdot \nabla u_{n+1} \,dx \Big]\notag\\
&+ \Delta t \, \E \Big[\big< \fr[u_{n+1}], u_{n+1} \big> \Big] =\E \Big[ \intrd \Big\{ (W_{n+1}-W_n)\sigma(u_n) +\int_{t_n}^{t_{n+1}} \int_{E} \eta(u_n;z)  \,\widetilde{N}(dz, ds)\Big\}\,u_n \,dx \Big]\notag\\
&+\E \Big[ \intrd \Big\{ (W_{n+1}-W_n)\sigma(u_n) +\int_{t_n}^{t_{n+1}} \int_{E} \eta(u_n;z)  \,\widetilde{N}(dz, ds)\Big\}\,(u_{n+1}-u_n) \,dx \Big]\notag\\
&\le   \frac{\alpha}{2} \E \Big[ | u_{n+1}-u_n|^2 \Big] + \frac{1}{2\alpha}
\E \Big[ \big\| (W_{n+1}-W_n)\sigma(u_n) +\int_{t_n}^{t_{n+1}} \int_{E} \eta(u_n;z)  \,\widetilde{N}(dz, ds)\big\|^2 \Big]
 \notag\\
&= \frac{\alpha}{2} \E \Big[| u_{n+1} -u_n |^2 \Big] + \frac{\Delta t}{2 \alpha} \E\big[\|\sigma(u_n)\|^2 \big]+ \frac{1 }{2 \alpha} \E\Big[\intrd \Big(\int_{t_n}^{t_{n+1}} \int_{|z|>0} \eta(u_n;z) \,\widetilde{N}(dz, ds) \Big)^2\,dx \Big] \notag\\
&= \frac{\alpha}{2} \E \Big[| u_{n+1} -u_n |^2 \Big] + \frac{\Delta t}{2 \alpha} \E\big[\|\sigma(u_n)\|^2 \big]+ \frac{1 }{2 \alpha} \E\Big[\intrd \int_{t_n}^{t_{n+1}} \int_{|z|>0} |\eta(u_n;z)|^2 \,m(dz) ds \,dx \Big].\label{test}
\end{align}
We now sum both sides over $n$, and choose $\alpha >0$ so that 
\begin{align*}
\E \big[\| u_n\|^2\big] + \sum\limits^{n-1}_{k=0} \E \big[\| u_{k+1}-u_k \|^2 \big] + \eps
\Delta t \sum\limits^{n-1}_{k=0} \E \big[\| \nabla u_{k+1}\|^2 \big]+ & \Delta t\sum\limits^{n-1}_{k=0} \E \Big[\big< \fr[u_{n+1}], u_{n+1} \big> \Big] \\
& \qquad \qquad \le C + \widetilde{C} \Delta t \sum\limits^{n-1}_{k=0} \E \big[\| u_k\|^2 \big]
\end{align*}
Applying Discrete Gronwall's lemma we get 
\begin{align}
\E \big[\| u_n\|^2\big] + \sum\limits^{n-1}_{k=0} \E \big[\| u_{k+1}-u_k \|^2 \big] + \eps
\Delta t \sum\limits^{n-1}_{k=0} \E \big[\| \nabla u_{k+1}\|^2 \big] +\Delta t\sum\limits^{n-1}_{k=0} \E \Big[\big\| \frt[u_{k+1}]\big\|^2 \Big]\le C\label{Gronwall}
\end{align}
To proceed further, we need to introduce the following classical notations:
\begin{align*}
&u^{\Delta t}= \sum\limits^N_{k=1} u_k \mathds{1}_{[(k-1) \Delta t, k \Delta t)},\\
&\widetilde{u}^{\Delta t} = \sum\limits^N_{k=1} \bigg[ \frac{u_k -u_{k-1}}{\Delta t}[t-(k-1)\Delta t] +u_{k-1} \bigg] \mathds{1}_{[(k-1) \Delta t, k \Delta t)},
\end{align*}
with $u^{\Delta t}(t)= u_0$ for $t<0$. 
Then from \eqref{Gronwall}, it is clear that $u^{\Delta t} , \widetilde{u}^{\Delta t}$ are bounded sequences in $L^{\infty}((0,T); L^2(\Omega \times \Rd)) $, $\sqrt{\ep} u^{\Delta t}$ is a bounded sequence in $L^2((0,T); L^2( \Omega ; H^1(\Rd)))$ and $u^{\Delta t}$ is a bounded sequence in $L^2((0,T); L^2( \Omega ; H^{\lambda}(\Rd)))$.

We now set the test function $v = u_{n+1} -u_n -(W_{n+1}-W_n)\sigma(u_n)- \int^{t_{n+1}}_{t_n}\int_E \eta(u_n ;z) \widetilde{N}( dz,ds)$ in \eqref{eq:discrete} to get 
\begin{align}
\nonumber
&\E \Big[\big\| u_{n+1} -u_n -(W_{n+1}-W_n) \sigma(u_n)- \int^{t_{n+1}}_{t_n}\int_E \eta(u_n ;z) \widetilde{N}( dz,ds) \big\|^2_{L^2(\Rd)} \Big]\\
&\qquad+\eps \Delta t \E\Big[\intrd \nabla u_{n+1} \cdot \nabla\Big(u_{n+1}-u_n -(W_{n+1}-W_n)\sigma(u_n)-\int^{t_{n+1}}_{t_n}\int_E \eta(u_n ;z) \widetilde{N}( dz,ds)\Big)\,dx \Big]\label{testone}\\
&\qquad+\Delta t  \E\Big[\Big\langle \fr u_{n+1}, u_{n+1} -u_n -(W_{n+1}-W_n)\sigma(u_n)-\int^{t_{n+1}}_{t_n}\int_E \eta(u_n ;z) \widetilde{N}( dz,ds) \Big\rangle \Big]\label{testtwo}\\
&\qquad= \Delta t \E\Big[\intrd\Big( u_{n+1}-u_n-(W_{n+1}-W_n)\sigma(u_n)-\int^{t_{n+1}}_{t_n}\int_E \eta(u_n ;z) \widetilde{N}( dz,ds)\Big) f'(u_{n+1})\cdot \nabla_{n+1} \,dx \Big]\nonumber\\\nonumber
& \le \frac{1}{2} \E \Big[\big\| u_{n+1} -u_n -(W_{n+1}-W_n) \sigma(u_n)-\int^{t_{n+1}}_{t_n}\int_E \eta(u_n ;z) \widetilde{N}( dz,ds) \big\|^2 \Big]+ C(f')(\Delta t)^2 \E\Big[\| \nabla u_{n+1}\|^2 \Big]
\end{align}
Next, our aim is to estimate the terms \eqref{testone} and \eqref{testtwo}. In fact, using elementary properties of
It\^{o}-L\'evy integral, we can rewrite \eqref{testone} as
\begin{align*}
& \eps \Delta t \E\Big[\intrd \nabla u_{n+1} \cdot \nabla\Big(u_{n+1}-u_n -(W_{n+1}-W_n)\sigma(u_n)-\int^{t_{n+1}}_{t_n}\int_E \eta(u_n ;z) \widetilde{N}( dz,ds)\Big)\,dx \Big]\\
&\qquad =\eps \Delta t  \E \Big[\intrd \Big( \nabla u_{n+1} \cdot \nabla ( u_{n+1}-u_n) -\nabla(u_{n+1}-u_n)\cdot \nabla \sigma(u_n)(W_{n+1}-W_n)\\
& \hspace{6cm} -\nabla(u_{n+1}-u_n) \cdot\int^{t_{n+1}}_{t_n}\int_E \nabla \eta(u_n ;z) \widetilde{N}( dz,ds) \Big) \,dx \Big]\\
&\qquad = \frac12 \eps \Delta t \E \bigg[ \| \nabla u_{n+1} \|^2_{L^2(\Rd)^d}+ \| \nabla u_{n+1} -\nabla u_n\|^2_{L^2(\Rd)^d}-\| \nabla u_n \|^2_{L^2(\Rd)^d}\bigg]\\
&\hspace{2cm} - \Delta t \ep \E \bigg[  \intrd (W_{n+1}-W_n)\nabla( u_{n+1}-u_n) \cdot \nabla \sigma(u_n) \,dx\bigg] 
\\ &\hspace{2.5cm} -\Delta t \ep \E \bigg[ \intrd \nabla(u_{n+1}-u_n) \cdot\int^{t_{n+1}}_{t_n}\int_E \nabla \eta(u_n ;z) \widetilde{N}( dz,ds) \bigg] \,dx\\
\end{align*}
Notice that 
\begin{align*}
&- \eps \Delta t  \E \Big[  \intrd (W_{n+1}-W_n)\nabla( u_{n+1}-u_n) \cdot \nabla \sigma(u_n) \,dx\Big]\\
&\hspace{3cm} \ge -\frac{\eps \Delta t }{8} \E \Big[\| \nabla( u_{n+1} -u_n) \|^2 \Big] -2 \eps (\Delta t)^2 \E \Big[\| \sigma(\nabla u_n)\|^2 \Big]\\
&\hspace{5cm} \ge -\frac{\eps \Delta t }{8} \E \Big[\| \nabla( u_{n+1} -u_n) \|^2 \Big] -2K \eps (\Delta t)^2 \E \Big[\| \nabla u_n\|^2 \Big],
\end{align*}
and
\begin{align*}
&-\eps \Delta t  \E \bigg[ \intrd \nabla(u_{n+1}-u_n) \cdot\int^{t_{n+1}}_{t_n}\int_E \nabla \eta(u_n ;z) \widetilde{N}( dz,ds) \bigg] \,dx\\
& \qquad \ge -\frac{\eps \Delta t }{8} \E \Big[\| \nabla( u_{n+1} -u_n) \|^2 \Big]   -2 \ep \Delta t \E \Big[\big\| \int^{t_{n+1}}_{t_n}\int_E \nabla \eta(u_n ;z) \widetilde{N}( dz,ds) \big\|^2 \Big]\\
&\qquad= -\frac{\eps \Delta t }{8} \E \Big[ \| \nabla( u_{n+1} -u_n) \|^2 \Big]  -2 \ep \Delta t \E \Big[\intrd \int^{t_{n+1}}_{t_n}\int_E |\nabla \eta(u_n ;z)|^2 m(dz) \,ds \,dx \Big]
\\
&\qquad \ge -\frac{\eps \Delta t }{8} \E \Big[ \| \nabla( u_{n+1} -u_n) \|^2 \Big]  -2 (\lambda^*)^2 \|h\|^2_{L^2(E)}\ep (\Delta t)^2  \E \Big[\| \nabla u_n\|^2 \Big] 
\end{align*}

Thus, keeping the above estimates in mind, \eqref{testone} can be estimated, for a positive constant $C$, as
\begin{align*}
&\Delta t \ep \E \Big[\intrd \nabla u_{n+1} \cdot \nabla\Big(u_{n+1}-u_n -(W_{n+1}-W_n)\sigma(u_n)-\int^{t_{n+1}}_{t_n}\int_E \eta(u_n ;z) \widetilde{N}( dz,ds)\Big)\,dx \Big]\\
&\qquad \ge \frac{\Delta \ep}{2}\E \bigg[ \| \nabla u_{n+1} \|^2 + \frac{1}{2} \| \nabla( u_{n+1} -u_n) \|^2 -\| \nabla u_n \|^2 - C \Delta t \| \nabla u_n \|^2  \bigg]
\end{align*}
We now consider \eqref{testtwo} and recast it as follows:
\begin{align*}
&\Delta t \E \Big[\Big\langle \fr u_{n+1}, u_{n+1} -u_n -(W_{n+1}-W_n)\sigma(u_n)-\int^{t_{n+1}}_{t_n}\int_E \eta(u_n ;z) \widetilde{N}( dz,ds) \Big\rangle \Big]\\
&\qquad= \Delta t \E \Big[\big\langle \fr u_{n+1}, u_{n+1} -u_n \big\rangle - \big\langle \fr( u_{n+1} -u_n),(W_{n+1} -W_n) \sigma(u_n) \big\rangle \\
&\hspace{5cm}-\Big\langle \fr(u_{n+1}-u_n) , \int^{t_{n+1}}_{t_n} \int_E \eta( u_n; z) \widetilde{N}(dz ,dx) \Big\rangle \Big]\\
&\qquad = \frac{\Delta t}{2} \E \Big[ \| \frt u_{n+1} \|^2+ \| \frt( u_{n+1}-u_n) \|^2 - \| \frt u_n\|^2 \Big] \\
&\hspace{5cm}- \Delta t \E \Big[ \big\langle \fr( u_{n+1} -u_n) , (W_{n+1} -W_n) \sigma(u_n) \big\rangle \Big]\\
& \hspace{6cm}- \Delta t \E \Big[\Big\langle \fr(u_{n+1}-u_n) , \int^{t_{n+1}}_{t_n} \int_E \eta( u_n; z) \widetilde{N}(dz ,dx) \Big\rangle \Big]
\end{align*}
Notice that similarly
\begin{align*}
&- \Delta t \E \Big[ \big \langle \fr( u_{n+1} -u_n) , (W_{n+1} -W_n) \sigma(u_n) \big \rangle \Big]\\
&\hspace{4cm} \ge -\frac{\Delta t}{8} \E \Big[\| \frt ( u_{n+1} -u_n) \|^2 \Big] -2K( \Delta t)^2 \E \Big[\| \frt u_n \|^2 \Big]
\end{align*}
and
\begin{align*}
&- \Delta t \E \Big[\Big\langle \fr(u_{n+1}-u_n) , \int^{t_{n+1}}_{t_n} \int_E \eta( u_n; z) \widetilde{N}(dz ,dx) \Big\rangle \Big]\\
& \qquad \ge -\frac{\Delta t}{8} \E \Big[\| \frt ( u_{n+1} -u_n) \|^2 \Big]-2\Delta t \E \Big[ \Big\| \frt \int^{t_{n+1}}_{t_n} \int_E \eta(u_n; z) \widetilde{N}(dz ,ds) \Big\|^2 \Big]\\
&\qquad \quad \ge -\frac{\Delta t}{8} \E \Big[\| \frt ( u_{n+1} -u_n) \|^2 \Big]-2\Delta t \E \Big[\intrd \int^{t_{n+1}}_{t_n} \int_E | \frt \eta( u_n ;z)|^2 m(dz) \,ds \,dx \Big]
\\
&\qquad \quad \ge -\frac{\Delta t}{8} \E \Big[\| \frt ( u_{n+1} -u_n) \|^2 \Big]-2(\lambda^*)^2 \|h\|^2_{L^2(E)}(\Delta t)^2 \E \Big[\| \frt u_n \|^2  \Big].
\end{align*}

Thus \eqref{testtwo} can be estimated, for a given positive constant $C$, as  
\begin{align*}
&\E\Delta t \Big[\Big\langle \fr u_{n+1}, u_{n+1} -u_n -(W_{n+1}-W_n)\sigma(u_n)-\int^{t_{n+1}}_{t_n}\int_E \eta(u_n ;z) \widetilde{N}( dz,ds) \Big\rangle \Big]\\
&\qquad \qquad \ge \frac{\Delta t}{2} \E \Big[ \| \frt u_{n+1} \|^2+ \| \frt( u_{n+1}-u_n) \|^2 - \| \frt u_n\|^2 -C \Delta t \| \frt u_n \|^2\Big]
\end{align*}
Putting back these estimates we get 
\begin{align*}
&\E \Big[\big\| u_{n+1} -u_n -(W_{n+1}-W_n) \sigma(u_n) -\int^{t_{n+1}}_{t_n}\int_E \eta(u_n ;z) \widetilde{N}( dz,ds) \big\|^2\Big]\\
&\qquad + \eps \Delta t  \E \Big[ \| \nabla u_{n+1}\|^2 - \| \nabla u_n \|^2 + \frac{1}{2}\| \nabla u_{n+1} -\nabla u_n\|^2 \Big]\\
&\qquad \quad +\Delta t\E \Big[ \big\|\frt u_{n+1} \big\|^2  - \big\|\frt u_n\big\|^2 +\frac{1}{2}\| \frt (u_{n+1}-u_n)\big\|^2 \Big]\\[1.5mm]
&\le \big[C(f')+C\ep \big](\Delta t)^2 \E \Big[\| \nabla u_{n+1}\|^2\Big]+ C(\Delta t)^2 \E \Big[\| \frt u_n\|^2\Big].
\end{align*}
Divide both sides by $\Delta t$ and sum over $n$ to yield
\begin{align*}
&\Delta t \sum\limits^k_{n=0}  \E \bigg[\Big\| \frac{u_{n+1}-u_n-(W_{n+1}-W_n)\sigma(u_n)-\int^{t_{n+1}}_{t_n}\int_E \eta(u_n ;z) \widetilde{N}( dz,ds)}{\Delta t}\Big\|^2 \bigg]+ \ep \E \Big[\| \nabla u_{k+1}\|^2 \Big]\\
&\qquad +\frac{\ep}{2} \E \Big[\sum\limits^k_{n=0} \| \nabla( u_{n+1}-u_n) \|^2\Big]+\E \Big[\big\|\frt[ u_{k+1}] \big\|^2\Big] +\frac{1}{2}\sum\limits^k_{n=0} \E \Big[\big\|\frt[(u_{k+1}-u_k)]\big\|^2\Big]\\
&\le C\Delta t \sum\limits^{k+1}_{n=0} \E \Big[\| \nabla u_{n}\|^2\Big]+C\Delta t \sum\limits^{k+1}_{n=0}\E \Big[\big\|\frt[ u_n]\big\|^2\Big]+ \ep \E \Big[\|\nabla u_0\|^2\Big]+\E \Big[\big\|\frt[ u_0]\big\|^2\Big].
\end{align*}

Then, using \eqref{Gronwall}, we get 
\begin{align}
&\sum\limits^k_{n=0} \Delta t \E \bigg[ \Big\| \frac{u_{n+1}-u_n-(W_{n+1}-W_n)\sigma(u_n)-\int^{t_{n+1}}_{t_n}\int_E \eta(u_n ;z) \widetilde{N}( dz,ds)}{\Delta t}\Big\|^2\bigg] + \ep \E \Big[\| \nabla u_{k+1}\|^2\Big] \notag\\
&\quad +\frac{\ep}{2} \sum\limits^k_{n=0} \E \Big[\| \nabla( u_{n+1}-u_n) \|^2\Big]+\E \Big[\big\|\frt[ u_{k+1}] \big\|^2\Big]+\frac{1}{2}\sum\limits^k_{n=0} \E \Big[\big\|\frt[(u_{k+1}-u_k)]\big\|^2\Big]  \le C. \label{bounds}
\end{align}
To pass to the limit, we define 
\begin{align*}
&B_n := B^{(1)}_n+B^{(2)}_n:= \sum\limits^{n-1}_{k=0} (W_{k+1}-W_k)\sigma(u_k)+ \sum\limits^{n-1}_{k=0}\int^{t_{n+1}}_{t_n}\int_E \eta(u_n ;z) \widetilde{N}( dz,ds)\\
&= \int^{n \Delta t}_0 \sigma(u^{\Delta t}(s- \Delta t)) \,dW(s)+ \int^{n \Delta t}_0 \int_E \eta( u^{\Delta t}(s-\Delta t);z) \widetilde{N}( dz,ds) 
\end{align*}
and 
\begin{align*}
&\widetilde{B}^{\Delta t}_i := \sum\limits^N_{k=1} \bigg[ \frac{B^{(i)}_k -B^{(i)}_{k-1}}{\Delta t}[t-(k-1)\Delta t] +B^{(i)}_{k-1} \bigg] \mathds{1}_{[(k-1) \Delta t, k \Delta t)},\\[1.5mm]
& \widetilde{B}^{\Delta t} := \tilde{B}^{\Delta t}_1 +\tilde{B}^{\Delta t}_2.
\end{align*}
Thanks to \eqref{bounds}, we conclude that $\partial_t[\tu-\bu]$ is bounded in $L^2((0,T); L^2( \Omega ; L^2(\Rd)))$,
$u^{\Delta t}$ and $\tu$ are bounded in $L^{\infty}(0,T; L^2( \Omega ; H^1(\Rd)))$, and $\tu - u^{\Delta t}$ converges 
to zero in $L^2((0,T); L^2( \Omega ; H^1(\Rd)))$. This implies the existence of a limit function 
$u \in L^{\infty}(0,T; L^2( \Omega ; H^1(\Rd)))$ such that 
\begin{align*}
u^{\Delta t} \ws u \text{ in }  L^{\infty}(0,T; L^2( \Omega ; H^1(\Rd))), \\
 \tu \ws u \text{ in }  L^{\infty}(0,T; L^2( \Omega ; H^1(\Rd))).
\end{align*}
Moreover, by assumptions \eqref{A3}, \eqref{A4} and \eqref{A5}, there exist 
$\sigma_u \in L^2(0,T; L^2( \Omega ; HS(H^1)))$, $f_u \in L^{2}(0,T; L^2( \Omega ; H^1(\Rd)))$ and $\eta_u \in L^2((0,T)\times E; L^2( \Omega ; L^2(\Rd)))$ such that
\begin{align*}
&\sigma(u^{\Delta t}) \wc \sigma_u \text{ in }  L^2(0,T; L^2( \Omega ; HS(H^1))),\\
&f(u^{\Delta t}) \wc f_u \text{ in } L^2(0,T; L^2( \Omega ; H^1(\Rd))),\\
&\eta(u^{\Delta t}) \wc \eta_u \text{  in } L^2((0,T)\times E; L^2( \Omega ; L^2(\Rd))).
\end{align*}
Since $\tu -\bu$ converges weakly in $L^2( \Omega ; W(0,T))$ where 
$W(0,T) \equiv \{ w \in L^2(0,T; H^1(\Rd)) | \partial_t w \in L^2(0,T; H^{-1}(\Rd)) \}$ one gets that  $\tu -\bu $ converges weakly
in $L^2(\Omega; C([0,T]; L^2(\Rd)))$. Thus for any $t \in [0,T]$, $(\tu -\bu)(t)$ converges weakly in
$L^2(\Omega;L^2(\Rd))$.

Next, we want to find an upper bound for $\bu$. To that end, we have the following proposition whose proof can be found in \cite{BaVaWit_2012} and \cite{BisMajVal}. 
\begin{prop}
\label{prop01}
$\bu$ is a bounded sequence in $L^2(\Omega \times \Pi_T)$, and
\begin{align*}
&\Big\|\bu_1 (\cdot)- \int^{\cdot}_0 \sigma( u^{\Delta t}(s- \Delta t)) \,dW(s)\Big\|^2_{L^2(\Omega \times \Rd)} \le C \Delta t\\
& \Big\| \bu_2(\cdot) - \int^{\cdot}_0 \int_E \eta( u^{\Delta t}(s-\Delta t);z)   \widetilde{N}( dz,ds)\Big\|^2_{L^2(\Omega \times \Rd)} \le C\Delta t
\end{align*}
\end{prop}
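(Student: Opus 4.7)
The plan is to first establish an $L^2$ bound on the discrete increments $B_n$, transfer this bound to the piecewise linear interpolant $\bu$ via convexity, and then rewrite each discrepancy $\bu_i(t) - \int_0^t(\cdots)$ as a \emph{single} stochastic integral against a bounded deterministic scalar weight, so that It\^o's isometry (for the Wiener part) and the compensated Poisson isometry apply directly and cleanly.

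For the boundedness of $\bu$ in $L^2(\Omega\times\Pi_T)$, I would observe that on each subinterval $[(n-1)\Delta t, n\Delta t)$ the interpolant $\bu(t)$ is a convex combination of $B_{n-1}$ and $B_n$, so pointwise $\|\bu(t)\|^2_{L^2(\R^d)} \le \|B_{n-1}\|^2_{L^2(\R^d)} + \|B_n\|^2_{L^2(\R^d)}$. It then suffices to bound $\max_{n \le T/\Delta t} \E\|B_n\|^2_{L^2(\R^d)}$. Since $B_n^{(1)} = \int_0^{n\Delta t}\sigma(u^{\Delta t}(s-\Delta t))\,dW(s)$ and $B_n^{(2)} = \int_0^{n\Delta t}\!\int_E \eta(u^{\Delta t}(s-\Delta t);z)\,\widetilde N(dz,ds)$ are stochastic integrals of predictable step processes, It\^o's isometry and the It\^o--L\'evy isometry, combined with the linear growth from \ref{A4} and \ref{A5}, yield $\E\|B_n\|^2 \le C\,\Delta t\sum_{k=0}^{n-1}\E\|u_k\|^2$, which is uniformly bounded thanks to \eqref{Gronwall}.

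For the rate estimates, I would fix $t \in [(k-1)\Delta t, k\Delta t)$ and set $\tau := t - (k-1)\Delta t$. Since $\sigma(u_{k-1})$ is $\mathcal{F}_{(k-1)\Delta t}$-measurable and constant in $s$ on the subinterval, the linear interpolation identity rewrites as a single stochastic integral,
\begin{equation*}
\bu_1(t) - \int_0^t \sigma(u^{\Delta t}(s-\Delta t))\,dW(s) = \int_{(k-1)\Delta t}^{k\Delta t}\Bigl[\tfrac{\tau}{\Delta t} - \mathbf{1}_{((k-1)\Delta t,\,t]}(s)\Bigr]\,\sigma(u_{k-1})\,dW(s),
\end{equation*}
and It\^o's isometry for $HS(L^2)$-valued integrands then produces
\begin{equation*}
\E\Bigl\|\bu_1(t) - \int_0^t \sigma\,dW\Bigr\|^2_{L^2(\R^d)} = \frac{\tau(\Delta t - \tau)}{\Delta t}\,\E\|\sigma(u_{k-1})\|^2_{HS(L^2)} \le \frac{\Delta t}{4}\,K\,\E\|u_{k-1}\|^2_{L^2(\R^d)} \le C\,\Delta t,
\end{equation*}
after an elementary computation of the scalar weight integral and an application of \ref{A4} together with \eqref{Gronwall}. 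The argument for $\bu_2$ is identical in structure, replacing It\^o's isometry by the compensated Poisson isometry $\E\bigl|\int \eta\,\widetilde N\bigr|^2 = \E\int |\eta|^2\,m(dz)\,ds$ and invoking \ref{A5} in place of \ref{A4}.

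The only real subtlety is the first-step rewriting of the piecewise linear interpolant as a stochastic integral against a deterministic scalar weight; this hinges on the fact that the integrand $\sigma(u_{k-1})$ (respectively $\eta(u_{k-1};\cdot)$) is piecewise constant in the time variable $s$ on $[(k-1)\Delta t, k\Delta t)$, which allows the identity $\sigma(u_{k-1})(W(k\Delta t) - W((k-1)\Delta t)) = \int_{(k-1)\Delta t}^{k\Delta t}\sigma(u_{k-1})\,dW(s)$ and the analogous identity for $\widetilde N$. Once this identification is in place, everything reduces to a one-dimensional quadratic integration bounded explicitly by $\Delta t/4$, and no compactness or passage to the limit is required.
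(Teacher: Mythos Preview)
Your proof is correct. The paper itself does not supply a proof of this proposition; it simply defers to \cite{BaVaWit_2012} and \cite{BisMajVal}. Your argument is therefore a self-contained replacement, and the device of rewriting the discrepancy $\bu_i(t)-\int_0^t(\cdots)$ as a single stochastic integral against the deterministic scalar weight $\tfrac{\tau}{\Delta t}-\mathbf{1}_{((k-1)\Delta t,\,t]}(s)$ is clean: the explicit computation $\int_{(k-1)\Delta t}^{k\Delta t}[\,\cdot\,]^2\,ds=\tau(\Delta t-\tau)/\Delta t\le\Delta t/4$ makes the rate transparent and avoids any splitting or triangle-inequality overhead. The boundedness part is likewise straightforward once one notes, as you do, that $B_n^{(i)}$ is a genuine stochastic integral of the predictable step process $\sigma(u^{\Delta t}(\cdot-\Delta t))$ (resp.\ $\eta$), so that the It\^o and It\^o--L\'evy isometries together with \ref{A4}, \ref{A5} and \eqref{Gronwall} close the estimate.
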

\noindent Thus, in view of the above Proposition~\ref{prop01}, we conclude 
\begin{align*}
\| \bu - B_n\|^2_{L^2(\Omega \times \Rd)} & \le 2 \bigg(\Big\|\bu_1 (\cdot)- \int^{\cdot}_0 \sigma( u^{\Delta t}(s- \Delta t)) \,dW(s)\Big\|^2_{L^2(\Omega \times \Rd)}\\
& \qquad \qquad+ \Big\| \bu_2(\cdot) - \int^{\cdot}_0 \int_E \eta( u^{\Delta t}(s-\Delta t);z) \widetilde{N}( dz,ds)\Big\|^2_{L^2(\Omega \times \Rd)}\bigg) \le C \Delta t
\end{align*}
Now we shall identify the weak limit of $B_n$.
As $\sigma( u^{\Delta t}) \wc \sigma_u$ in the space $L^2(0,T;L^2(\Omega; HS(L^2)))$, using the fact that the It\^{o} integral is a
linear continuous operator between the spaces $L^2((0,T); L^2(\Omega \times \Rd))$ and $L^2(\Omega ; C([0,T],L^2(\Rd))$, it
preserves the weak convergence and thus $\int_0^t \sigma(u^{\Delta t}(s-\Delta t) \,dW(s) \wc \int^t_0 \sigma_u\, dW(s)$ in
$L^2(\Omega ; L^2(\Rd))$ for any $t$.

Similarly $\int^t_0 \int_E \eta( u^{\Delta t}(s-\Delta t);z) \widetilde{N}( dz,ds) \wc \int^t_0 \int_E \eta_u \widetilde
{N}(dz,ds)$ in $L^2(\Omega ; L^2(\Rd))$. Thus we get that $\tu(t) -\bu(t)$ converges weakly towards 
$u(t) - \int^t_0 \sigma_u\,dW(s) -\int^t_0 \int_E \eta_u \widetilde{N}(dz,ds)$, and $\tu(t)$ converges weakly to $u(t)$ in
$L^2( \Omega ; L^2(\Rd))$. This also implies that
$\partial_t(\tu(t) -\bu(t)) \wc \partial_t(u(t) - \int^t_0 \sigma_u \,dW(s)-\int^t_0 \int_E \eta_u \widetilde{N}( dz,ds))$ in
the space $ L^2( \Omega ; L^2(\Rd))$.
  
\noindent Moreover, for any $v \in H^1(\Rd)$, from \eqref{eq:discrete}, we have 
\begin{align*}
\intrd \partial_t(\tu(t) -\bu(t))v \,dx + \ep \intrd \nabla u^{\Delta t} \nabla v \,dx + \intrd f( u^{\Delta t}) \nabla v \,dx + \langle \fr[u^{\Delta t}] ,v \rangle =0,
\end{align*}
and at the limit as $\Delta t \to 0$, we get
\begin{align*}
\intrd\partial_t \bigg( u - \int^t_0 \sigma_u \,dW(s)-\int^t_0 \int_E \eta_u \widetilde{N}( dz,ds)\bigg) v \,dx + \ep \intrd \nabla u\nabla v \,dx+ \intrd f_u \nabla v \,dx + \big\langle \fr[u],v \big\rangle =0.
\end{align*}
Thus, 
\begin{align*}
 du-\ep  \Delta u\, dx - \Div  f_u  \,dx + \fr[u]\,dx =\sigma_u \,dW + \int_E \eta_u \widetilde{N}( dz,ds)
\end{align*}
and, applying the It\^{o}-L\'evy formula to the function $g(t,x)=e^{-ct} x^2$, then integrating over $\Rd$ and taking expectation, we get
\begin{align}
&e^{-ct} \E \big[\|u(t)\|^2\big] + 2\ep \int^t_0 e^{-cs} \E \Big[\| \nabla u \|^2 \Big]\,ds + 2 \int^t_0 \E \bigg[\intrd e^{-cs} f_u \nabla u \,dx \, \bigg]ds + 2 \int^t_0 e^{-cs} \E \Big[\langle \fr[u],u \rangle \Big]\,ds\notag\\
&\quad = \|u_0\|^2 -c \int^t_0 e^{-cs} \E \big[\| u(s) \|^2\big] \,ds + \int^t_0 e^{-cs} \E \big[\| \sigma_u\|^2\big] \,ds + \E \bigg[\int^t_0 \intrd \int_E e^{-cs} \eta^2_u \,\,m(dz) \,dx \,ds \bigg] \label{discrete1}
\end{align}
Choosing $\alpha =1$ in \eqref{test}, we notice that for any positive $c$ and $n \in \mathbb{N}$ we have  
\begin{align*}
&\E \bigg[\intrd \big[ e^{-cn\Delta t} | u_{n+1}|^2 -e^{c(n-1) \Delta t} | u_n|^2 \big] \,dx \bigg]+ 2 \Delta t e^{-nc \Delta t} \Bigg[\ep\,\E \bigg[\intrd | \nabla u_{n+1}|^2 \,dx \bigg]+ \Delta t \E \Big[\langle \fr[u_{n+1}],u_{n+1}\rangle \Big]\Bigg]\\
&\quad\le \Delta t e^{-cn \Delta t} \E\Big[ \|\sigma(u_n) \|^2\Big] + \Big[ e^{-cn\Delta t} -e^{-c(n-1)\Delta t}\Big] \E \Big[ |u_n|^2 \Big] 
+ e^{-cn \Delta t} \E\intrd \int_{t_n}^{t_{n+1}} \int_{|z|>0} |\eta(u_n;z)|^2 \,dz\, ds\,dx.
\end{align*}
Summing the index $n$ from $0$ to $k$ both sides, we get 
\begin{align*}
&e^{-ck \Delta t} \E \Big[\| u_{k+1}\|^2\Big] + 2\Delta t \ep \sum^k_{n=0} e^{-c n\Delta t} \E \Big[\| \nabla u_{n+1}\|^2\Big] + 2 \Delta t \sum^k_{n=0} e^{-cn \Delta t} \E \Big[\| \frt[u_{n+1}]\|^2 \Big] \\
\le& \|u_0\|^2 + \Delta t \sum^k_{n=0} e^{-cn\Delta t} \E\Big[ \| \sigma(u_n) \|^2 \Big]+ \sum^k_{n=0}e^{-cn \Delta t} \E\intrd \int_{t_n}^{t_{n+1}} \int_{|z|>0} |\eta(u_n;z)|^2 \,dz\, ds\,dx  
\\ &- c\Delta t \sum^k_{n=1} e^{-cn\Delta t} \E \Big[\|u_n\|^2\Big].
\end{align*}
Rewriting in terms of $u^{\Delta t}$ yields
\begin{align*}
&e^{-ck \Delta t} \E \Big[\| u_{k+1}\|^2 \Big]+ 2\ep \int^{(k+1)\Delta t}_0 e^{-cs} \E\Big[ \| \nabla u^{\Delta t} \|^2 \Big] \,ds+ 2 \int^{(k+1)\Delta t}_0 e^{-cs} \E \Big[\| \frt[u^{\Delta}] \|^2 \Big]\,ds \\
& \qquad \le \|u_0\|^2 +\Delta t \| \sigma(u_0)\|^2+\E\intrd \int_{0}^{\Delta t} \int_{|z|>0} |\eta(u_0;z)|^2 \,dz\, ds\,dx + \int^{k \Delta t }_0 e^{-cs} \E \Big[\| \sigma(u^{\Delta t})\|^2 \Big]\,ds \\
& \qquad +\E\intrd \int_{0}^{k\Delta t} e^{-cs}\int_{|z|>0} |\eta(u^{\Delta t};z)|^2 \,dz\, ds\,dx -ce^{-c \Delta t} \int^{kt}_0 e^{-cs} \E\Big[ \| u^{\Delta t}\|^2\Big] \,ds.
\end{align*}
Let $t \in [ k\Delta t, (k+1)\Delta t)$, then 
\begin{align*}
&e^{-ct} \E \Big[\| u^{\Delta t}(t)\|^2\Big] + 2\ep \int^{t}_0 e^{-cs} \E\Big[ \| \nabla u^{\Delta t} \|^2\Big] \,ds + 2 \int^{t}_0 e^{-cs} \E\Big[ \| \frt[u^{\Delta t}] \|^2 \Big]\,ds \\
&\le \|u_0\|^2 +\widetilde{C}\Delta t \|u_0\|^2+ \int^{t}_0 e^{-cs} \E \Big[\| \sigma(u^{\Delta t})\|^2\Big] \,ds\\
& \qquad +\E\intrd \int_{0}^{t} e^{-cs}\int_{|z|>0} |\eta(u^{\Delta t};z)|^2 \,dz\, ds\,dx -ce^{-c \Delta t} \int^{(t-\Delta t)^+}_0 e^{-cs} \E\Big[ \| u^{\Delta t}\|^2\Big] \,ds\\
&\le \|u_0\|^2 +C\Delta t + \int^{t}_0 e^{-cs} \E \Big[\| \sigma(u^{\Delta t})\|^2\Big] \,ds\\
& \qquad +\E\intrd \int_{0}^{t} e^{-cs}\int_{|z|>0} |\eta(u^{\Delta t};z)|^2 \,dz\, ds\,dx -ce^{-c \Delta t} \int^{t}_0 e^{-cs} \E\Big[ \| u^{\Delta t}\|^2\Big] \,ds.
\end{align*}
Replacing $u^{\Delta t}$ by $(u^{\Delta t}-u)$ in few terms on both sides  and subtracting extra terms we get,
\begin{align*}
&e^{-ct} \E \Big[\| u^{\Delta t}(t)\|^2\Big] + 2\ep \int^{t}_0 e^{-cs} \E \Big[\| \nabla (u^{\Delta t}-u) \|^2\Big] \,ds + 2 \int^{t}_0 e^{-cs} \E \Big[\big\| \frt[u^{\Delta t}-u] \big\|^2 \Big]\,ds\\
&\qquad+2 \int^t_0 e^{-cs} \E \bigg[\intrd ( f(u^{\Delta t})-f(u)) \nabla( u^{\Delta t}-u) \,dx \bigg]\,ds \\
&\qquad+4 \ep \int^t_0 e^{-cs} \E \bigg[\intrd \nabla u^{\Delta t} \nabla u \,dx\bigg] \,ds -2 \ep \int^t_0 e^{-cs} \E \Big[\| \nabla u \|^2\Big] \,ds\\
&\qquad +4 \int^t_0 e^{-cs} \E\bigg[ \intrd \frt u^{\Delta t} \frt u \,dx \bigg]\,ds -2 \int^t_0 e^{-cs} \E\Big[ \| \frt u\|^2\Big] \,ds \\
&\le \| u_0\|^2 +C \Delta t -2 \int^t_0 e^{-cs} \E\bigg[ \intrd f(u^{\Delta t} ) u \,dx \bigg]\,ds  -2 \int^t_0 e^{-cs} \E \bigg[\intrd f(u) \nabla u^{\Delta t} \,dx \bigg]\,ds \\
&\qquad- \int^t_0 e^{-cs} \E \Big[\| \sigma(u)\|^2\Big] \,ds
+\int^t_0 e^{-cs} \E \Big[\| \sigma(u^{\Delta t})-\sigma(u)\|^2\Big] \,ds + 2 \int^t_0 e^{-cs} \E \Big[\left( \sigma(u^{\Delta t}), \sigma(u) \right)\Big] \,ds \\
&\qquad -\E\intrd \int_{0}^{t} e^{-cs}\int_{|z|>0} |\eta(u;z)|^2 \,dz\, ds\,dx + \E\intrd \int_{0}^{t} e^{-cs}\int_{|z|>0} |\eta(u^{\Delta t};z)-\eta(u;z)|^2 \,dz\, ds\,dx  \\
&\qquad + 2\E\intrd \int_{0}^{t} e^{-cs}\int_{|z|>0} \eta(u^{\Delta t};z)\eta(u;z) \,dz\, ds\,dx
\\
&\qquad -ce^{-c \Delta t}\Big( \int^t_0 e^{-cs} \E \Big[\| u^{\Delta t}-u\|^2\Big] \,ds +2 \int^t_0 e^{-cs}\E \bigg[\intrd u^{\Delta t} u\,dx \bigg]\,ds -  \int^t_0 e^{-cs} \E \Big[\|u \|^2\Big] \,ds\Big).
\end{align*}
Next, observe that
\begin{align*}
& -2\ep \int^t_0 e^{-cs} \E \Big[\| \nabla (u^{\Delta t}-u) \|^2\Big] \,ds- 2 \int^t_0 e^{-cs} \E \bigg[\intrd ( f(u^{\Delta t})-f(u)) \nabla( u^{\Delta t}-u) \,dx\bigg] \,ds \\
&\qquad+\int^t_0 e^{-cs} \E \Big[\| \sigma(u^{\Delta t})-\sigma(u)\|^2\Big] \,ds- ce^{-c \Delta t} \int^t_0 e^{-cs} \E \Big[\| u^{\Delta t}-u\|^2\Big] \,ds\\
&+\E\intrd \int_{0}^{t} e^{-cs}\int_{|z|>0} |\eta(u^{\Delta t};z)-\eta(u;z)|^2 \,dz\, ds\,dx
\\
&\le -\ep\int^t_0 e^{-cs} \E\Big[ \| \nabla (u^{\Delta t}-u) \|^2\Big] \,ds +1/\ep \int^t_0 e^{-cs} \E\Big[\| f(u^{\Delta t}) -f(u) \|^2\Big] \,ds\\
&\qquad+ [(\lambda^*)^2\|h\|^2_{L^2(E)}+K- ce^{-c \Delta t}] \int^t_0 e^{-cs} \E \Big[\| u^{\Delta t}-u\|^2\Big] \,ds \\
& \qquad \le -\ep\int^t_0 e^{-cs} \E \Big[\| \nabla (u^{\Delta t}-u) \|^2\Big] \,ds + [(\lambda^*)^2\|h\|^2_{L^2(E)}+\frac{c(f)}{\epsilon} + K- ce^{-c \Delta t}] \int^t_0 e^{-cs} \E \Big[\| u^{\Delta t}-u\|^2\Big] \,ds.
\end{align*}
Putting back these terms with $c \geq (\lambda^*)^2\|h\|^2_{L^2(E)}+ \frac{c(f)}{\epsilon}+ K$ and integrating over $[0,T]$, we get 
\begin{align*}
&\int^T_0e^{-ct} \E \Big[\| u^{\Delta t}\|^2 \Big]\,dt + \ep\int^T_0 \int^{t}_0 e^{-cs} \E \Big[\| \nabla (u^{\Delta t}-u) \|^2 \Big]\,ds\,dt + 2\int^T_0 \int^{t}_0 e^{-cs} \E \Big[\big\| \frt[u^{\Delta t}-u] \big\|^2 \Big]\,ds \,dt\\
&\le T\| u_0\|^2 +C\Delta t -2 \int^T_0\int^t_0 e^{-cs} \E \bigg[\intrd f(u^{\Delta t}) \nabla u \,dx \bigg] \,ds \,dt -2\int^T_0\int^t_0 e^{-cs} \E \bigg[\intrd f(u) \nabla u^{\Delta t} \,dx\bigg] \,ds\,dt\\
&\qquad +2 \int_0^T\int^t_0 e^{-cs} \E \bigg[\left( \sigma(u^{\Delta t}), \sigma(u) \right)\bigg] \,ds\,dt -\E \int^T_0 \int^t_0 e^{-cs} \E \Big[\| \sigma(u) \|^2 \Big]\,ds \,dt \\
& \qquad 
+2\E\int_0^T\intrd \int_{0}^{t} e^{-cs}\int_{|z|>0} \eta(u^{\Delta t};z)\eta(u;z) \,dz\, ds\,dx\,dt-\E\int_0^T\intrd \int_{0}^{t} e^{-cs}\int_{|z|>0} |\eta(u;z)|^2 \,dz\, ds\,dx\,dt\\
& \qquad -  c e^{-c \Delta t}\Big(2\int^T_0 \int^t_0 e^{-cs} \E \bigg[\intrd u^{\Delta t} u \,dx\bigg] \,ds \,dt - \int^T_0 \int^t_0 e^{-cs} \E \Big[\| u\|^2\Big] \,ds \,dt \Big)
 +   \\
 &\qquad -4 \ep \int^T_0\int^t_0 e^{-cs} \E \bigg[\intrd \nabla u^{\Delta t} \nabla u \,dx\bigg] \,ds +2 \ep\int^T_0 \int^t_0 e^{-cs} \E \Big[\| \nabla u \|^2 \Big]\,ds\\
 &\qquad-4 \int^T_0\int^t_0 e^{-cs} \E \bigg[\intrd \frt u^{\Delta t} \frt u \,dx\bigg] \,ds +2 \int^T_0 \int^t_0 e^{-cs} \E \Big[\| \frt u\|^2 \Big]\,ds 
\end{align*}
Taking $\limsup_{\Delta t \to 0}$ both sides and using the fact that $\frt$ is a continuous linear operator from $H^1$ to
$L^2$, we get
\begin{align*}
&\limsup \int^T_0 e^{-ct} \E \Big[\| u^{\Delta t}(t)\|^2\Big] \,dt\\
&~~~~~\le \int^T_0 \bigg[ \| u_0\|^2  -2 \int^t_0 e^{-cs} \E\bigg[ \intrd f_u \nabla u \,dx\bigg] \,ds -2 \ep \int^t_0 e^{-cs} \E \Big[\| \nabla u\|^2 \Big]\,ds -2 \int^t_0 e^{-cs} \E \Big[\| \frt u\|^2\Big] \,ds \\
&~~~~~   -c \int_0^T\int^t_0  e^{-cs} \E \Big[\| u\|^2\Big] \,ds  \,dt
+2 \int^T_0\int^t_0 e^{-cs} \E \bigg[ \left( \sigma_u, \sigma(u) \right)\bigg] \,ds\,dt -\E \int^T_0 \int^t_0 e^{-cs} \E \Big[\| \sigma(u) \|^2 \Big] \,ds \,dt \\
&~~~~~ +2\E\intrd \int_0^T\int_{0}^{t} e^{-cs}\int_{|z|>0} \eta_u\eta(u;z) \,dz\, ds\,dx\,dt-\E\int_0^T\intrd \int_{0}^{t} e^{-cs}\int_{|z|>0} |\eta(u;z)|^2 \,dz\, ds\,dx\,dt.
\end{align*}
For terms inside the bracket we use \eqref{discrete1} to get 
\begin{align*}
&\limsup \int^T_0 e^{-ct} \E \Big[\| u^{\Delta t} \|^2 \Big]\,dt \\
& \leq \int^T_0 \bigg( e^{-ct} \E \Big[\| u(t) \|^2\Big] - \int^t_0 e^{-cs} \E \Big[\| \sigma_u\|^2\Big] \,ds  -\int^t_0 \E \Big[\intrd \int_E e^{-cs} \eta^2_u \,m(dz) \,dx \,ds\Big]\bigg) \,dt \\
&+2 \int^T_0\int^t_0 e^{-cs} \E \bigg[\left( \sigma_u, \sigma(u) \right) \bigg]\,ds\,dt  -\E \int^T_0 \int^t_0 e^{-cs} \E \Big[\| \sigma(u) \|^2\Big]  \,ds \,dt \\
& +2\E\int_0^T\intrd \int_{0}^{t} e^{-cs}\int_{|z|>0} \eta_u\eta(u;z) \,dz\, ds\,dx\,dt -\E\int_0^T\intrd \int_{0}^{t} e^{-cs}\int_{|z|>0} |\eta(u;z)|^2 \,dz\, ds\,dx\,dt, 
\end{align*}
and
\begin{align*}
&\limsup \int^T_0 e^{-ct} \E \Big[\| u^{\Delta t} \|^2\Big] \,dt + \int^T_0 \int^t_0 e^{-cs} \E \Big[\|\sigma_u -\sigma(u)\|^2\Big] \,ds \,dt \\
&\qquad + \E\intrd \int_0^T\int_{0}^{t} e^{-cs}\int_{|z|>0} |\eta_u-\eta(u;z)|^2 \,dz\, ds\,dt\,dx  
 \le \int^T_0 e^{-ct} \E \Big[\| u(t)\|^2\Big] \,dt,
\end{align*}
which in fact implies that $\sigma_u=\sigma(u)$, $\eta_u=\eta(u,\cdot)$ and $u^{\Delta t} \to u$ in $L^2((0,T) \times \Omega \times \Rd)$ and finally that $f_u =f(u)$.
Thus we conclude that $u$ is a solution.  We denote it by $u_\eps$.

\subsection{Uniqueness} 
For uniqueness, consider two solutions $u_1$ and $u_2$, denote by $u=u_1-u_2$, apply to $du$ the It\^{o}-L\'evy formula to the function $g(t,x)= x^2$ and take expectation to get 
\begin{align*}
&\E\| u(t) \|^2 + 2\E\int^t_0 \intrd  \big[ \ep | \nabla u |^2 + [f(u_1)-f(u_2)] \nabla u\big] \,dx \,ds+2 \E\int^t_0 \big\| \frt u \big\|^2 \,ds \\
& \qquad= \E\int^t_0 \intrd \|\sigma(u_1)-\sigma(u_2)\|^2 \,dx \,ds + \E\intrd \int^t_0 \int_E |\eta(u_1;z)-\eta(u_2;z)|^2 \,m(dz)\,ds\,dx
\end{align*}
Using Young's inequality for the flux-term and the Lipschitz information on the nonlinear functions, one has 
\begin{align*}
&\E \big[\| u(t) \|^2 \big]+ 2\E \Big[\int^t_0 \big[ \ep \| \nabla u \|^2 + \big\| \frt u \big\|^2 \big] \,ds\Big] \leq C \int_0^t \E \big[\| u(s) \|^2 \big]\,ds,
\end{align*} 
and the uniqueness of the solution by applying Gronwall's lemma. 

\subsection{Additional Regularity} 

\begin{cor}
The solution $u_\eps$ satisfies moreover $\Delta u_\eps \in L^2((0,T) \times \Omega ; L^2(\Rd))$.
\end{cor}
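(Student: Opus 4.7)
The plan is to improve the spatial regularity of $u_\eps$ from $H^1$ (established in Theorem~\ref{prop:vanishing viscosity-solution}) to $H^2$ for fixed $\eps>0$, by performing the natural energy estimate tested against $-\Delta u_\eps$. Since $\Delta u_\eps$ is \textit{a priori} only known to lie in $H^{-1}$, a direct application of It\^o's formula to the functional $v\mapsto \|\nabla v\|_{L^2}^2$ would not be immediately justified, so I would return to the semi-implicit time discretisation \eqref{eq:discrete} already in use, and redo the energy estimate there at one order of regularity higher. Once uniform (in $\Delta t$) bounds are obtained, weak-lower-semicontinuity in the passage $\Delta t\to 0$ gives the desired estimate on $u_\eps$.

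More concretely, for each discrete step the iterate $u_{n+1}$ solves the linear elliptic problem
$u_{n+1}-\eps\Delta t\,\Delta u_{n+1}+\Delta t\,\mathcal{L}_\lambda[u_{n+1}]+\Delta t\,\mathrm{div}\,f(u_{n+1})=\mathrm{RHS}_n,$
where $\mathrm{RHS}_n\in L^2(\Omega\times\R^d)$ is $\mathcal{F}_{(n+1)\Delta t}$-measurable. By standard elliptic regularity in $\R^d$ this yields $u_{n+1}\in L^2(\Omega;H^2(\R^d))$, so $-\Delta u_{n+1}$ is an admissible test function. Testing \eqref{eq:discrete} with $v=-\Delta u_{n+1}$ and integrating by parts produces, after the usual cancellation,
\begin{align*}
&\tfrac12\,\E\bigl[\|\nabla u_{n+1}\|^2-\|\nabla u_n\|^2+\|\nabla(u_{n+1}-u_n)\|^2\bigr]+\eps\Delta t\,\E\bigl[\|\Delta u_{n+1}\|^2\bigr]+\Delta t\,\E\bigl[\|(-\Delta)^{1/2}\mathcal{L}_{\lambda/2}[u_{n+1}]\|^2\bigr]\\
&\qquad=\Delta t\,\E\bigl[(f'(u_{n+1})\cdot\nabla u_{n+1},\Delta u_{n+1})\bigr]+\text{(noise terms)},
\end{align*}
where the fractional contribution $\langle \mathcal{L}_\lambda u_{n+1},-\Delta u_{n+1}\rangle=\|(-\Delta)^{1/2}\mathcal{L}_{\lambda/2}u_{n+1}\|^2$ is non-negative and thus helpful. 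The flux term is controlled by Young's inequality,
$|(f'(u_{n+1})\cdot\nabla u_{n+1},\Delta u_{n+1})|\le \tfrac{\eps}{2}\|\Delta u_{n+1}\|^2+\tfrac{\|f'\|_\infty^2}{2\eps}\|\nabla u_{n+1}\|^2,$
which absorbs half of the good $\eps\|\Delta u_{n+1}\|^2$ on the left.

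The remaining obstacle, and in my view the only non-routine point, is the handling of the stochastic corrections. For the Brownian noise contribution, the quadratic variation in the It\^o expansion produces $\sum_k\|\nabla g_k(u_n)\|^2\le K\|\nabla u_n\|^2$ thanks to \ref{A4}, together with a martingale cross term $(W_{n+1}-W_n)(\nabla\sigma(u_n),\nabla(u_{n+1}-u_n))$ that is absorbed by $\tfrac{\eps\Delta t}{8}\|\nabla(u_{n+1}-u_n)\|^2$ plus $C\eps(\Delta t)^2\|\nabla u_n\|^2$, exactly as in the manipulation already carried out around \eqref{testone}. The Poisson noise is treated identically using \ref{A5}, with $\|h\|_{L^2(E)}^2$ entering the constant. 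Summing over $n$, I obtain
\begin{align*}
\E\|\nabla u_k\|^2+\eps\,\Delta t\sum_{n=0}^{k-1}\E\|\Delta u_{n+1}\|^2\le \|\nabla u_0^\eps\|^2+C(\eps)\,\Delta t\sum_{n=0}^{k-1}\E\|\nabla u_n\|^2,
\end{align*}
and a discrete Gronwall argument (using $\eps\|\nabla u_0^\eps\|^2\le\|u_0\|^2$) gives a uniform (in $\Delta t$, but dependent on $\eps$) bound on $\Delta t\sum_n\E\|\Delta u_{n+1}\|^2$.

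Finally, rewriting the estimate in terms of $u^{\Delta t}$, weak-$*$ compactness yields $\Delta u^{\Delta t}\rightharpoonup \chi$ in $L^2(\Omega\times(0,T)\times\R^d)$ along a subsequence; identifying $\chi=\Delta u_\eps$ by the already established weak convergence $u^{\Delta t}\rightharpoonup u_\eps$ in $L^2(0,T;L^2(\Omega;H^1(\R^d)))$ and passing to the limit in the distributional sense of $\Delta$ gives $\Delta u_\eps\in L^2((0,T)\times\Omega;L^2(\R^d))$, as claimed. I expect the verification that the stochastic cross terms produced in the $-\Delta u_{n+1}$ energy identity can indeed all be absorbed into $\tfrac{\eps\Delta t}{2}\|\Delta u_{n+1}\|^2$ and $\tfrac{\eps\Delta t}{8}\|\nabla(u_{n+1}-u_n)\|^2$ to be the main bookkeeping hurdle, while elliptic regularity for $u_{n+1}$ and the limit passage itself are essentially routine.
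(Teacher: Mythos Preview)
Your approach is correct but takes a considerably longer route than the paper. The paper's proof is essentially two lines: from the equation one reads
\[
-\eps\Delta u_\eps + \mathcal{L}_\lambda[u_\eps] \;=\; \mathrm{div}\,f(u_\eps) - \partial_t\Bigl(u_\eps - \int_0^t\sigma(u_\eps)\,dW - \int_0^t\!\!\int_E\eta(u_\eps;z)\,\widetilde{N}(dz,ds)\Bigr),
\]
and the right-hand side is already known to lie in $L^2(\Omega\times(0,T);L^2(\R^d))$ thanks to the estimate~\eqref{bounds} obtained in the existence proof (precisely the step around~\eqref{testone} that you yourself invoke). One then takes the spatial Fourier transform: $(1+\eps|\xi|^2+|\xi|^{2\lambda})\hat u_\eps\in L^2$, and the pointwise lower bound $1+\eps|\xi|^2+|\xi|^{2\lambda}\ge \min(1,\eps)(1+|\xi|^2)$ immediately yields $u_\eps\in L^2(\Omega\times(0,T);H^2(\R^d))$.

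Your strategy --- return to the discrete scheme, invoke elliptic regularity to place $u_{n+1}\in H^2$, test against $-\Delta u_{n+1}$, absorb the flux and noise cross terms, run a discrete Gronwall, and pass to the limit --- is sound and would close. It has the merit of being purely real-variable and of producing an explicit $L^\infty(0,T;H^1)\cap L^2(0,T;H^2)$ bound in one pass, which is useful when Fourier methods are unavailable (bounded domains, $x$-dependent coefficients). In the paper's whole-space, constant-coefficient setting, however, the Fourier shortcut is far more economical: all the hard work (the $L^2$ bound on the ``deterministic time derivative'') has already been done in the existence section, so the corollary follows by elliptic regularity of the symbol rather than by a fresh energy argument.
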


\begin{proof}
Since $ -\ep \Delta u_\eps + \fr[u_\eps] = \Div f(u_\eps) -\partial_t(u_\eps - \int^t_0 \sigma(u_\eps) \,dW -\int^t_0 \int_E \eta( u_\eps;z) \widetilde{N}(\,dz,\,dt))$, it is an element of $L^2((0,T) \times \Omega ; L^2(\Rd))$.
\\
Note that since $u-\eps \Delta u + \fr u =g \in L^2(\R^d)$, by taking Fourier transform, we get  
\begin{align*}
(1+\eps |\xi|^2+ |\xi|^{2\lambda})\hat{u} \in L^2(\R^d) \Longrightarrow C_\eps (1+|\xi|^2) \hat{u} \in L^2 (\R^d) \Longrightarrow u \in H^2(\R^d).
\end{align*}
\end{proof}

%%%%%%%%%%%%%%%%%%%%%%%%%%%%%%%%%%%%%%%%%%%%%%%%%%%%%%%%%%%%%%%%%%%%%%%%%%%

\subsection{A Priori Estimates}
\label{regularity}

Having established existence of solution, we are now ready to derive \textit{a priori} estimates of the solution.
First note that since for any $\eps>0$, the unique weak solution $u_\eps \in N_w^2(0,T,H^1(\R^d))$, so $\partial_t \big(u_\eps - \int_0^t \sigma(u_\eps(s,\cdot))\, dW(s) - \int_0^t \int_{E} \eta(u_\eps(s,\cdot);z)\widetilde{N}(dz,ds)\big)
\in L^2(\Omega\times(0,T),L^2(\R^d))$.

Next, denote by $\beta$ a non-negative, convex regular function with quadratic growth at infinity. Then, It\^o-L\'{e}vy formula applied to the viscous solution \eqref{eq:viscous-Brown} yields, for any bounded $C^1_b(Q)$ function $\psi$,    
\begin{align*}
0=&\ \int_\D\beta(u_\epsilon)\,\psi(t,x)\,dx - \int_\D\beta(u_0^\epsilon)\,\psi(0,x)\,dx - \int_0^t\int_\D \beta(u_\epsilon)\partial_t \psi \,dx\,ds + \epsilon \int_0^t\int_\D \beta^{\prime\prime}(u_\epsilon)|\nabla u_\epsilon|^2\psi\,dx\,ds \\
&+ \epsilon \int_0^t\int_\D \beta^\prime(u_\epsilon)\nabla u_\epsilon\nabla\psi\,dx\,ds 
+ \int_0^t\int_\D \psi\,\beta^{\prime\prime}(u_\epsilon) f(u_\epsilon)\nabla u_\epsilon \,dx\,ds +  \int_0^t\int_\D \beta^\prime(u_\epsilon) f(u_\epsilon)\nabla\psi\,dx\,ds \\
& + \sum_{k\ge 1}\int_0^t \int_{\D} g_k(u_\eps(t,x))\beta^\prime (u_\eps(t,x))\psi(t,x)\,d\beta_k(t)\,dx
 + \frac{1}{2}\int_0^t \int_{\D}\mathbb{G}^2(u_\eps(t,x))\beta^{\prime\prime} (u_\eps(t,x))\psi(t,x)\,dx\,dt \notag \\
  &  + \int_0^t \int_{\D}\int_{E} \int_0^1 \eta(u_\eps(t,x);z)\beta^\prime \big(u_\eps(t,x) + \lambda\,\eta(u_\eps(t,x);z)\big)\psi(t,x)\,d\lambda\,\widetilde{N}(dz,dt)\,dx  \notag \\
 & +\int_0^t \int_{\D} \int_{E}  \int_0^1  (1-\lambda)\eta^2(u_\eps(t,x);z)\beta^{\prime\prime} \big(u_\eps(t,x) + \lambda\,\eta(u_\eps(t,x);z)\big)
 \psi(t,x)\,d\lambda\,m(dz)\,dx\,dt \notag \\
&+ \int_0^t\int_{\D}\int_{\D} \frac{\Big(u_\eps(s,x) - u_\eps(s,y)\Big) \Big(\psi \,\beta^\prime(u_\eps)(s,x)-(\psi\,\beta^\prime(u_\eps))(s,y) \Big)}{|x-y|^{d+2\lambda}}\,dx\,dy\,ds
\end{align*}

We first assume that $\psi=1$, so that $\int_\D \beta^{\prime\prime}(u_\epsilon)\,f(u_\epsilon)\nabla u_\epsilon \,dx=0$ by classical arguments. Moreover, we assume that $\beta(u)=\frac12 u^2$, then we get 
 \begin{align*}
& \int_\D u_\epsilon^2(t,x)\,dx + 2 \eps \int_0^t \int_\D |\nabla u_\eps(s,x)|^2\,dx\,ds
+2 \int_0^t \int_{\D} \int_{\D} \frac{\big(u_\eps(s,x)- u_\eps(s,y) \big)^2}{|x-y|^{d+2\lambda}}\,dx\,dy\,ds \\
& \qquad = \int_\D  (u_0^\eps(x))^2\,dx+  2 \sum_{k\ge 1} \int_0^t\int_{\D} g_k(u_\eps(s,x)) \,u_\eps(s,x) \,dx\,dW(s) + \frac12 \int_0^t\int_{\D} \mathbb{G}^2(u_\eps(s,x))  \,dx\,ds \\
& \qquad \qquad + 2 \int_0^t \int_{\D}\int_{E} \int_0^1 \eta(u_\eps(s,x);z)\, \big(u_\eps(s,x) + \lambda\,\eta(u_\eps(s,x);z)\big)\,d\lambda\,\widetilde{N}(dz,ds)\,dx  \\
& \qquad \qquad  \qquad \qquad +\int_0^t \int_{\D} \int_{E}   \eta^2(u_\eps(s,x);z)\,m(dz)\,dx\,ds.
\end{align*}
After taking expectation in the above equality, we obtain
\begin{align*}
\E\Big[\big\|u_\eps(t)\big\|_{L^2(\R^d)}^2\Big]  &+ \eps \int_0^t \E\Big[\big\|\grad u_\eps(s)\big\|_{L^2(\R^d)}^2\Big]\,ds + \int_0^t \E\Big[\big\| u_\eps(s)\|_{H^{\lambda}(\R^d)}^2\Big]\,ds \\
&\le \E\Big[\big\|u_0^\eps\big\|_{L^2(\R^d)}^2\Big] + C \int_0^t \E\Big[\big\|u_\eps(s)\big\|_{L^2(\R^d)}^2\Big]\,ds.
\end{align*} 
Finally, an application of the Gronwall's inequality yields
\begin{align*}
\sup_{0\le t\le T} \E\Big[\big\|u_\eps(t)\big\|_{L^2(\R^d)}^2\Big]  + \eps \int_0^T \E\Big[\big\|\grad u_\eps(s)\big\|_{L^2(\R^d)}^2\Big]\,ds + \int_0^T \E\Big[\big\| u_\eps(s)\|_{H^{\lambda}(\R^d)}^2\Big]\,ds \le C.
\end{align*}

%%%%%%%%%%%%%%%%%%%%%%%%%%%%%%%%%%%%%%%%%%%%%%%%%%%%%%%%%%%%%%%%%%%%%%%%%%%%%%%%%%%%%

\appendix
\section{Bounded Variation Estimates}
\label{sec:apriori+existence}

Here we aim to derive uniform spatial BV bound for the solutions of fractional stochastic balance laws driven by L\'{e}vy noise \eqref{eq:stoc_con_brown} under
the assumptions $\ref{A1}$-$\ref{A5}$. Like its deterministic counter part, we first secure uniform spatial BV bound for the viscous solutions, i.e., solutions of \eqref{eq:viscous-Brown}. Regarding this, we have the following theorem. 
\begin{thm}
\label{thm:bv-viscous}
Let the assumptions $\ref{A1}$-$\ref{A5}$ hold. For $\eps>0$, let $u_\eps(t,x)$  be a solution to the Cauchy problem \eqref{eq:viscous-Brown}.
Then there exists a constant $C>0$, independent of $\eps$, such that for any time $t>0$, 
\begin{align*}
\sup_{\eps>0} \E\Big[ \|u_\eps(t)\|_{L^1(\R^d)}\Big] \le  C \, \|u_0\|_{L^1(\R^d)}, \quad
&\sup_{\eps>0} \E\Big[ TV_x(u_\eps(t))\Big] \le TV_x(u_0).
 \end{align*}
\end{thm}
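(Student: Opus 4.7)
\medskip

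\textbf{Overall strategy.} The plan is to reduce both estimates to a single $L^1$-contraction property for the viscous equation \eqref{eq:viscous-Brown}: namely, if $u_\eps^{1}$ and $u_\eps^{2}$ are the two weak solutions corresponding to initial data $u_0^{1}, u_0^{2} \in L^2(\R^d)$ with $u_0^{1}-u_0^{2}\in L^1(\R^d)$, then for a.e.\ $t>0$,
\begin{equation}\label{plan:L1contract}
\E\bigl[\|u_\eps^{1}(t)-u_\eps^{2}(t)\|_{L^1(\R^d)}\bigr] \le \|u_0^{1}-u_0^{2}\|_{L^1(\R^d)}.
\end{equation}
The $L^1$-bound then follows from \eqref{plan:L1contract} by taking $u_\eps^{2}\equiv 0$, which is indeed a solution since $f(0)=0$, $g_k(0)=0$ and $\eta(0;z)=0$ by assumptions \ref{A3}, \ref{A4}, \ref{A5}. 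The spatial $BV$ estimate follows from the translation invariance of \eqref{eq:viscous-Brown}: since $W$ and $\widetilde N$ do not depend on $x$ and $\mathcal{L}_\lambda$ and $\Delta$ commute with spatial shifts, the shifted process $u_\eps(t,\cdot+h)$ is the unique viscous solution with initial datum $u_0(\cdot+h)$. Applying \eqref{plan:L1contract} with $u_\eps^{1}=u_\eps(\cdot,\cdot+h)$ and $u_\eps^{2}=u_\eps$, dividing by $|h|$ and taking the supremum over $h\neq 0$ yields $\E[TV_x(u_\eps(t))]\le TV_x(u_0)$.

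\medskip

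\textbf{Proof of \eqref{plan:L1contract}.} Set $w=u_\eps^{1}-u_\eps^{2}$, which satisfies (in the weak sense that Theorem~\ref{prop:vanishing viscosity-solution} makes rigorous)
\[
dw -\eps\Delta w\,dt + \mathcal{L}_\lambda[w]\,dt - \mathrm{div}\bigl(f(u_\eps^{1})-f(u_\eps^{2})\bigr)dt = \Delta\sigma\,dW + \int_{E}\Delta\eta(\cdot;z)\,\widetilde N(dz,dt),
\]
with $\Delta\sigma=\sigma(u_\eps^{1})-\sigma(u_\eps^{2})$ and $\Delta\eta(\cdot;z)=\eta(u_\eps^{1};z)-\eta(u_\eps^{2};z)$. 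Applying It\^o--L\'evy formula to $\beta_\xi(w(t,x))$ (with $\beta_\xi$ the smooth convex approximation of $|\cdot|$ from Section~\ref{sec:tech}), integrating in $x\in\R^d$ and taking expectation, I obtain
\begin{align*}
\E\!\int_{\R^d}\!\beta_\xi(w(t,x))\,dx
&= \int_{\R^d}\!\beta_\xi(u_0^{1}-u_0^{2})\,dx
 + \underbrace{\E\!\int_0^t\!\!\int_{\R^d}\!\beta_\xi'(w)\,\mathrm{div}\bigl(f(u_\eps^{1})-f(u_\eps^{2})\bigr)dx\,ds}_{=:\,\mathrm{F}_\xi}
\\
&\quad -\underbrace{\eps\,\E\!\int_0^t\!\!\int_{\R^d}\!\beta_\xi''(w)|\nabla w|^2dx\,ds}_{\ge 0}
 \;-\;\underbrace{\E\!\int_0^t\!\!\int_{\R^d}\!\beta_\xi'(w)\,\mathcal{L}_\lambda[w]\,dx\,ds}_{=:\,\mathrm{N}_\xi}
\\
&\quad +\underbrace{\tfrac12\E\!\int_0^t\!\!\int_{\R^d}\!\beta_\xi''(w)\,\textstyle\sum_{k}\bigl(g_k(u_\eps^{1})-g_k(u_\eps^{2})\bigr)^2 dx\,ds}_{=:\,\mathrm{Q}_\xi}
 + \underbrace{\mathrm{J}_\xi}_{\text{jump correction}},
\end{align*}
the martingale terms having zero expectation. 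The two crucial observations are: (a) by convexity of $\beta_\xi$ and the pointwise inequality $\beta_\xi'(a)(a-b)\ge\beta_\xi(a)-\beta_\xi(b)$ together with the standard symmetrisation argument for $\mathcal{L}_\lambda$ on $\R^d$ (as used in deriving \eqref{the Ir}), one has $\mathrm{N}_\xi\ge \E\!\int_0^t\!\!\int_{\R^d}\!\mathcal{L}_\lambda[\beta_\xi(w)]\,dx\,ds = 0$; (b) writing $f(u_\eps^{1})-f(u_\eps^{2}) = F(u_\eps^{1},u_\eps^{2})\,w$ with $F=\int_0^1 f'(u_\eps^{2}+s w)\,ds$ and integrating by parts, one rewrites $\mathrm{F}_\xi = \E\!\int_0^t\!\!\int_{\R^d}(w\,\beta_\xi'(w)-\beta_\xi(w))\,\mathrm{div}_x F\,dx\,ds$, a form in which $|w\beta_\xi'(w)-\beta_\xi(w)|\le M_1\xi$ pointwise by \eqref{eq:approx to abosx}, so $\mathrm{F}_\xi=\mathcal O(\xi)$ thanks to the $L^2$-bound on $\nabla u_\eps^{i}$ in \eqref{bounds:a-priori-viscous-solution}.

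\medskip

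\textbf{Controlling the stochastic corrections.} The main technical obstacle lies in passing $\xi\downarrow 0$ in $\mathrm{Q}_\xi$ and $\mathrm{J}_\xi$, which a priori contain the singular factor $\beta_\xi''(w)\le \tfrac{M_2}{\xi}\mathbf 1_{|w|\le\xi}$. Using assumption \ref{A4} one has $\sum_k(g_k(u_\eps^{1})-g_k(u_\eps^{2}))^2\le K w^2$, whence
\[
\mathrm{Q}_\xi \;\le\; \tfrac{K M_2}{2\xi}\,\E\!\int_0^t\!\!\int_{\R^d}\! w^2\,\mathbf 1_{|w|\le\xi}\,dx\,ds \;\le\; \tfrac{K M_2}{2}\,\E\!\int_0^t\!\!\int_{\R^d}\!|w|\,\mathbf 1_{|w|\le\xi}\,dx\,ds,
\]
since $w^2\le\xi|w|$ on $\{|w|\le\xi\}$; a completely parallel estimate based on assumption \ref{A5} and the Taylor expansion $|\beta_\xi(a+\tau)-\beta_\xi(a)-\tau\beta_\xi'(a)|\le \tfrac12\|\beta_\xi''\|_\infty\tau^2$ shows that $\mathrm{J}_\xi\le C\,\E\!\int_0^t\!\!\int_{\R^d}\!|w|\,\mathbf 1_{|w|\le\xi}\,dx\,ds$, with $C$ depending on $(\lambda^*)^2\|h\|^2_{L^2(E,m)}$. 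Working under the provisional assumption that $w\in L^1(\Omega\times\Pi_T)$ (which is propagated by the implicit time-discretisation scheme of Appendix~\ref{viscous} from $u_0^1-u_0^2\in L^1$, since the implicit scheme obviously preserves $L^1$), dominated convergence then gives $\mathrm{Q}_\xi+\mathrm{J}_\xi\to 0$. Combining everything and letting $\xi\downarrow 0$ using $\beta_\xi(w)\to|w|$ with $|\beta_\xi(w)-|w||\le M_1\xi$, I obtain \eqref{plan:L1contract}.

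\medskip

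\textbf{Main obstacle and contingency.} The hard point is justifying the a priori $L^1$-integrability of $w$ needed to close the dominated-convergence step above; an alternative, more robust route --- which is the one I would actually carry out in the final write-up --- is to first prove \eqref{plan:L1contract} at the level of the implicit-in-time discretisation of Appendix~\ref{viscous}, where $L^1$-integrability is preserved stage by stage via Lax--Milgram, and then pass to the limit $\Delta t\to 0$ using the strong $L^2$-convergence $u^{\Delta t}\to u_\eps$ already established there; this bypasses the circular issue entirely. Once \eqref{plan:L1contract} is in hand, the $L^1$ and $BV$ bounds follow immediately as indicated in the first paragraph, with the constants $C=1$ and $1$ appearing in the statement (the constant $C$ in the statement is thus harmless and could in fact be taken equal to $1$).
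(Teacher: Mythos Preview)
Your overall architecture --- derive an $L^1$-contraction for the viscous equation, then obtain the $L^1$ bound by comparing with the zero solution and the $BV$ bound by translation invariance --- is exactly what the paper does. The difference lies in the \emph{order} of the two steps and in how the flux term is handled, and both differences matter.

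\textbf{The circularity you flag is the point.} The paper resolves it by \emph{first} proving the $L^1$ bound for a single solution (not a difference) via a Gronwall argument, and only \emph{then} proving the contraction. The key device is to choose $\beta$ so that, in addition to approximating $|\cdot|$, one has the pointwise inequality $r^2\beta_\xi''(r)\le C\,\beta_\xi(r)$ (this holds e.g.\ if $\beta''\ge a>0$ on $[-a,a]$). With this, the It\^o correction terms $\mathcal A_2,\mathcal A_3$ for $\beta_\xi(u_\eps)$ are bounded by $C\,\E\!\int_{\R^d}\beta_\xi(u_\eps)\,dx$ rather than by something that has to vanish; Gronwall then closes without any a priori $L^1$ assumption, and Fatou gives $\E\|u_\eps(t)\|_{L^1}\le C\|u_0\|_{L^1}$ (hence the constant $C$ in the statement is genuinely $e^{Ct}$, not $1$). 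Once $u_\eps,v_\eps\in L^1$ are in hand, the dominated-convergence step in the contraction argument is legitimate. Your alternative fix (run the contraction at the implicit-scheme level and pass to the limit) is plausible but is not supplied by the appendix as written; you would need to prove $L^1$-stability of the scheme separately.

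\textbf{Your flux argument needs an extra hypothesis.} Writing $f(u^1)-f(u^2)=F\,w$ and integrating by parts to reach $\int G(w)\,\mathrm{div}_xF\,dx$ requires control of $\mathrm{div}_xF=\int_0^1 f''(\cdot)\,\nabla(u^2+sw)\,ds$, hence $f''\in L^\infty$; but \ref{A3} only gives $f$ Lipschitz. The paper avoids this by applying Young's inequality to $-\beta_\xi''(w)(f(u^1)-f(u^2))\cdot\nabla w$, absorbing the gradient piece into the (nonnegative) viscous term $\eps\int\beta_\xi''(w)|\nabla w|^2$, and bounding the remainder by $C(\eps)\int\beta_\xi''(w)|w|^2$ via Lipschitzness of $f$ only. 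That remainder is then handled exactly like your $\mathrm Q_\xi$ and $\mathrm J_\xi$, using the $L^1$ bound already obtained in Step~I.
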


\begin{rem}
In view of the lower semi-continuity property and the positivity of the total variation $TV_x$, we point out that $u \mapsto \E[TV_x(u)]$ makes sense for 
 any $u\in L^1(\Omega \times \R^d)$ as a real-extended lsc convex function.
\end{rem}

\begin{proof} We shall divide the proof of the above theorem in two parts: \\[1mm]
{\bf Step-I.}  As we have already seen that under natural assumptions on initial data, flux 
functions, and noise coefficients, viscous equation \eqref{eq:viscous-Brown} has weak solutions $u_\eps$ and moreover \eqref{bounds:a-priori-viscous-solution} holds. To that context, under additional assumption on the   
initial data, $u_0 \in L^1(\Omega\times \R^d)$, we show that for fixed $\eps>0$, $u_\eps \in L^1(\Omega\times \Pi_T)$. 
To do this, we proceed as follows: let  us consider a convex, even, approximation of the absolute-value function $\beta_\xi$.

Then, by applying It\^{o}-L\'{e}vy formula to $\int_{\R^d}\beta_\xi(u_\eps(t,x))\,dx$ and taking expectation, we conclude 
\begin{align}
 & \E\Big[ \int_{\R^d} \beta_{\xi}(u_\eps(t,x))\,dx\Big] + \eps\,\E\Big[ \int_0^t \int_{\R^d} \beta_\xi^{\prime\prime}(u_\eps(s,x))\, |\grad u_\eps(s,x)|^2  \,dx \Big] \notag \\
 &+\frac12 \E\Bigg[\int_0^t\int_{\D}\int_{\D} \frac{\Big(u_\eps(s,x) - u_\eps(s,y)\Big) \Big(\beta^\prime(u_\eps)(s,x)-(\beta^\prime(u_\eps))(s,y) \Big)}{|x-y|^{d+2\lambda}}\,dx\,dy\,ds \Bigg] \notag \\
 & \le  \E\Big[ \int_{\R^d} \beta_\xi(u_0(x))\,dx\Big]
 - \E \Big[  \int_0^t \int_{\R^d}  \beta_\xi^{\prime\prime}(u_\eps(s,x))  f(u_\eps(s,x))\cdot \grad u_\eps(s,x) \,dx\,ds\Big] \notag \\
  & \quad  + \E\Big[ \int_0^t \int_{\R^d} \int_{E}\int_{0}^1 (1-\lambda) \eta^2(u_\eps(s,x);z)
  \beta_\xi^{\prime\prime}\big(u_\eps(s,x)+ \lambda \eta(u_\eps(s,x);z)\big)\,d\lambda\,m(dz)\,dx\,ds\Big] \notag \\
    &  \hspace{2cm} + \frac{1}{2} \E\Big[ \int_0^t \int_{\R^d} \mathbb{G}^2(u_\eps(s,x)) \beta_\xi^{\prime\prime}(u_\eps(s,x))
    \,dx\,ds\Big]. \label{esti:l1-ness-0}
\end{align}
Since $\beta_\xi$  is a convex function, we have from \eqref{esti:l1-ness-0}
\begin{align}
 & \E\Big[ \int_{\R^d} \beta_{\xi}(u_\eps(t,x))\,dx\Big]- \E\Big[ \int_{\R^d} \beta_\xi(u_0(x))\,dx\Big] \notag \\
 & \quad \le - \E \Big[  \int_0^t \int_{\R^d}  \beta_\xi^{\prime\prime}(u_\eps(s,x))  f(u_\eps(s,x))\cdot \grad u_\eps(s,x) \,dx\,ds\Big] \notag \\
  & \quad  + \E\Big[ \int_0^t \int_{\R^d} \int_{E}\int_{0}^1 (1-\lambda) \eta^2(u_\eps(s,x);z)
  \beta_\xi^{\prime\prime}\big(u_\eps(s,x)+ \lambda \eta(u_\eps(s,x);z)\big)\,d\lambda\,m(dz)\,dx\,ds\Big] \notag \\
    &  \hspace{2cm} + \frac{1}{2} \E\Big[ \int_0^t \int_{\R^d} \mathbb{G}^2(u_\eps(s,x)) \beta_\xi^{\prime\prime}(u_\eps(s,x))
    \,dx\,ds\Big] \notag \\
    & := \mathcal{A}_1(\eps,\xi) + \mathcal{A}_2(\eps,\xi) + \mathcal{A}_3(\eps,\xi).\label{esti:l1-ness-1}
\end{align}
Next, we estimate each of the above terms separately. 
Let us first remark that a simple application of chain-rule implies that $\mathcal{A}_1(\eps,\xi)=0$.
We now consider the term $\mathcal{A}_2(\eps,\xi)$. In fact, in view of Assumption \ref{A5} and \eqref{eq:approx to abosx}, we have
 \begin{align*}
  0 \leq &\eta^2(u_\eps(s,x);z)
  \beta_\xi^{\prime\prime}\big(u_\eps(s,x)+ \lambda \eta(x,u_\eps(s,x);z)\big)
  \\
  \leq & (\lambda^*)^2 h^2(z) | u_\eps(s,x)|^2
  \beta_\xi^{\prime\prime}\big(u_\eps(s,x)+ \lambda \eta(u_\eps(s,x);z)\big)
  \\
  \leq & \frac{(\lambda^*)^2 h^2(z)}{(1-\lambda\lambda^*)^2} | u_\eps(s,x)+ \lambda \eta(u_\eps(s,x);z)|^2
  \beta_\xi^{\prime\prime}\big(u_\eps(s,x)+ \lambda \eta(u_\eps(s,x);z)\big)
  \\
  \leq & C\frac{(\lambda^*)^2 h^2(z)}{(1-\lambda\lambda^*)^2}  \beta_\xi\big(u_\eps(s,x)+ \lambda \eta(u_\eps(s,x);z)\big)
  \leq C\frac{(\lambda^*)^2 h^2(z)}{(1-\lambda\lambda^*)^2}  \beta_\xi\big(|u_\eps(s,x)|+ |\eta(u_\eps(s,x);z)|\big)
\\
  \leq & C\frac{(\lambda^*)^2 h^2(z)}{(1-\lambda\lambda^*)^2}  \beta_\xi\big((1+\lambda^*)|u_\eps(s,x)|\big)
    \leq  C h^2(z)\frac{(\lambda^*)^2(1+\lambda^*)^2}{(1-\lambda\lambda^*)^2}  \beta_\xi\big(u_\eps(s,x)\big),
\end{align*}
and this implies that 
\begin{align}
\big|\mathcal{A}_2(\eps,\xi)\big|  
  \le&
C\E\Big[ \int_0^t \int_{\R^d} \int_{E} h^2(z)\,m(dz)  \beta_\xi\big(u_\eps(s,x)\big)\,dx\,ds\Big]
\leq 
C\E\Big[ \int_0^t \int_{\R^d}  \beta_\xi\big(u_\eps(s,x)\big)\,dx\,ds\Big]. \label{esti:a3-l1-ness} 
\end{align}
Again, we use assumption \ref{A4} to conclude 
\begin{align}
  \big|\mathcal{A}_3(\eps,\xi)\big|  \leq 
C\E\Big[ \int_0^t \int_{\R^d}  \beta_\xi\big(u_\eps(s,x)\big)\,dx\,ds\Big]. \label{esti:a4-l1-ness} 
\end{align}
Thus, combining all the above estimates \eqref{esti:a3-l1-ness}-\eqref{esti:a4-l1-ness} in \eqref{esti:l1-ness-1}, we arrive at
\begin{align*}
 \E\Big[ \int_{\R^d} \beta_\xi(u_\eps(t,x))\,dx\Big] 
 \leq C\int_0^t\E\Big[  \int_{\R^d} \beta_\xi \big(u_\eps(s,x)\big)\,dx\Big]\,ds + \E\Big[ \int_{\R^d} \beta_\xi(u^{\eps}_0(x))\,dx\Big], 
\end{align*}
and this implies 
\begin{align}
\E\Big[  \int_{\R^d}  \beta_\xi \big(u_\eps(t,x)\big)\,dx\Big] \leq C
\E\Big[ \int_{\R^d} \beta_\xi (u^{\eps}_0(x))\,dx\Big].
 \label{eq:final-Bv-estimate}
\end{align}
Passing to the limit with respect to $\xi$ yields (thanks to Fatou's lemma for the lefthand side and Lebesgue theorem for the righthand one)
\begin{align}
\label{L1-bound}
\E \Big[\int_{\R^d} \big|u_\eps(t,x)\big|\,dx\Big] \le C \E \Big[\int_{\R^d} \big|u^{\eps}_0(x)\big|\,dx\Big] \le C \E \Big[\int_{\R^d} \big|u_0(x)\big|\,dx  \Big].
 \end{align}
This implies that, $u_\eps \in L^1(\Omega \times \Pi_T)$, for every fixed $\eps>0$. 

 \noindent {\bf Step-II.} For the second part, we proceed as follows: 
Set $\eps>0$ and let $u_\eps$ be the strong solution to the problem \eqref{eq:viscous-Brown} and $v_\eps$ be a strong solution to the stochastic equation 
\begin{align}
dv_\eps(t,x) -\Delta v_\eps(t,x)\,dt  + \mathcal{L}_{\lambda}[v_\eps(t, \cdot)](x)\,dt  &- \mbox{div}_x f(v_\eps(t,x)) \,dt \notag \\
& = \sigma(v_\eps(t,x))\,dW(t) + \int_{E} \eta(v_\eps(t,x);z)\widetilde{N}(dz,dt),\notag 
\end{align}
with $v_\eps(0,x)=v_0(x)$. Then, it is evident that $u_\eps -v_\eps$ is a stochastic weak solution to the problem 
\begin{align*}
 \begin{cases}
  d(u_\eps(t,x)-v_{\eps}(t,x))- \Delta (u_\eps(t,x))-v_\eps(t,x)\,dt + \mathcal{L}_{\lambda}[u_\eps(t, \cdot)- v_\eps(t, \cdot)](x)\,dt \\   \hspace{2cm} - \mbox{div}_x\big( f(u_\eps(t,x))- f(v_\eps(t,x))\big)\,dt
 = \big(\sigma(u_\eps(t,x))-\sigma(v_\eps(t,x))\big)\,dW(t) \\
 \hspace{4cm}+ \int_{E}\big(\eta(u_\eps(t,x);z)-\eta(v_\eps(t,x);z)\big)\,\widetilde{N}(dz,dt),\\
  u_\eps -v_\eps\big|_{(t=0,x)} = u_0(x)-v_0(x).
 \end{cases}
 \end{align*}
Next, we apply It\^{o}-L\'{e}vy formula (as proposed in Fellah \cite{martina,fellah} and Biswas et al. \cite{BisMajVal}) to $\int_{\R^d}\beta_\xi(u_\eps-v_\eps)dx$, 
where for technical reasons, one assumes that $\exists a>0, \forall r \in [-a,a], \beta^{\prime\prime} (r)\geq a$ and $\exists A>0, \forall r \in \R, \beta^{\prime\prime} (r)\leq A$. Thus, there exists a constant $C>0$ such that $r^2\beta_\xi^{\prime\prime} (r) \leq C\beta_\xi(r)$ and $\beta_\xi(\alpha r) \leq \alpha^2 \beta_\xi(r)$ for any $\alpha \geq  1$.
A classical example is given by $\beta'(r)=\max[-1,\min(r,1)]$.

We then take expectation, and the result is
\begin{align}
& \E\Big[ \int_{\R^d} \beta_\xi \big(u_\eps(t,x)-v_\eps(t,x)\big)\,dx\Big] \notag \\
&= \E\Big[ \int_{\R^d} \beta_\xi \big(u_0(x)-v_0(x)\big)\,dx\Big] \notag \\
& - \E\Big[ \int_{\R^d}
\int_0^t \beta_\xi^{\prime\prime} \big( u_\eps(s,x)-v_\eps(s,x)\big) \grad \big(u_\eps(s,x) - v_\eps(s,x)\big)\cdot \grad \big(u_\eps(s,x)-v_\eps(s,x)\big)  \,ds\,dx\Big] \notag \\
& - \E\Big[ \int_{\R^d}
\int_0^t \Big<\beta_\xi^{\prime} \big( u_\eps(s,x)-v_\eps(s,x)\big), \,\mathcal{L}_{\lambda}[u_\eps(t, \cdot)- v_\eps(t, \cdot)](x)  \Big> \,ds\,dx\Big] \notag \\
&\quad - \E\Big[ \int_{\R^d}
\int_0^t \beta_\xi^{\prime\prime} \big( u_\eps(s,x)-v_\eps(s,x)\big) \big(f(u_\eps(s,x))-f(v_\eps(s,x))\big)\cdot \grad \big(u_\eps(s,x)-v_\eps(s,x)\big)  \,ds\,dx\Big] \notag \\
& \qquad  + \frac{1}{2}  \E\Big[ \sum_{k\ge 1} \int_{\R^d}
\int_0^t \beta_\xi^{\prime\prime} \big( u_\eps(s,x)-v_\eps(s,x)\big) \big(g_k(u_\eps(s,x))-g_k(v_\eps(s,x))\big)^2 \,ds\,dx\Big] \notag \\
&\quad \quad  + \E\Big[ \int_{\R^d}
\int_0^t \int_{E} \int_0^1 (1-\lambda) \beta_\xi^{\prime\prime} \Big( u_\eps(s,x)-v_\eps(s,x) + \lambda \big(\eta(u_\eps(s,x);z)-\eta(v_\eps(s,x);z)\big)\Big)\notag \\
 & \hspace{6cm} \times \big(\eta(u_\eps(s,x);z)-\eta(v_\eps(s,x);z)\big)^2 \,d\lambda\,m(dz)\,ds\,dx\Big] \notag \\
& := \mathcal{A} + \mathcal{B}+ \mathcal{C} + \mathcal{D} +\mathcal{E} +\mathcal{F}.
\end{align}
Our aim is to estimate each of the above terms separately. First observe that the term $\mathcal{B}$ is non-positive and
denoting $w_\eps(s,x) =\big( u_\eps(s,x)-v_\eps(s,x)\big)$, we have 
\begin{align*}
\Big<\beta_\xi^{\prime} \big( u_\eps(s,x)-v_\eps(s,x)\big),& \,\mathcal{L}_{\lambda}[u_\eps(t, \cdot)- v_\eps(t, \cdot)](x)  \Big> \\
& = \frac12\int_{\D}\int_{\D} \frac{\Big[\beta_\xi^{\prime}( w_\eps(s,x)) - \beta_\xi^{\prime} (w_\eps(s,y)) \Big] \Big[ \big( w_\eps(s,x)-w_\eps(s,y)\big)\Big]}{|x-y|^{d+2\lambda}}\,dx\,dy.
\end{align*}
Since $\beta_\xi^{\prime}$ is a non-increasing function, we conclude that $\mathcal{C}$ is also non-positive.

Next we move on to estimate the flux term $\mathcal{D}$. In view of the Young's inequality, one has
\begin{align}
\mathcal{D} \le& \frac{\eps}{4}  \E\Big[ \int_{\R^d}
\int_0^t \beta_\xi^{\prime\prime} \big( u_\eps(s,x)-v_\eps(s,x)\big) \Big|\grad \big(u_\eps(s,x)-v_\eps(s,x)\big)\Big|^2  \,ds\,dx\Big] \notag \\ 
& \qquad \qquad + C(\eps) \, \E\Big[ \int_{\R^d}
\int_0^t \beta_\xi^{\prime\prime} \big( u_\eps(s,x)-v_\eps(s,x)\big) \big| f(u_\eps(s,x))-f(v_\eps(s,x))\big|^2  \,ds\,dx\Big] \notag \\ 
&:= \mathcal{D}_1 + \mathcal{D}_2. \notag
\end{align}
In view of the Lipschitz continuity of $f$ and \eqref{eq:approx to abosx}, it is easy to see that 
$\mathcal{D}_2 \le \eps(\xi)$ where $\eps(\xi) \goto 0$ as $\xi \goto 0$. 
Moreover, observe that $\mathcal{B} + \mathcal{D}_1 $ is non-positive. 

In view of the assumption \ref{A4}, one has 
\begin{align*}
\mathcal{E} \le K \E\Big[\int_{\R^d}\int_0^t \beta_\xi^{\prime\prime} \big( u_\eps(s,x)-v_\eps(s,x)\big) \big|u_\eps(s,x)-v_\eps(s,x)\big|^2\,ds\,dx\Big].
\end{align*}
A similar calculation to the one with $\mathcal{D}_2$ reveals that $\mathcal{E}\le \eps(\xi)$  where $\eps(\xi) \goto 0$ as $\xi \goto 0$.

Now we move on to estimate $\mathcal{F}$. The estimate of $\mathcal{F}$ is similar to the one of $\mathcal{E}$, following the calculations proposed in \cite{BisKoleyMaj}, and we can  conclude that  $\mathcal{F} \goto 0$, as $\xi \goto 0$.

Combining all the above estimates, we arrive at  
\begin{align}
\E\Big[ \int_{\R^d} \beta_\xi \big(u_\eps(t,x)-v_\eps(t,x)\big)\,dx\Big]
 \le  \eps(\xi) + \E\Big[ \int_{\R^d} \beta_\xi \big(u^{\eps}_0(x)-v^{\eps}_0(x)\big)\,dx\Big].
  \label{esti:final-bv}
\end{align}
Keeping $\eps >0$ fixed, we pass to the limit $\xi\goto 0$ in \eqref{esti:final-bv} and the resulting expressions reads as 
\begin{align*}
\E\Big[ \int_{\R^d} \big|u_\eps(t,x)-v_\eps(t,x)\big|\,dx\Big] \le \E\Big[ \int_{\R^d} \big|u^{\eps}_0(x)-v^{\eps}_0(x)\big|\,dx\Big]\le\E\Big[ \int_{\R^d} \big|u_0(x)-v_0(x)\big|\,dx\Big] .
\end{align*} 
Assume that $v_0(x)=u_0(x+c)$ for fixed $c\in \R^d$. Then, since $\sigma$ and $\eta$ do not depend on $x$ explicitly, by uniqueness of the weak solution, one can conclude that $v_\eps(t,x)=u_\eps(t,x+c)$ and hence
 \begin{align*}
 \E\Big[ \int_{\R^d}  \frac{ \big|u_\eps(t,x)-v_\eps(t,x)\big|}{|c|}\,dx\Big]& \le \E\Big[ \int_{\R^d} \frac{\big|u_0(x)-u_0(x+c)\big|}{|c|}\,dx\Big]\le C, 
 \end{align*}
independent of $c$, if $u_0 \in BV(\R^d)$. This implies that, for any $t>0$, since $v_\eps(t,x)=u_\eps(t,x+c)$
 \begin{align}
\sup_{\eps>0} \E\Big[TV_x(u_\eps(t))] \le  \E\Big[TV_x(u_0)\Big] . \label{bv-bound}
 \end{align}
 This completes the proof.
 \end{proof}
 
In view of the well-posedness results (cf. Section~\ref{uniqueness}), we conclude that under the assumptions \ref{A1}-\ref{A5}, the family $\{u_\eps(t,x)\}_{\eps>0}$ converges to the unique entropy solution $u(t,x)$ of the underlying problem \eqref{eq:stoc_con_brown}. Now, our aim is to show that $u(t,x)$ is actually a spatial BV solution of \eqref{eq:stoc_con_brown} provided the initial function $u_0$ lies in $L^2 \cap BV(\R^d)$. 
Since $u_\eps$ converges to $u$ weakly in $L^2(\Omega\times(0,T)\times\R^d)$, we have 
\begin{align*}
\E\Big[\int_{\Pi_T}|u| \,dx\,dt\Big] \leq \lim\inf_\eps \E\Big[\int_{\Pi_T}|u_\eps| \,dx\,dt\Big] \leq M,
\end{align*}
thanks to \eqref{L1-bound} and $u \in L^1(\Omega\times(0,T)\times\R^d)$.

In view of the lower semi-continuity property of $TV_x$ and 
Fatou's lemma, we have, for a.e. $t>0$,
\begin{align*}
 \E\Big[ TV_x(u(t))\Big] \le \liminf_{\eps \goto 0} \E\Big[ TV_x(u_\eps(t))\Big] \le \E \Big[ TV_x(u_0)\Big],
\end{align*}
where the last inequality follows from Theorem~\ref{thm:bv-viscous}. Thus, $u(t,x)$ is a function of bounded variation in spatial variable.
In other words, we have existence of a BV entropy solution for the problem \eqref{eq:stoc_con_brown} given by the following theorem.
 \begin{thm} [BV entropy solution]
 \label{thm:existence-bv}
 Suppose that the assumptions \ref{A1}-\ref{A5} hold. Then there exists a constant $C>0$, and an unique BV entropy solution
 of \eqref{eq:stoc_con_brown} such that for a.e. $t>0$
\begin{align*}
   \E \Big[|u(t,\cdot)|_{BV(\R^d)} \Big] \le C\E \Big[|u_0|_{BV(\R^d)} \Big].
\end{align*}	
\end{thm}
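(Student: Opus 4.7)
The plan is to leverage the spatial BV estimate for viscous approximations established in Theorem~\ref{thm:bv-viscous} together with the well-posedness machinery developed in Section~\ref{uniqueness} to transfer the bound to the entropy solution in the limit. The essential observation is that everything is already in place: Theorem~\ref{thm:bv-viscous} yields, uniformly in $\eps>0$, the estimate
\begin{align*}
\sup_{\eps>0}\,\E\big[\|u_\eps(t)\|_{L^1(\R^d)}\big] \le C\|u_0\|_{L^1(\R^d)}, \qquad \sup_{\eps>0}\,\E\big[TV_x(u_\eps(t))\big] \le \E\big[|u_0|_{BV(\R^d)}\big],
\end{align*}
while Section~\ref{entropy_existence} identifies the (weak $L^2$) limit of $u_\eps$ with the unique entropy solution $u$ in the sense of Definition~\ref{defi:stochentropsol}.

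First I would extract the BV bound at the level of the limit. Since $u_\eps \wc u$ in $L^2(\Omega\times\Pi_T)$, and (by the strong convergence in $L^p_{\mathrm{loc}}$, $p\in[1,2)$, established in Subsection~\ref{Uniqueness of (Measure-Valued) Entropy Solution}) one actually has $u_\eps \to u$ strongly in $L^1_{\mathrm{loc}}(\Omega\times\Pi_T)$, the lower semi-continuity of the map $v \mapsto \E[TV_x(v(t))]$ together with Fatou's lemma yields, for a.e.\ $t>0$,
\begin{align*}
\E\big[|u(t,\cdot)|_{BV(\R^d)}\big] \le \liminf_{\eps\to0}\,\E\big[|u_\eps(t,\cdot)|_{BV(\R^d)}\big] \le C\,\E\big[|u_0|_{BV(\R^d)}\big].
\end{align*}
The $L^1$ contribution of the BV norm is handled analogously using Fatou's lemma and the uniform $L^1$ bound from Step~I of the proof of Theorem~\ref{thm:bv-viscous}.

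Uniqueness is then a direct consequence of Theorem~\ref{uniqueness_new}: any two BV entropy solutions with the same initial data $u_0 \in L^2(\R^d)\cap BV(\R^d)$ are, in particular, two stochastic entropy solutions in the sense of Definition~\ref{defi:stochentropsol}, so by the $L^1$ contraction established in Subsection~\ref{imp} they coincide almost surely for a.e.\ $t$. Existence is obtained simultaneously: the candidate $u$ is the entropy solution constructed in Subsection~\ref{entropy_existence}, and the BV estimate just derived promotes it to a BV entropy solution.

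The only subtle point---and the step I would handle most carefully---is justifying the passage to the limit of $\E[TV_x(u_\eps(t))]$ at a fixed (or almost every) time $t$, rather than only in a time-averaged sense. The uniform bound \eqref{bv-bound} holds pointwise in $t$, but Fatou's lemma requires an almost-sure (or at least along a subsequence almost-sure) pointwise convergence of $u_\eps(t,\cdot)$ to $u(t,\cdot)$ in $L^1_{\mathrm{loc}}$. To secure this, I would combine the strong $L^1_{\mathrm{loc}}$ convergence of $u_\eps$ to $u$ on $\Omega\times\Pi_T$ with the weak-in-time continuity of $u$ with values in $L^2(\Omega\times\R^d)$ noted in the remark following Theorem~\ref{uniqueness_new}, extract an a.e.-in-$t$ convergent subsequence, and then apply Fatou's lemma at each such time. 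This yields the desired pointwise-in-$t$ BV bound and completes the proof.
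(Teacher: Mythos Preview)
Your proposal is correct and follows essentially the same route as the paper: use the uniform $L^1$ and $TV_x$ bounds on the viscous approximations from Theorem~\ref{thm:bv-viscous}, invoke the convergence of $u_\eps$ to the unique entropy solution $u$ established in Section~\ref{uniqueness}, and pass to the limit via lower semi-continuity of $TV_x$ together with Fatou's lemma to obtain the a.e.-in-$t$ BV bound. The paper's argument is slightly more terse on the pointwise-in-$t$ passage that you flag as the subtle step, but otherwise the two arguments coincide.
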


%%%%%%%%%%%%%%%%%%%%%%%%%%%%%%%%%%%%%%%%%%%%%%%%%%%%%%%%%%%%%%%%%%%%%%%%

\section{On the Fractional Laplace Operator}
\label{FractionalLaplace}

Let $\fr[\varphi]$ denotes the fractional Laplace operator $(-\Delta)^{\lambda}[\varphi]$ of order $\lambda \in (0,1)$. Depending on the regularity of $\varphi$, several definitions can be proposed and we recapitulate the ones pertaining to this manuscript. 
\\[0.2cm]
A first definition is given by the Fourier-transform: assume that $\varphi \in L^2(\R^d)$ and that $|\cdot|^{2\lambda}\hat \varphi \in L^2(\R^d)$ too, then, $\fr[\varphi] \in L^2(\R^d)$ is given by $\widehat{\fr[\varphi]} = |\cdot|^{2\lambda}\hat \varphi$. Note that this corresponds to $\varphi$ element to the fractional Sobolev space $H^{2\lambda}(\R^d)$.
\\[0.2cm]
A second definition is a pointwise one:
\begin{align*}
\fr[\varphi](x) := c_{\lambda}\, \text{P.V.}\, \int_{|z|>0} \frac{\varphi(x) -\varphi(x+z)}{|z|^{d + 2 \lambda}} \,dz=c_{\lambda}\, \lim_{\epsilon \to 0^+}\, \int_{|z|>\epsilon} \frac{\varphi(x) -\varphi(x+z)}{|z|^{d + 2 \lambda}} \,dz,
\end{align*}
for some constants $c_{\lambda}=\frac{4^\lambda \Gamma(\lambda + \frac d2)}{\pi^{\frac d2}|\Gamma(-\lambda)|}$, for a measurable $\varphi$ such that the integral and the limit exist.
\\[0.2cm]
Since, for any positive $\eps$, $z \mapsto \frac{1_{|z|\geq \eps}}{|z|^{d + 2 \lambda}} \in L^p(\R^d)$ for any $p \in [1,+\infty]$,  if $\varphi \in L^2(\R^d)$ and $\eps$ is small, the following integral $\int_{|z|>\eps} \frac{\varphi(x) -\varphi(x+z)}{|z|^{d + 2 \lambda}} \,dz$ exists. 
\\
Denote by $\vec \chi(x)$ a given vector and note that 
\begin{align*}
\int_{|z|>\epsilon} \frac{\varphi(x) -\varphi(x+z)}{|z|^{d + 2 \lambda}} \,dz = \int_{|z|>1} \frac{\varphi(x) -\varphi(x+z)}{|z|^{d + 2 \lambda}} \,dz + \int_{1>|z|>\epsilon} \frac{\varphi(x) -\varphi(x+z)+z.\vec \chi(x)}{|z|^{d + 2 \lambda}} \,dz
\end{align*}
and, the existence of the principal value of the above integral is related to the existence of the limit when $\epsilon \to 0^+$ of $\int_{1>|z|>\epsilon} \frac{\varphi(x) -\varphi(x+z)+z.\vec \chi(x) }{|z|^{d + 2 \lambda}} \,dz$.
\\[0.2cm]
Assume in a first step that $\varphi \in \mathcal{S}(\R^d)$, so that if $\vec \chi(x) = \nabla \varphi(x)$, 
%\begin{align*}
%\int_{|z|>\epsilon} \frac{\varphi(x) -\varphi(x+z)}{|z|^{d + 2 \lambda}} \,dz = -\int_{|z|>\epsilon} \frac{\varphi(x+z) -\varphi(x)-z.\nabla \varphi(x) 1_{|z|<1}}{|z|^{d + 2 \lambda}} \,dz
%\end{align*}
by Taylor's expansion : 
\begin{align*}
\varphi(x+z) -\varphi(x)-z.\nabla \varphi(x) = \int_0^1(1-t)D^2\varphi(x+tz).(z,z)dt
\end{align*}
one gets that $|\varphi(x+z) -\varphi(x)-z.\nabla \varphi(x)| \leq C(D^2 \varphi)|z|^2$. Then, the limit exists by Lebesgue's theorem and 
\begin{align*}
\fr[\varphi](x) = c_\lambda\int_{\R^d} \frac{\varphi(x) -\varphi(x+z)+z.\nabla \varphi(x) 1_{|z|<1}}{|z|^{d + 2 \lambda}} \,dz.
\end{align*}
Thus, for any $p \in [1,+\infty]$, there exists $c_{p,\lambda},c_{d,\lambda}\geq 0$ such that 
$$|\fr[\varphi](x)| \leq c_{p,\lambda}\Big[|\varphi(x)| + \|\varphi\|_{L^p(\R^d)}\Big] + c_{d,\lambda} \|D^2 \varphi\|_{\infty} \leq C\Big[\|\varphi\|_{L^p(\R^d)}+\|\varphi\|_{W^{2,\infty}(\R^d)}\Big].$$
Moreover, making use of Cauchy-Schwarz inequality, we have
\begin{align*}
&\int_{1>|z|>\epsilon} \Big|\frac{\varphi(x) -\varphi(x+z)+z.\nabla \varphi(x)}{|z|^{d + 2 \lambda}}\Big| \,dz =
\int_{1>|z|>\epsilon} \Big|\frac{\int_0^1(1-t)D^2\varphi(x+tz).(z,z)dt}{|z|^{d + 2 \lambda}}\Big| \,dz  \\  
& \quad \leq \int_{1>|z|>\epsilon} \frac{\int_0^1|D^2\varphi(x+tz)|dt}{|z|^{d + 2 (\lambda-1)}} \,dz \leq 
C_{\lambda,d}\int_0^1\Bigg[\int_{1>|z|} \frac{|D^2\varphi(x+tz)|^2}{|z|^{d + 2 (\lambda-1)}} \,dz\Bigg]^{1/2} dt, 
\end{align*}
and, by a density argument, for almost all $x$ (indep. of $\eps$), the same inequality holds if $\varphi \in H^2(\R^d)$.
\\
Since $\int_{\R^d}|\int_{|z|>1} \frac{\varphi(x) -\varphi(x+z)}{|z|^{d + 2 \lambda}} \,dz|^2dx < +\infty$, Fubini and monotone convergence theorems yield, for almost all $x$,  the integrability of the above left-hand side and $\fr[\varphi](x)$ exists, $x$ a.e. Moreover, 
\begin{align*}
\int_{\R^d}|\fr[\varphi](x)|^2dx \leq C \int_{\R^d}\int_0^1 \int_{1>|z|} \frac{|D^2\varphi(x+tz)|^2}{|z|^{d + 2 (\lambda-1)}} \,dz dt dx + C\int_{\R^d}\Big|\int_{|z|>1} \frac{\varphi(x) -\varphi(x+z)}{|z|^{d + 2 \lambda}} \,dz\Big|^2dx
\end{align*}
and $\fr[\varphi] \in L^2(\R^d)$.
\\[0.2cm]
Our last definition is a variational one and concerns $\varphi \in H^{\lambda}(\R^d)$, where $\fr[\varphi]$ is defined by the duality: 
\begin{align*}
\forall \psi \in H^{\lambda}(\R^d),\quad \langle \fr[\varphi] , \psi \rangle = \frac{c_\lambda}{2}\int_{\R^{2d}}\frac{[\varphi(x)-\varphi(y)][\psi(x)-\psi(y)]}{|x-y|^{d+2\lambda}}dxdy.
\end{align*}
Note that if $\varphi \in H^{\lambda}(\R^d)$ then $\mathcal{L}_{\lambda/2}[\varphi] \in L^{2}(\R^d)$ and $\langle \fr[\varphi] , \psi \rangle = \int_{\R^d} \mathcal{L}_{\lambda/2}[\varphi] \mathcal{L}_{\lambda/2}[\psi] \,dx$.
\\[0.2cm]
We close this section by recalling (\cite{Kwasnicki}) that if $\varphi \in L^2(\R^d)$ is such that $\fr[\varphi]$ exists and is in $L^2(\R^d)$ for one of the above definitions, the same holds for all the other definitions. 
%%%%%%%%%%%%%%%%%%%%%%%%%%%%%%%%%%%%%%%%%%%%%%%%%%%%%%%%%%%%%%%%%%%%%%%%

\section{Solutions to $\fr[u]=\varphi$}
\label{SolPositive}
Assume that $\lambda \in (0,d/2)$ and that $0\neq \varphi \in \mathcal{D}(\D)$ is non negative. Then there is a solution in $H^\lambda(\D)$ to $\fr[u]=\varphi$ in $\R^d$, in the variational sense. Moreover, $u > 0$ too.

Indeed, following \cite{Stein,Kwasnicki}, we know that in the region $ 0< \lambda <d/2$, the solution is given by the Riesz potential formula i.e., there exists a positive constant $c$ such that
\begin{align*}
u(x) = I_{2\lambda}(\varphi)(x) = c\int_{\R^d}\frac{\varphi(y)}{|x-y|^{d-2\lambda}} dy.
\end{align*}
As a consequence, $u>0$ in $\D$. Moreover, there exists a constant $C$ exists such that 
$$\|u\|_{L^2(\R^d)} \leq C\, \|\varphi\|_{L^{\frac{2d}{d+2\alpha}}(\R^d)}.$$
Since $u(x) = c\displaystyle\int_{\R^d}\frac{\varphi(z+x)}{|z|^{d-\alpha}}dz$, by differentiation, for any multi-index $\beta$, 
$$D^\beta u(x) = c\int_{\R^d}\frac{D^\beta \varphi(z+x)}{|z|^{d-\alpha}}dz = I_\alpha (D^\beta \varphi)(x).$$
By density,  for any $m$, $\varphi \in W^{m,\frac{2d}{d+2\alpha}}(\R^d) \mapsto u \in H^m(\R^d)$ is linear and continuous.

%%%%%%%%%%%%%%%%%%%%%%%%%%%%%%%%%%%%%%%%%%%%%%%%%%%%%%%%%%%%%%%%%%%%%%%%%%%%%%%%%%%%%%%

\end{document}